\title{Symmetries of equivariant Khovanov--Rozansky homology}  
\author{You Qi}
\address{Department of Mathematics, University of Virginia,
  Charlottesville, VA 22904, USA}
\email{\href{mailto:yq2dw@virginia.edu}{yq2dw@virginia.edu}}
\author{Louis-Hadrien Robert}
 \address{Université Clermont Auvergne, LMBP, Campus des Cézeaux, 3 place Vasarely, TSA 60026, CS 60026, 63178 Aubière Cedex, France}
 \email{\href{mailto:louis\_hadrien.robert@uca.fr}{louis\_hadrien.robert@uca.fr}}
 \author{Joshua Sussan}
 \address{ Mathematics Program\\
 The Graduate Center, CUNY, New York, NY 10016, USA} 
 \address{Department of Mathematics, CUNY Medgar Evers, Brooklyn, NY,
   11225, USA} 
  \email{\href{mailto:jsussan@mec.cuny.ed}{jsussan@mec.cuny.edu}}
 \author{Emmanuel Wagner}
 \address{Univ Paris Cit\'e, IMJ-PRG, Univ Paris Sorbonne, UMR 7586 CNRS,
   F-75013, Paris, France} 
 \email{\href{mailto:emmanuel.wagner@imj-prg.fr}{emmanuel.wagner@imj-prg.fr}}
    \subjclass[2020]{57K18, 57K16, 17B10, 18N25, 18G35} 
\tikzset{->-/.style={decoration={markings, mark=at position .5 with {\arrow{>}}},postaction={decorate}}}
\tikzset{-<-/.style={decoration={markings, mark=at position .5 with {\arrow{<}}},postaction={decorate}}}
\let\oldtocsubsection\tocsubsection
\renewcommand\tocsubsection[3]{\hspace{0.5cm}\oldtocsubsection{#1}{#2}{#3}}
\let\oldtocsubsubsection\tocsubsubsection
\renewcommand\tocsubsubsection[3]{\hspace{1cm}\oldtocsubsubsection{#1}{#2}{#3}}
\newcounter{res}[section]
\numberwithin{res}{section}
\newtheorem{thm}[res]{Theorem}
\newtheorem*{theo}{Theorem}
\newtheorem{lem}[res]{Lemma}
\newtheorem{lem-dfn}[res]{Lemma-Definition}
\newtheorem{prop}[res]{Proposition}
\newtheorem{cor}[res]{Corollary}
\theoremstyle{definition}
\newtheorem{notation}[res]{Notation}
\newtheorem{dfn}[res]{Definition}
\newtheorem{rmk}[res]{Remark}
\newtheorem{exa}[res]{Example}
\newtheorem{conv}[res]{Convention}
\def\co{\colon\thinspace}
\newcommand{\NB}[1]{\ensuremath{\vcenter{\hbox{#1}}}}
\newcommand{\NN}{\ensuremath{\mathbb{N}}}
\newcommand{\ZZ}{\ensuremath{\mathbb{Z}}}
\newcommand{\RR}{\ensuremath{\mathbb{R}}}
 \newcommand
{\Id}{\operatorname{Id}} \newcommand{\id}{\mathrm{Id}}
\newcommand{\Hom}{\ensuremath{\mathrm{Hom}}}
\newcommand{\HOM}{\ensuremath{\mathrm{HOM}}}
\newcommand{\gll}{\ensuremath{\mathfrak{gl}}}
\newcommand{\sll}{\ensuremath{\mathfrak{sl}}}
\renewcommand{\deg}[2][{}]{\ensuremath{\mathrm{deg}_{#1}(#2)}}
\newcommand{\Links}{\ensuremath{\mathsf{Links}}}
\newcommand{\Linkscon}{\ensuremath{\mathsf{Links}_{Con}}}
\newcommand{\qbinom}[2]{\ensuremath
\begin{bmatrix}
  #1 \\
  #2
\end{bmatrix}}
 \newcommand{\Sym}{\ensuremath{\mathrm{Sym}}}
\newcommand{\gFm}{\ensuremath{\mathsf{gFoam}}}
\newcommand{\ch}{\ensuremath{\mathrm{Ch}}}
\newcommand{\MGH}{\ensuremath{\mathrm{MGH}}}
\newcommand{\MGS}{\ensuremath{\mathrm{MGS}}}
\newcommand{\MGO}{\ensuremath{\mathrm{MG1}}}
\newcommand{\MGTW}{\ensuremath{\mathrm{MG2}}}
\newcommand{\MGTH}{\ensuremath{\mathrm{MG3}}}
\newcommand{\scalars}{\ensuremath{\Bbbk}}
\newcommand{\de}{\ensuremath{\mathbf{e}}}
\newcommand{\df}{\ensuremath{\mathbf{f}}}
\renewcommand{\dh}{\ensuremath{\mathbf{h}}}
\newcommand{\Le}{\ensuremath{\mathsf{e}}}
\newcommand{\Lf}{\ensuremath{\mathsf{f}}}
\newcommand{\Lh}{\ensuremath{\mathsf{h}}}
\newcommand{\dotnewtoni}[1][i]{\ensuremath{\textcolor[rgb]{0,0,0.6}{\spadesuit_{#1}}}}
\newcommand{\wdotnewtoni}[1][i]{\ensuremath{\textcolor[rgb]{0,0,0.6}{\widehat{\spadesuit}_{#1}}}}
\newcommand{\NNN}{\ensuremath{\mathbb{N}_{-1}}}
\newcommand{\bracketN}[1]{\left\langle #1 \right\rangle_{\myN}}
\newcommand{\bracket}[1]{\left\langle #1 \right\rangle}
\newcommand{\KN}{\ensuremath{\Bbbk_\myN}}
\newcommand{\mymovie}[3][]{
  \NB{
    \begin{tikzpicture}[#1]
\begin{scope}
  \draw[gray, thick] (0, -0.05) -- +(0, 3.1);
  \draw[gray, thick] (4, -0.05) -- +(0, 3.1);
  \draw[gray, thick] (8, -0.05) -- +(0, 3.1);
  \draw[gray, line width=1mm] (0,0) -- +(8,0);
  \draw[white, densely dotted, line width=0.6mm] (0,0) -- +(8,0);
  \draw[gray, line width=1mm] (0,3) -- +(8,0);
  \draw[white, densely dotted, line width=0.6mm] (0,3) -- +(8,0);
  \node (Frame1) at (2, 1.5) {#2};
  \node (Frame1) at (6, 1.5) {#3};
\end{scope}
    \end{tikzpicture}
    }
}
\newcommand{\tone}{{t_1}}
\newcommand{\ttwo}{{t_2}}
\newcommand{\dif}{\ensuremath{\partial}}
\newcommand{\Fp}{\ensuremath{\mathbb{F}_p}}
\newcommand{\tqftfunc}[1][]{\ensuremath{\mathcal{F}_\myN^{#1}}}
\newcommand{\statespaceN}[2][]{\ensuremath{\tqftfunc[#1]\left(#2\right)}}
\newcommand{\RN}{\ensuremath{\mathbb{Z}_\myN}}
\newcommand{\myN}{\ensuremath{N}}
\newcommand{\web}{\ensuremath{\Gamma}}
\newcommand{\foam}{\ensuremath{F}}
\newcommand{\degN}[1]{\ensuremath{\mathrm{deg}_\myN\left(#1\right)}}
\newcommand{\facet}{\ensuremath{f}}
\newcommand{\surface}{\ensuremath{\Sigma}}
\newcommand{\foamcat}[1][]{\ensuremath{\mathsf{Foam}_{#1}}}
\newcommand{\vectweb}[1]{\ensuremath{V\left(#1\right)}}
\def\mc{\mathcal}
\def\lra{{\longrightarrow}}
\def\dmod{{\mathrm{\mbox{\textrm{-}}mod}}}  
\newcommand{\KR}{\mathrm{KR}}
\newcommand{\mapX}{\ensuremath{\chi}}
\newcommand{\mapH}{\ensuremath{\eta}}
\newcommand{\mapB}{\ensuremath{\upsilon}}
\newcommand{\mapU}{\ensuremath{\zeta}}
\newcommand{\mapA}{\ensuremath{\alpha}}
\newcommand{\mapI}{\ensuremath{\iota}}
\newcommand{\mapS}{\ensuremath{\sigma}}
\newcommand{\mapC}{\ensuremath{\kappa}}
\newcommand{\Com}{\ensuremath{\mathrm{Com}}}
\def\gmod{{\mathrm{\mbox{-}gmod}}}
\def\mod{{\mathrm{\mbox{-}mod}}}
\newcommand{\gdot}{\ensuremath{\NB{\tikz[thin, green!50!black]{\draw (0,0) circle (0.5mm);}}}}
\newcommand{\gsoliddot}{\ensuremath{\NB{\tikz[thin, green!50!black]{\filldraw[draw= green!50!black, fill = green] (0,0) circle (0.5mm);}}}}
 \newcommand{\imagesfolder}{.}
\begin{document}
\begin{abstract}
We construct an $\mathfrak{sl}_2$-action on equivariant
  $\mathfrak{gl}_N$-link homologies. As a consequence we obtain an action of $\sll_2$ on these homologies as well as a $p$-DG structures for $p$ a prime number.
  We explore topological applications of these structures. 
\end{abstract}
\maketitle
\setcounter{tocdepth}{1}
\tableofcontents

\section{Introduction}
\label{sec:intro}
Symmetries on the homology of a link have been constructed in various
contexts.  A starting point for this work was an action of a
subalgebra of the graded Witt algebra on HOMFLYPT homology given by Khovanov
and Rozansky \cite{KRWitt}.  They were partially motivated by work of
Lipschitz and Sarkar \cite{LS1, LS2} who constructed an action of the
Steenrod algebra on Khovanov homology, as well observations by Gorsky,
Oblomkov, and Rasmussen \cite{GOR2} who conjectured that some colored
link homologies have graded dimensions equal to characters of
representations of affine Lie algebras.  Another symmetry of HOMFLYPT
homology was studied by Gorsky, Hogancamp, and Mellit \cite{GHM}.
They found an $\mathfrak{sl}_2$-action which seems distinct from the
symmetries studied in \cite{KRWitt}.  In particular, the
$\mathfrak{sl}_2$-action in \cite{GHM} acts in the homological
direction as opposed to the Khovanov--Rozansky action which preserves
homological degree.  This was applied in \cite{ChGo} to study some
structural properties of $\mathfrak{gl}_N$-homology.  In another
direction, Grigsby, Licata, and Wehrli \cite{GLW} constructed an
$\mathfrak{sl}_2$-action on annular Khovanov homology.  In this work,
we construct an action of a graded version of $\mathfrak{sl}_2$ on (equivariant)
Khovanov--Rozansky $\mathfrak{gl}_N$-homologies.  It would be
interesting to find an analogous $\mathfrak{sl}_2$-action in the
annular case developed by Akhmechet and Khovanov \cite{AkhKh} and
compare it to the construction of \cite{GLW}.

The construction in \cite{KRWitt} has been utilized and extended in
various directions.  Over a field of characteristic $p$, the action of
the degree-2 Witt generator $L_1$ has been used to define a $p$-DG structure on
HOMFLYPT homology \cite{KR2, Khovtriply} and
$\mathfrak{gl}_{-2}$-homology \cite{Cautisremarks}, \cite{QRS},
\cite{RW2}, leading to categorifications of the Jones and colored
Jones polynomial at a root of unity \cite{QiSussanLink, QRSW1}. Work
of Wang \cite{Wang} uses the degree-($-2$) nilpotent part of the $\mathfrak{sl}_2$-action (that we
extend here) to deduce various structural properties of
Khovanov--Rozansky $\mathfrak{gl}_N$-link homologies \cite{KR1}.
Actions of $\mathfrak{sl}_2$ have been discovered by Elias and one of
the authors on various categorical structures \cite{EliasQisl2}.  It
was earlier observed by Khovanov that there is a Witt-type action on
KLR algebras \cite{KhovNote}.

The first connection between foams and link homology was Khovanov's
categorification of the $\mathfrak{sl}_3$-link invariant \cite{Khsl3}.
This was extended to the $\mathfrak{gl}_N$ case by Mackaay,
Sto{\v{s}}i{\'c}, and Vaz \cite{MSV} who used the Kapustin--Li formula
\cite{KapLi, KRLG} to evaluate closed foams.  Categorified quantum
groups play a role in link homology via categorified skew Howe duality
and was first explored by Cautis, Kamnitzer, and Licata
\cite{CautisKamnitzerLicataSkewHowe} and further developed in several
papers such as \cite{CautisClasp}.  Lauda, Queffelec, and Rose showed
that these constructions factor through a foam category \cite{LQR,
  QR1} extending earlier work of Mackaay, Pan, and Tubbenhauer
\cite{Mac, MPT}.  The relations on the foam category imposed in
\cite{QR1} are shown to be a consequence of the foam evaluation
formula defined in \cite{RW1}.

We now summarize the main results of this work.
\begin{theo}
  \begin{itemize}  
  \item The equivariant Khovanov--Rozansky $\mathfrak{gl}_N$-homology
    of a link $L$ is a graded $\mathfrak{sl}_2$-module.
  \item This action is functorial in the sense that there is a functor
    from the category of links and cobordisms to a foam category
    enriched in graded $\mathfrak{sl}_2$-modules.
  \item The Rasmussen invariant is equal to the highest weight of a
    certain quotient representation.
\item If $C$ is a ribbon concordance from $L_0$ to $L_1$, then
  the equivariant homology of $L_0$ is a direct summand of
the equivariant homology of $L_1$ as $\sll_2$-modules.
  \item Over a field of characteristic $p$, certain homogeneous nilpotent generators of
    $\mathfrak{sl}_2$ give rise to $p$-DG structures on link homology.
\end{itemize}
\end{theo}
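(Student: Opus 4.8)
The plan is to derive all five assertions from a single construction: an action of the graded Lie algebra $\langle L_{-1},L_0,L_1\rangle\subset\witt$ spanned by the first three generators of the Witt algebra --- with $L_{\pm1}$ placed in quantum degree $\pm2$ and $L_0$ in degree $0$, so that as a graded Lie algebra it is $\mathfrak{sl}_2$ --- on the equivariant foam category, hence on equivariant $\mathfrak{gl}_N$-link homology. First I would present $\KR_N(L)$ via the Rouquier-type complexes of its crossings inside the category $\Foam$ whose closed foams are evaluated by the Robert--Wagner formula~\cite{RW1}, and let $L_n$ act on each facet polynomial ring $\Bbbk[X_1,\dots,X_k]$ and on the ground ring of equivariant parameters by the Witt derivation $-\sum_iX_i^{\,n+1}\partial_{X_i}$; here $L_{-1}$ is Wang's degree-$(-2)$ operator~\cite{Wang}, $L_1$ is the degree-$2$ operator behind known $p$-DG structures, and $L_0$ is a shift of the weight operator, so that $[L_0,L_{\pm1}]=\pm L_{\pm1}$ and $[L_{-1},L_1]=2L_0$ hold automatically. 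The two points to establish are that (i) these derivations respect the local foam relations and the evaluation of closed foams, so they act as derivations on the morphism spaces of $\Foam$ and make it enriched in graded $\mathfrak{sl}_2$-modules; and (ii) they commute with the differentials and homotopies attached to crossings and to the Reidemeister moves, so they descend to operators on $\KR_N(L)$ of homological degree $0$ --- in contrast with~\cite{GHM} --- and quantum degrees $2,0,-2$. Granting (i)--(ii), $\KR_N(L)$ is a graded $\mathfrak{sl}_2$-module, and since the $L_n$ are natural with respect to the generating link cobordisms the induced maps respect the $\mathfrak{sl}_2$-structure, yielding the functor from links and cobordisms to $\Foam$ enriched in graded $\mathfrak{sl}_2$-modules.

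For the Rasmussen invariant I would pass to the Lee--Gornik-type deformation that defines $s_N$, so that for a knot $K$ the deformed homology is free of rank $N$ over the deformed ground ring and $s_N(K)$ is recorded by the quantum filtration on the image of the canonical class $1\in\KR_N(\text{unknot})$ under a chosen cobordism to $K$. Transporting the $\mathfrak{sl}_2$-action to this setting, $L_0$ is still a shift of the weight operator and $L_{-1}$ strictly lowers the filtration, so the filtration level of the canonical class is the highest $L_0$-weight of the finite-dimensional $\mathfrak{sl}_2$-module it generates after passing to the relevant quotient; identifying that module with the claimed quotient representation then gives $s_N(K)$ as its highest weight. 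For a ribbon slice knot $K$ I would use that there is a ribbon concordance from the unknot $U$ to $K$; by the functoriality above it induces morphisms of graded $\mathfrak{sl}_2$-modules $\phi\colon\KR_N(U)\to\KR_N(K)$ and $\psi\colon\KR_N(K)\to\KR_N(U)$, and the standard handle-cancellation argument for ribbon concordances (the $\mathfrak{gl}_N$ analogue of Zemke's theorem) yields $\psi\circ\phi=\mathrm{id}$, so $\KR_N(U)$ is a direct summand of $\KR_N(K)$ in the category of graded $\mathfrak{sl}_2$-modules.

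Over a field of characteristic $p$ I would take $\partial$ to be one of the homogeneous nilpotent generators, $L_{-1}$ or $L_1$. On a polynomial ring $L_1=-\sum_iX_i^2\partial_{X_i}$ satisfies $L_1^{\,p}=0$, since $L_1^{\,p}(X_i^{\,n})$ is a scalar multiple of the product of $p$ consecutive integers $n(n+1)\cdots(n+p-1)\equiv0\pmod p$, and similarly $L_{-1}^{\,p}=\sum_i\partial_{X_i}^{\,p}=0$; being a derivation commuting with the homological differential by (i)--(ii), $\partial$ makes the chain-level equivariant $\mathfrak{gl}_N$-complex a module over $\Bbbk[\partial]/(\partial^p)$, and its naturality lets the Reidemeister homotopies be chosen $\partial$-equivariant, so the resulting $p$-DG structure is well defined up to the appropriate equivalence. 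The main obstacle throughout is step (i)--(ii): by locality it reduces to finitely many identities --- dot migration, the defining foam relations, neck-cutting and bubble evaluations, and the chain maps for one crossing and for the three Reidemeister moves --- but each of these must be checked by hand against the Robert--Wagner evaluation; once that is done, the remaining four bullets follow by the soft arguments above.
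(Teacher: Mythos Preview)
Your overall plan matches the paper's architecture, but you have missed the central technical obstruction and the machinery the paper builds to overcome it. The naive Witt-type derivations $L_{-1},L_0,L_1$ on facet polynomial rings do \emph{not} commute with the differentials in the Rouquier-type crossing complexes: applying $L_1$ to a zip or unzip foam does not give zero (see the paper's formulas for $\df$ on digon-cup, zip, and unzip foams), so these foams are not $\mathfrak{sl}_2$-equivariant maps between state spaces with the untwisted action. Your step (ii) therefore fails as stated, and this cannot be repaired by ``checking finitely many identities.'' The paper's solution is to introduce \emph{green-dotted webs}: one twists the $\mathfrak{sl}_2$-action on the state space of each resolution by certain flat $\mathfrak{sl}_2\to D_\Gamma$-valued cochains (encoded by green dots with multiplicities), chosen precisely so that the zip/unzip differentials become equivariant. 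This twisting is what produces the nonstandard green-dot decorations in the crossing complexes $T,T'$ and is essential; without it there is no $\mathfrak{sl}_2$-action on the chain level at all.

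A second point you have underestimated is the homotopical framework. Even after twisting, the Reidemeister homotopies are in general \emph{not} $\mathfrak{sl}_2$-equivariant; the paper does not claim they are. Instead it works in a \emph{relative homotopy category} $\mc{C}^{\mathfrak{sl}_2}(\gFm)$, obtained by Verdier-localizing the homotopy category of $\sll_2$-equivariant complexes at those maps that become homotopy equivalences after forgetting the $\sll_2$-action. Invariance under Reidemeister moves is then proved by exhibiting, for each move, a short exact sequence of $\sll_2$-equivariant complexes that is only $A$-split (not $\sll_2$-split), together with a contractible third term; this yields an isomorphism in the relative category, hence on homology groups as $\sll_2$-modules, even though no $\sll_2$-equivariant chain homotopy exists. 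Your claim that ``naturality lets the Reidemeister homotopies be chosen $\partial$-equivariant'' is therefore not how the argument goes, and the same remark applies to your $p$-DG paragraph. Finally, your approach to the Rasmussen invariant via the Lee--Gornik deformation is not what the paper does: it instead shows (for $N=2$) that the $\mathbb{Q}[x]$-torsion submodule of the equivariant homology is an $\mathfrak{sl}_2$-subrepresentation, so the free quotient is an $\mathfrak{sl}_2$-module whose highest weight recovers $s(L)$ via Khovanov's reformulation; your alternative route is plausible but would require separately transporting the twisted $\mathfrak{sl}_2$-action through the deformation.
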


While we only consider the uncolored case (links are colored only by
the natural representation), we expect that this construction extends
to the setting where link components are colored by exterior powers of
the fundamental representation.  Furthermore, we anticipate that the
$\mathfrak{sl}_2$-action really comes from the Witt-type action on
foams constructed in \cite{QRSW2}. Note that in \cite{QRSW2}, we
introduced a smaller class of foams, called \emph{spherical foams} for
which the Witt- and $\mathfrak{sl}_2$-actions have a bit more
flexibility. We expect this flexibility to remain if working with
spherical foams only, which could be achieved if one only works with
braid closures instead of arbitrary link diagrams.

\subsection{Outline}
\label{sec:outline}
We begin in Section \ref{homological:sec} with some background
material on the homological algebra needed later in the paper.

Section \ref{sl2foams:sec} contains a review of the
$\mathfrak{sl}_2$-action on foams constructed in \cite{QRSW2}.
This action is shown to extend to homology of links in Section
\ref{homology:sec}.

The compatibility of the $\mathfrak{sl}_2$-action with functoriality
of link homology is explained in Section \ref{functoriality:sec}.  Our
argument is a direct application of functoriality proved by Ehrig,
Tubbenhauer, and Wedrich \cite{ETW}.

Section \ref{sec:special} contains various specializations of the
$\mathfrak{sl}_2$-action on link homology.  Applying the Zuckerman
functor (a certain Lie theoretic construction), we obtain a locally
finite submodule of equivariant link homology which has the potential
of being finite-dimensional.  In another direction, over a field of
characteristic $p$, we obtain a $p$-DG structure on link homology.  In
fact, we have two different such constructions: one coming from the
action of $\df$ and one coming from the action of $\de$.  Using the
$\df$ action, we must use symmetric polynomials as the ground ring in
order to have a non-trivial action.  We are not able to make any
statements about the Euler characteristic.  However, using the $\de$
action, we may take the ground ring just to be a field of
characteristic $p$ and we categorify the $\mathfrak{gl}_N$-link
invariant at a root of unity. We also relate our construction to the
Rasmussen invariant and to recent work by Wang \cite{Wang}.

\subsection{Conventions}
\label{sec:conventions}
Let $\NN$ stand for the set of non-negative
integers and set
$\NNN= \{ k\in \ZZ, k\geq -1\}$. 

For a ring with unity $\scalars$, and for $x \in \scalars$, we set $\bar{x}=1-x$.

Throughout most of the paper, we will fix a natural number $N$. The
algebras $\RN=\ZZ[X_1, \dots,X_\myN]^{S_\myN}$ and
$\KN = \scalars[X_1, \dots, X_\myN]^{S_\myN}$ of symmetric polynomials
will play central roles in this paper. They are non-negatively graded
by imposing that $\deg{X_i} =2$. The $i$th elementary, complete
homogeneous, and power sum symmetric polynomials in
$X_1,\dots, X_\myN$ are denoted by $E_i$, $H_i$ and $P_i$
respectively, so that
\[
  \RN=\ZZ[E_1, \dots, E_\myN] \qquad \text{and} \qquad \KN=
  \scalars[E_1, \dots, E_\myN].
\]
Throughout most of this paper we invert $2$ in the ground ring.

For $a \in \NN$, $\Sym_{a}$ denotes the ring of symmetric polynomials
in $a$ variables with $\ZZ$ coefficients, in particular
$\RN=\Sym_\myN$. When working in such a ring, we will
let $e_i, h_i$ and $p_i$ be the $i$th elementary, complete
homogeneous, and power sum symmetric polynomials respectively without
reference to the variables.
The ring $\Sym_{a}$ is graded by imposing that $e_i$ is homogeneous of
degree $2i$. With this setting, we have:
\[ \deg{e_i} = \deg{h_i} = \deg {p_i} = 2i.\]

For $n \in \ZZ$, let $[n]=\frac{q^n-q^{-n}}{q-q^{-1}}$, for $k \in
\NN$, we let $[k]!= \prod_{j=1}^k[j]$. Finally, for $m\in \ZZ$ and $a
\in \NN$,  define: 
\[
  \qbinom{m}{a}=\prod_{i=1}^a \frac{[m+1-i]}{[i]}.\]
Note that if $m$ is non-negative, one has $\qbinom{m}{a}= \frac{[m]!}{[a]![m-a]!}$.

For a $\mathbb{Z}$-graded vector space $V$, let $V_i$ denote the
subspace in degree $i$. Let $q^n V$ denote the $\mathbb{Z}$-graded
vector space where $(q^n V)_i=V_{i-n}$.

Complexes will be taken to be cohomologically graded.  For a complex
$C$, we let $t^i C$ denote the shifted complex whose piece in
cohomological degree $i+j$ is the piece of $C$ in cohomological degree
$j$, in other words, $(t^j C)_i = C_{i-j}$.

Foams are read from bottom to top.

\subsection{Acknowledgments}
\label{sec:acknoledgment}
We would like to thank Rostislav Akhmechet, Ben Elias, Lev Rozansky, Pedro Vaz, and Joshua
Wang for interesting and enlightening conversations. Let us thank Joshua Wang again for spotting a mistake in a previous version of this paper. We especially
would like to thank Mikhail Khovanov who encouraged us to search for
algebra actions on foams and link homologies.  We are grateful to
Felix Roz for helpful comments on an earlier draft of the paper.

Various ideas motivating this paper partially arose
during the hybrid workhshop ``Foam Evaluation'' held at ICERM. The Universit\'e Paris Cit\'e  and its Programme d'invitations internationales scientifiques further supported the collaboration of the authors. We thank the institutions for their hospitality.

Some figures are recycled from papers of various subsets of the authors
with or without other collaborators.

 Y.Q.{} is partially supported by the Simons Foundation Collaboration Grants for Mathematicians. J.S.{} is
partially supported by the NSF grant DMS-1807161 and PSC CUNY Award
64012-00 52.
LH.R.{} was supported by the Luxembourg National Research Fund PRIDE17/1224660/GPS.
E.W.{} is partially supported by the ANR projects AlMaRe
(ANR-19-CE40-0001-01), AHA (JCJC ANR-18-CE40-0001) and CHARMES (ANR-19-CE40-0017).
 
\section{Homological background}
\label{homological:sec}

\label{sec:homol-non-sense}

Throughout this work, we will study $H$-modules, where $H$ is some
cocommutative Hopf algebra. We will freely make use of Sweedler's
notation $\Delta(h)=\sum_h h_1\otimes h_2$ and $(\Delta\otimes
\id_H)\circ \Delta(h)=\sum_h h_1\otimes h_2\otimes h_3$, etc.  In the
main results of this paper, the Hopf algebra $H$ will be the universal enveloping algebra of $\mathfrak{sl}_2$.

Let $A$ be an $H$-module algebra.  This means that $A$ is an algebra
object in the module category of the Hopf algebra $H$. In particular,
for $h$ in $H$ and $a$ and $b$ in $A$, one has:
\begin{equation}
h\cdot (ab)=\sum_h (h_1\cdot a)(h_2\cdot b). 
\end{equation}
We may then form the \emph{smash product algebra}  $A\# H$.
As an abelian group, $A\# H$ is isomorphic to
$A\otimes H$.  The multiplicative structure is determined by
\begin{equation}
  (a\otimes h)(b\otimes k)=\sum_h a(h_1\cdot b)\otimes h_2 k,
\end{equation}
for any $a,b\in A$ and $h,k\in H$.
Thus, for an $H$-module algebra $A$,  $A\otimes 1$ and $1\otimes H$ sit in
$A\# H$ as subalgebras by construction. 
We will often refer to modules and morphisms
in $A\#H\dmod$ 
as \emph{$H$-equivariant} $A$-modules and morphisms.

There is an exact forgetful functor between the usual homotopy
categories of chain complexes of graded modules
\begin{equation}\label{eqn-forgetful-functor}
  \mathrm{For}: \mc{C}(A\# H)\lra \mc{C}(A).
\end{equation}
An object $K_\bullet $ in $\mc{C}(A\# H)$ is annihilated by the forgetful functor if and only if, when forgetting the $H$-module structure
on each term of $K_\bullet$, the complex of graded $A$-modules
$\mathrm{For}(K_\bullet)$ is null-homotopic. The null-homotopy map on
$\mathrm{For}(K_\bullet)$ though, is not required to intertwine
$H$-actions. We will refer to such complexes as \emph{relatively null-homotopic}. Also, for ease of notation, we will usually just abbreviate the complex notation $K_\bullet$ as $K$ when no confusion can be caused.

Since $H$ is a cocommutative Hopf algebra, if $B$ is an $H$-module algebra, then so is its opposite algebra $B^{\mathrm{op}}$. Thus given two $H$-module algebras $A$ and $B$, then their tensor product $A\otimes B^{\mathrm{op}}$ is an $H$-module algebra with its natural $H$-module structure. Following the
definition given above, the multiplication on $A\otimes B^{\mathrm{op}}$ is given by:
\[
  (a_1\otimes b_1 \otimes h )(a_2\otimes b_2 \otimes k) = \sum_h a_1 (h_1\cdot a_2)
  \otimes (h_2\cdot b_1)b_2\otimes h_3k,
\]
for any $a_1,a_2\in A$, $b_1,b_2 \in B$ and $h,k\in H$. A complex in $\mc{C}((A\otimes B^{\mathrm{op}})\# H)$ will also be referred to as a complex of $H$-equivariant $(A,B)$-bimodules, while morphisms between such complexes as $H$-equivariant bimodules homomorphisms.

\begin{dfn}\label{def-relative-homotopy-category}
Given an $H$-module algebra $A$,
  the \emph{relative homotopy
  category} is the Verdier quotient
  \[\mc{C}^H(A):=\dfrac{\mc{C}(A\#
    H)}{\mathrm{Ker}(\mathrm{For})}.\]
\end{dfn}
The superscript $H$ in the definition is there to remind the reader of the
$H$-module structures on the objects.

The usual homology functor factors through the kernel of $\mathrm{For}$, and induces well-defined homology functors on the relative homotopy category. The homology of objects in $\mc{C}^H(A)$ carries natural actions by the smash product algebra $A\# H$.

When $A=\Bbbk$ is a field, then the functor $\mathrm{For}:\mc{C}(H)\lra \mc{C}(\Bbbk)$ agrees with the usual total homology functor, since a complex of $H$-modules is acyclic if and only if it is null-homotopic over the ground field. It follows that, in this case $\mc{C}^H(\Bbbk)=\mc{D}(H)$, the usual derived category of $H$-modules. In this paper, we will consider ground rings that are more general than fields.

The category $\mc{C}^H(A)$ is triangulated. By construction,
there is a factorization of the forgetful functor
\begin{gather}
  \NB{
    \tikz[xscale=3, yscale =0.8]{
      \node (AH) at (-1, 1) {$\mc{C}(A\# H)$};
      \node (A) at ( 1, 1) {$\mc{C}(A)$};
      \node (Adif) at ( 0, -1) {$\mc{C}^H(A)$};
      \draw[-to] (AH) -- (A) node[pos =0.5, above] {$\mathrm{For}$};
      \draw[-to] (AH) -- (Adif);
      \draw[-to] (Adif) -- (A);      
    }
  }  \ .
\end{gather}

Let us briefly comment on the triangulated structure of the relative homotopy category $\mc{C}^H(A)$. By construction, the homological shift functors, denoted $t^i$ for $i\in \mathbb{Z}$, are inherited from the shift functors on $\mc{C}(A\# H)$, which shift complexes $i$ steps to the left if $i\geq 0$, and $-i$ steps to the right if $i<0$. 

For the usual homotopy category $\mc{C}(A)$ of an algebra, standard distinguished triangles arise from short exact sequences
  \[
 0 \lra M \stackrel{f}{\lra} N\stackrel{g}{\lra} L \lra 0
  \]
of $A$-modules that are termwise split exact. The class of distinguished triangles in $\mc{C}(A)$ are declared to be those that are isomorphic to standard ones.
For distinguished triangles in the relative homotopy category, similarly, we have the following construction.

\begin{lem} \cite[Lemma 2.3]{QRSW1} \label{lem-construction-of-triangle}
  A short exact sequence of chain complexes of $A\#H$-modules
  \[
 0 \lra M \stackrel{f}{\lra} N \stackrel{g}{\lra} L \lra 0
  \]
 that is termwise $A$-split exact gives rise to a distinguished triangle in $\mc{C}^H(A)$. Conversely, any distinguished triangle in $\mc{C}^H(A)$ is isomorphic to one that arises in this form.
\end{lem}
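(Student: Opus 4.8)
The plan is to deduce this from the analogous, classical statement about the ordinary homotopy category $\mc{C}(A)$ together with the explicit description of the Verdier quotient $\mc{C}^H(A) = \mc{C}(A\# H)/\Ker(\mathrm{For})$. First I would establish the forward direction: given a short exact sequence $0 \to M \xrightarrow{f} N \xrightarrow{g} L \to 0$ of complexes of $A\#H$-modules that is termwise $A$-split, I want to produce a distinguished triangle $M \to N \to L \to t M$ in $\mc{C}^H(A)$. The natural candidate is the standard mapping-cone construction performed inside $\mc{C}(A\# H)$: form the cone $\mathrm{Cone}(f)$, which carries an $A\#H$-module structure since $f$ is a chain map of $A\#H$-modules, and note that the termwise $A$-splitting gives a termwise $A$-linear (but not necessarily $H$-linear) isomorphism $\mathrm{Cone}(f) \cong L$ as complexes of $A$-modules. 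The key point is that the comparison map $\varphi\co \mathrm{Cone}(f) \to L$ built from $g$ and the splitting is a morphism in $\mc{C}(A\#H)$ whose image under $\mathrm{For}$ is a homotopy equivalence; hence $\mathrm{Cone}(\varphi)$ lies in $\Ker(\mathrm{For})$, so $\varphi$ becomes an isomorphism in the Verdier quotient $\mc{C}^H(A)$. Since $(M, N, \mathrm{Cone}(f))$ is a standard distinguished triangle in $\mc{C}(A\#H)$ and the localization functor $\mc{C}(A\#H) \to \mc{C}^H(A)$ is triangulated, replacing $\mathrm{Cone}(f)$ by $L$ via the isomorphism $\varphi$ yields the desired distinguished triangle.

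The subtle point I would need to check carefully is that $\varphi$ really is a chain map of $A\#H$-modules, not merely of $A$-modules. Here one uses that $g$ is $A\#H$-linear and that the termwise splitting $s\co L \to N$, while only $A$-linear, can be used to define $\varphi$ so that its $A\#H$-linearity follows formally: concretely $\varphi$ is, up to sign conventions, the map induced by $g$ on $\mathrm{Cone}(f) = t M \oplus N$ projecting onto $N$ and then applying $g$, which is visibly $A\#H$-linear, while the splitting is only needed to see that $\varphi$ is a termwise $A$-split surjection with kernel $\mathrm{Cone}(f_{\text{shifted}})$ that $\mathrm{For}$ kills. I would phrase this so that the cone $\mathrm{Cone}(\varphi)$ is identified (after forgetting to $A$) with the mapping cone of an isomorphism, hence null-homotopic over $A$, i.e.\ relatively null-homotopic, hence zero in $\mc{C}^H(A)$.

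For the converse, I would argue as follows. Every distinguished triangle in $\mc{C}^H(A)$ is, by definition of the Verdier quotient and of triangulated localization, isomorphic (in $\mc{C}^H(A)$) to the image of a standard distinguished triangle $M \xrightarrow{f} N \to \mathrm{Cone}(f) \to tM$ coming from a chain map $f$ of $A\#H$-modules. It then suffices to replace such a triangle by one arising from a termwise $A$-split short exact sequence. The standard trick is to replace $f$ by the inclusion of $M$ into the mapping cylinder $\mathrm{Cyl}(f)$: the sequence $0 \to M \to \mathrm{Cyl}(f) \to \mathrm{Cone}(f) \to 0$ is termwise split exact already at the level of $A\#H$-modules (a fortiori $A$-split), and the natural projection $\mathrm{Cyl}(f) \to N$ is a homotopy equivalence of complexes of $A\#H$-modules, hence an isomorphism in $\mc{C}^H(A)$. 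This exhibits the given triangle, up to isomorphism in $\mc{C}^H(A)$, as one coming from a termwise $A$-split short exact sequence of complexes of $A\#H$-modules, which is exactly the asserted form. (This is, of course, the content of \cite[Lemma 2.3]{QRSW1}, whose proof I am recapitulating; in the write-up I would simply cite it and indicate the mapping cone/cylinder mechanism.)

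The main obstacle is bookkeeping rather than conceptual: one must make sure that all the comparison maps (the map $\mathrm{Cone}(f) \to L$ built from a splitting, and the map $\mathrm{Cyl}(f) \to N$) are genuinely $A\#H$-linear chain maps, so that they descend to $\mc{C}^H(A)$, while the homotopies witnessing that their cones lie in $\Ker(\mathrm{For})$ are only required to be $A$-linear. Keeping straight which maps must be $H$-equivariant and which need not be is the crux; once that is organized, everything reduces to the corresponding classical statement for $\mc{C}(A\#H)$ and the universal property of the Verdier quotient.
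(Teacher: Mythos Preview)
Your proposal is correct and follows essentially the same route as the paper (and the cited reference \cite{QRSW1}): for the forward direction you compare $L$ to $\mathrm{Cone}(f)$ via the $A\#H$-linear map induced by $g$ and use the $A$-splitting only to see that its cone is relatively null-homotopic; for the converse you use the mapping cylinder, exactly as in the paragraph following the lemma in the paper. Your emphasis on which maps must be $H$-equivariant versus merely $A$-linear is precisely the point.
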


A morphism of $A\# H$-modules $f:M\lra N$ becomes an isomorphism in $\mc{C}^H(A)$ if and only if its cone $\mathrm{C}(f)$ in $\mc{C}(A\# H)$ is relatively null-homotopic. To see this, replace $N$ by the usual mapping cylinder $\mathrm{Cyl}(f)= N\oplus M\oplus t M$, which is homotpic to $N$ in $\mc{C}(A\# H)$. The inclusion of $M$ into $\mathrm{Cyl}(f)$ is a homotopy equivalence. Furthermore, we have a short exact sequence of $A\# H$-modules
\begin{equation}
    0 \lra M \stackrel{\iota}{\lra} \mathrm{Cyl}(f) \lra \mathrm{C}(f) \lra 0,
\end{equation}
where 
$\iota: M \lra \mathrm{Cyl}(f)$ is the map $m\mapsto (f(m), m, 0)$. The sequence is termwise $A$-split (even $A\# H$-split), and thus gives rise to a distinguished triangle in $\mc{C}^H(A)$. Thus $\iota: M \lra \mathrm{Cyl}(f)$ descends to an isomorphism in $\mc{C}^H(A)$ if and only if $\mathrm{C}(f)$ is annihilated under $\mathrm{For}$, which is equivalent to saying that it is relatively null-homotopic.

Our main goal in this section is to describe an enriched structure on the relative homotopy category of an $H$-module algebra. The enriched hom spaces will naturally carry structures of $H$-representations. As usual, the enriched hom will be naturally right adjoint to tensor product of chain complexes.

Let $A$ be an $H$-module algebra, and $M$, $N$ be two (bounded) complexes of left $A\# H$-modules. 
We set $\HOM_A(M,N)$ to be the usual space of all $A$-linear homomorphisms
\begin{equation}
    \HOM_A(M,N)=\oplus_{i\in \ZZ} \Hom_A(M,t^i N),
\end{equation}
equipped with the natural differential
\begin{equation}
(df)(m)=d(f(m))-(-1)^if(dm)
\end{equation}
for any $f\in \Hom_A(M, t^iN)$.
Then the Hopf algebra $H$ acts on this morphism space by
\begin{equation}\label{eqn-H-action}
    (h \cdot f)(m):=\sum_h h_1 \cdot (f(S(h_2)\cdot m))
\end{equation}
for any $h\in H$, $f\in \HOM_A(M, N)$ and $m\in M$. This action preserves $A$-linearity since, for any $a\in A$, we have that
\begin{align*}
    (h \cdot f)(am)& =\sum_h h_1 \cdot f(S(h_2)\cdot (am)) \\
     & = \sum_h h_1\cdot f((S(h_3)\cdot a)(S(h_2)\cdot m)) \\
        & = \sum_h h_1\cdot (S(h_4)\cdot a) (h_2 \cdot f(S(h_3)\cdot m)) \\
         & = \sum_h h_1\cdot (S(h_2)\cdot a) (h_3 \cdot f(S(h_4)\cdot m)) \\
          & = \sum_h a h_1 \cdot f(S(h_2)\cdot m)) = a (h\cdot f)(m).
\end{align*}
Here, in the fourth equality, we have used that $H$ is cocommutative, so that
\[
\sum_h h_1\otimes h_2\otimes h_3\otimes h_4=\sum_h h_1\otimes h_4\otimes h_2\otimes h_3.
\] 
One can also show, as in \cite[Lemma 5.2]{QYHopf}, that, under this $H$-action, the space of $H$-invariants is equal to
\begin{equation}\label{eqn-H-inv-in-HOM}
    \HOM_A(M,N)^H \cong \HOM_{A\# H} (M, N).
\end{equation}

Notice that we restrict the objects inserted into the bifunctor $\HOM_A(\mbox{-},\mbox{-}) $ to those in $\mc{C}(A\# H)$, i.e., those chain complexes of $A$-modules that are $H$-equivariant. When $M$ is an $H$-equivariant complex of $(A,B)$-bimodules, where $B$ is a second $H$-module algebra, we obtain a functor on the usual homotopy categories:
\begin{equation}
\HOM_A(M,\mbox{-}): \mc{C}(A\#H) \lra \mc{C}(B\# H).
\end{equation}

On the other hand, if $M$ is an $H$-equivariant complex of $(A,B)$-bimodules and $K$ is a complex of $H$-equivariant $B$-modules, the natural tensor product
\begin{equation}
    M\otimes_B K= \oplus_{i \in \ZZ}(M\otimes_B K)_i,\quad \quad (M\otimes_B K)_i:=\oplus_{k\in \ZZ} M_k\otimes_B K_{i-k}.
\end{equation}
gives one an $H$-equivariant complex of $A$-modules.
Here, the differential acts by, for any $m\in M_i$ and $k\in K_j$,
\begin{equation}
    d(m\otimes k)=d(m)\otimes k + (-1)^{i}m\otimes d(k);
\end{equation}
while the Hopf algebra $H$ acts by
\begin{equation}
    h\cdot (m\otimes k)=\sum_h (h_1\cdot m)\otimes (h_2\otimes k).
\end{equation}
In this way, tensor product with $M$ over $B$ defines a functor
\begin{equation}
    M\otimes_B(\mbox{-}): \mc{C}(B\#H) \lra \mc{C}(A\#H)
\end{equation}

It is then an easy exercise to check that the usual tensor-hom adjunction preserves the action by the Hopf algebra $H$ (see, for instance, \cite[Lemma 8.5]{QYHopf}):
\begin{equation}
    \HOM_A(M\otimes_B K, N) \cong \HOM_{B}(K,\HOM_A(M,N)).
\end{equation}
Taking $H$-invariants on both sides gives an isomorphism of the graded hom spaces in complexes of $B\# H$-modules and $A\# H$-modules (equation \eqref{eqn-H-inv-in-HOM}). Then taking the zeroth cohomology with respect to the differentials gives us the adjunction in the usual homotopy category:
\begin{equation}\label{eqn-homotopy-adj}
    \Hom_{\mc{C}(A\#H)}(M\otimes_B K, N) \cong \Hom_{\mc{C}(B\#H)}(K,\HOM_A(M,N)).
\end{equation}

Our first result is to show that tensor and hom descend to the relative homotopy category.

\begin{lem}\label{lem-tensor-hom-preserve-null-homotopy}
    The tensor and hom bifunctors in equation \eqref{eqn-homotopy-adj} preserve the class of relatively null-homotopic objects.
\end{lem}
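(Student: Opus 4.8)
The plan is to show that each of the two bifunctors $\HOM_A(M,-)$ and $M\otimes_B(-)$ sends relatively null-homotopic complexes to relatively null-homotopic complexes, in each of the two slots. Recall that a complex $K$ of $B\#H$-modules is relatively null-homotopic precisely when $\mathrm{For}(K)$ is null-homotopic as a complex of $B$-modules, i.e.\ when there is a (not necessarily $H$-equivariant) contracting homotopy $s$ on the underlying complex of $B$-modules. The key observation is that the underlying complexes of $A$-modules (resp.\ $B$-modules) of $\HOM_A(M,K)$ and $M\otimes_B K$ agree with the classical hom and tensor constructions applied to $\mathrm{For}(M)$ and $\mathrm{For}(K)$; the Hopf algebra $H$ only enters through the extra action described in equations \eqref{eqn-H-action} and the displayed tensor-product action, and plays no role in the underlying $A$- or $B$-module structure. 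So everything reduces to the classical, well-known fact that ordinary hom and tensor with a bounded complex preserve null-homotopy, together with a check that the $H$-equivariance of all the data is not disturbed.

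First I would treat tensor product. Suppose $K$ in $\mc{C}(B\#H)$ is relatively null-homotopic, so $\mathrm{For}(K)$ admits a contracting homotopy $s\colon \mathrm{For}(K)\lra \mathrm{For}(K)$ of degree $-1$ with $ds+sd=\id$. Then $\id_{\mathrm{For}(M)}\otimes_B s$ is a degree $-1$ map on the underlying complex of $A$-modules of $M\otimes_B K$, and since $d$ on $M\otimes_B K$ is $d_M\otimes 1\pm 1\otimes d_K$, a one-line computation (using that $M$ is bounded so the sum is finite, and that the signs match) shows $d(\id\otimes s)+(\id\otimes s)d=\id$ on $\mathrm{For}(M\otimes_B K)$. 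Hence $M\otimes_B K$ is relatively null-homotopic. The same argument, reading the roles of the two slots symmetrically, handles a relatively null-homotopic $M$: if $\mathrm{For}(M)$ has a contracting $B$-linear, $A$-linear homotopy $s$ then $s\otimes_B \id_K$ contracts $\mathrm{For}(M\otimes_B K)$. Note we never need $s$ to commute with $H$; this is exactly why the statement is phrased in terms of the \emph{relative} homotopy category.

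Next, for the hom bifunctor, if $N$ is relatively null-homotopic with contracting homotopy $s$ on $\mathrm{For}(N)$, then post-composition $f\mapsto s\circ f$ defines a degree $-1$ operator on $\mathrm{For}(\HOM_A(M,N))$, and the Leibniz rule for the differential $(df)(m)=d(f(m))-(-1)^i f(dm)$ gives $d(s\circ f)+s\circ(df)=(ds+sd)\circ f=f$, so $\HOM_A(M,N)$ is relatively null-homotopic. When instead $M$ is relatively null-homotopic, precomposition $f\mapsto (-1)^{?}f\circ s$ (with the appropriate Koszul sign, which I would pin down during the writeup) is a contracting homotopy on $\mathrm{For}(\HOM_A(M,N))$; boundedness of $M$ again guarantees everything is well-defined. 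In all four cases one should also remark that the resulting objects genuinely lie in $\mc{C}(A\#H)$ (resp.\ $\mc{C}(B\#H)$), which is immediate since tensor and hom were already shown above to land in these categories.

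I do not expect a serious obstacle here: the proof is a bookkeeping exercise combining the classical null-homotopy-preservation statements with the compatibilities already established in the preceding discussion. The one place requiring mild care is the sign conventions — making sure the Koszul signs in $d(m\otimes k)=d(m)\otimes k+(-1)^i m\otimes d(k)$ and in $(df)(m)=d(f(m))-(-1)^i f(dm)$ interact correctly with a degree $-1$ homotopy so that the cross terms cancel and the identity is recovered — together with the standing assumption that $M$ (and the complexes in question) are bounded, which is what makes the infinite direct sums defining $\HOM_A$ and $\otimes_B$ behave well. Once these are settled the lemma follows, and as a consequence tensor and hom descend to functors between the relative homotopy categories $\mc{C}^H(A)$, $\mc{C}^H(B)$, with the adjunction \eqref{eqn-homotopy-adj} passing to these quotients.
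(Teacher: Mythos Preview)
Your proposal is correct and follows essentially the same approach as the paper: build an explicit contracting homotopy on the underlying $A$- or $B$-complex by tensoring with, or pre/post-composing with, the given (not necessarily $H$-equivariant) contracting homotopy $s$, and observe that $H$-equivariance of $s$ is never used. The paper's own proof is terser --- it writes out only the tensor case, taking $h\otimes_A \Id_M$ as the null-homotopy, and leaves the hom case to the reader --- whereas you sketch all four cases (both slots of both functors), which is indeed what the subsequent exactness lemma requires. One small remark: your repeated appeals to boundedness of $M$ are not actually needed for this argument, since the contracting homotopy acts termwise on the direct-sum total complex regardless; boundedness enters elsewhere in the paper but not here.
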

\begin{proof}
    We show this for the tensor case, and leave the similar hom case to the reader. To do so, suppose $M$ is a complex of $A\# H$-modules. Let $K\in \mc{C}(A^{\mathrm{op}}\#H)$ be a relatively null-homotopic complex. 
    Since $K$ is relatively null-homotopic, there is an $A$-linear map $h: K\lra tK$ such that $\Id_K = dh+hd$. 
    Then $K\otimes_A M$ is also null-homotopic over $\Bbbk$, with a null-homotopy given by $h\otimes_A \Id_M$. 
\end{proof}

\begin{lem}\label{lem-relative-tensor-hom-exact}
The tensor and hom bifunctors in equation \eqref{eqn-homotopy-adj} descend to exact bifunctors on relative homotopy categories. 
\end{lem}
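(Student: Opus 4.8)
The plan is to deduce the statement formally from Lemma~\ref{lem-tensor-hom-preserve-null-homotopy} together with the universal property of Verdier localization; once the ingredients are in place the argument is short. First I would set up the ambient structure: the forgetful functor $\mathrm{For}\colon\mc{C}(A\#H)\to\mc{C}(A)$ is exact, so $\Ker(\mathrm{For})$ is a triangulated subcategory, and it is thick since a direct summand of a relatively null-homotopic complex is again relatively null-homotopic. Thus $\mc{C}^H(A)=\mc{C}(A\#H)/\Ker(\mathrm{For})$ inherits its triangulated structure together with an exact localization functor $Q_A\colon\mc{C}(A\#H)\to\mc{C}^H(A)$, and similarly for $B$.

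Second, I would check that the functors
\[
M\otimes_B(-)\colon\mc{C}(B\#H)\longto\mc{C}(A\#H),\qquad
\HOM_A(M,-)\colon\mc{C}(A\#H)\longto\mc{C}(B\#H)
\]
are already exact on the \emph{ordinary} homotopy categories. Both are additive, compatible with the homological shift functor $t$, and strict with respect to the degreewise additive structure of complexes, hence carry mapping cones to mapping cones; equivalently, $M\otimes_B(-)$ sends a termwise $B\#H$-split short exact sequence to a termwise $A\#H$-split one (a degreewise splitting $s$ is carried to $\Id_M\otimes_B s$, whose $H$-equivariance is immediate from the formula for the $H$-action on a tensor product), and dually $\HOM_A(M,-)$ preserves termwise $A\#H$-split short exact sequences. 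Therefore they send standard distinguished triangles to standard ones, and so are exact.

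Third, Lemma~\ref{lem-tensor-hom-preserve-null-homotopy} says precisely that $M\otimes_B(-)$ maps $\Ker(\mathrm{For})\subset\mc{C}(B\#H)$ into $\Ker(\mathrm{For})\subset\mc{C}(A\#H)$, and symmetrically for $\HOM_A(M,-)$. Consequently the exact composites $Q_A\circ\bigl(M\otimes_B(-)\bigr)$ and $Q_B\circ\HOM_A(M,-)$ annihilate the respective kernels, so by the universal property of the Verdier quotient they factor uniquely through exact functors
\[
M\otimes_B(-)\colon\mc{C}^H(B)\longto\mc{C}^H(A),\qquad
\HOM_A(M,-)\colon\mc{C}^H(A)\longto\mc{C}^H(B),
\]
and exactness of the induced functors is part of that universal property. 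To obtain the stated \emph{bi}functoriality, I would rerun the same argument in the variable $M$ — viewing $M$ as a complex in $\mc{C}(A\otimes B^{\mathrm{op}}\#H)$ and extending $\otimes$ and $\HOM$ to complexes in both slots with the usual signs — using that Lemma~\ref{lem-tensor-hom-preserve-null-homotopy} is symmetric in its two arguments, so that the descended operations are exact in each variable separately.

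I do not anticipate a genuine obstacle, as the statement is essentially formal. The only point deserving a line of verification is the exactness of the two functors \emph{before} localizing, namely that tensoring and homming with a fixed bimodule complex is compatible with the $H$-action on mapping cones and on termwise splittings; this is routine, being an instance of the general principle that a strict additive functor of complexes induces an exact functor of homotopy categories, and I would dispatch it with the short checks sketched above.
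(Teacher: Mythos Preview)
Your proposal is correct. Both you and the paper reach the result, but by somewhat different routes. The paper works directly with the concrete description of distinguished triangles in $\mc{C}^H(A)$ supplied by Lemma~\ref{lem-construction-of-triangle}: it takes a termwise $A$-split short exact sequence of $A\#H$-complexes, checks that $\HOM_A(M,-)$ carries it to a termwise $B$-split short exact sequence of $B\#H$-complexes (the $A$-linear splitting induces a $B$-linear splitting after applying $\HOM_A(M,-)$), and invokes Lemma~\ref{lem-construction-of-triangle} again to conclude that the image is a triangle in $\mc{C}^H(B)$. You instead first establish exactness of the functors on the \emph{ordinary} homotopy categories $\mc{C}(A\#H)$ and $\mc{C}(B\#H)$ --- where triangles come from mapping cones, equivalently from termwise $A\#H$-split sequences --- and then invoke Lemma~\ref{lem-tensor-hom-preserve-null-homotopy} together with the universal property of Verdier localization to obtain descent and exactness in one stroke. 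Your route is cleaner in that it separates descent from exactness and never needs the characterization of triangles in the relative homotopy category; the paper's route is more hands-on but must still tacitly rely on Lemma~\ref{lem-tensor-hom-preserve-null-homotopy} for the functor to be well defined on $\mc{C}^H(A)$ before one can speak of it preserving triangles there.
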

\begin{proof}
We only deal with the hom functor, and leave the tensor functor case as an exercise. To do so, we need to show that $\HOM_A(\mbox{-},\mbox{-})$ preserve the triangulated category structure on $\mc{C}^H(A)$. By definition, it commutes with homological shifts. It suffices to show that, the functor
$$\HOM_A(M, \mbox{-}): \mc{C}(A\#H)\lra \mc{C}(B\# H)$$
sends distinguished triangles to distinguished triangles.   By Lemma \ref{lem-construction-of-triangle}, triangles in $\mc{C}^H(A)$ arises from short exact sequence of $A\# H$-modules 
    \[
   0 \lra K \stackrel{f}{\lra} L \stackrel{g}{\lra} N \lra 0
    \]
that are split exact when forgetting the $H$-actions. Therefore, $\HOM_A(M,\mbox{-})$ applied to such an exact sequence results in a short exact sequence with induced maps from $f$ and $g$:
   \[
   0 \lra \HOM_A(M, K) \stackrel{f_*}{\lra} \HOM_A(M,L) \stackrel{g_*}{\lra} \HOM_A(M,N) \lra 0.
    \]
 Under the action of \eqref{eqn-H-action}, it is clear that $f_*$ and $g_*$ are $H$-intertwining. Furthermore, the sequence splits termwise as $B$-modules when forgetting the $H$-actions. By Lemma \ref{lem-construction-of-triangle} again, this short exact sequence descends to a triangle in the relative homotopy category $\mc{C}^H(B)$. 
 
 The argument for $\HOM_A(\mbox{-},N) $ is similar. The result follows. 
\end{proof}

\begin{thm}\label{thm-tensor-hom-adj}
    Let $M$ be a complex of $H$-equivariant $(A,B)$-bimodules.  Then there is an isomorphism
    \[
    \Hom_{\mc{C}^H(A)}(M\otimes_B K, N) \cong \Hom_{\mc{C}^H(B)}( K, \HOM_A(M, N))
    \]
    functorial in $K$ and $N$.
\end{thm}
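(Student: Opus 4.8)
The plan is to descend the homotopy-category adjunction \eqref{eqn-homotopy-adj} along the two Verdier localizations $\mc{C}(A\# H)\to \mc{C}^H(A)$ and $\mc{C}(B\# H)\to \mc{C}^H(B)$. By Lemma \ref{lem-tensor-hom-preserve-null-homotopy}, both functors $M\otimes_B(\mbox{-})$ and $\HOM_A(M,\mbox{-})$ carry relatively null-homotopic complexes to relatively null-homotopic complexes, hence by the universal property of the Verdier quotient they induce functors on the relative homotopy categories; Lemma \ref{lem-relative-tensor-hom-exact} records that these induced functors are exact. So the statement amounts to checking that the adjunction unit and counit survive the localization.

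First I would recall the general localization principle: if $F\dashv G$ is an adjoint pair between triangulated categories $\mc{T}$, $\mc{S}$, and $L_{\mc{T}}\co \mc{T}\to \mc{T}/\mc{N}_{\mc{T}}$, $L_{\mc{S}}\co \mc{S}\to \mc{S}/\mc{N}_{\mc{S}}$ are Verdier quotients such that $F(\mc{N}_{\mc{T}})\subseteq \mc{N}_{\mc{S}}$ and $G(\mc{N}_{\mc{S}})\subseteq \mc{N}_{\mc{T}}$, then the induced functors $\bar F\dashv \bar G$ form an adjoint pair. Concretely one checks that for $K\in\mc{C}(B\#H)$ and $N\in\mc{C}(A\#H)$, the adjunction isomorphism of \eqref{eqn-homotopy-adj}
\[
\Hom_{\mc{C}(A\#H)}(M\otimes_B K, N) \cong \Hom_{\mc{C}(B\#H)}(K,\HOM_A(M,N))
\]
is compatible with the localization functors, i.e.\ it sends a morphism that becomes zero (equivalently, an isomorphism) in the relative homotopy category to one with the same property on the other side. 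Since the morphisms in $\mc{C}^H(A)$ and $\mc{C}^H(B)$ are computed by a calculus of fractions over the null-homotopic complexes, and since $M\otimes_B(\mbox{-})$, $\HOM_A(M,\mbox{-})$ preserve that class, a roof $K\leftarrow K'\to \HOM_A(M,N)$ transports under the adjunction to a roof $M\otimes_B K\leftarrow M\otimes_B K'\to N$ with the left leg still a quasi-isomorphism modulo null-homotopy, and this assignment is inverse to the obvious one in the other direction. This yields the desired bijection, functorial in $K$ and $N$ because all constructions involved are.

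Alternatively — and this is probably the cleanest route to write — I would argue directly with the mapping-cone criterion established just before the theorem: a morphism in $\mc{C}(A\#H)$ becomes an isomorphism in $\mc{C}^H(A)$ iff its cone is relatively null-homotopic. Start from the units and counits of the homotopy-category adjunction \eqref{eqn-homotopy-adj}, namely $\eta_K\co K\to \HOM_A(M, M\otimes_B K)$ and $\varepsilon_N\co M\otimes_B\HOM_A(M,N)\to N$, which are morphisms of complexes of $B\#H$- and $A\#H$-modules respectively. Composing with the localization functors gives natural transformations $\bar\eta$ and $\bar\varepsilon$ between the induced functors on the relative homotopy categories, and the triangle identities for $\bar F\dashv\bar G$ hold because they already hold before localization and the localization functors are (triangulated) functors. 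Hence $\bar F=M\otimes_B(\mbox{-})$ is left adjoint to $\bar G=\HOM_A(M,\mbox{-})$ on the relative homotopy categories, which is exactly the claimed Hom-isomorphism.

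The main obstacle is bookkeeping rather than conceptual: one must make sure that the localized functors are genuinely well defined on morphisms (not just objects), which is where Lemma \ref{lem-tensor-hom-preserve-null-homotopy} and the universal property of the Verdier quotient do the real work, and that the unit/counit, a priori only defined on honest chain maps, are natural with respect to roofs. Both follow formally once one invokes that $\mathrm{Ker}(\mathrm{For})$ is a thick triangulated subcategory (so the quotient exists and is triangulated) and that $M\otimes_B(\mbox{-})$ and $\HOM_A(M,\mbox{-})$ send it into the corresponding kernels; there is no delicate estimate or construction beyond that. I would therefore present the proof as: (1) invoke Lemmas \ref{lem-tensor-hom-preserve-null-homotopy} and \ref{lem-relative-tensor-hom-exact} to get exact induced bifunctors; (2) transport the unit and counit of \eqref{eqn-homotopy-adj} through the quotient functors; (3) observe the triangle identities are inherited; (4) conclude, noting functoriality in $K$ and $N$ is automatic.
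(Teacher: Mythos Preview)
Your proposal is correct and follows essentially the same approach as the paper: invoke the general principle that an adjoint pair between triangulated categories descends to Verdier quotients whenever both functors preserve the respective localizing subcategories, and then appeal to Lemma~\ref{lem-tensor-hom-preserve-null-homotopy} to verify this hypothesis for $M\otimes_B(\mbox{-})$ and $\HOM_A(M,\mbox{-})$. The paper states this general principle in one sentence and applies it directly, whereas you spell out more of the mechanics (roofs, unit/counit, triangle identities), but the argument is the same.
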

\begin{proof}
Suppose that $\mc{C}$, $\mc{D}$ are triangulated categories, and $\mc{L}: \mc{C}\lra \mc{D}$ is an exact functor admitting a right adjoint $\mc{R}$. Let $\mc{A}(\mc{C})$ and $\mc{A}(\mc{D})$ be classes of objects that are idempotent complete and satisfy $\mc{L}(\mc{A}(\mc{C})) \subset \mc{A}(\mc{D})$ and $\mc{R}(\mc{A}(\mc{D}))\subset \mc{A}(\mc{C}) $. Then it is immediate from the
universal property of Verdier localization that $\mc{L}$ and $\mc{R}$ induce a pair of adjoint functors between
the Verdier quotient categories $\mc{C}/\mc{A}(\mc{C})$ and $\mc{D}/\mc{A}(\mc{D})$.

Now, in our situation, take $\mc{C}=\mc{C}(A\# H)$ and $\mc{D}=\mc{C}(B\# H)$, and set $\mc{L}=M\otimes_B(\mbox{-})$ and $R=\HOM_A(M,\mbox{-})$ 
respectively. By Lemma \ref{lem-tensor-hom-preserve-null-homotopy}, these functors preserve relatively null-homotopic objects, and the general construction follows.
\end{proof}

\begin{cor}\label{cor-hom-as-derived-invariants}
    Let $M$ be a complex of $H$-equivariant $B$-modules. Then
    \[
    \Hom_{\mc{C}^{H}(\Bbbk)}(\Bbbk, \HOM_A(M,N)) \cong \Hom_{\mc{C}^H(A)}(M,N).
    \]
\end{cor}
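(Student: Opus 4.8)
The plan is to obtain this as the special case $B=\Bbbk$, $K=\Bbbk$ of Theorem~\ref{thm-tensor-hom-adj}. First I would record that the ground ring $\Bbbk$, equipped with the trivial $H$-action through the counit (so $h\cdot c=\epsilon(h)c$), is an $H$-module algebra: the module-algebra identity $h\cdot(ab)=\sum_h (h_1\cdot a)(h_2\cdot b)$ is, for the trivial action, precisely the counit axiom $\epsilon(h)=\sum_h\epsilon(h_1)\epsilon(h_2)$. Since $H$ is a $\Bbbk$-algebra, the smash product $\Bbbk\#H$ is just $H$, and $\mc{C}^H(\Bbbk)$ is the corresponding relative homotopy category of Definition~\ref{def-relative-homotopy-category}; when $\Bbbk$ is a field this is simply $\mc{D}(H)$, but the statement makes sense for the more general ground rings used in this paper. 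Here $M$ is understood to be a complex of $H$-equivariant left $A$-modules, so that both $\HOM_A(M,N)$ and $\Hom_{\mc{C}^H(A)}(M,N)$ are defined.

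Next I would note that such an $M$ is the same thing as an $H$-equivariant $(A,\Bbbk)$-bimodule, the right $\Bbbk$-action being scalar multiplication, so that Theorem~\ref{thm-tensor-hom-adj} applies with this $M$, with $B=\Bbbk$, and with $K$ the complex $\Bbbk$ concentrated in cohomological degree $0$. The one identification to check is that the canonical map $M\otimes_\Bbbk\Bbbk\to M$, $m\otimes c\mapsto cm$, is an isomorphism in $\mc{C}^H(A)$: it is manifestly $A$-linear and a chain map, and it intertwines the $H$-actions because $h\cdot(m\otimes c)=\sum_h(h_1\cdot m)\otimes(h_2\cdot c)=\sum_h(h_1\cdot m)\otimes\epsilon(h_2)c=(h\cdot m)\otimes c$, again by the counit axiom. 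Granting this, the left-hand side of the isomorphism in Theorem~\ref{thm-tensor-hom-adj} reads $\Hom_{\mc{C}^H(A)}(M,N)$ and the right-hand side reads $\Hom_{\mc{C}^H(\Bbbk)}(\Bbbk,\HOM_A(M,N))$, which is exactly the claimed isomorphism; functoriality in $N$ is inherited directly from the theorem.

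I do not expect a genuine obstacle: the argument is a direct substitution into Theorem~\ref{thm-tensor-hom-adj}. The only place requiring care is the bookkeeping of the previous paragraph, namely verifying that the comparison $M\otimes_\Bbbk\Bbbk\cong M$ is simultaneously compatible with the $A$-action, the differential, and the $H$-equivariant structure, so that it is an isomorphism in the relative homotopy category $\mc{C}^H(A)$ and not merely in $\mc{C}(A)$. Once this is in place there is nothing further to prove.
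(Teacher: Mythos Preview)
Your proposal is correct and follows exactly the same approach as the paper: specialize Theorem~\ref{thm-tensor-hom-adj} with $B=\Bbbk$ and $K$ the one-term complex $\Bbbk$ (trivial $H$-action) in homological degree zero. The paper's proof is a one-line invocation of this specialization, while you have additionally spelled out the routine verifications (that $\Bbbk$ is an $H$-module algebra, that $M$ is automatically an $(A,\Bbbk)$-bimodule, and that $M\otimes_\Bbbk\Bbbk\cong M$ in $\mc{C}^H(A)$); these are harmless and accurate.
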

\begin{proof}
    Taking $B=\Bbbk$ to be the ground field, and $K$ to be the one-term complex $\Bbbk$  equipped with the trivial $H$-action sitting in homological degree zero, the result is a special case of Theorem \ref{thm-tensor-hom-adj}.
\end{proof}

Again, when $\Bbbk$ is a field, the left-hand side of Corollary \ref{cor-hom-as-derived-invariants} gives us a way of computing morphisms spaces in the relative homotopy category:
\[
  \Hom_{\mc{C}^H(A)}(M,N)  \cong  \Hom_{\mc{D}(H)}(\Bbbk, \HOM_A(M,N)).
\]
The latter space is the zeroth (hyper)cohomology of the complex of $H$-modules $\HOM_A(M,N)$.

\section{\texorpdfstring{$\sll_2$}{sl(2)}-action on foams}
\label{sl2foams:sec}
In this section we review a family of actions of $\mathfrak{sl}_2$ on foams constructed in \cite[Sections 4.2 and 4.4]{QRSW2}.

\subsection{Webs and foams}
\begin{dfn}
  Let $\surface$ be a surface. A \emph{closed web} or simply a
  \emph{web}
is a finite, oriented, trivalent graph
  $\web = (V(\Gamma), E(\Gamma))$ embedded in the interior of
  $\surface$ and endowed with a \emph{thickness function}
  $\ell\co E(\Gamma) \to \NN$ satisfying a flow condition: vertices
  and thicknesses of their adjacent edges must be one of these two types:
  \begin{equation} \label{mergesplit}
    \NB{\tikz[scale=0.6]{\begin{scope}[font=\tiny]
  \begin{scope}
    \draw [-<] (0,0) -- (-90:1) node[pos =1, below] {$a+b$};
    \draw [->] (0,0) -- (30:1) node[pos =1, above] {$b$};
    \draw [->] (0,0) -- (150:1) node[pos =1, above] {$a$};
  \end{scope}
\end{scope}
}} \qquad\text{or}\qquad \NB{\tikz[scale=0.6]{\begin{scope}[font=\tiny]
    \draw [->] (0,0) -- (90:1) node[pos =1, above] {$a+b$};
    \draw [-<] (0,0) -- (-30:1) node[pos =1, below] {$b$};
    \draw [-<] (0,0) -- (-150:1) node[pos =1, below] {$a$};
\end{scope}
  \end{equation}
  The first type is called a \emph{split} vertex, the second a \emph{merge}
  vertex. In each of these types, there is one \emph{thick} edge and
  two \emph{thin} edges. Oriented circles with non-negative thickness
  are regarded as edges without vertices and can be part of a web. The
  embedding of $\web$ in $\surface$ is smooth outside its vertices,
  and at the vertices should fit with the local models in \eqref{mergesplit}.
\end{dfn}

\begin{figure}
  \centering
  \NB{\tikz[font=\tiny]{\input{\imagesfolder/pf_exa_web}}}
    \caption{Example of a web in $\RR^2$.}
    \label{fig:exa-web}
\end{figure}

\begin{dfn}\label{def:foam}
  Let $M$ be an~oriented smooth 3-manifold with a~collared boundary.
  A~\emph{foam} $\foam \subset M$ is a~collection of \emph{facets},
  that are compact oriented surfaces labeled with non-negative integers
  and glued together along their boundary points, such that every
  point $p$ of $\foam$ has a~closed neighborhood homeomorphic to one
  of the~following:
  \begin{enumerate}
  \item a~disk, when $p$ belongs to a~unique facet,
  \item \label{it:Y}$Y \times [0,1]$, where $Y$ is the neighborhood of
    a~merge or split vertex of a web, when $p$ belongs to three facets, 
  \item the~cone over the~1-skeleton of a~tetrahedron with $p$ as
    the~vertex of the~cone (so that it belongs to six facets).
  \end{enumerate}
  See Figure~\ref{fig:foam-local-model} for a pictorial representation
  of these three cases. The set of points of the~second type is
  a~collection of curves called \emph{bindings} and the~points of
  the~third type are called \emph{singular vertices}.
  The~\emph{boundary} $\partial\foam$ of $\foam$ is the~closure of
  the~set of boundary points of facets that do not belong to
  a~binding. It is understood that $\foam$ coincides with
  $\partial\foam\times[0,1]$ on the~collar of $\partial M$. For each facet
  $\facet$ of $\foam$, we denote by $\ell(\facet)$ its~label, 
  called the \emph{thickness of $\facet$}. A~foam $\foam$ is
  \emph{decorated} if each facet $\facet$ of $\foam$ is assigned
  a~symmetric polynomial $P_f \in \Sym_{\ell(\facet)}$.  In the second local
  model (\ref{it:Y}) of Definition \ref{def:foam}, it is implicitly understood that thicknesses of
  the three facets are given by that of the edges in $Y$. In
  particular, it satisfies a flow condition and locally one has a thick
  facet and two thin ones. We also require that orientations of
  bindings are induced by that of the thin facets and by the opposite
  of the thick facet. Foams are regarded up to ambient isotopy relative to
  boundary. Foams without boundary are said to be \emph{closed}.
\end{dfn}

\begin{rmk}
    Diagrammatically, decorations on facets are depicted by dots
      placed on facets 
      adorned with symmetric polynomials in the correct number of
      variables (the thickness of the facet they sit on). The
      decoration of a given facet is the product of all adornments of
      dots sitting on that facet. 
\end{rmk}

\begin{figure}[ht]
  \centering
  \NB{\tikz[]{\input{\imagesfolder/pf_foam-3-local-models}}}
  \caption{The three local models of a foam. Taking into account the
    thicknesses, the model in the middle is denoted $Y^{(a,b)}$,
    and the model on the right is denoted $T^{(a,b,c)}$.}
  \label{fig:foam-local-model}
\end{figure}

\begin{notation}
  For a foam $\foam$, we write:
  \begin{itemize}
  \item $\foam^2$ for the~collection of facets of $\foam$,
  \item $\foam^1$ for the collection of bindings,
  \item $\foam^0$ for the set of singular vertices of $\foam$.
  \end{itemize}
  We partition $\foam^1$ as follows:
  $\foam^1= \foam^1_\circ\sqcup \foam^1_{-}$, where $\foam^1_\circ$ is
  the collection of circular bindings and $\foam^1_-$ is the
  collection of bindings diffeomorphic to intervals. If $s \in
  \foam^1_-$, any of its points has a neighborhood diffeomorphic to $Y^{(a,b)}$ for a given $a$ and $b$, and we set:
  \begin{equation}
    \degN{s} = ab + (a+b)(\myN-a-b).\label{eq:deg-binding}
  \end{equation}
  If $v \in
  \foam^0$, it has a neighborhood diffeomorphic to $T^{(a,b,c)}$ and we set:
  \begin{equation}
    \degN{v} = ab +bc + ac + (a+b+c)(\myN-a-b-c).\label{eq:deg-sing}
  \end{equation}
\end{notation}

\begin{dfn}\label{dfn:deg-foam}
  Let $\foam$ be a decorated foam and suppose that all decorations are
  homogeneous. For all $\myN$ in $\NN$, the \emph{$\myN$-degree of
    $\foam$} is the integer $\degN{\foam} \in \ZZ$ given by the
  following formula: 
  \begin{align}\label{eq:deg-foam}
    \degN{\foam}: = & \sum_{\facet \in \foam^2}
    \Big(\deg{P_\facet} -\ell(\facet)(\myN-\ell(\facet))\chi(\facet)
    \Big)  \\
     & + \sum_{s \in \foam^1_-} \degN{s} - \sum_{v \in \foam^0} \degN{v}. \nonumber
  \end{align}
\end{dfn}

The~boundary of a~foam $\foam\subset M$ is a~web in $\partial M$. In the
case $M = \surface\times[0,1]$ is a~thickened surface, a~generic
section $\foam_t := \foam \cap (\surface\times\{t\})$ is
a~web. The~bottom and top webs $\foam_0$ and $\foam_1$ are called the~\emph{input} and \emph{output} of $\foam$ respectively.

If $\surface$ is a surface, $\foamcat[\surface]$ is the category which
has webs in $\surface$ as objects and 
\[
  {\Hom}_{\foamcat[\surface]}\left( \web_0,\web_1\right)
  = \left\{ \begin{array}{c}\text{decorated foams $\foam$ in $\surface\times [0,1]$}\\ \text{ with $\foam_i =\web_i$ for $i\in \{0,1\}$}\end{array}\right\}.
\]
Composition is given by stacking foams on one another and
rescaling. Decorations behave multiplicatively. The identity of $\web$
is $\web\times [0,1]$ decorated by the constant polynomial $1$ on
every facet. The $\myN$-degree of foams is additive under composition.
If $\web$ is a web in a surface $\surface$ and
$h:\surface\times[0,1] \to \surface$ is a smooth isotopy\footnote{For
  the sake of satisfying the collared condition, one should assume
  that $h_t=\id_\surface$ for $t\in [0,\epsilon[\cup]1-\epsilon, 1]$
  for an $\epsilon\in ]0,1]$.} of $\surface$, one can define the foam
$\foam(h)$ to be the trace of $h(\web)$ in $\surface\times[0,1]$: for
all $t \in [0,1]$, $\foam(h)_t=h_t(\web)$. Such foams are called
\emph{traces of isotopies}. They have degree $0$.

\begin{dfn}
 A foam in a surface $\surface\times[0,1]$ is \emph{basic} if it is a trace of isotopy or if it
 is equal to $\web\times[0,1]$ outside a cylinder $B\times[0,1]$, and where
 it is given inside by one of the local models given in Figure~\ref{fig:basic-foam}.  
 \begin{figure}[ht]
   \centering
   \begin{tikzpicture}[xscale=2.4, yscale =-3.2]
     \node (pol) at (0,0) {\NB{\tikz[font=\tiny]{\input{\imagesfolder/pf_foam-polR}}}};
     \node[yshift= -1.4cm] at (pol) {polynomial};
     \node[yshift = -1.8cm] at (pol) {$\deg{R}$};
     \node (asso) at(1.85,0) {\NB{\tikz[font=\tiny]{\input{\imagesfolder/pf_foam-assoc}}}};
     \node[yshift = -1.4cm] at (asso) {associativity};
     \node[yshift= -1.8cm] at (asso) {$0$};
     \node (coasso) at (3.7,0) {\NB{\tikz[font=\tiny]{\input{\imagesfolder/pf_foam-coassoc}}}};
     \node[yshift = -1.4cm] at (coasso) {associativity};
     \node[yshift= -1.8cm] at (coasso) {$0$};
     \node (digcup) at (0,1) {\NB{\tikz[font=\tiny]{\input{\imagesfolder/pf_foam-digon-cup}}}};
     \node[yshift = -1.64cm] at (digcup) {digon-cup};
     \node[yshift= -2.04cm] at (digcup) {$-ab$};
     \node (digcap) at (2,1.1) {\NB{\tikz[font=\tiny]{\input{\imagesfolder/pf_foam-digon-cap}}}};
     \node[yshift = -1.4cm] at (digcap) {digon-cap};
     \node[yshift= -1.8cm] at (digcap) {$-ab$};
     \node (zip) at (4,1.1) {\NB{\tikz[font=\tiny]{\input{\imagesfolder/pf_foam-unzip}}}};
     \node[yshift = -1.4cm] at (zip) {unzip};
     \node[yshift= -1.8cm] at (zip) {$ab$};
     \node (unzip) at (0,2.1) {\NB{\tikz[font=\tiny]{\input{\imagesfolder/pf_foam-zip}}}};
     \node[yshift = -1.4cm] at (unzip) {zip};
     \node[yshift= -1.8cm] at (unzip) {$ab$};
     \node (cup) at (1.4,2.1) {\NB{\tikz[font=\tiny]{\input{\imagesfolder/pf_foam-cupa}}}};
     \node[yshift = -1.4cm] at (cup) {cup};
     \node[yshift= -1.8cm] at (cup) {$-a(\myN-a)$};
     \node (cap) at (2.6,2.1) {\NB{\tikz[font=\tiny]{\input{\imagesfolder/pf_foam-capa}}}};
     \node[yshift = -1.4cm] at (cap) {cap};
     \node[yshift= -1.8cm] at (cap) {$-a(\myN-a)$};
     \node (saddle) at (4,2.1) {\NB{\tikz[font=\tiny]{\input{\imagesfolder/pf_foam-saddle}}}};
     \node[yshift = -1.4cm] at (saddle) {saddle};
     \node[yshift= -1.8cm] at (saddle) {$a(\myN-a)$};
   \end{tikzpicture}
   \caption{The degree of a basic foam is given below the name of each
     of the local models.}\label{fig:basic-foam}
 \end{figure}

 A foam in $\surface \times [0,1]$ is \emph{in good position} if it is
 a composition of basic foams. 

If $\web$ is a web in $\RR^2$, we denote by $\vectweb{\web}$ the free
$\scalars$-module generated by foams in good position in $\RR^2\times
[0,1]$ with $\emptyset$ as input and $\web$ as output.
\end{dfn}

\begin{rmk}
  Every foam in $\surface \times [0,1]$ is isotopic to a foam in good
  position. 
\end{rmk}

In \cite{RW1}, a \emph{$\gll_\myN$-evaluation} of a foam $\foam$ was defined.  This evaluation
$\bracketN{\foam}$ is an element of the ring $\mathbb{Z}[E_1,\ldots,E_N]$.
Its exact formula is not important for this paper and we refer to \cite{RW1} for details of the construction.

Decorations of foams that we consider will often be power sums, and so 
we introduce the following notation: 
\begin{equation}
  \NB{\tikz[scale=1.5, font=\small]{\begin{scope}
  \draw (0,0) rectangle (1,1) coordinate [midway] (A);
  \node at (A) {$\dotnewtoni$};
\end{scope}}} \ =\
  \NB{\tikz[scale=1.5, font=\small]{\begin{scope}
  \draw (0,0) rectangle (1,1) coordinate [midway] (A);
  \fill (A) circle (0.5mm) node[below] {$p_i$};
\end{scope}}}.
\end{equation}
Note that in particular, on a facet of thickness $a$,
$\dotnewtoni[0]=a$. 
We will also extend the decorations allowed and let
$\wdotnewtoni$ denote the $i$th power sum in the variables which are
not in the facet. In other words,
\begin{equation}
  \label{eq:18}
  \NB{\tikz[scale=1.5, font=\small]{\begin{scope}
  \draw (0,0) rectangle (1,1) coordinate [midway] (A);
  \node at (A) {$\wdotnewtoni$};
\end{scope}}} \ =\
  P_i\cdot\ \NB{\tikz[scale=1.5, font=\small]{\begin{scope}
  \draw (0,0) rectangle (1,1) coordinate [midway] (A);
\end{scope}}}\   -\ 
  \NB{\tikz[scale=1.5, font=\small]{}}.
\end{equation}

\subsection{$\mathfrak{sl}_2$-action} \label{sec:sl2action}
\begin{dfn}
Let $\mathfrak{sl}_2$ be the Lie algebra over $\mathbb{Z}$ generated by $\Le, \Lf, \Lh$ with relations
\[
[\Lh, \Le]=2 \Le, \quad
[\Lh, \Lf]=-2 \Lf, \quad
[\Le, \Lf]= \Lh  \ .
\]
\end{dfn}

We fix parameters $t_1, t_2$ and define operators $\de$, $\df$ and $\dh$ below on basic foams and extend the action to satisfy
the Leibniz rule with respect to composition of foams. They map traces of isotopies to $0$. The operator $\de$ acts via 
$-\sum_i \frac{\partial}{\partial x_i} $ on
polynomials and by $0$ on any other basic foam. The operators $\dh$ and $\df$ are
defined as follows. 

\begin{gather}
\label{eq:h-act-pol} \dh\left(\NB{\tikz[scale=1.5, font=\small]{\begin{scope}
  \draw (0,0) rectangle (1,1) coordinate [midway] (A); 
  \fill (A) circle (0.5mm) node[below] {$R$};
\end{scope}}}\right) 
  =-\deg{R}\cdot\  \NB{\tikz[scale=1.5, font=\small]{}} \\
\label{eq:h-act-assoc}  \dh\left(\NB{\tikz[scale=0.6, font=\tiny]{\begin{scope}[xscale = -1 ]
  \begin{scope}
    \coordinate (LL) at (0,0);
    \coordinate (L) at (0.5,0);
    \coordinate (R1) at (2.2,0.4);
    \coordinate (R2) at (2,0);
    \coordinate (R3) at (1.8, -0.4);
    \draw[-<-] (LL) -- (L);
    \draw (L) .. controls +(0,0) and +(-0.5, 0) .. (R1)
    coordinate[pos =0.5] (M);
    \draw (L) .. controls +(0,0) and +(-0.5, 0) .. (R3) ;
    \draw (M) .. controls +(0,0) and +(-0.5, 0) .. (R2) ;
  \end{scope}  
 \begin{scope}[yshift = -1.5cm]
    \coordinate (LLB) at (0,0);
    \coordinate (LB) at (0.5,0);
    \coordinate (R1B) at (2.2,0.4);
    \coordinate (R2B) at (2,0);
    \coordinate (R3B) at (1.8, -0.4);
    \draw[-<-] (LLB) -- (LB)  node[right, pos =0] {$a+b+c$};
    \draw (LB) .. controls +(0,0) and +(-0.5, 0) .. (R1B)    node[left] {$a$};
    \draw (LB) .. controls +(0,0) and +(-0.5, 0) .. (R3B)   node[left] {$b$}
    coordinate[pos =0.5] (MB);
    \draw (MB) .. controls +(0,0) and +(-0.5, 0) .. (R2B)    node[left] {$c$};
  \end{scope}  
  \draw (LL) -- (LLB);
  \draw (R1) -- (R1B);
  \draw (R2) -- (R2B);
  \draw (R3) -- (R3B);
  \draw[name path=path1, thick] (M) .. controls +(0,-0.5) and +(0,
  0.5) .. (LB);
  \draw[name path=path2, thick] (L) .. controls +(0,-0.5) and +(0,
  0.5) .. (MB);
  \path [name intersections={of=path1 and path2,by=O}];
  
\end{scope}}}\right)=
  \dh\left(\NB{\tikz[scale=0.6,font=\tiny]{\begin{scope}
  \begin{scope}
    \coordinate (LL) at (0,0);
    \coordinate (L) at (0.5,0);
    \coordinate (R1) at (2.2,0.4);
    \coordinate (R2) at (2,0);
    \coordinate (R3) at (1.8, -0.4);
    \draw[->-] (LL) -- (L);
    \draw (L) .. controls +(0,0) and +(-0.5, 0) .. (R1)
    coordinate[pos =0.5] (M);
    \draw (L) .. controls +(0,0) and +(-0.5, 0) .. (R3) ;
    \draw (M) .. controls +(0,0) and +(-0.5, 0) .. (R2) ;
  \end{scope}  
 \begin{scope}[yshift = -1.5cm]
    \coordinate (LLB) at (0,0);
    \coordinate (LB) at (0.5,0);
    \coordinate (R1B) at (2.2,0.4);
    \coordinate (R2B) at (2,0);
    \coordinate (R3B) at (1.8, -0.4);
    \draw[->-] (LLB) -- (LB)  node[left, pos =0] {$a+b+c$};
    \draw (LB) .. controls +(0,0) and +(-0.5, 0) .. (R1B)    node[right] {$a$};
    \draw (LB) .. controls +(0,0) and +(-0.5, 0) .. (R3B)   node[right] {$b$}
    coordinate[pos =0.5] (MB);
    \draw (MB) .. controls +(0,0) and +(-0.5, 0) .. (R2B)    node[right] {$c$};
  \end{scope}  
  \draw (LL) -- (LLB);
  \draw (R1) -- (R1B);
  \draw (R2) -- (R2B);
  \draw (R3) -- (R3B);
  \draw[name path=path1, thick] (M) .. controls +(0,-0.5) and +(0,
  0.5) .. (LB);
  \draw[name path=path2, thick] (L) .. controls +(0,-0.5) and +(0,
  0.5) .. (MB);
  \path [name intersections={of=path1 and path2,by=O}];  
\end{scope}
}} \right) =0 \\
\label{eq:h-act-dig-cup}  \dh\left( \NB{\tikz[font=\tiny]{\begin{scope}
  \begin{scope}
    \coordinate (L) at (0,0);
    \coordinate (R) at (2,0);
    \coordinate (ML) at (0.5, 0);
    \coordinate (MR) at (1.5, 0);
    \draw[->-] (L) -- (ML);
    \draw[->-] (MR) -- (R) node[right] {$a+b$};
    \draw[->-] (ML).. controls + (0.4, 0.4) and +(-0.2, 0.4) .. (MR)
    node[above, midway] {$a$};
    \draw[->-] (ML).. controls + (0.2, -0.4) and +(-0.4, -0.4) .. (MR)
    node[below, midway] {$b$};
  \end{scope}  
 \begin{scope}[yshift = -1.3cm]
    \coordinate (LB) at (0,0);
    \coordinate (RB) at (2,0);
    \draw[->-] (LB) -- (RB);
  \end{scope}  
  \draw (R) -- (RB);
  \draw (L) -- (LB);
  \draw[thick] (ML) .. controls +(0, -1) and +(0, -1) .. (MR);
\end{scope}}}\right)\ =\
  ab(\tone+\ttwo)\cdot\ \NB{\tikz[font=\tiny]{}} \\
\label{eq:h-act-dig-cap}  \dh\left( \NB{\tikz[font=\tiny]{\begin{scope}
  \begin{scope}
    \coordinate (L) at (0,0);
    \coordinate (R) at (2,0);
    \coordinate (ML) at (0.5, 0);
    \coordinate (MR) at (1.5, 0);
    \draw[->-] (L) -- (ML);
    \draw[->-] (MR) -- (R) node[right] {$a+b$};
    \draw[->-] (ML).. controls + (0.4, 0.4) and +(-0.2, 0.4) .. (MR)
    node[above, midway] {$a$};
    \draw[->-] (ML).. controls + (0.2, -0.4) and +(-0.4, -0.4) .. (MR)
    node[below, midway] {$b$};
  \end{scope}  
 \begin{scope}[yshift = 1.3cm]
    \coordinate (LB) at (0,0);
    \coordinate (RB) at (2,0);
    \draw (LB) -- (RB);
  \end{scope}  
  \draw (R) -- (RB);
  \draw (L) -- (LB);
  \draw[thick] (ML) .. controls +(0, 1) and +(0, 1) .. (MR);
\end{scope}}}\right)\ =\
 ab(\overline{\tone}+\overline{\ttwo})\cdot\  \NB{\tikz[font=\tiny]{}} \\
\label{eq:h-act-dig-zip}  \dh\left( \NB{\tikz[font=\tiny]{\begin{scope}
  \begin{scope}
    \coordinate (L1) at (0.2,0.4);
    \coordinate (L2) at (0,0);
    \coordinate (R1) at (2.2,0.4);
    \coordinate (R2) at (2,0);
    \coordinate (ML) at (0.6, 0.2);
    \coordinate (MR) at (1.6, 0.2);
    \draw[->-] (ML) -- (MR) node[above, midway] {$a+b$};
    \draw (MR) .. controls +(0, 0) and +(-0.3,0) .. (R1) ;
    \draw (MR) .. controls +(0, 0) and +(-0.3,0) .. (R2);
    \draw (L1) .. controls +( 0.3, 0) and +(0,0) .. (ML);
    \draw (L2) .. controls +( 0.3, 0) and +(0,0) .. (ML);
  \end{scope}  
 \begin{scope}[yshift = -1cm]
    \coordinate (L1B) at (0.2,0.4);
    \coordinate (L2B) at (0,0);
    \coordinate (R1B) at (2.2,0.4);
    \coordinate (R2B) at (2,0);
    \draw[->-] (L1B) .. controls +( 0, 0) and +(0,0) .. (R1B) node [right, pos
    = 1] {$a$};
    \draw[->-] (L2B) .. controls +( 0, 0) and +(0,0) .. (R2B) node [right, pos
    = 1] {$b$};
 \end{scope}  
  \draw (R1) -- (R1B);
  \draw (R2) -- (R2B);
  \draw (L1) -- (L1B);
  \draw (L2) -- (L2B);
  \draw[thick] (ML) .. controls +(0, -0.6) and +(0, -0.6) .. (MR);
\end{scope}

}}\right)\ =\
-ab(\overline{\tone} + \overline{\ttwo}) \cdot\ \NB{\tikz[font=\tiny]{}} 
 \\
\label{eq:h-act-dig-unzip} \dh\left( \NB{\tikz[font=\tiny]{\begin{scope}
  \begin{scope}
    \coordinate (L1) at (0.2,0.4);
    \coordinate (L2) at (0,0);
    \coordinate (R1) at (2.2,0.4);
    \coordinate (R2) at (2,0);
    \coordinate (ML) at (0.6, 0.2);
    \coordinate (MR) at (1.6, 0.2);
    \draw[->-] (ML) -- (MR) node[below, midway] {$a+b$};
    \draw (MR) .. controls +(0, 0) and +(-0.3,0) .. (R1) ;
    \draw (MR) .. controls +(0, 0) and +(-0.3,0) .. (R2);
    \draw (L1) .. controls +( 0.3, 0) and +(0,0) .. (ML);
    \draw (L2) .. controls +( 0.3, 0) and +(0,0) .. (ML);
  \end{scope}  
 \begin{scope}[yshift = 1cm]
    \coordinate (L1B) at (0.2,0.4);
    \coordinate (L2B) at (0,0);
    \coordinate (R1B) at (2.2,0.4);
    \coordinate (R2B) at (2,0);
    \draw[->-] (L1B) .. controls +( 0, 0) and +(0,0) .. (R1B) node [left, pos
    = 0] {$a$};
    \draw[->-] (L2B) .. controls +( 0, 0) and +(0,0) .. (R2B) node [left, pos
    = 0] {$b$};
 \end{scope}  
  \draw (R1) -- (R1B);
  \draw (R2) -- (R2B);
  \draw (L1) -- (L1B);
  \draw (L2) -- (L2B);
  \draw[thick] (ML) .. controls +(0, 0.6) and +(0, 0.6) .. (MR);
\end{scope}
}}\right)\ =\
 -ab({\tone} + {\ttwo}) \cdot\ \NB{\tikz[font=\tiny]{}} 
  \\
\label{eq:h-act-cup}  \dh\left( \NB{\tikz[font=\tiny, scale=1.2]{\begin{scope}
  \draw (0,0) arc (180 :0: 0.5cm and 0.2cm) node[above, pos =
  0.5] {$a$};
  \draw[very thin] (0,0) arc (180 :0: 0.5cm and -0.6cm) node[pos=0.5,
  above] {};
  \draw (0,0) arc (180 :0: 0.5cm and -0.2cm);
\end{scope}

     a(N-a) \cdot\ \NB{\tikz[font=\tiny, scale=1.2]{}}
  \\[3pt]
\label{eq:h-act-cap}  \dh\left( \NB{\tikz[font=\tiny, scale=1.2]{\begin{scope}[-]
  \draw (0,0) arc (180 :0: 0.5cm and 0.2cm);
  \draw[very thin] (0,0) arc (180 :0: 0.5cm and 0.6cm) node[pos=0.5,
  below] {};
  \draw (0,0) arc (180 :0: 0.5cm and -0.2cm)node[ below, pos =
  0.5] {$a$};
\end{scope}

  a(N-a) \cdot\ \NB{\tikz[font=\tiny,
    scale=1.2]{}} \\
\label{eq:h-act-saddle}  \dh\left( \NB{\tikz[font=\tiny, scale=1.2]{\begin{scope}[scale=0.6]
\tdplotsetmaincoords{70}{25}
\begin{scope}[scale = 1.5, tdplot_main_coords]
  \tikzset{yxplane/.style={canvas is xy plane at z=#1}}
  \begin{scope}[yxplane=1]
    \coordinate (AT) at ({cos(  45)}, {sin(  45)});
    \coordinate (BT) at ({cos(135)}, {sin(135)});
    \coordinate (CT) at ({cos(225)}, {sin(225)});
    \coordinate (DT) at ({cos(315)}, {sin(315)});
    \coordinate (aT) at (  0.3, 0);
    \coordinate (bT) at ( -0.3, 0);
    \draw (AT) .. controls (aT) and (bT) .. (BT)
    coordinate[pos=0.5] (eT) node [pos =0.5, above] {$a$};
    \draw (DT) .. controls (aT) and (bT) .. (CT) coordinate[pos=0.5] (fT);
  \end{scope}

  \begin{scope}[yxplane=0]
    \coordinate (AM) at ({cos(  45)}, {sin(  45)});
    \coordinate (BM) at ({cos(135)}, {sin(135)});
    \coordinate (CM) at ({cos(225)}, {sin(225)});
    \coordinate (DM) at ({cos(315)}, {sin(315)});
    \coordinate (aM) at (0, 0.3);
    \coordinate (bM) at (0, -0.3);
    \draw (AM) .. controls (aM) and (bM) .. (DM) coordinate[pos=0.5] (eM);
    \draw (BM) .. controls (aM) and (bM) .. (CM) coordinate[pos=0.5] (fM);
\end{scope}
  \coordinate (OT) at (0,0, 0.5);
\draw (AM) -- (AT);
  \draw (BM) -- (BT);
  \draw (CM) -- (CT);
  \draw (DM) -- (DT);
  \draw[densely dotted] (eM) ..controls +(0,0,0.2) and + (0.1,0,0).. (OT);
  \draw[densely dotted] (fM) ..controls +(0,0,0.2) and + (-0.1,0,0).. (OT);
  \draw[densely dotted] (eT) ..controls +(0,0,-0.2) and + (0,0.1,0).. (OT);
  \draw[densely dotted] (fT) ..controls +(0,0,-0.2) and + (0,-0.1,0).. (OT);
\end{scope}  
\end{scope}

}}\right)\ =\
    -a(N-a) \cdot\ \NB{\tikz[font=\tiny,
    scale=1.2]{}} 
\end{gather}

\begin{gather}
\label{eq:e-act-pol}  \df\left(\NB{\tikz[scale=1.5, font=\small]{}}\right)
  =\ \NB{\tikz[scale=1.5, font=\small]{\begin{scope}
  \draw (0,0) rectangle (1.5,1) coordinate [midway] (A);
  \fill (A) circle (0.5mm) node[below] {$\sum_i x_i^2 \frac{\partial}{\partial x_i}(R)$};
\end{scope}}} \\
\label{eq:e-act-assoc}   \df\left(\NB{\tikz[scale=0.6, font=\tiny]{}}\right)=
  \df\left(\NB{\tikz[scale=0.6,font=\tiny]{}} \right) =0 \\
\label{eq:e-act-dig-cup}  \df\left( \NB{\tikz[font=\tiny]{}}\right)\ =\
 - \tone\cdot\ \NB{\tikz[font=\tiny]{\begin{scope}[font=\tiny]
  \begin{scope}
    \coordinate (L) at (0,0);
    \coordinate (R) at (2,0);
    \coordinate (ML) at (0.5, 0);
    \coordinate (MR) at (1.5, 0);
    \draw[->-] (L) -- (ML);
    \draw[->-] (MR) -- (R) node[right] {$a+b$};
    \draw[->-] (ML).. controls + (0.4, 0.4) and +(-0.2, 0.4) .. (MR)
    node[above, pos=0.7 ] {$a$} node[below, pos =0.3] {$\dotnewtoni[1]$};
    \draw[->-] (ML).. controls +(0.2, -0.4) and +(-0.4, -0.4) .. (MR)
    node[below, pos =0.3] {$b$} node[below, pos =0.75] {$\dotnewtoni[0]$};
  \end{scope}  
 \begin{scope}[yshift = -1.3cm]
    \coordinate (LB) at (0,0);
    \coordinate (RB) at (2,0);
    \draw[->-] (LB) -- (RB);
  \end{scope}  
  \draw (R) -- (RB);
  \draw (L) -- (LB);
  \draw[thick] (ML) .. controls +(0, -1) and +(0, -1) .. (MR);
\end{scope}

}} 
  - \ttwo\cdot \ \NB{\tikz[font=\tiny]{\begin{scope}[font=\tiny]
  \begin{scope}
    \coordinate (L) at (0,0);
    \coordinate (R) at (2,0);
    \coordinate (ML) at (0.5, 0);
    \coordinate (MR) at (1.5, 0);
    \draw[->-] (L) -- (ML);
    \draw[->-] (MR) -- (R) node[right] {$a+b$};
    \draw[->-] (ML).. controls + (0.4, 0.4) and +(-0.2, 0.4) .. (MR)
    node[above, pos=0.7 ] {$a$} node[below, pos =0.3] {$\dotnewtoni[0]$};
    \draw[->-] (ML).. controls +(0.2, -0.4) and +(-0.4, -0.4) .. (MR)
    node[below, pos =0.3] {$b$} node[below, pos =0.75] {$\dotnewtoni[1]$};
  \end{scope}  
 \begin{scope}[yshift = -1.3cm]
    \coordinate (LB) at (0,0);
    \coordinate (RB) at (2,0);
    \draw[->-] (LB) -- (RB);
  \end{scope}  
  \draw (R) -- (RB);
  \draw (L) -- (LB);
  \draw[thick] (ML) .. controls +(0, -1) and +(0, -1) .. (MR);
\end{scope}

}}  \\
\label{eq:e-act-dig-cap}   \df\left( \NB{\tikz[font=\tiny]{}}\right)\ =\
   - \overline{\tone}
  \cdot\ \NB{\tikz[font=\tiny]{\begin{scope}[font =\tiny]
  \begin{scope}
    \coordinate (L) at (0,0);
    \coordinate (R) at (2,0);
    \coordinate (ML) at (0.5, 0);
    \coordinate (MR) at (1.5, 0);
    \draw[->-] (L) -- (ML);
    \draw[->-] (MR) -- (R) node[right] {$a+b$};
    \draw[->-] (ML).. controls + (0.4, 0.4) and +(-0.2, 0.4) .. (MR)
    node[above, pos =0.7] {$a$}     node[above, pos =0.3] {$\dotnewtoni[1]$};
    \draw[->-] (ML).. controls + (0.2, -0.4) and +(-0.4, -0.4) .. (MR)
    node[left, pos =0.3] {$b$} node[above, pos =0.7, yshift = -0.5mm] {$\dotnewtoni[0]$};
  \end{scope}  
 \begin{scope}[yshift = 1.3cm]
    \coordinate (LB) at (0,0);
    \coordinate (RB) at (2,0);
    \draw (LB) -- (RB);
  \end{scope}  
  \draw (R) -- (RB);
  \draw (L) -- (LB);
  \draw[thick] (ML) .. controls +(0, 1) and +(0, 1) .. (MR);
\end{scope}

}}  
  \!\!\!\! - \overline{\ttwo}
  \cdot \ \NB{\tikz[font=\tiny]{\begin{scope}[font =\tiny]
  \begin{scope}
    \coordinate (L) at (0,0);
    \coordinate (R) at (2,0);
    \coordinate (ML) at (0.5, 0);
    \coordinate (MR) at (1.5, 0);
    \draw[->-] (L) -- (ML);
    \draw[->-] (MR) -- (R) node[right] {$a+b$};
    \draw[->-] (ML).. controls + (0.4, 0.4) and +(-0.2, 0.4) .. (MR)
    node[above, pos =0.7] {$a$}     node[above, pos =0.3] {$\dotnewtoni[0]$};
    \draw[->-] (ML).. controls + (0.2, -0.4) and +(-0.4, -0.4) .. (MR)
    node[left, pos =0.3] {$b$} node[above, pos =0.7, yshift = -0.5mm] {$\dotnewtoni[1]$};
  \end{scope}  
 \begin{scope}[yshift = 1.3cm]
    \coordinate (LB) at (0,0);
    \coordinate (RB) at (2,0);
    \draw (LB) -- (RB);
  \end{scope}  
  \draw (R) -- (RB);
  \draw (L) -- (LB);
  \draw[thick] (ML) .. controls +(0, 1) and +(0, 1) .. (MR);
\end{scope}

}}  \\
\label{eq:e-act-zip}   \df\left( \NB{\tikz[font=\tiny]{}}\right)\ =\
    \overline{\tone} \cdot\ \NB{\tikz[font=\tiny]{\begin{scope}
  \begin{scope}
    \coordinate (L1) at (0.2,0.4);
    \coordinate (L2) at (0,0);
    \coordinate (R1) at (2.2,0.4);
    \coordinate (R2) at (2,0);
    \coordinate (ML) at (0.6, 0.2);
    \coordinate (MR) at (1.6, 0.2);
    \draw[->-] (ML) -- (MR) node[above, midway] {$a+b$};
    \draw (MR) .. controls +(0, 0) and +(-0.3,0) .. (R1) ;
    \draw (MR) .. controls +(0, 0) and +(-0.3,0) .. (R2);
    \draw (L1) .. controls +( 0.3, 0) and +(0,0) .. (ML);
    \draw (L2) .. controls +( 0.3, 0) and +(0,0) .. (ML);
  \end{scope}  
 \begin{scope}[yshift = -1cm]
    \coordinate (L1B) at (0.2,0.4);
    \coordinate (L2B) at (0,0);
    \coordinate (R1B) at (2.2,0.4);
    \coordinate (R2B) at (2,0);
    \draw[->-] (L1B) .. controls +( 0, 0) and +(0,0) .. (R1B) node [right, pos
    = 1] {$a$};
    \draw[->-] (L2B) .. controls +( 0, 0) and +(0,0) .. (R2B) node [right, pos
    = 1] {$b$}   node [pos = 0.2, above] {$\dotnewtoni[0]$};
 \end{scope}  
  \draw (R1) -- (R1B) node [pos = 0.2, left] {$\dotnewtoni[1]$};
  \draw (R2) -- (R2B);
  \draw (L1) -- (L1B);
  \draw (L2) -- (L2B);
  \draw[thick] (ML) .. controls +(0, -0.6) and +(0, -0.6) .. (MR);
\end{scope}

}} 
  +  \overline{\ttwo}\cdot \ \NB{\tikz[font=\tiny]{\begin{scope}
  \begin{scope}
    \coordinate (L1) at (0.2,0.4);
    \coordinate (L2) at (0,0);
    \coordinate (R1) at (2.2,0.4);
    \coordinate (R2) at (2,0);
    \coordinate (ML) at (0.6, 0.2);
    \coordinate (MR) at (1.6, 0.2);
    \draw[->-] (ML) -- (MR) node[above, midway] {$a+b$};
    \draw (MR) .. controls +(0, 0) and +(-0.3,0) .. (R1) ;
    \draw (MR) .. controls +(0, 0) and +(-0.3,0) .. (R2);
    \draw (L1) .. controls +( 0.3, 0) and +(0,0) .. (ML);
    \draw (L2) .. controls +( 0.3, 0) and +(0,0) .. (ML);
  \end{scope}  
 \begin{scope}[yshift = -1cm]
    \coordinate (L1B) at (0.2,0.4);
    \coordinate (L2B) at (0,0);
    \coordinate (R1B) at (2.2,0.4);
    \coordinate (R2B) at (2,0);
    \draw[->-] (L1B) .. controls +( 0, 0) and +(0,0) .. (R1B) node [right, pos
    = 1] {$a$};
    \draw[->-] (L2B) .. controls +( 0, 0) and +(0,0) .. (R2B) node [right, pos
    = 1] {$b$}   node [pos = 0.2, above] {$\dotnewtoni[1]$};
 \end{scope}  
  \draw (R1) -- (R1B) node [pos = 0.2, left] {$\dotnewtoni[0]$};
  \draw (R2) -- (R2B);
  \draw (L1) -- (L1B);
  \draw (L2) -- (L2B);
  \draw[thick] (ML) .. controls +(0, -0.6) and +(0, -0.6) .. (MR);
\end{scope}

}}  \\
\label{eq:e-act-unzip}   \df\left( \NB{\tikz[font=\tiny]{}}\right)\ =\
    {\tone}\cdot\ \NB{\tikz[font=\tiny]{\begin{scope}
  \begin{scope}
    \coordinate (L1) at (0.2,0.4);
    \coordinate (L2) at (0,0);
    \coordinate (R1) at (2.2,0.4);
    \coordinate (R2) at (2,0);
    \coordinate (ML) at (0.6, 0.2);
    \coordinate (MR) at (1.6, 0.2);
    \draw[->-] (ML) -- (MR) node[below, midway] {$a+b$};
    \draw (MR) .. controls +(0, 0) and +(-0.3,0) .. (R1) ;
    \draw (MR) .. controls +(0, 0) and +(-0.3,0) .. (R2);
    \draw (L1) .. controls +( 0.3, 0) and +(0,0) .. (ML);
    \draw (L2) .. controls +( 0.3, 0) and +(0,0) .. (ML) node [above,
    pos=0.2] {$\dotnewtoni[0]$};
  \end{scope}  
 \begin{scope}[yshift = 1cm]
    \coordinate (L1B) at (0.2,0.4);
    \coordinate (L2B) at (0,0);
    \coordinate (R1B) at (2.2,0.4);
    \coordinate (R2B) at (2,0);
    \draw[->-] (L1B) .. controls +( 0, 0) and +(0,0) .. (R1B) node [left, pos
    = 0] {$a$} node
  [pos=0.8, below] {$\dotnewtoni[1]$};
    \draw[->-] (L2B) .. controls +( 0, 0) and +(0,0) .. (R2B) node [left, pos
    = 0] {$b$};
 \end{scope}  
  \draw (R1) -- (R1B);
  \draw (R2) -- (R2B);
  \draw (L1) -- (L1B);
  \draw (L2) -- (L2B);
  \draw[thick] (ML) .. controls +(0, 0.6) and +(0, 0.6) .. (MR);
\end{scope}

}} 
   +  {\ttwo}\cdot \ \NB{\tikz[font=\tiny]{\begin{scope}
  \begin{scope}
    \coordinate (L1) at (0.2,0.4);
    \coordinate (L2) at (0,0);
    \coordinate (R1) at (2.2,0.4);
    \coordinate (R2) at (2,0);
    \coordinate (ML) at (0.6, 0.2);
    \coordinate (MR) at (1.6, 0.2);
    \draw[->-] (ML) -- (MR) node[below, midway] {$a+b$};
    \draw (MR) .. controls +(0, 0) and +(-0.3,0) .. (R1) ;
    \draw (MR) .. controls +(0, 0) and +(-0.3,0) .. (R2);
    \draw (L1) .. controls +( 0.3, 0) and +(0,0) .. (ML);
    \draw (L2) .. controls +( 0.3, 0) and +(0,0) .. (ML) node [above,
    pos=0.2] {$\dotnewtoni[1]$};
  \end{scope}  
 \begin{scope}[yshift = 1cm]
    \coordinate (L1B) at (0.2,0.4);
    \coordinate (L2B) at (0,0);
    \coordinate (R1B) at (2.2,0.4);
    \coordinate (R2B) at (2,0);
    \draw[->-] (L1B) .. controls +( 0, 0) and +(0,0) .. (R1B) node [left, pos
    = 0] {$a$} node
  [pos=0.8, below] {$\dotnewtoni[0]$};
    \draw[->-] (L2B) .. controls +( 0, 0) and +(0,0) .. (R2B) node [left, pos
    = 0] {$b$};
 \end{scope}  
  \draw (R1) -- (R1B);
  \draw (R2) -- (R2B);
  \draw (L1) -- (L1B);
  \draw (L2) -- (L2B);
  \draw[thick] (ML) .. controls +(0, 0.6) and +(0, 0.6) .. (MR);
\end{scope}

}}  
  \\
\label{eq:e-act-cup}   \df\left( \NB{\tikz[font=\tiny, scale=1.2]{}}\right)\ =\
     - \frac{1}{2}  \cdot\ \NB{\tikz[font=\tiny, scale=1.2]{\begin{scope}
  \draw (0,0) arc (180 :0: 0.5cm and 0.2cm) node[above, pos =
  0.5] {$a$};
  \draw[very thin] (0,0) arc (180 :0: 0.5cm and -0.6cm) node[pos=0.5,
  above] {$\dotnewtoni[0] \wdotnewtoni[1]$};
  \draw (0,0) arc (180 :0: 0.5cm and -0.2cm);
\end{scope}

}}
    \ 
   -\   \frac{1}{2}  \cdot\ \NB{\tikz[font=\tiny, scale=1.2]{\begin{scope}
  \draw (0,0) arc (180 :0: 0.5cm and 0.2cm) node[above, pos =
  0.5] {$a$};
  \draw[very thin] (0,0) arc (180 :0: 0.5cm and -0.6cm) node[pos=0.5,
  above] {$\dotnewtoni[1] \wdotnewtoni[0]$};
  \draw (0,0) arc (180 :0: 0.5cm and -0.2cm);
\end{scope}

}}  \\[3pt]
\label{eq:e-act-cap}   \df\left( \NB{\tikz[font=\tiny, scale=1.2]{}}\right)\ =\
    -\frac{1}{2} \cdot\ \NB{\tikz[font=\tiny,
    scale=1.2]{\begin{scope}
  \draw (0,0) arc (180 :0: 0.5cm and 0.2cm);
  \draw[very thin] (0,0) arc (180 :0: 0.5cm and 0.6cm) node[pos=0.5,
  below] {$\dotnewtoni[0] \wdotnewtoni[1]$};
  \draw (0,0) arc (180 :0: 0.5cm and -0.2cm)node[ below, pos =
  0.5] {$a$};
\end{scope}

}} \ 
  - \  \frac{1}{2}  \cdot\ \NB{\tikz[font=\tiny, scale=1.2]{\begin{scope}
  \draw (0,0) arc (180 :0: 0.5cm and 0.2cm);
  \draw[very thin] (0,0) arc (180 :0: 0.5cm and 0.6cm) node[pos=0.5,
  below] {$\dotnewtoni[1] \wdotnewtoni[0]$};
  \draw (0,0) arc (180 :0: 0.5cm and -0.2cm)node[ below, pos =
  0.5] {$a$};
\end{scope}

}} \\
  \label{eq:e-act-saddle}   \df\left( \NB{\tikz[font=\tiny, scale=1.2]{}}\right)\ =\
    \frac{1}{2} \cdot\ \NB{\tikz[font=\tiny,
    scale=1.2]{\begin{scope}[scale=0.6]
\tdplotsetmaincoords{70}{25}
\begin{scope}[scale = 1.5, tdplot_main_coords]
  \tikzset{yxplane/.style={canvas is xy plane at z=#1}}
  \begin{scope}[yxplane=1]
    \coordinate (AT) at ({cos(  45)}, {sin(  45)});
    \coordinate (BT) at ({cos(135)}, {sin(135)});
    \coordinate (CT) at ({cos(225)}, {sin(225)});
    \coordinate (DT) at ({cos(315)}, {sin(315)});
    \coordinate (aT) at (  0.3, 0);
    \coordinate (bT) at ( -0.3, 0);
    \draw (AT) .. controls (aT) and (bT) .. (BT)
    coordinate[pos=0.5] (eT) node [pos =0.5, above] {$a$};
    \draw (DT) .. controls (aT) and (bT) .. (CT) coordinate[pos=0.5] (fT);
  \end{scope}
  \node at (-0.75, -0.1, 0.3) {$\dotnewtoni[0]$};
    \node at (0.5, 0.5, 0.3) {$\wdotnewtoni[1]$};
  \begin{scope}[yxplane=0]
    \coordinate (AM) at ({cos(  45)}, {sin(  45)});
    \coordinate (BM) at ({cos(135)}, {sin(135)});
    \coordinate (CM) at ({cos(225)}, {sin(225)});
    \coordinate (DM) at ({cos(315)}, {sin(315)});
    \coordinate (aM) at (0, 0.3);
    \coordinate (bM) at (0, -0.3);
    \draw (AM) .. controls (aM) and (bM) .. (DM) coordinate[pos=0.5] (eM);
    \draw (BM) .. controls (aM) and (bM) .. (CM) coordinate[pos=0.5] (fM);
\end{scope}
  \coordinate (OT) at (0,0, 0.5);
\draw (AM) -- (AT);
  \draw (BM) -- (BT);
  \draw (CM) -- (CT);
  \draw (DM) -- (DT);
  \draw[densely dotted] (eM) ..controls +(0,0,0.2) and + (0.1,0,0).. (OT);
  \draw[densely dotted] (fM) ..controls +(0,0,0.2) and + (-0.1,0,0).. (OT);
  \draw[densely dotted] (eT) ..controls +(0,0,-0.2) and + (0,0.1,0).. (OT);
  \draw[densely dotted] (fT) ..controls +(0,0,-0.2) and + (0,-0.1,0).. (OT);
\end{scope}  
\end{scope}

}} \ 
  + \  \frac{1}{2} \cdot\ \NB{\tikz[font=\tiny, scale=1.2]{\begin{scope}[scale=0.6]
\tdplotsetmaincoords{70}{25}
\begin{scope}[scale = 1.5, tdplot_main_coords]
  \tikzset{yxplane/.style={canvas is xy plane at z=#1}}
  \begin{scope}[yxplane=1]
    \coordinate (AT) at ({cos(  45)}, {sin(  45)});
    \coordinate (BT) at ({cos(135)}, {sin(135)});
    \coordinate (CT) at ({cos(225)}, {sin(225)});
    \coordinate (DT) at ({cos(315)}, {sin(315)});
    \coordinate (aT) at (  0.3, 0);
    \coordinate (bT) at ( -0.3, 0);
    \draw (AT) .. controls (aT) and (bT) .. (BT)
    coordinate[pos=0.5] (eT) node [pos =0.5, above] {$a$};
    \draw (DT) .. controls (aT) and (bT) .. (CT) coordinate[pos=0.5] (fT);
  \end{scope}
  \node at (-0.75, -0.1, 0.3) {$\dotnewtoni[1]$};
    \node at (0.5, 0.5, 0.3) {$\wdotnewtoni[0]$};
  \begin{scope}[yxplane=0]
    \coordinate (AM) at ({cos(  45)}, {sin(  45)});
    \coordinate (BM) at ({cos(135)}, {sin(135)});
    \coordinate (CM) at ({cos(225)}, {sin(225)});
    \coordinate (DM) at ({cos(315)}, {sin(315)});
    \coordinate (aM) at (0, 0.3);
    \coordinate (bM) at (0, -0.3);
    \draw (AM) .. controls (aM) and (bM) .. (DM) coordinate[pos=0.5] (eM);
    \draw (BM) .. controls (aM) and (bM) .. (CM) coordinate[pos=0.5] (fM);
\end{scope}
  \coordinate (OT) at (0,0, 0.5);
\draw (AM) -- (AT);
  \draw (BM) -- (BT);
  \draw (CM) -- (CT);
  \draw (DM) -- (DT);
  \draw[densely dotted] (eM) ..controls +(0,0,0.2) and + (0.1,0,0).. (OT);
  \draw[densely dotted] (fM) ..controls +(0,0,0.2) and + (-0.1,0,0).. (OT);
  \draw[densely dotted] (eT) ..controls +(0,0,-0.2) and + (0,0.1,0).. (OT);
  \draw[densely dotted] (fT) ..controls +(0,0,-0.2) and + (0,-0.1,0).. (OT);
\end{scope}  
\end{scope}

}} 
\end{gather}

We now record the fact that the formulas above do indeed give rise to an $\mathfrak{sl}_2$-action.
\begin{lem} \cite[Lemma 4.6]{QRSW2} \label{lem:sl2-acts-good-position}
  Mapping $\Le$ to $\de$, $\Lh$ to $\dh$ and $\Lf$ to $\df$ defines an
  action of $\sll_2$ on the $\scalars$-module generated by
  foams in good position.
\end{lem}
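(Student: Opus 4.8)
The plan is to verify that the defining relations of $\sll_2$ are satisfied as operators on $\vectweb{\web}$ for an arbitrary web $\web$, i.e.\ that $[\dh,\de]=2\de$, $[\dh,\df]=-2\df$, and $[\de,\df]=\dh$. Since all three operators are declared to satisfy the Leibniz rule with respect to composition of foams in good position, it suffices to check the relations on \emph{basic} foams: if $\foam = \foam_k\circ\cdots\circ\foam_1$ is a composition of basic foams, then $\de,\df,\dh$ each act as a sum of terms where the operator is applied to one $\foam_j$ and the identity to the others, and a routine (if bookkeeping-heavy) computation shows that the commutator of two such derivations is again the derivation built from the commutator on each basic piece. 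Thus the whole statement reduces to a finite check: for each local model in Figure~\ref{fig:basic-foam} (polynomial, the two associativity foams, digon-cup, digon-cap, zip, unzip, cup, cap, saddle), confirm the three bracket identities.

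First I would dispatch the polynomial facet case. Here $\dh$ acts by $-\deg{R}$, the operator $\de$ acts by $-\sum_i \partial/\partial x_i$, and $\df$ acts by multiplication by $\frac12(p_1^2 - \dotnewtoni[2]$-type correction$)$ — more precisely by the operator recorded in \eqref{eq:e-act-pol}. The identity $[\dh,\de]=2\de$ holds because $\sum_i \partial/\partial x_i$ lowers polynomial degree by $2$; the identity $[\dh,\df]=-2\df$ holds because the operator in \eqref{eq:e-act-pol} raises degree by $2$; and $[\de,\df]=\dh$ is the classical computation that $-\sum\partial/\partial x_i$ against the relevant degree-raising multiplication operator recovers (minus) the Euler/degree operator. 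This is exactly the standard $\sll_2$ on a polynomial ring (the ``$\mathfrak{sl}_2$ of the Lefschetz type''), and I would simply cite or reproduce that one-variable-at-a-time check.

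Next I would handle the remaining basic foams, where $\de$ acts by $0$ and $\dh$ acts by a scalar $\lambda$ (the various $\pm ab$, $\pm ab(\tone+\ttwo)$, $\pm a(N-a)$, etc.\ listed in \eqref{eq:h-act-assoc}--\eqref{eq:h-act-saddle}), while $\df$ sends the foam to a $\ZZ[t_1,t_2]$-linear combination of the \emph{same} foam decorated by one extra power-sum dot $\dotnewtoni[1]$ on one of its facets, with coefficients $\pm t_j$ or $\pm\overline{t_j}$ or $\pm\frac12$. Since $\de$ kills all these foams and also kills the decorated variants (a single $\dotnewtoni[1]$ dot is $p_1 = \sum x_i$ on that facet, and one must track that $\de$ applied to a dotted basic foam produces the $\de$-image of the dot times the undotted foam — here one uses that $\de$ is a derivation and that $\de$ of $p_1$ on a facet of thickness $a$ is $-a$, a scalar, so these contributions have to be reconciled), the bracket $[\de,\df]=\dh$ reduces on each such foam to the assertion that applying $\de$ to $\df(\foam)$ yields $\lambda\cdot\foam$. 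Concretely, $\de$ applied to the decorated foam $\dotnewtoni[1]$-on-a-facet returns $-a$ times the undotted foam (differentiating $p_1$), so $\de\df(\foam)$ becomes $(\text{sum of coefficients})\times(-a)\times\foam$ or $(-b)$ as appropriate, and in each of the nine cases one checks this scalar equals the corresponding $\dh$-eigenvalue $\lambda$. The identities $[\dh,\de]=2\de=0$ and $[\dh,\df]=-2\df$ are then immediate: the first because $\de$ annihilates these foams, the second because $\dh$ acts on the decorated foam $\df(\foam)$ by its scalar $\lambda$ plus the extra $2$ coming from the degree of the $\dotnewtoni[1]$ decoration, and one checks $\lambda_{\text{(dotted)}} = \lambda_{\text{(undotted)}} - 2$ against the tables, recovering $[\dh,\df](\foam) = -2\,\df(\foam)$.

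The main obstacle I anticipate is purely organizational rather than conceptual: there are nine basic-foam types times three relations, i.e.\ twenty-seven sign-and-coefficient verifications, and the bookkeeping of which thin facet carries the extra $\dotnewtoni[1]$ dot (and whether the coefficient is $t_1$ versus $t_2$, or $\overline{t_1}$ versus $t_1$) has to be matched carefully against the figures. The subtle point worth isolating is the interaction of $\de$ with decorations: because $\de$ is a derivation, $\de(P\cdot \foam) = (\de P)\cdot\foam + P\cdot\de(\foam)$, and since $\de(\foam)=0$ on a basic non-polynomial foam but $\de(\dotnewtoni[1])$ is the \emph{nonzero scalar} $-(\text{thickness})$, the $[\de,\df]=\dh$ computation genuinely uses the formula $\de = -\sum_i\partial/\partial x_i$ on the decoration, and this is where the eigenvalues $a(N-a)$, $ab$, $ab(t_1+t_2)$, $\ldots$ must emerge — so the one genuinely load-bearing verification is the reconciliation of the $\df$-coefficients in \eqref{eq:e-act-dig-cup}--\eqref{eq:e-act-saddle} with the $\dh$-eigenvalues in \eqref{eq:h-act-dig-cup}--\eqref{eq:h-act-saddle}. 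Everything else follows by the Leibniz-rule extension argument and the standard polynomial-ring $\sll_2$.
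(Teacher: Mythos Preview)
The paper does not give its own proof of this lemma; it simply records the statement and cites \cite[Lemma 4.6]{QRSW2}. Your direct verification --- reducing to basic foams via the fact that the commutator of two derivations is again a derivation, then checking the three $\sll_2$ relations on each local model --- is exactly the kind of argument one expects in the cited reference, and it is correct as outlined. One small clarification worth making explicit in your write-up: in the $\df$-formulas \eqref{eq:e-act-dig-cup}--\eqref{eq:e-act-saddle} the decorations labeled ``newtons10'' and ``newtons01'' carry a $\dotnewtoni[1]$ on one thin facet \emph{and} a $\dotnewtoni[0]$ (i.e.\ the scalar thickness) on the other, so when you apply $\de$ you pick up a factor of the form $(-a)\cdot b$ or $(-b)\cdot a$ rather than just $-a$ or $-b$; this is precisely what makes the eigenvalue come out as $ab(t_1+t_2)$ rather than $at_1+bt_2$ in the digon-cup case, and similarly for the others.
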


The actions of $\de, \df, \dh$ defined above descend to an action on symmetric polynomials.
The next fact is a direct consequence of \cite[Proposition 4.3]{QRSW2} and says that the action of $\mathfrak{sl}_2$ on the space of foams is compatible with the $\mathfrak{sl}_2$-action on symmetric polynomials via foam evaluation.

\begin{prop}\label{prop:sl2-acts-good-pos}
  Let $\foam$ be a closed foam in good position, then for 
  $\mathbf{x} \in \mathfrak{sl}_2$: \begin{equation} \bracketN{\mathbf{x} \cdot \foam}= \mathbf{x} \cdot \bracketN{\foam} .\end{equation}
\end{prop}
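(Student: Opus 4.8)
The plan is to reduce the statement to a finite, checkable collection of identities, one for each basic foam, and then invoke the Leibniz rule on both sides. Since every closed foam in good position is, by definition, a composition of basic foams, and since both the foam evaluation $\bracketN{-}$ (when restricted to closed foams arising as closures of good-position foams) and the $\sll_2$-operators $\de,\df,\dh$ are defined so as to interact with gluing in a controlled way, it suffices to understand what happens on generators. More precisely, I would first record that $\bracketN{-}$ is multiplicative with respect to disjoint union and behaves functorially under the decomposition of a closed foam into a stacking of basic pieces; in particular, for a closed foam $\foam$ obtained from a good-position presentation, one can express $\bracketN{\foam}$ via the state-sum/TQFT formalism of \cite{RW1} in terms of contributions of the facets, bindings, and singular vertices. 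The point of Proposition~\ref{prop:sl2-acts-good-pos} is that the three operators act on $\bracketN{\foam}$, viewed as an element of $\RN=\ZZ[E_1,\dots,E_\myN]$ (after the parameter specialization), exactly as $\de$ acts by $-\sum_i \partial/\partial x_i$, $\dh$ acts by $-\deg$, and $\df$ acts by the relevant Newton-type derivation on symmetric polynomials.

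The main step is then a case analysis over the basic foams listed in Figure~\ref{fig:basic-foam}: the polynomial foam, the (co)associativity foams, the digon-cup and digon-cap, the zip and unzip, and the cup, cap, and saddle. For each such basic foam $\foam$, and for each of $\de,\df,\dh$, one computes the left-hand side $\bracketN{\mathbf{x}\cdot\foam}$ using the explicit formulas \eqref{eq:h-act-pol}--\eqref{eq:h-act-saddle} and \eqref{eq:e-act-pol}--\eqref{eq:e-act-saddle} together with the evaluation formula of \cite{RW1}, and compares it to $\mathbf{x}\cdot\bracketN{\foam}$. For the polynomial foam the identities \eqref{eq:h-act-pol} and \eqref{eq:e-act-pol} are essentially definitional once one knows how $\bracketN{-}$ treats decorations, and the $\de$-case is the statement that foam evaluation intertwines $-\sum_i\partial/\partial x_i$ with the corresponding derivation on $\RN$; the associativity foams contribute $0$ on both sides by \eqref{eq:h-act-assoc}, \eqref{eq:e-act-assoc}. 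The substantive checks are the cup/cap/saddle and the zip/unzip/digon foams, where the degree shifts $\pm a(\myN-a)$ and $\pm ab$ recorded in Figure~\ref{fig:basic-foam} must be matched against the behavior of $\bracketN{-}$ under the corresponding elementary cobordisms; this is precisely the content of \cite[Proposition 4.3]{QRSW2}, which I would cite for these local computations rather than reproducing them.

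Finally, I would assemble the local identities into the global one. Given a closed foam $\foam = \foam_k \circ \cdots \circ \foam_1$ presented as a composition of basic foams, the Leibniz rule for $\de,\df,\dh$ with respect to composition gives $\mathbf{x}\cdot\foam = \sum_{j} \foam_k\circ\cdots\circ(\mathbf{x}\cdot\foam_j)\circ\cdots\circ\foam_1$. Applying $\bracketN{-}$ and using that $\bracketN{-}$ turns composition of foams into the appropriate multiplication/composition of the associated linear-algebraic data, one gets a sum of terms each of which, by the basic-foam identities, equals the corresponding term of the derivation $\mathbf{x}$ applied to $\bracketN{\foam}$ via the Leibniz rule on the symmetric-polynomial side. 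Collecting, $\bracketN{\mathbf{x}\cdot\foam} = \mathbf{x}\cdot\bracketN{\foam}$. The one subtlety to be careful about is well-definedness: the presentation of $\foam$ as a composition of basic foams is not unique, so one should note that both sides are independent of the chosen good-position presentation — the left side because $\mathbf{x}\cdot\foam$ is well-defined by Lemma~\ref{lem:sl2-acts-good-position}, and the right side because $\bracketN{\foam}$ is an invariant of $\foam$ — so the equality, once established for one presentation, holds unconditionally. The hard part is really bookkeeping: making sure the sign conventions, the degree normalizations $\degN{\foam}$ from Definition~\ref{dfn:deg-foam}, and the parameter specialization $t_1,t_2$ all line up so that $\dh$ acts by $-\deg{}$ on the nose; but this is exactly the computation already carried out in \cite{QRSW2}, so I would lean on \cite[Proposition 4.3]{QRSW2} for the generator-level identities and present the proof as the routine propagation of those identities through the Leibniz rule.
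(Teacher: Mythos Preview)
The paper does not give a proof at all: it simply records that the statement is a direct consequence of \cite[Proposition~4.3]{QRSW2}. Since you end up citing that same proposition for the ``generator-level identities'' anyway, your elaborate Leibniz-reduction scaffolding is superfluous --- and, more importantly, it does not actually work as written.

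The gap is the following. Basic foams are not closed, so $\bracketN{\foam_j}$ is undefined for a single basic piece $\foam_j$; you cannot ``compute the left-hand side $\bracketN{\mathbf{x}\cdot\foam}$'' on a basic foam. Likewise, the closed-foam evaluation $\bracketN{\foam}\in\RN$ is a single symmetric polynomial with no factorization along a good-position slicing, so there is no ``Leibniz rule on the symmetric-polynomial side'' to match the Leibniz expansion of $\mathbf{x}\cdot\foam$. One might try to salvage this by passing to the TQFT functor $\statespaceN{-}$, replacing each $\foam_j$ by a linear map between state spaces and arguing via commutators; but the well-definedness of the $\sll_2$-action on $\statespaceN{\web}$ for nonempty $\web$ is precisely what Proposition~\ref{prop:sl2-acts-good-pos} is invoked to guarantee (see the sentence immediately following it), so that route is circular.

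The actual content of \cite[Proposition~4.3]{QRSW2} is a direct computation with the explicit Robert--Wagner evaluation formula (a sum over colorings, with per-facet and per-binding contributions), verifying that the differential operators realizing $\de,\dh,\df$ on $\RN$ match, coloring by coloring, the effect of inserting the extra decorations prescribed by \eqref{eq:h-act-pol}--\eqref{eq:e-act-saddle}. That computation is global in the foam and does not pass through a basic-foam decomposition. Your proposal should therefore drop the Leibniz reduction and simply invoke \cite[Proposition~4.3]{QRSW2}, as the paper does.
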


\subsection{Green dotted webs and state spaces}
\label{subsec:greendottedwebs}
Using foam evaluation (as in \cite{Khsl3, RW1, QRSW2}), one can associate an
$\RN$-module, called a {\em $\gll_N$-state space $\statespaceN{\web}$} to each web
$\web$. To prevent overloaded diagrams, we will omit
$\statespaceN{\cdot}$ when dealing with actual webs or foams.

The $\mathfrak{sl}_2$-action on foams gives rise to an
$\mathfrak{sl}_2$-action on the state space $\statespaceN{\web}$.
Proposition \ref{prop:sl2-acts-good-pos} guarantees that this action
is well-defined.

For each edge $e$ with label $a$ in a web $\web$, the
  algebra 
  \begin{equation}
  D_e = \scalars[x_1, \dots, x_a,
  y_1,\dots,y_{N-a}]^{S_a\times S_{N-a}}
  \end{equation}
  acts on the $\gll_N$-state space $\statespaceN{\web}$ associated to $\web$ by adding a decoration on
  the facet bounding to the edge $e$. Consequently the algebra
  \begin{equation}
      D_\web:=\bigotimes_{e\in E(\web)} D_e 
  \end{equation} acts on
  $\statespaceN{\web}$. For each edge $e$, the algebra $D_e$ is
  endowed with a natural $\mathfrak{sl}_2$-module structure and is an
  $\mathfrak{sl}_2$-module algebra and thus so is $D_\web$. 

It is possible to twist the $\mathfrak{sl}_2$-action on
  $\statespaceN{\web}$.  This idea goes back to \cite{KRWitt} where
  the Witt-type action was twisted in order to make differentials
  occuring in link homology equivariant with respect to the Witt
  action.  Similar ideas were then used in \cite{QRSW1,QiSussanLink}
  in order to categorify the colored Jones polynomial in the context
  of a $p$-DG structure.  The twisting of the Witt-type action on
  foams was described in \cite{QRSW2}.  Here we review this twisting
  in the simplified setting of an $\mathfrak{sl}_2$-action.

  Let $\foam\co \web_1 \to \web_2$ be a foam.  Then
  $\tqftfunc\left(\foam\right)$ induces
a $\RN$-linear map from $\statespaceN{\web_1}$ to
  $\statespaceN{\web_2}$. Due to the Leibniz rule, this map
  intertwines the action of $\mathfrak{sl}_2$ if and only if
  $\tqftfunc\left(g \cdot\foam\right) =0$ for all
  $g\in \mathfrak{sl}_2$. This prevents many foams from being
  morphisms in categories taking into account the
  $\mathfrak{sl}_2$-action. Having in mind the construction of a link
  homology with an $\mathfrak{sl}_2$-action, we must circumvent this
  problem.  We introduce twists on state spaces. These twists are very
  similar to that in \cite{QRSW1}. However the more refined structures
  we are considering here forces us to introduce several types of
  twists. We follow the diagrammatic formalism of \cite{QRSW1} and
  extend the definition of green-dotted webs.

  \begin{dfn}
    A \emph{green-dotted} web is a web $\web$ endowed with
a finite collection $D$ of \emph{green dots}, that are marked
    points with multiplicities (in $\scalars$) located in the interior
    of edges of $\web$. These green dots are of two types $\gdot$ and
$ \gsoliddot$.  If a given edge carries several green dots of the
    same type, they may be replaced by one green dot of that type on
    this edge with the sum of all multiplicities.
      
\end{dfn}

\begin{exa}
The following green-dotted web will come up later when proving invariance of link homology under a Reidemeister II move.
\[
\NB{\tikz[scale=1, font=\tiny]{\begin{scope}
  \coordinate (bl) at (-0.5, -1.5);
  \coordinate (br) at ( 0.5, -1);
  \coordinate (tr) at ( 0.5,  1);
  \coordinate (tl) at (-0.5,  1.5);

  \coordinate (bm) at (  0,-0.3);
  \coordinate (tm) at (  0, 0.3);
    \coordinate (bl2) at (1.5, -1);
    \coordinate (br2) at ( 2.5, -1.5);
  \coordinate (bm2) at (  2,-0.3);
  \coordinate (tl2) at (1.5,  1);
  \coordinate (tr2) at ( 2.5,  1.5);
  \coordinate (tm2) at (  2, 0.3);
  \draw[<-]  (bl) .. controls +( 0, 0.5) and +(0,0) .. (bm)
  node[right, pos = 0] {$1$} coordinate[pos = 0.25] (ha) ;
    \filldraw[draw= green!50!black, fill = white] (ha) circle (1mm)
  node[left, green!50!black] {$-\bar{t}_2$};
  \draw[<-]  (br) .. controls +( 0, 0.5) and +(0,0) .. (bm)
  node[left, pos = 0] {$1$} coordinate[pos = 0.25] (hb) ;
    \filldraw[draw= green!50!black, fill = white] (hb) circle (1mm)
  node[right, green!50!black] {$-\bar{t}_1$};
  \draw[>-]  (tl) .. controls +( 0, -0.5) and +(0,0) .. (tm)
  node[right, pos = 0] {$1$} coordinate[pos = 0.25] (ga) ;
   \draw[>-]  (tr) .. controls +( 0, -0.5) and +(0,0) .. (tm)
  node[left, pos = 0] {$1$} coordinate[pos = 0.25] (gb) ;
  \draw [-<-] (bm) -- (tm) node[left, pos = 0.5] {$2$};
\draw (.5, 1) arc (180:0:0.5);
\draw (1.5,-1) arc (180:0:-0.5) coordinate[pos = 0.25] (X) coordinate[pos = 0.75] (Y);
 \draw[>-]  (bl2) .. controls +( 0, 0.5) and +(0,0) .. (bm2)
  node[right, pos = 0] {$1$};
  \draw[>-]  (br2) .. controls +( 0, 0.5) and +(0,0) .. (bm2)
  node[left, pos = 0] {$1$};
  \draw[<-]  (tl2) .. controls +( 0, -0.5) and +(0,0) .. (tm2)
  node[right, pos = 0] {$1$} coordinate[pos = 0.25] (ga2) ; 
      \filldraw[draw= green!50!black, fill = white] (ga2) circle (1mm)
  node[left, green!50!black] {${t}_1$};
    \draw [->-] (bm2) -- (tm2) node[left, pos = 0.5] {$2$};
  \draw[<-]  (tr2) .. controls +( 0, -0.5) and +(0,0) .. (tm2)
  node[left, pos = 0] {$1$} coordinate[pos = 0.15] (gb2) coordinate[pos = 0.6] (gb3);
   \filldraw[draw= green!50!black, fill = white] (gb2) circle (1mm)
  node[right, green!50!black] {${t}_2-\frac{N-1}{2}$};
    \filldraw[draw= green!50!black, fill = green] (gb3) circle (1mm)
  node[right, green!50!black] {$\frac{-1}{2}$};
    \filldraw[draw= green!50!black, fill = green] (X) circle (1mm)
  node[below, green!50!black] {$\frac{1}{2}$};
    \filldraw[draw= green!50!black, fill = white] (Y) circle (1mm)
  node[below, green!50!black] {$\frac{N-1}{2}$};
\end{scope}

}}
\]

\end{exa}

  Let $(\web, D)$ be a green-dotted web. For each
  green dot $d$ of multiplicity $\lambda \in \scalars$, define
  $\web_d $ to be the foam $\web\times [0,1]$ with a twisted action of
  $\mathfrak{sl}_2$.  Each green dot lives on an edge, hence each
  foam bounding $\web$ has a neighborhood of that green dots
  homeomorphic to $]0,1[\times [0,1[$. The twists induced by green
  dots is local and we depict the modified $\sll_2$-action on these
  neighborhoods. Recall that to act on a concrete foam, one uses the
  Leibniz rule, so that the twists induced by various green-dot on a
  given web add up.

\begin{gather}
\label{eq:e-act-pol-twisthollow} \de\left(\NB{\tikz[scale=1.5, font=\small]{\begin{scope}
  \draw[thin] (0,0.5) rectangle (1,1);
  \draw[thick] (0,1) -- (1,1)  coordinate [pos=0.4] (A);
  \filldraw[draw= green!50!black, fill = white] (A) circle (.5mm)
  node[below,  green!50!black] {$\lambda$};
\end{scope}}}\right) 
  =0 \\
\label{eq:e-act-pol-twistsolid}  \de\left(\NB{\tikz[scale=1.5, font=\small]{\begin{scope}
  \draw[thin] (0,0.5) rectangle (1,1);
  \draw[thick] (0,1) -- (1,1)  coordinate [pos=0.4] (A);
  \filldraw[draw= green!50!black, fill = green] (A) circle (.5mm)
  node[below,  green!50!black] {$\lambda$};
\end{scope}}}\right)=
0 \\
\label{eq:h-act-pol-twisthollow} \dh\left(\NB{\tikz[scale=1.5, font=\small]{}}\right) 
  =-\lambda\cdot\  \NB{\tikz[scale=1.5, font=\small]{\begin{scope}
  \draw[thin] (0,0.5) rectangle (1,1);
  \draw[thick] (0,1) -- (1,1)  coordinate [pos=0.4] (A);
  \filldraw[draw= green!50!black, fill = white] (A) circle (.5mm)
  node[below,  green!50!black] {$\lambda$};
  \node (B) at (.75,.75) {$\dotnewtoni[0]$};
\end{scope}

\label{eq:h-act-pol-twistsolid}  \dh\left(\NB{\tikz[scale=1.5, font=\small]{}}\right)=
-\lambda \cdot\  \NB{\tikz[scale=1.5, font=\small]{\begin{scope}
  \draw[thin] (0,0.5) rectangle (1,1);
  \draw[thick] (0,1) -- (1,1)  coordinate [pos=0.4] (A);
  \filldraw[draw= green!50!black, fill = green] (A) circle (.5mm)
  node[below,  green!50!black] {$\lambda$};
  \node (B) at (.75,.75)  {$\wdotnewtoni[0]$};
\end{scope}}} \\
\label{eq:f-act-pol-twisthollow} \df\left(\NB{\tikz[scale=1.5, font=\small]{}}\right) 
  =\lambda\cdot\  \NB{\tikz[scale=1.5, font=\small]{\begin{scope}
  \draw[thin] (0,0.5) rectangle (1,1);
  \draw[thick] (0,1) -- (1,1)  coordinate [pos=0.4] (A);
  \filldraw[draw= green!50!black, fill = white] (A) circle (.5mm)
  node[below,  green!50!black] {$\lambda$};
  \node (B) at (.75,.75) {$\dotnewtoni[1]$};
\end{scope}

\label{eq:f-act-pol-twistsolid}  \df\left(\NB{\tikz[scale=1.5, font=\small]{}}\right)=
\lambda \cdot\  \NB{\tikz[scale=1.5, font=\small]{\begin{scope}
  \draw[thin] (0,0.5) rectangle (1,1);
  \draw[thick] (0,1) -- (1,1)  coordinate [pos=0.4] (A);
  \filldraw[draw= green!50!black, fill = green] (A) circle (.5mm)
  node[below,  green!50!black] {$\lambda$};
  \node (B) at (.75,.75)  {$\wdotnewtoni[1]$};
\end{scope}}}  .
\end{gather}

It is convenient to introduce floating green dots on the plane: one
can view them as being on edges of thickness $0$. A floating hollow
green dot $\gdot$ does not alter the Lie algebra action since in this context
$\dotnewtoni[0]=\dotnewtoni[1]=0$. However a solid green dot
$\gsoliddot$ twists the action of $\df$ by $E_1$ and the action of
$\dh$ by $-N$ (because $\wdotnewtoni[0]=N$ and $\wdotnewtoni[1]=E_1$)

With this new convention, one has the following local relation:

\[
\NB{\tikz[]{\begin{scope}
  \coordinate (m) at (  0,  0);
  \coordinate (t) at (  2, 0);
  \draw[->] (m) -- (t) node[pos = 1, right] {$a$} coordinate[pos=0.3]
  (ga) coordinate[pos=0.7] (gb) ;
  \filldraw[draw= green!50!black, fill = green] (ga) circle (1mm)
  node[yshift= 2mm, green!50!black] {$r$};
  \filldraw[draw= green!50!black, fill = white] (gb) circle (1mm)
  node[yshift=2mm, green!50!black] {$r$};
\end{scope}
}} = \NB{\tikz[]{\begin{scope}
  \coordinate (m) at (  0,  0);
  \coordinate (t) at (  2, 0);
  \draw[->] (m) -- (t) node[pos = 1, right] {$a$} coordinate [pos=0.5,
  yshift = 2mm]
  (ga);
  \filldraw[draw= green!50!black, fill = green, above] (ga) circle (1mm)
  node[left, green!50!black] {$r$};
\end{scope}
}}.
\]

\subsection{About twists}
\renewcommand{\gg}{\ensuremath{\mathfrak{g}}}

The
$\gll_N$-state spaces are endowed with an $\sll_2$-action and a
$D_\web$-action. These two actions intertwine nicely. This allows us to
twist the first one by the second as we shall see. In order to show
what structures are at play, we will formulate this in a slightly more
general context.

Let $\gg$ be a Lie algebra, $A$ a commutative $\gg$-module algebra and $M$ an
$A\# \gg$-module\footnote{In the notation of Section \ref{homological:sec}, it should really be $A\# \mc{U}(\gg)$, where  $\mc{U}(\gg)$ denotes the universal enveloping algebra of $\gg$. However, we will write this smash product algebra as $A\# \gg$ for simplification.}, that a $\gg$-module with an $A$-module structure, which
satisfies the following identities for all $g$ in $\gg$, $a$ in $A$
and $m$ in $M$:
\[g\cdot_{\gg}(a\cdot_{A} m) = (g\cdot a)\cdot_A m +
  a\cdot_{A}(g\cdot_{\gg} m).\]
In our context, $\gg =\sll_2$, $M$ is a $\gll_N$-state space of a web $\web$
 and $A$ is $D_\web$, the polynomial algebra of decorations of $\Gamma$.

A linear map $\tau\co \gg \to A$ is \emph{flat} if for all $g_1, g_2$
in $\gg$, one has:
\begin{equation}
\tau([g_1,g_2]) = g_1\cdot \tau(g_2) - g_2\cdot \tau(g_1).
\end{equation}
If $\tau$ is flat, one can check that there is a new $A\# \gg$-module structure on the rank-one free module $A$ itself defined by
\begin{equation}
    g\cdot_{\gg^\tau} a:= g\cdot_\gg a+\tau(g)a.
\end{equation}
More generally, the following formula
defines a new $\gg$-action on any $A\# \gg$-module $M$ via:
\begin{equation}
g\cdot_{\gg^\tau} m := g\cdot_{\gg} m + \tau(g)\cdot_A m. 
\end{equation}
One also checks that the twisted action $\cdot_{\gg^\tau}$, similar as $\cdot_\gg$, is compatible with the $A$-action. In other words, this gives $M$ a new $A\#\gg$-module
structure. If $\tau$ is flat, we denote by $M^{\tau}$ the module endowed with the
action $\cdot_{\gg^{\tau}}$.
One can readily see that, as $A\# \gg$-modules,
\begin{equation}
    M^\tau = A^\tau\otimes_A M.
\end{equation}
This means in particular that if
$\tau, \sigma \co \gg \to A$ are flat, then 
$(M^\tau)^\sigma \cong M^{\tau+\sigma}$.

  \begin{prop}\label{prop:classification-sl2-twists}
    For any green-dotted web $(\web, D)$,
    twisting the actions of $\de,\df,$ and $\dh$ as above
    endows $\statespaceN{\web}$ with an $\mathfrak{sl}_2$-module structure.
  \end{prop}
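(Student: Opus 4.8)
The plan is to reduce the statement to the abstract twisting formalism developed just above, by exhibiting the combined effect of all green dots in $D$ as a single flat map $\tau\co\sll_2\to D_\web$. First I would recall that, by Lemma~\ref{lem:sl2-acts-good-position} together with the well-definedness discussion in Section~\ref{subsec:greendottedwebs}, $\statespaceN{\web}$ already carries an honest $\sll_2$-action and a $D_\web$-action, and that these satisfy the Leibniz compatibility $g\cdot_{\sll_2}(a\cdot m)=(g\cdot a)\cdot m+a\cdot(g\cdot_{\sll_2}m)$ because decorations are added by gluing on polynomial-decorated foams and the $\sll_2$-action obeys the Leibniz rule with respect to composition. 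Thus $M=\statespaceN{\web}$ is an $A\#\sll_2$-module with $A=D_\web$, and we are exactly in the setting of the preceding subsection.

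Next I would read off $\tau$ from the local twisting formulas \eqref{eq:e-act-pol-twisthollow}--\eqref{eq:f-act-pol-twistsolid}. A hollow green dot $\gdot$ of multiplicity $\lambda$ on an edge $e$ of thickness $a$ contributes, to the action of $\dh$, multiplication by $-\lambda\,\dotnewtoni[0]=-\lambda a$ (a scalar, lying in $D_e$), to the action of $\df$ multiplication by $\lambda\,\dotnewtoni[1]=\lambda p_1$, and nothing to $\de$; a solid green dot $\gsoliddot$ of multiplicity $\lambda$ contributes instead $-\lambda\,\wdotnewtoni[0]=-\lambda(N-2a)$ hmm — more precisely, by the defining formula \eqref{eq:18}, $\wdotnewtoni[0]=N-a$ on a facet of thickness $a$, so the solid dot contributes $-\lambda(N-a)$ to $\dh$ and $\lambda\,\wdotnewtoni[1]=\lambda(P_1-p_1)$ to $\df$, again nothing to $\de$. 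Summing over all green dots of $D$ (legitimate because twists induced by different green dots add up, as noted in the text) defines a linear map $\tau=\tau_D\co\sll_2\to D_\web$ by $\tau(\Le)=0$, $\tau(\Lh)=$ (the total $\dh$-correction), $\tau(\Lf)=$ (the total $\df$-correction), and the twisted action on $\statespaceN{\web}$ is precisely $g\cdot_{\sll_2^\tau}m=g\cdot_{\sll_2}m+\tau(g)\cdot m$.

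The only thing left is to verify that $\tau$ is flat, i.e.\ that $\tau([g_1,g_2])=g_1\cdot\tau(g_2)-g_2\cdot\tau(g_1)$ for the three bracket relations $[\Lh,\Le]=2\Le$, $[\Lh,\Lf]=-2\Lf$, $[\Le,\Lf]=\Lh$. Since $\tau$ is a sum over green dots and both sides of the flatness equation are linear in $\tau$, it suffices to check flatness for one green dot of each type on one edge, which amounts to a short computation using the action of $\de,\df,\dh$ on the symmetric polynomials $\dotnewtoni[0],\dotnewtoni[1]$ (resp.\ $\wdotnewtoni[0],\wdotnewtoni[1]$): e.g.\ $[\Le,\Lf]\mapsto\Lh$ forces $\tau(\Lh)=\de\cdot\tau(\Lf)-\df\cdot\tau(\Le)=\de(\lambda p_1)$, which matches $-\lambda a$ since $\de$ acts on $p_1$ by $-\sum_i\partial/\partial x_i$ giving $-a$; and $[\Lh,\Lf]=-2\Lf$ forces $-2\tau(\Lf)=\dh\cdot\tau(\Lf)-\df\cdot\tau(\Lh)$, which holds because $p_1$ has degree $2$ so $\dh(\lambda p_1)=-2\lambda p_1$ and $\df$ kills the scalar $\tau(\Lh)$; the $\Lh,\Le$ relation is immediate as $\tau(\Le)=0$ and $\de,\df,\dh$ all kill scalars in a way consistent with $[\Lh,\Le]=2\Le$. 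The solid-dot case is identical with $p_1$ replaced by $P_1-p_1$ and $a$ by $N-a$, using that $P_1$ is a global scalar annihilated by $\df$ and of $\dh$-degree $-2$ wait, $P_1=p_1+\wdotnewtoni[1]$ has degree $2$ — the computation goes through verbatim. Once flatness is established, Proposition~\ref{prop:classification-sl2-twists} follows immediately: by the paragraph on twists, $\statespaceN{\web}^\tau=D_\web^\tau\otimes_{D_\web}\statespaceN{\web}$ is again an $A\#\sll_2$-module, in particular an $\sll_2$-module. The main (and only mildly nontrivial) obstacle is getting the bookkeeping of signs and of the polynomials $\dotnewtoni[0],\dotnewtoni[1],\wdotnewtoni[0],\wdotnewtoni[1]$ exactly right so that the three flatness identities close up; everything else is a direct appeal to the abstract twisting lemma.
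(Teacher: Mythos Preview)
Your proposal is correct and follows essentially the same route as the paper: reduce to the abstract twisting formalism by exhibiting the green-dot corrections as a flat map $\tau\co\sll_2\to D_\web$, use additivity of twists to reduce to a single edge, and verify the three flatness identities by direct computation with $\dotnewtoni[0],\dotnewtoni[1],\wdotnewtoni[0],\wdotnewtoni[1]$. The only cosmetic difference is that the paper packages the hollow and solid contributions on a given edge into a single two-parameter twist $\tau(\df)=\alpha\dotnewtoni[1]+\beta\wdotnewtoni[1]$ and checks flatness once, whereas you check the two dot types separately; also, your aside that ``$P_1$ is a global scalar annihilated by $\df$'' is not quite right (one has $\df(P_1)=P_2$), but as you note the relevant computation only uses that $\wdotnewtoni[1]$ is homogeneous of degree $2$ and that $\tau(\dh)$ is a scalar, so the argument closes up regardless.
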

  \begin{proof}

From the earlier discussion, we know that if there are no green
    dots, then the state space carries an $\mathfrak{sl}_2$-action.
    
    By additivity of twists, it is enough to prove the proposition for
    the case that a single edge $e$ of thickness $a$ contains a hollow dot
    labeled $\alpha$ and a solid dot labeled  $\beta$.  Let
    $\tau:\sll_2 \to D_e \subseteq D_\Gamma$ be the map encoded by these two green
    dots:
    \begin{equation}
      \label{eq:twist-gd-def}
      \tau(\de) = 0,\qquad \tau(\dh) = -a\alpha -(N-a)\beta,\qquad
      \tau(\df) = \alpha \dotnewtoni[1] + \beta \wdotnewtoni[1].
    \end{equation}
    Checking flatness of $\tau$ is a straightforward computation:
    \begin{align*}
      \tau([\dh,\de]) - \dh\cdot\tau(\de) + \de\cdot \tau(\dh)
      &= 2\tau(\de) - 0 + \de\cdot(-a\alpha -(N-a)\beta) \\ &=0.\\
      \tau([\dh,\df]) - \dh\cdot\tau(\df) + \df\cdot \tau(\dh)
      &= -2\tau(\df) - \dh\cdot (\alpha \dotnewtoni[1] + \beta \wdotnewtoni[1])
        + \df\cdot(-a\alpha -(N-a)\beta) \\
      &= -2(\alpha \dotnewtoni[1] + \beta \wdotnewtoni[1]) +2 (\alpha
        \dotnewtoni[1] + \beta \wdotnewtoni[1]) + 0 \\ &=0.\\
        \tau([\de, \df]) - \de\cdot\tau(\df) + \df\cdot \tau(\de) 
      &= \tau(\dh)- \de\cdot(\alpha \dotnewtoni[1] + \beta
        \wdotnewtoni[1]) \\ &=
        -a\alpha -(N-a)\beta + (\alpha \dotnewtoni[0] +
        \beta\wdotnewtoni[0]) \\ &=0.
    \end{align*}
    The result follows.
  \end{proof}

Alternatively, if $e$ is an edge of thickness $a$ in a web $\web$, the
  algebra 
  $$D_e = \scalars[x_1, \dots, x_a,
  y_1,\dots,y_{N-a}]^{S_a\times S_{N-a}}$$ 
  is an $\sll_2$-module algebra with
   $\de$, $\df$ and $\dh$ acting by differential operators
  \begin{equation}
      \de  =-\sum_{i=1}^a \frac{\partial}{\partial x_i}-\sum_{j=1}^{N-a} \frac{\partial}{\partial y_j}, \quad \quad 
      \df  =\sum_{i=1}^a x_i^2\frac{\partial}{\partial x_i}+\sum_{j=1}^{N-a} y_j^2\frac{\partial}{\partial y_j},
  \end{equation}
  \begin{equation}
 \dh =-2\sum_{i=1}^a x_i\frac{\partial}{\partial x_i}-2\sum_{j=1}^{N-a} y_j\frac{\partial}{\partial y_j}.
  \end{equation}
 Proposition \ref{prop:classification-sl2-twists} shows that there is a $2$-parameter $\sll_2$-equivariant $D_e$-bimodule structure on the rank-one module $D_e \cdot v_{\alpha,\beta}$, for some $\alpha,\beta\in \Bbbk$. Here $\de$, $\df$ and $\dh$ act on the module generator $v_{\alpha,\beta}$ by
 \begin{equation} \label{eqn-sl2-twist-on-mod}
     \de (v_{\alpha,\beta})=0, \quad \dh (v_{\alpha,\beta})=-(a\alpha+(N-a)\beta)v_{\alpha,\beta}, \quad \df(v_{\alpha,\beta})= (\alpha \dotnewtoni[1] + \beta \wdotnewtoni[1])v_{\alpha,\beta}.
 \end{equation}
This equivariant bimodule gives rise to the twist endo-functor on $D_e\# \sll_2$-modules as the tensor product
$   (D_e\cdot v_{\alpha,\beta})\otimes_{D_e}(\mbox{-}) $.

Basic properties of symmetric functions allow green dots to migrate
along a web in the following ways, which we record for later use.  See also \cite[Lemma
3.11]{QRSW1}. These manipulation of green dots, will be referred to as
\emph{green dot migration}.
  \begin{equation}
    \NB{\tikz[scale=0.45,font=\tiny]{\begin{scope}
  \coordinate (m) at (  0,  0);
  \coordinate (t) at (  0, 1);
  \coordinate (br) at (+.5,  -1);
  \coordinate (bl) at (-.5,  -1);
  \draw[>-] (bl) .. controls +(0,0.5) and + (0, 0) .. (m) node[pos =
  0, below] {$a$} coordinate[pos = 0.3] (ga);
  \draw[>-] (br) .. controls +(0,0.5) and + (0, 0) .. (m) node[pos =
  0, below] {$b$} coordinate[pos = 0.3] (gb);
  \draw[->] (m) -- (t) node[pos = 1, above] {$a+b$};
  \filldraw[draw= green!50!black, fill = white] (ga) circle (1mm)
  node[left, green!50!black] {$r$};
  \filldraw[draw= green!50!black, fill = white] (gb) circle (1mm)
  node[right, green!50!black] {$r$};
\end{scope}}} =
    \NB{\tikz[scale=0.45,font=\tiny]{\begin{scope}
  \coordinate (m) at (  0,  0);
  \coordinate (t) at (  0, 1);
  \coordinate (br) at (+.5,  -1);
  \coordinate (bl) at (-.5,  -1);
  \draw[>-] (bl) .. controls +(0,0.5) and + (0, 0) .. (m) node[pos =
  0, below] {$a$} coordinate[pos = 0.3] (ga);
  \draw[>-] (br) .. controls +(0,0.5) and + (0, 0) .. (m) node[pos =
  0, below] {$b$} coordinate[pos = 0.3] (gb);
  \draw[->] (m) -- (t) node[pos = 1, above] {$a+b$} coordinate[pos = 0.7] (gc);
    \filldraw[draw= green!50!black, fill = white] (gc) circle (1mm)
  node[right, green!50!black] {$r$};
\end{scope}}} , \quad 
   \NB{\tikz[scale=0.45,font=\tiny]{\begin{scope}
  \coordinate (m) at (  0,  0);
  \coordinate (b) at (  0, -1);
  \coordinate (tr) at (+.5,  1);
  \coordinate (tl) at (-.5,  1);
  \draw[->] (m) .. controls +(0,0) and + (0, -0.5) .. (tl) node[pos =
  1, above] {$a$} coordinate[pos = 0.7] (ga);
  \draw[->] (m) .. controls +(0,0) and + (0, -0.5) .. (tr) node[pos =
  1, above] {$b$} coordinate[pos = 0.7] (gb);
  \draw[>-] (b) -- (m) node[pos = 0, below] {$a+b$} coordinate[pos = 0.3] (gc);
   \filldraw[draw= green!50!black, fill = white] (gc) circle (1mm)
  node[left, green!50!black] {$r$};
\end{scope}}} =
   \NB{\tikz[scale=0.45,font=\tiny]{\begin{scope}
  \coordinate (m) at (  0,  0);
  \coordinate (b) at (  0, -1);
  \coordinate (tr) at (+.5,  1);
  \coordinate (tl) at (-.5,  1);
  \draw[->] (m) .. controls +(0,0) and + (0, -0.5) .. (tl) node[pos =
  1, above] {$a$} coordinate[pos = 0.7] (ga);
  \draw[->] (m) .. controls +(0,0) and + (0, -0.5) .. (tr) node[pos =
  1, above] {$b$} coordinate[pos = 0.7] (gb);
  \draw[>-] (b) -- (m) node[pos = 0, below] {$a+b$};
  \filldraw[draw= green!50!black, fill = white] (ga) circle (1mm)
  node[left, green!50!black] {$r$};
  \filldraw[draw= green!50!black, fill = white] (gb) circle (1mm)
  node[right, green!50!black] {$r$};
\end{scope}}} ,\quad
   \NB{\tikz[scale=0.45,font=\tiny]{\begin{scope}
  \coordinate (m) at (  0,  0);
  \coordinate (t) at (  0, 1);
  \coordinate (br) at (+.5,  -1);
  \coordinate (bl) at (-.5,  -1);
  \draw[>-] (bl) .. controls +(0,0.5) and + (0, 0) .. (m) node[pos =
  0, below] {$a$} coordinate[pos = 0.3] (ga);
  \draw[>-] (br) .. controls +(0,0.5) and + (0, 0) .. (m) node[pos =
  0, below] {$b$} coordinate[pos = 0.3] (gb);
  \draw[->] (m) -- (t) node[pos = 1, above] {$a+b$};
  \filldraw[draw= green!50!black, fill = green] (ga) circle (1mm)
  node[left, green!50!black] {$r$};
\end{scope}}} =
   \NB{\tikz[scale=0.45,font=\tiny]{\begin{scope}
  \coordinate (m) at (  0,  0);
  \coordinate (t) at (  0, 1);
  \coordinate (br) at (+.5,  -1);
  \coordinate (bl) at (-.5,  -1);
  \draw[>-] (bl) .. controls +(0,0.5) and + (0, 0) .. (m) node[pos =
  0, below] {$a$} coordinate[pos = 0.3] (ga);
  \draw[>-] (br) .. controls +(0,0.5) and + (0, 0) .. (m) node[pos =
  0, below] {$b$} coordinate[pos = 0.3] (gb);
  \draw[->] (m) -- (t) node[pos = 1, above] {$a+b$} coordinate[pos = 0.7] (gc);
  \filldraw[draw= green!50!black, fill=white] (gb) circle (1mm)
  node[right, green!50!black] {$r$};
  \filldraw[draw= green!50!black, fill = green] (gc) circle (1mm)
  node[right, green!50!black] {$r$};
\end{scope}}} , \quad
  \NB{\tikz[scale=0.45,font=\tiny]{\begin{scope}
  \coordinate (m) at (  0,  0);
  \coordinate (b) at (  0, -1);
  \coordinate (tr) at (+.5,  1);
  \coordinate (tl) at (-.5,  1);
  \draw[->] (m) .. controls +(0,0) and + (0, -0.5) .. (tl) node[pos =
  1, above] {$a$} coordinate[pos = 0.7] (ga);
  \draw[->] (m) .. controls +(0,0) and + (0, -0.5) .. (tr) node[pos =
  1, above] {$b$} coordinate[pos = 0.7] (gb);
  \draw[>-] (b) -- (m) node[pos = 0, below] {$a+b$};
  \filldraw[draw= green!50!black, fill = green] (ga) circle (1mm)
  node[left, green!50!black] {$r$};
\end{scope}}} =
  \NB{\tikz[scale=0.45,font=\tiny]{\begin{scope}
  \coordinate (m) at (  0,  0);
  \coordinate (b) at (  0, -1);
  \coordinate (tr) at (+.5,  1);
  \coordinate (tl) at (-.5,  1);
  \draw[->] (m) .. controls +(0,0) and + (0, -0.5) .. (tl) node[pos =
  1, above] {$a$} coordinate[pos = 0.7] (ga);
  \draw[->] (m) .. controls +(0,0) and + (0, -0.5) .. (tr) node[pos =
  1, above] {$b$} coordinate[pos = 0.7] (gb);
  \draw[>-] (b) -- (m) node[pos = 0, below] {$a+b$} coordinate[pos = 0.3] (gc);
  \filldraw[draw= green!50!black, fill = white] (gb) circle (1mm)
  node[right, green!50!black] {$r$};
   \filldraw[draw= green!50!black, fill = green] (gc) circle (1mm)
  node[left, green!50!black] {$r$};
\end{scope}}} .
\end{equation}

\subsection{Useful morphisms}
The next few lemmas are foam versions of $\mathfrak{sl}_2$-equivariant
maps of Soergel bimodules in \cite[Section 2.5]{QRSW2} or \cite[Lemma 3.21]{QRSW1} where
$t_1=t_2=0$ and we omit proofs of these basic facts.  These morphisms
play an important role in the link homology to be introduced in the
next section. 

\begin{lem}
  The two different orientations of the foam
  \[
    \NB{\tikz[scale=0.7,font=\tiny]{\begin{scope}
  \coordinate (ABCt) at (0,2);
  \coordinate (Ot) at (1,2);
  \coordinate (At) at (1.5,2.5);
  \coordinate (Bt) at (2,2);
  \coordinate (Ct) at (2.5,1.5);
  \begin{scope}[yshift= -2cm]
    \coordinate (ABCb) at (0,2);
    \coordinate (Ob) at (1,2);
    \coordinate (Ab) at (1.5,2.5);
    \coordinate (Bb) at (2,2);
    \coordinate (Cb) at (2.5,1.5);
  \end{scope}
  \begin{scope}[fill = white, fill opacity = 0.45, draw = black,
    draw opacity =1, very thin]
    \filldraw (Ob) .. controls +(0,0) and +(-0.3, 0) .. (Ab)
    coordinate[pos=0.5] (ABb) -- (At)
    .. controls +(-0.3, 0) and +(0,0) .. (Ot) -- (Ob);
    \fill (Ob) .. controls +(0,0) and +(-0.5, 0) .. (Cb) -- (Ct)
    .. controls +(-0.5, 0) and +(0,0) .. (Ot) coordinate[pos=0.5]
    (BCt) -- (Ob) coordinate[pos=0.5] (Om);
    \filldraw (ABb) .. controls +(0,0) and +(-0.3, 0) .. (Bb)-- (Bt)
    .. controls +(-0.3, 0) and +(0,0) .. (BCt) .. controls +(0, -0.3) and
    +(0,0) .. (Om) .. controls +(0,0) and +(0, 0.3)  .. (ABb);
    \filldraw (Ob) .. controls +(0,0) and +(-0.5, 0) .. (Cb) -- (Ct)
    .. controls +(-0.5, 0) and +(0,0) .. (Ot) -- (Ob);
    \filldraw (ABCt) -- (Ot) -- (Ob) -- (ABCb) -- (ABCt);
  \end{scope}
  
  \begin{scope}
    \node[left] at (ABCt) {$a+b+c$};
    \node[right] at (At) {$a$};
    \node[right] at (Bt) {$b$};
    \node[right] at (Ct) {$c$};
  \end{scope}
\end{scope}

}}
  \]
  induce isomorphisms
\[
      \begin{array}{crcl}
        \mapA \colon\thinspace & \NB{\tikz[font=\tiny,
                                 scale =0.65]{\begin{scope}
  \coordinate (b) at ( 0,-0.50);
  \coordinate (m) at (0,0);
  \coordinate (tr) at (+1,  1);
  \coordinate (tm) at (0,  1);
  \coordinate (tl) at (-1,  1);
  \draw[>-, >= to] (b) -- (m) node[pos=0, below] {$a+b+c$};
  \draw[-to] (m) .. controls +(0,0) and + (0, -0.5) .. (tl) node[pos =
  1, above] {$a$} coordinate[pos= 0.5] (ml) coordinate[pos = 0.25] (lml);
  \draw[-to] (m) .. controls +(0,0) and + (0, -0.5) .. (tr) node[pos =
  1, above] {$c$};
  \draw[-to] (ml)  .. controls +(0,0) and + (0, -0.5) .. (tm) node[pos =
  1, above] {$b$};
  \node[left] (lml) {$a+b$};
\end{scope}}}  &
                                                                      \to &  \NB{\tikz[font=\tiny, scale =0.65]{\begin{scope}
  \coordinate (b) at ( 0,-0.50);
  \coordinate (m) at (0,0);
  \coordinate (tr) at (+1,  1);
  \coordinate (tm) at (0,  1);
  \coordinate (tl) at (-1,  1);
  \draw[>-, >= to] (b) -- (m) node[pos=0, below] {$a+b+c$};
  \draw[-to] (m) .. controls +(0,0) and + (0, -0.5) .. (tl) node[pos =
  1, above] {$a$};
  \draw[-to] (m) .. controls +(0,0) and + (0, -0.5) .. (tr) node[pos =
  1, above] {$c$}  coordinate[pos= 0.5] (mr) coordinate[pos = 0.25] (lmr);
  \draw[-to] (mr)  .. controls +(0,0) and + (0, -0.5) .. (tm) node[pos =
  1, above] {$b$};
  \node[right] (lmr) {$b+c$};
\end{scope}}}   \\
\end{array}
\quad
      and
      \quad
\begin{array}{crcl}
        \mapA \colon\thinspace & \NB{\tikz[font=\tiny, scale =0.65]{\begin{scope}
  \coordinate (t) at ( 0,0.50);
  \coordinate (m) at (0,0);
  \coordinate (br) at (+1,  -1);
  \coordinate (bm) at ( 0,  -1);
  \coordinate (bl) at (-1,  -1);
  \draw[-to] (m) -- (t) node[pos=1, above] {$a+b+c$};
  \draw[>-, >=to] (bl) .. controls +(0,0.5) and + (0, 0) .. (m) node[pos =
  0, below] {$a$} coordinate[pos= 0.5] (ml) coordinate[pos = 0.75] (lml);
  \draw[>-, >=to] (br) .. controls +(0,0.5) and + (0, -0) .. (m) node[pos =
  0, below] {$c$};
  \draw[>-, >=to] (bm)  .. controls +(0,0.5) and + (0, 0) .. (ml) node[pos =
  0, below] {$b$};
  \node[left] (lml) {$a+b$};
\end{scope}}}  & \to &
     \NB{\tikz[font=\tiny, scale =0.65]{\begin{scope}
  \coordinate (t) at ( 0,0.50);
  \coordinate (m) at (0,0);
  \coordinate (br) at (+1,  -1);
  \coordinate (bm) at ( 0,  -1);
  \coordinate (bl) at (-1,  -1);
  \draw[-to] (m) -- (t) node[pos=1, above] {$a+b+c$};
  \draw[>-, >=to] (bl) .. controls +(0,0.5) and + (0, 0) .. (m) node[pos =
  0, below] {$a$};
  \draw[>-, >=to] (br) .. controls +(0,0.5) and + (0, -0) .. (m) node[pos =
  0, below] {$c$}  coordinate[pos= 0.5] (mr) coordinate[pos = 0.75] (lmr);
  \draw[>-, >=to] (bm)  .. controls +(0,0.5) and + (0, 0) .. (mr) node[pos =
  0, below] {$b$};
  \node[right] (lmr) {$b+c$};
\end{scope}}}  \\
\end{array}
    \]
    of $\mathfrak{sl}_2$-equivariant $\gll_N$-state spaces associated with webs. Their inverses are also
    denoted by $\mapA$.
\end{lem}

\begin{lem}
  The foams
 \[
   \NB{\tikz[scale=1,font=\tiny]{\begin{scope}
  \begin{scope}
    \coordinate (Lt) at (0,2);
    \coordinate (Rt) at (2,2);
    \coordinate (lt) at (0.5,2);
    \coordinate (rt) at (1.5,2);
    \coordinate (Lb) at (0,1);
    \coordinate (Rb) at (2,1);
  \end{scope}
  \draw (lt) .. controls +(0.3, 0.3) and +(-.3, 0.3) ..  (rt) node
  [pos=0.5, yshift= 0.1cm] {$a$};
  \draw (lt) .. controls +(0.3, -.3) and +(-.3, -.3) ..  (rt) node
  [pos=0.5, yshift= -0.1cm] {$b$};
  \draw (lt) .. controls +(0 , -.9) and +(0, -.9) ..  (rt);
  \draw (rt) -- (Rt) node[pos=1, right] {$a+b$} -- (Rb) -- (Lb) -- (Lt) -- (lt);
\end{scope}

}} \qquad \text{and} \qquad
   \NB{\tikz[scale=1,font=\tiny]{\begin{scope}[yscale=-1, font=\tiny]
  \begin{scope}
    \coordinate (Lt) at (0,2);
    \coordinate (Rt) at (2,2);
    \coordinate (lt) at (0.5,2);
    \coordinate (rt) at (1.5,2);
    \coordinate (Lb) at (0,1);
    \coordinate (Rb) at (2,1);
  \end{scope}
  \draw (lt) .. controls +(0.3, 0.3) and +(-.3, 0.3) ..  (rt) node
  [pos=0.5, yshift= -0.1cm] {$a$};
  \draw[white!50!black] (lt) .. controls +(0.3, -.3) and +(-.3, -.3) ..  (rt) node
  [pos=0.5, yshift= 0.1cm, black] {$b$};
  \draw (lt) .. controls +(0 , -.9) and +(0, -.9) ..  (rt);
  \draw (rt) -- (Rt) node[pos=1, right] {$a+b$} -- (Rb) -- (Lb) -- (Lt) -- (lt);
\end{scope}

}} 
  \]
  induce morphisms
    \[
      \begin{array}{crcl}
        \mapB \colon\thinspace &  \NB{\tikz[font=\tiny, scale =0.7]{\begin{scope}
  \coordinate (bm) at ( 0, -1);
  \coordinate (tm) at ( 0, 1);
  \draw[>->] (bm) -- (tm) node[above, pos =1] {$a+b$} node[below, pos =0] {$a+b$};
\end{scope}}}  & \to &  \NB{\tikz[font=\tiny, scale =0.7]{\begin{scope}
  \coordinate (bm) at ( 0, -1);
  \coordinate (cm) at ( 0, -0.6);
  \coordinate (sm) at ( 0,  0.6);
  \coordinate (tm) at ( 0, 1);
  \draw[->] (sm) -- (tm) node[above, pos =1] {$a+b$};
  \draw[>-] (bm) -- (cm) node[below, pos =0] {$a+b$};
  \draw[->-] (cm) .. controls + ( 0.6, 0.6) and + ( 0.6, -0.6) .. (sm)
  node [pos = 0.5, right] {$b$} coordinate[pos = 0.8] (gb);
  \draw[->-] (cm) .. controls + (-0.6, 0.6) and + (-0.6, -0.6) .. (sm)
  node [pos = 0.5,  left] {$a$} coordinate[pos = 0.2] (ga);
  \filldraw[draw= green!50!black, fill = white] (ga) circle (1mm)
  node[left, green!50!black] {$b {t}_1$};
  \filldraw[draw= green!50!black, fill = white] (gb) circle (1mm)
  node[right,  green!50!black] {$a {t}_2$};
\end{scope}}}   \end{array}
\quad
      \text{and}
      \quad
\begin{array}{crcl}
        \mapU \colon\thinspace
        & \NB{\tikz[font=\tiny, scale =0.7]{\begin{scope}
  \coordinate (bm) at ( 0, -1);
  \coordinate (cm) at ( 0, -0.6);
  \coordinate (sm) at ( 0,  0.6);
  \coordinate (tm) at ( 0, 1);
  \draw[->] (sm) -- (tm) node[above, pos =1] {$a+b$};
  \draw[>-] (bm) -- (cm) node[below, pos =0] {$a+b$};
  \draw[->-] (cm) .. controls + ( 0.6, 0.6) and + ( 0.6, -0.6) .. (sm)
  node [pos = 0.5, right] {$b$} coordinate[pos = 0.8] (gb);
  \draw[->-] (cm) .. controls + (-0.6, 0.6) and + (-0.6, -0.6) .. (sm)
  node [pos = 0.5,  left] {$a$} coordinate[pos = 0.2] (ga);
  \filldraw[draw= green!50!black, fill = white] (ga) circle (1mm)
  node[left, green!50!black] {$-b \bar{t}_1$};
  \filldraw[draw= green!50!black, fill = white] (gb) circle (1mm)
  node[right,  green!50!black] {$-a \bar{t}_2$};
\end{scope}}}
        & \to 
        &   \NB{\tikz[font=\tiny, scale =0.7]{}}   \end{array}
    \]
of $\mathfrak{sl}_2$-equivariant $\gll_N$-state spaces associated with webs.
\end{lem}

\begin{lem}
    The foams
 \[
   \NB{\tikz[xscale=1.4,font=\tiny]{\begin{scope}[xscale=0.5,yscale=0.5, font=\tiny]
  \begin{scope}
    \coordinate (At) at (0,0);
    \coordinate (Bt) at (0.5,1);
    \coordinate (Ct) at (3,1);
    \coordinate (Dt) at (2.5,0);
    \coordinate (Et) at (1,0.1);
    \coordinate (Ft) at (2,0.9);
  \end{scope}
  \coordinate (O) at (1.5, -0.5);
  \begin{scope}[yshift= -2cm]
    \coordinate (Ab) at (0,0);
    \coordinate (Bb) at (0.5,1);
    \coordinate (Cb) at (3,1);
    \coordinate (Db) at (2.5,0);
    \coordinate (Eb) at (0.8,0.5);
    \coordinate (Fb) at (2.2,0.5);
  \end{scope}

  \draw (Bt) -- (Bb) -- (Eb) -- (Fb) node[pos=0.5, below] {$a+b+c$}-- (Cb) -- (Ct) -- (Ft) -- (O);
  \filldraw[draw =  black, fill = white, fill opacity=0.5] (At) --
  (Et) -- (Dt) -- (Db) -- (Fb) -- (O) -- (Eb) -- (Ab) -- (At);
  \draw (Bt) -- (Ft)-- (Et) node[pos=0.5, xshift =-0.2cm] {$c$} -- (O);

  \begin{scope}
    \node[xshift = -0.3cm] at (Ab) {$b+c$};
    \node[xshift = -0.1cm] at (Bt) {$a$};
    \node[xshift = 0.3cm] at (Ct) {$a+c$};
    \node[xshift = 0.1cm] at (Db) {$b$};
    
  \end{scope}



\end{scope}

}} \qquad \text{and} \qquad
   \NB{\tikz[xscale=1.4,font=\tiny]{\begin{scope}[yscale=0.5, xscale=0.5, font=\tiny]
  \begin{scope}
    \coordinate (At) at (0,0);
    \coordinate (Bt) at (0.5,1);
    \coordinate (Ct) at (3,1);
    \coordinate (Dt) at (2.5,0);
    \coordinate (Et) at (2,0.1);
    \coordinate (Ft) at (1,0.9);
  \end{scope}
  \coordinate (O) at (1.5, -0.5);
  \begin{scope}[yshift= -2cm]
    \coordinate (Ab) at (0,0);
    \coordinate (Bb) at (0.5,1);
    \coordinate (Cb) at (3,1);
    \coordinate (Db) at (2.5,0);
    \coordinate (Eb) at (0.8,0.5);
    \coordinate (Fb) at (2.2,0.5);
  \end{scope}

  \draw (Bt) -- (Bb) -- (Eb) -- (Fb) node[pos=0.5, below] {$a+b+c$}-- (Cb) -- (Ct) -- (Ft) -- (O);
  \filldraw[draw =  black, fill = white, fill opacity=0.5] (At) --
  (Et) -- (Dt) -- (Db) -- (Fb) -- (O) -- (Eb) -- (Ab) -- (At);
  \draw (Bt) -- (Ft)-- (Et) node[pos=0.5, xshift =0.2cm] {$c$} -- (O);

  \begin{scope}
    \node[xshift = -0.1cm] at (Ab) {$b$};
    \node[xshift = -0.3cm] at (Bt) {$a+c$};
    \node[xshift = 0.1cm] at (Ct) {$a$};
    \node[xshift = 0.3cm] at (Db) {$b+c$};
    
  \end{scope}



\end{scope}

}} 
  \]
  induce morphisms
    \[
      \begin{array}{crcl}
        \mapH \colon\thinspace &  \NB{\tikz[font=\tiny, scale =0.7]{\begin{scope}
  \coordinate (bl) at (-0.5, -1);
  \coordinate (br) at ( 0.5, -1);
  \coordinate (bm) at (  0,-0.3);
  \coordinate (tl) at (-0.5,  1);
  \coordinate (tr) at ( 0.5,  1);
  \coordinate (tm) at (  0, 0.3);
  \draw[>-]  (bl) .. controls +( 0, 0.5) and +(0,0) .. (bm)
  node[below, pos = 0] {$a$} coordinate[pos =0.3] (ga);
  \draw[>-]  (br) .. controls +( 0, 0.5) and +(0,0) .. (bm)
  node[below, pos = 0] {$b+c$};
  \draw[<-]  (tl) .. controls +( 0, -0.5) and +(0,0) .. (tm)
  node[above, pos = 0] {$a+c$};
  \draw[<-]  (tr) .. controls +( 0, -0.5) and +(0,0) .. (tm)
  node[above, pos = 0] {$b$} coordinate[pos =0.3] (gb);
  \draw [->-] (bm) -- (tm) node[left, pos = 0.5] {$a+b+c$};
  \filldraw[draw= green!50!black, fill = white] (gb) circle (1mm)
  node[right, green!50!black] {$a {t}_2$};
  \filldraw[draw= green!50!black, fill = white] (ga) circle (1mm)
  node[left, green!50!black] {$b {t}_1$};
\end{scope}}}  & \to &  \NB{\tikz[font=\tiny, scale =0.7]{\begin{scope}
  \coordinate (bl) at (-0.5, -1);
  \coordinate (br) at ( 0.5, -1);
  \coordinate (tl) at (-0.5,  1);
  \coordinate (tr) at ( 0.5,  1);
  \draw[>->] (bl) -- (tl) node[pos = 0, below] {$a$} node[pos = 1,
  above] {$a+c$} coordinate[pos = 0.6] (ml);
  \draw[>->] (br) -- (tr) node[pos = 0, below] {$b+c$} node[pos = 1, above] {$b$} coordinate[pos = 0.4] (mr);
  \draw[->-] (mr) -- (ml) node [pos= 0.5, above] {$c$};
\end{scope}}}   \\
\end{array}
       \]
and
\[
      \begin{array}{crcl}
        \mapH \colon\thinspace &  \NB{\tikz[font=\tiny, scale =0.7]{\begin{scope}
  \coordinate (bl) at (-0.5, -1);
  \coordinate (br) at ( 0.5, -1);
  \coordinate (bm) at (  0,-0.3);
  \coordinate (tl) at (-0.5,  1);
  \coordinate (tr) at ( 0.5,  1);
  \coordinate (tm) at (  0, 0.3);
  \draw[>-]  (bl) .. controls +( 0, 0.5) and +(0,0) .. (bm)
  node[below, pos = 0] {$a+c$};
  \draw[>-]  (br) .. controls +( 0, 0.5) and +(0,0) .. (bm)
  node[below, pos = 0] {$b$} coordinate[pos =0.3] (gb);
  \draw[<-]  (tl) .. controls +( 0, -0.5) and +(0,0) .. (tm)
  node[above, pos = 0] {$a$} coordinate[pos =0.3] (ga);
  \draw[<-]  (tr) .. controls +( 0, -0.5) and +(0,0) .. (tm)
  node[above, pos = 0] {$b+c$};
  \draw [->-] (bm) -- (tm) node[left, pos = 0.5] {$a+b+c$};
  \filldraw[draw= green!50!black, fill = white] (gb) circle (1mm)
  node[right, green!50!black] {$a {t}_2$};
  \filldraw[draw= green!50!black, fill = white] (ga) circle (1mm)
  node[left, green!50!black] {$b {t}_1$};
\end{scope}}}  & \to &                                   \NB{\tikz[font=\tiny, scale =0.7]{\begin{scope}
  \coordinate (bl) at (-0.5, -1);
  \coordinate (br) at ( 0.5, -1);
  \coordinate (tl) at (-0.5,  1);
  \coordinate (tr) at ( 0.5,  1);
  \draw[>->] (bl) -- (tl) node[pos = 0, below] {$a+c$} node[pos = 1,
  above] {$a$} coordinate[pos = 0.4] (ml);
  \draw[>->] (br) -- (tr) node[pos = 0, below] {$b$} node[pos = 1, above] {$b+c$} coordinate[pos = 0.6] (mr);
  \draw[->-] (ml) -- (mr) node [pos= 0.5, above] {$c$};
\end{scope}}}   \\
\end{array}
    \]
     of $\mathfrak{sl}_2$-equivariant $\gll_N$-state spaces associated with webs.
\end{lem}

\begin{lem}
  The foams
   \[
   \NB{\tikz[xscale=1.4,font=\tiny]{\begin{scope}[yscale=-0.5, xscale=0.5, font=\tiny]
  \begin{scope}
    \coordinate (At) at (0,0);
    \coordinate (Bt) at (0.5,1);
    \coordinate (Ct) at (3,1);
    \coordinate (Dt) at (2.5,0);
    \coordinate (Et) at (2,0.1);
    \coordinate (Ft) at (1,0.9);
  \end{scope}
  \coordinate (O) at (1.5, -0.5);
  \begin{scope}[yshift= -2cm]
    \coordinate (Ab) at (0,0);
    \coordinate (Bb) at (0.5,1);
    \coordinate (Cb) at (3,1);
    \coordinate (Db) at (2.5,0);
    \coordinate (Eb) at (0.8,0.5);
    \coordinate (Fb) at (2.2,0.5);
  \end{scope}

  \draw (Bt) -- (Bb) -- (Eb) -- (Fb) node[pos=0.5, above] {$a+b+c$}-- (Cb) -- (Ct) -- (Ft) -- (O);
  \filldraw[draw =  black, fill = white, fill opacity=0.5] (At) --
  (Et) -- (Dt) -- (Db) -- (Fb) -- (O) -- (Eb) -- (Ab) -- (At);
  \draw (Bt) -- (Ft)-- (Et) node[pos=0.5, xshift =0.2cm] {$c$} -- (O);

  \begin{scope}
    \node[xshift = -0.1cm] at (Ab) {$b$};
    \node[xshift = -0.3cm] at (Bt) {$a+c$};
    \node[xshift = 0.1cm] at (Ct) {$a$};
    \node[xshift = 0.3cm] at (Db) {$b+c$};
    
  \end{scope}



\end{scope}

}} \qquad \text{and} \qquad
   \NB{\tikz[xscale=1.4,font=\tiny]{\begin{scope}[xscale=0.5,yscale=-0.5, font=\tiny]
  \begin{scope}
    \coordinate (At) at (0,0);
    \coordinate (Bt) at (0.5,1);
    \coordinate (Ct) at (3,1);
    \coordinate (Dt) at (2.5,0);
    \coordinate (Et) at (1,0.1);
    \coordinate (Ft) at (2,0.9);
  \end{scope}
  \coordinate (O) at (1.5, -0.5);
  \begin{scope}[yshift= -2cm]
    \coordinate (Ab) at (0,0);
    \coordinate (Bb) at (0.5,1);
    \coordinate (Cb) at (3,1);
    \coordinate (Db) at (2.5,0);
    \coordinate (Eb) at (0.8,0.5);
    \coordinate (Fb) at (2.2,0.5);
  \end{scope}

  \draw (Bt) -- (Bb) -- (Eb) -- (Fb) node[pos=0.5, above] {$a+b+c$}-- (Cb) -- (Ct) -- (Ft) -- (O);
  \filldraw[draw =  black, fill = white, fill opacity=0.5] (At) --
  (Et) -- (Dt) -- (Db) -- (Fb) -- (O) -- (Eb) -- (Ab) -- (At);
  \draw (Bt) -- (Ft)-- (Et) node[pos=0.5, xshift =-0.2cm] {$c$} -- (O);

  \begin{scope}
    \node[xshift = -0.3cm] at (Ab) {$b+c$};
    \node[xshift = -0.1cm] at (Bt) {$a$};
    \node[xshift = 0.3cm] at (Ct) {$a+c$};
    \node[xshift = 0.1cm] at (Db) {$b$};
    
  \end{scope}



\end{scope}

}} 
  \]
induce morphisms
  \[
      \begin{array}{crcl}
        \mapX \colon\thinspace &  \NB{\tikz[font=\tiny, scale =0.7]{}}  & \to &   \NB{\tikz[font=\tiny, scale =0.7]{\begin{scope}
  \coordinate (bl) at (-0.5, -1);
  \coordinate (br) at ( 0.5, -1);
  \coordinate (bm) at (  0,-0.3);
  \coordinate (tl) at (-0.5,  1);
  \coordinate (tr) at ( 0.5,  1);
  \coordinate (tm) at (  0, 0.3);
  \draw[>-]  (bl) .. controls +( 0, 0.5) and +(0,0) .. (bm)
  node[below, pos = 0] {$a$} coordinate[pos =0.3] (ga);
  \draw[>-]  (br) .. controls +( 0, 0.5) and +(0,0) .. (bm)
  node[below, pos = 0] {$b+c$};
  \draw[<-]  (tl) .. controls +( 0, -0.5) and +(0,0) .. (tm)
  node[above, pos = 0] {$a+c$};
  \draw[<-]  (tr) .. controls +( 0, -0.5) and +(0,0) .. (tm)
  node[above, pos = 0] {$b$} coordinate[pos =0.3] (gb);
  \draw [->-] (bm) -- (tm) node[left, pos = 0.5] {$a+b+c$};
  \filldraw[draw= green!50!black, fill = white] (gb) circle (1mm)
  node[right, green!50!black] {$-a \bar{t}_2$};
  \filldraw[draw= green!50!black, fill = white] (ga) circle (1mm)
  node[left, green!50!black] {$-b \bar{t}_1$};
\end{scope}}}   \\
\end{array}
       \]
and
\[
      \begin{array}{crcl}
        \mapX \colon\thinspace & \NB{\tikz[font=\tiny, scale =0.7]{}}  & \to &
 \NB{\tikz[font=\tiny, scale =0.7]{\begin{scope}
  \coordinate (bl) at (-0.5, -1);
  \coordinate (br) at ( 0.5, -1);
  \coordinate (bm) at (  0,-0.3);
  \coordinate (tl) at (-0.5,  1);
  \coordinate (tr) at ( 0.5,  1);
  \coordinate (tm) at (  0, 0.3);
  \draw[>-]  (bl) .. controls +( 0, 0.5) and +(0,0) .. (bm)
  node[below, pos = 0] {$a+c$};
  \draw[>-]  (br) .. controls +( 0, 0.5) and +(0,0) .. (bm)
  node[below, pos = 0] {$b$} coordinate[pos =0.3] (gb);
  \draw[<-]  (tl) .. controls +( 0, -0.5) and +(0,0) .. (tm)
  node[above, pos = 0] {$a$} coordinate[pos =0.3] (ga);
  \draw[<-]  (tr) .. controls +( 0, -0.5) and +(0,0) .. (tm)
  node[above, pos = 0] {$b+c$};
  \draw [->-] (bm) -- (tm) node[left, pos = 0.5] {$a+b+c$};
  \filldraw[draw= green!50!black, fill = white] (gb) circle (1mm)
  node[right, green!50!black] {$-a \bar{t}_2$};
  \filldraw[draw= green!50!black, fill = white] (ga) circle (1mm)
  node[left, green!50!black] {$-b \bar{t}_1$};
\end{scope}}}   \\
\end{array}
    \]
of $\mathfrak{sl}_2$-equivariant $\gll_N$-state spaces associated with webs.

\end{lem}

\begin{lem} \label{lem:cup-cap-maps}
  The foams
   \[
   \NB{\tikz[scale=1,font=\tiny]{}} \qquad \text{and} \qquad
   \NB{\tikz[scale=1,font=\tiny]{}} 
  \]
induce morphisms
  \[
      \begin{array}{crcl}
        \mapC \colon\thinspace &  \emptyset  & \to &  \NB{\tikz[font =\tiny, scale= 0.7]{\begin{scope}[scale= 1]
  \draw[->] (0,0) arc (0:360: 0.8) node[pos=1, right] {$a$} coordinate[pos=0.45]
  (ga) coordinate[pos=0.55] (gb); 
  \filldraw[draw= green!50!black, fill = green] (ga) circle (1mm)
  node[left, green!50!black] {$\frac{a}{2}$};
  \filldraw[draw= green!50!black, fill = white] (gb) circle (1mm)
  node[left, green!50!black] {$\frac{N-a}{2}$};
\end{scope}
}}      \\
\end{array}
       \]
and
\[
      \begin{array}{crcl}
        \mapC \colon\thinspace & \NB{\tikz[font =\tiny, scale= 0.7]{\begin{scope}[scale= 1]
  \draw[->] (0,0) arc (0:360: 0.8) node[pos=1, right] {$a$} coordinate[pos=0.45]
  (ga) coordinate[pos=0.55] (gb); 
  \filldraw[draw= green!50!black, fill = green] (ga) circle (1mm)
  node[left, green!50!black] {$\frac{-a}{2}$};
  \filldraw[draw= green!50!black, fill = white] (gb) circle (1mm)
  node[left, green!50!black] {$\frac{a-N}{2}$};
\end{scope}
}}   & \to & \emptyset   \\
\end{array}
    \]
of $\mathfrak{sl}_2$-equivariant $\gll_N$-state spaces associated with webs.
\end{lem}

\begin{lem}
  The foam
   \[
   \NB{\tikz[scale=1,font=\tiny]{}}
  \]
induces a morphism
  \[
      \begin{array}{crcl}
        \mapS \colon\thinspace & \NB{\tikz[font =\tiny, scale= 0.7]{\begin{scope}[scale= 1]
  \draw [->] (45:0.8) arc (-45:-135:0.8) node [pos=1, above] {$a$};
  \draw [<-] (-45:0.8) arc (45: 135:0.8) node [pos=0, below] {$a$};
\end{scope}
}}     & \to &    \NB{\tikz[font =\tiny, scale= 0.7]{\begin{scope}[scale= 1]
  \draw [->] (45:0.8) arc (135:225:0.8) coordinate[pos=0.3] (ga) node
  [pos=1, below] {$a$};
  \draw [<-] (135:0.8) arc (-135:-225:-0.8) coordinate[pos=0.7] (gb)
  node [pos=0, above] {$a$};
  \filldraw[draw= green!50!black, fill = green] (ga) circle (1mm)
  node[right, green!50!black] {$\frac{-a}{2}$};
  \filldraw[draw= green!50!black, fill = white] (gb) circle (1mm)
  node[left, green!50!black] {$\frac{a-N}{2}$};
\end{scope}
}}    \\
\end{array}
       \]
    of $\mathfrak{sl}_2$-equivariant webs.
\end{lem}

These morphisms can be composed in order to construct more sophisticated
morphisms between $\sll_2$-equivariant $\gll_N$-state spaces associated with webs.

\begin{exa}
  The composition of elementary foams
  \begin{equation}
  \NB{\tikz[xscale = 3, yscale = 3]{
       \node (i0) at (0, 0) { \NB{\tikz[font= \tiny,
  scale=0.6, yscale=1]{\begin{scope}
  \coordinate (b) at ( 0,-0.50);
  \coordinate (m) at (0,0);
  \coordinate (tr) at (+1,  1);
  \coordinate (tm) at (0,  1);
  \coordinate (tl) at (-1,  1);
  \draw[ >= to] (b) -- (m) node[pos=0, below] {};
  \draw[-to] (m) .. controls +(0,0) and + (0, -0.5) .. (tl) node[pos =
  1, above] {$2$} coordinate[pos= 0.5] (ml) coordinate[pos = 0.25] (lml);
  \draw[-to] (m) .. controls +(0,0) and + (0, -0.5) .. (tr) node[pos =
  1, above] {$1$};
  \node[left] (lml) {};
    \coordinate (tX) at ( 0,-0.50);
  \coordinate (mX) at (0,-1);
  \coordinate (brX) at (+1,  -2);
  \coordinate (bmX) at ( 0,  -2);
  \coordinate (blX) at (-1,  -2);
  \draw[-to] (mX) -- (tX) node[pos=0.5, left] {$3$};
  \draw[>-, >=to] (blX) .. controls +(0,0.5) and + (0, 0) .. (mX) node[pos =
  0, below] {$2$} coordinate[pos= 0.5] (mlX) coordinate[pos = 0.75] (lmlX);
  \draw[>-, >=to] (brX) .. controls +(0,0.5) and + (0, -0) .. (mX) node[pos =
  0, below] {$1$} coordinate[pos = 0.25] (gbU) ;
    \filldraw[draw= green!50!black, fill = white] (gbU) circle (1mm)
  node[left, green!50!black] {$t_2$};
  \node[left] (lmlX) {};
\end{scope}

}} };
    \node (i1) at (1,0) { \NB{\tikz[font= \tiny,
  scale=0.6, yscale=1]{\begin{scope}
  \coordinate (b) at ( 0,-0.50);
  \coordinate (m) at (0,-0.3);
  \coordinate (tr) at (+1,  1);
  \coordinate (tm) at (-.5,  1);
  \coordinate (tl) at (-1,  1);
 \draw[ >= to] (b) -- (m) node[pos=0, below] {};
  \draw[-to] (m) .. controls +(0,0) and + (0, -0.8) .. (tl) node[pos =
  1, above] {$2$} coordinate[pos= 0.75] (ml) coordinate[pos = 0.2] (lml) coordinate[pos = 0.9] (lmlh) coordinate[pos = 0.5] (lmlm);
  \draw[-to] (m) .. controls +(0,0) and + (0, -0.5) .. (tr) node[pos =
  1, above] {$1$};
    \draw (lml)  .. controls +(0,.4) and + (.4, 0) .. (lmlh) node[pos =
  1, above] {} coordinate[pos = 0.5] (X);
    \filldraw[draw= green!50!black, fill = white] (X) circle (1mm)
  node[right, green!50!black] {$t_2$};
      \filldraw[draw= green!50!black, fill = white] (lmlm) circle (1mm)
  node[left, green!50!black] {$t_1$};
    \coordinate (tX) at ( 0,-0.50);
  \coordinate (mX) at (0,-1.2);
  \coordinate (brX) at (+1,  -2);
  \coordinate (bmX) at ( 0,  -2);
  \coordinate (blX) at (-1,  -2);
  \draw[-to] (mX) -- (tX) node[pos=1, above] {};
  \draw[>-, >=to] (blX) .. controls +(0,0.5) and + (0, 0) .. (mX) node[pos =
  0, below] {$2$} coordinate[pos= 0.5] (mlX) coordinate[pos = 0.75] (lmlX);
  \draw[>-, >=to] (brX) .. controls +(0,0.5) and + (0, -0) .. (mX) node[pos =
  0, below] {$1$} coordinate[pos = 0.25] (gbU) ;
    \filldraw[draw= green!50!black, fill = white] (gbU) circle (1mm)
  node[left, green!50!black] {$t_2$};
  \node[left] (lmlX) {};
\end{scope}

}} };
      \node (i2) at (2, 0) { \NB{\tikz[font= \tiny,
  scale=0.6, yscale=1]{\begin{scope}
  \coordinate (b) at ( 0,-0.50);
  \coordinate (m) at (0,-0.3);
  \coordinate (tr) at (+1,  1);
  \coordinate (tm) at (-.5,  1);
  \coordinate (tl) at (-1,  1);
 \draw[ >= to] (b) -- (m) node[pos=0, below] {};
  \draw[-to] (m) .. controls +(0,0) and + (0, -0.8) .. (tl) node[pos =
  1, above] {$2$} coordinate[pos= 0.75] (ml)  coordinate[pos = 0.9] (lmlh) coordinate[pos = 0.6] (lmlm);
  \draw[-to] (m) .. controls +(0,0) and + (0, -0.5) .. (tr) node[pos =
  1, above] {$1$} coordinate[pos = 0.5] (lml);
    \draw (lml)  .. controls +(0,0) and + (.4, 0) .. (lmlh) node[pos =
  1, above] {} coordinate[pos = 0.5] (X);
    \filldraw[draw= green!50!black, fill = white] (X) circle (1mm)
  node[above, green!50!black] {$t_2$};
      \filldraw[draw= green!50!black, fill = white] (lmlm) circle (1mm)
  node[left, green!50!black] {$t_1$};
    \coordinate (tX) at ( 0,-0.50);
  \coordinate (mX) at (0,-1.2);
  \coordinate (brX) at (+1,  -2);
  \coordinate (bmX) at ( 0,  -2);
  \coordinate (blX) at (-1,  -2);
  \draw[-to] (mX) -- (tX) node[pos=1, above] {};
  \draw[>-, >=to] (blX) .. controls +(0,0.5) and + (0, 0) .. (mX) node[pos =
  0, below] {$2$} coordinate[pos= 0.5] (mlX) coordinate[pos = 0.75] (lmlX);
  \draw[>-, >=to] (brX) .. controls +(0,0.5) and + (0, -0) .. (mX) node[pos =
  0, below] {$1$} coordinate[pos = 0.25] (gbU) ;
    \filldraw[draw= green!50!black, fill = white] (gbU) circle (1mm)
  node[left, green!50!black] {$t_2$};
  \node[left] (lmlX) {};
\end{scope}

}} };
    \node (i3) at (3, 0) { \NB{\tikz[font= \tiny,
  scale=0.6]{\begin{scope}

  \coordinate (b) at ( 0,-0.50);
  \coordinate (m) at (0,-0.3);
  \coordinate (tr) at (+1,  1);
  \coordinate (tm) at (-.5,  1);
  \coordinate (tl) at (-1,  1);
  \coordinate (tX) at ( 0,-0.50);
  \coordinate (mX) at (0,-1.2);
  \coordinate (brX) at (+1,  -2);
  \coordinate (bmX) at ( 0,  -2);
  \coordinate (blX) at (-1,  -2);

  \draw[>->] (blX) -- (tl) node[above] {$2$} node[pos=0, below] {$2$}
  coordinate[pos=0.2] (ml1) coordinate[pos=0.8] (ml2); 
  \draw[>->] (brX) -- (tr) node[above] {$1$} node[pos=0, below] {$1$}
  coordinate[pos=0.3] (mr1) coordinate[pos=0.7] (mr2); 
  \draw[->-] (ml1) -- (mr1);
  \draw[->-] (mr2) -- (ml2) coordinate[pos=0.3] (X);
  
    \filldraw[draw= green!50!black, fill = white] (X) circle (1mm)
  node[above, green!50!black] {$t_2$};
\end{scope}

}} };
  \draw[->] (i0) -- (i1)  node[pos=0.5, above] {$\mapB$};
      \draw[->] (i1) -- (i2)  node[pos=0.5, above] {$\mapA$};
       \draw[->] (i2) -- (i3)  node[pos=0.5, above] {$\mapH$};
      }} 
\end{equation}
  induces a morphism of $\sll_2$-equivariant $\gll_N$-state spaces associated with webs. Adding a green dot
  with multiplicity $-t_2$ on the bottom right leg, we end up with a
  morphism
  \begin{equation}
    \NB{
      \tikz{
        \node (a) at (0,0) {\NB{\tikz[scale= 0.6, font=\tiny]{\begin{scope}
  \coordinate (b) at ( 0,-0.50);
  \coordinate (m) at (0,0);
  \coordinate (tr) at (+1,  1);
  \coordinate (tm) at (0,  1);
  \coordinate (tl) at (-1,  1);
  \draw[ >= to] (b) -- (m) node[pos=0, below] {};
  \draw[-to] (m) .. controls +(0,0) and + (0, -0.5) .. (tl) node[pos =
  1, above] {$2$} coordinate[pos= 0.5] (ml) coordinate[pos = 0.25] (lml);
  \draw[-to] (m) .. controls +(0,0) and + (0, -0.5) .. (tr) node[pos =
  1, above] {$1$};
  \node[left] (lml) {};
    \coordinate (tX) at ( 0,-0.50);
  \coordinate (mX) at (0,-1);
  \coordinate (brX) at (+1,  -2);
  \coordinate (bmX) at ( 0,  -2);
  \coordinate (blX) at (-1,  -2);
  \draw[-to] (mX) -- (tX) node[pos=0.5, left] {$3$};
  \draw[>-, >=to] (blX) .. controls +(0,0.5) and + (0, 0) .. (mX) node[pos =
  0, below] {$2$} coordinate[pos= 0.5] (mlX) coordinate[pos = 0.75] (lmlX);
  \draw[>-, >=to] (brX) .. controls +(0,0.5) and + (0, -0) .. (mX) node[pos =
  0, below] {$1$} coordinate[pos = 0.25] (gbU) ;
    
\end{scope}

}}};
        \node (b) at (4,0) {\NB{\tikz[scale=0.6,font=\tiny]{\begin{scope}

  \coordinate (b) at ( 0,-0.50);
  \coordinate (m) at (0,-0.3);
  \coordinate (tr) at (+1,  1);
  \coordinate (tm) at (-.5,  1);
  \coordinate (tl) at (-1,  1);
  \coordinate (tX) at ( 0,-0.50);
  \coordinate (mX) at (0,-1.2);
  \coordinate (brX) at (+1,  -2);
  \coordinate (bmX) at ( 0,  -2);
  \coordinate (blX) at (-1,  -2);

  \draw[>->] (blX) -- (tl) node[above] {$2$} node[pos=0, below] {$2$}
  coordinate[pos=0.2] (ml1) coordinate[pos=0.8] (ml2); 
  \draw[>->] (brX) -- (tr) node[above] {$1$} node[pos=0, below] {$1$}
  coordinate[pos=0.3] (mr1) coordinate[pos=0.7] (mr2)
  coordinate[pos=0.15] (Y); 
  \draw[->-] (ml1) -- (mr1);
  \draw[->-] (mr2) -- (ml2) coordinate[pos=0.3] (X);
  
    \filldraw[draw= green!50!black, fill = white] (X) circle (1mm)
  node[above, green!50!black] {$t_2$};
    \filldraw[draw= green!50!black, fill = white] (Y) circle (1mm)
  node[right, green!50!black] {$-t_2$};

\end{scope}

}}};
        \draw [-to] (a) -- (b);
        }
    }
  \end{equation}
  of $\sll_2$-equivariant $\gll_N$-state spaces associated with webs.
\end{exa}

\section{Link homology, definition and invariance}
\label{homology:sec}
In the previous section, we associated a $\RN$-module to any closed
web.  This construction categorifies the MOY calculus which describes
the category of $\mathcal{U}_q(\mathfrak{gl}_N)$ representations
generated by exterior powers of the fundamental representation in a
diagrammatic fashion.  It leads to a definition of
Khovanov--Rozansky $\mathfrak{gl}_N$-link homology.  For more details,
see \cite{RW1}.

We also saw in the previous section how $\mathfrak{sl}_2$ acts on the
state space associated to a web.  In this section we show how the
$\mathfrak{sl}_2$-action extends to Khovanov--Rozansky homology.

\begin{conv}
  In web diagrams, for space purposes, we will often not write the
  thicknesses of the edges when they have thickness $1$ or $2$. A
  ``normal'' edge (\NB{\tikz{\draw (0,0) -- (0.5,0); }}) means
  thickness $1$, a double edge (\NB{\tikz{\draw[double] (0,0) --
    (0.5,0);}}) means thickness $2$. Edges of higher thickness (only
    thickness $3$ will appear in the proof of Reidemeister III move)
    will be thicker (\NB{\tikz{\draw[thick] (0,0) -- (0.5,0);)}}) and their thicknesses will be displayed.

Also, fix from now on parameters $t_1,t_2\in\Bbbk$ such that $t_1+t_2=1$. Then $\bar{t}_1=1-t_1$ and $\bar{t}_2=1-t_2$ also add up to 1. Thanks to equations \eqref{eq:h-act-pol}--\eqref{eq:h-act-saddle}, such choices of parameters make the $\dh$-operator on foams easier to read off: it is just the negative of the usual degree operator.
  \end{conv}

\subsection{Link homology}
\label{sec:lh}

As usual for Khovanov-like link homology theories, we associate a
hypercube shaped complex to any link diagram by specifying locally a
length-2 complex to any crossing. 
We define the following
cohomologically graded braiding complexes:

\begin{equation} \label{eqn:def-T}
  T= \NB{\tikz[xscale = 0.6]{\begin{scope}[font=\tiny]
  \draw[->] (0.5, -0.5) ..controls +(0,0.3) and +(0,-0.3) .. (-0.5,
  0.5);
  \fill[white] (0,0) circle (2mm);
  \draw[->] (-0.5, -0.5) ..controls +(0,0.3) and +(0,-0.3) .. (0.5,
  0.5);
\end{scope}}} :=
\NB{\tikz[xscale = 3.5, yscale = 3]{
    \node (i0) at (0, 0) {$q^{-1}$ \NB{\tikz[font= \tiny,
  scale=0.6]{\begin{scope}
  \coordinate (bl) at (-0.5, -1);
  \coordinate (br) at ( 0.5, -1);
  \coordinate (tl) at (-0.5,  1);
  \coordinate (tr) at ( 0.5,  1);
    \coordinate (ml) at (-0.5,  .6);
 \coordinate (mr) at (0.5,  .6);
 \coordinate (G) at (0.8,-0.1);

   \draw[->] (bl) -- (tl);
        \filldraw[draw= green!50!black, fill = white] (ml) circle (1mm) 
  node[left, green!50!black] 
  {$-t_1$};
  
    \draw[->] (br) -- (tr);
        \filldraw[draw= green!50!black, fill = white] (mr) circle (1mm) 
  node[right, green!50!black] 
  {$-t_2$};

\end{scope}}} };
     \node (i1) at (-1, 0) {\NB{\tikz[font= \tiny,
  scale=0.6]{\begin{scope}
  \coordinate (bl) at (-0.5, -1);
  \coordinate (br) at ( 0.5, -1);
  \coordinate (bm) at (  0,-0.3);
  \coordinate (tl) at (-0.5,  1);
  \coordinate (tr) at ( 0.5,  1);
  \coordinate (tm) at (  0, 0.3);
  \draw[>-]  (bl) .. controls +( 0, 0.5) and +(0,0) .. (bm);
  \draw[>-]  (br) .. controls +( 0, 0.5) and +(0,0) .. (bm);
  \draw[<-]  (tl) .. controls +( 0, -0.5) and +(0,0) .. (tm);

  \draw[<-]  (tr) .. controls +( 0, -0.5) and +(0,0) .. (tm);
  \draw [double] (bm) -- (tm);
 
\end{scope}}} };
\draw[->] (i1) -- (i0) coordinate[pos=0.5] (a);
\node[above] at (a) {\NB{\tikz[font=\tiny, scale=.5]{\begin{scope}
  \begin{scope}
    \coordinate (L1) at (0.2,0.4);
    \coordinate (L2) at (0,0);
    \coordinate (R1) at (2.2,0.4);
    \coordinate (R2) at (2,0);
    \coordinate (ML) at (0.6, 0.2);
    \coordinate (MR) at (1.6, 0.2);
    \draw[double] (ML) -- (MR);
    \draw (MR) .. controls +(0, 0) and +(-0.3,0) .. (R1) ;
    \draw (MR) .. controls +(0, 0) and +(-0.3,0) .. (R2);
    \draw (L1) .. controls +( 0.3, 0) and +(0,0) .. (ML);
    \draw (L2) .. controls +( 0.3, 0) and +(0,0) .. (ML);
  \end{scope}  
 \begin{scope}[yshift = 1cm]
    \coordinate (L1B) at (0.2,0.4);
    \coordinate (L2B) at (0,0);
    \coordinate (R1B) at (2.2,0.4);
    \coordinate (R2B) at (2,0);
    \draw (L1B) .. controls +( 0, 0) and +(0,0) .. (R1B); 
    \draw (L2B) .. controls +( 0, 0) and +(0,0) .. (R2B);
 \end{scope}  
  \draw (R1) -- (R1B);
  \draw (R2) -- (R2B);
  \draw (L1) -- (L1B);
  \draw (L2) -- (L2B);
  \draw[thick] (ML) .. controls +(0, 0.6) and +(0, 0.6) .. (MR);
\end{scope}
}}};
  }}
\end{equation}
    \begin{equation}\label{eqn:def-T-prime}
  T'= \NB{\tikz[xscale = 0.6]{\begin{scope}[font=\tiny]
  \draw[->] (-0.5, -0.5) ..controls +(0,0.3) and +(0,-0.3) .. (0.5,
  0.5);
  \fill[white] (0,0) circle (2mm);
  \draw[->] (0.5, -0.5) ..controls +(0,0.3) and +(0,-0.3) .. (-0.5,
  0.5);
\end{scope}}}:=
    \NB{\tikz[xscale = 3.5, yscale = 3]{
    \node (i0) at (-1, 0) { $q$\ \NB{\tikz[font= \tiny,
  scale=0.6]{\begin{scope}
  \coordinate (bl) at (-0.5, -1);
  \coordinate (br) at ( 0.5, -1);
  \coordinate (tl) at (-0.5,  1);
  \coordinate (tr) at ( 0.5,  1);
    \coordinate (ml) at (-0.5,  .6);
 \coordinate (mr) at (0.5,  .6);
 \coordinate (G) at (0.8,-0.1);

   \draw[->] (bl) -- (tl);
        \filldraw[draw= green!50!black, fill = white] (ml) circle (1mm) 
  node[left, green!50!black] 
  {$\bar{t}_1$};
  
    \draw[->] (br) -- (tr);
        \filldraw[draw= green!50!black, fill = white] (mr) circle (1mm) 
  node[right, green!50!black] 
  {$\bar{t}_2$};

\end{scope}}} };
     \node (i1) at (0, 0) { \NB{\tikz[font= \tiny,
  scale=0.6]{}} };
\draw[->] (i0) -- (i1) coordinate[pos=0.5] (b);
\node[above] at (b) {\NB{\tikz[font=\tiny, scale=.5]{\begin{scope}
  \begin{scope}
    \coordinate (L1) at (0.2,0.4);
    \coordinate (L2) at (0,0);
    \coordinate (R1) at (2.2,0.4);
    \coordinate (R2) at (2,0);
    \coordinate (ML) at (0.6, 0.2);
    \coordinate (MR) at (1.6, 0.2);
    \draw[double] (ML) -- (MR);
    \draw (MR) .. controls +(0, 0) and +(-0.3,0) .. (R1) ;
    \draw (MR) .. controls +(0, 0) and +(-0.3,0) .. (R2);
    \draw (L1) .. controls +( 0.3, 0) and +(0,0) .. (ML);
    \draw (L2) .. controls +( 0.3, 0) and +(0,0) .. (ML);
  \end{scope}  
 \begin{scope}[yshift = -1cm]
    \coordinate (L1B) at (0.2,0.4);
    \coordinate (L2B) at (0,0);
    \coordinate (R1B) at (2.2,0.4);
    \coordinate (R2B) at (2,0);
    \draw (L1B) .. controls +( 0, 0) and +(0,0) .. (R1B);
    \draw (L2B) .. controls +( 0, 0) and +(0,0) .. (R2B); 
 \end{scope}  
  \draw (R1) -- (R1B);
  \draw (R2) -- (R2B);
  \draw (L1) -- (L1B);
  \draw (L2) -- (L2B);
  \draw[thick] (ML) .. controls +(0, -0.6) and +(0, -0.6) .. (MR);
\end{scope}

}}}; 
      }} \,
    \end{equation}
where in both complexes we assume (as in \cite{QRSW1}) that the terms
\[
  \NB{\tikz[font= \tiny,
  scale=0.6]{}} 
\]
sit in cohomological degree $0$. In these diagrams $\statespaceN{\cdot}$ has
been omitted to maintain readability.

For a link $L$, define
$\KR_N^{\mathfrak{sl}_2}(L;R) :=
\KR_{N;t_1,t_2}^{\mathfrak{sl}_2}(L;R) $ to be the Khovanov--Rozansky
$\gll_N$-homology of $L$ with coefficients in a ring $R$,
equipped with the action of the Hopf algebra
$\mathcal{U}(\mathfrak{sl}_2)$. When the coefficient ring $R$ is clear from context we will also 
write $\KR_N^{\mathfrak{sl}_2}(L)$ for simplicity.

In what follows, we will simply draw (pieces of) link diagrams (with green dots) 
to represent the $\sll_2$-equivariant Khovanov--Rozansky
$\gll_N$-homology of these diagrams. In other words, we will mostly drop
$\KR_N^{\mathfrak{sl}_2}(\cdot\,;R)$ around diagrams to prevent
overloaded figures.

\begin{rmk}
  As indicated above, the definition of the link invariant depends
  upon the parameters $t_1$ and $t_2$, but for ease
  of reading, we often remove $t_1$ and $t_2$ from the notation.
\end{rmk}

The rest of the section will be devoted to proving that this link homology is invariant under Reidemeister moves.

\begin{thm} \label{thm:sl2inv} The homology
$\KR_{N;t_1,t_2}^{\mathfrak{sl}_2}(L;\RN)$ is an invariant of framed
  oriented links.
\end{thm}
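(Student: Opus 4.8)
The plan is to verify invariance under each Reidemeister move by upgrading the classical proof of invariance of Khovanov--Rozansky $\gll_N$-homology (as in \cite{RW1}) to the $\sll_2$-equivariant setting. Concretely, for each Reidemeister move one exhibits a chain homotopy equivalence between the braiding complexes of the two sides, and the new content is to check that all the maps and homotopies involved can be made $\sll_2$-equivariant after decorating the relevant webs with suitable green dots, and that the homotopies witnessing the equivalence are \emph{relatively} null-homotopic in the sense of Section~\ref{homological:sec}. The morphisms $\mapA, \mapB, \mapU, \mapH, \mapX, \mapC, \mapS$ recorded in the ``Useful morphisms'' subsection are precisely the $\sll_2$-equivariant building blocks: the strategy is to rebuild the classical simplifications of the hypercube complex out of these pieces so that equivariance is automatic, and then only the Gaussian-elimination steps require an argument.

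\textbf{Reduction via Gaussian elimination.} First I would isolate the key homological-algebra input: a version of Gaussian elimination valid in the relative homotopy category $\mc{C}^{\sll_2}(\RN)$. If a subcomplex of a complex of $\RN\#\sll_2$-modules has an acyclic direct summand on which the relevant differential is an isomorphism of $\RN\#\sll_2$-modules, then one may cancel it as usual; but more generally, following the discussion around Lemma~\ref{lem-construction-of-triangle} and the mapping-cylinder argument, it suffices that the cancelled summand be relatively null-homotopic, i.e. contractible after applying $\mathrm{For}$. This is the mechanism that lets us tolerate twists: the isomorphisms appearing in the Reidemeister~II and III simplifications are $\RN$-linear but a priori only become $\sll_2$-equivariant after inserting green dots with multiplicities $-t_1, -t_2$ (compare the Example with the green dot of multiplicity $-t_2$), and the green-dot migration relations plus the twist formalism of Proposition~\ref{prop:classification-sl2-twists} reconcile the two sides.

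\textbf{Move by move.} For Reidemeister~I (framed, so only RI with a curl, producing a grading/framing shift), the complex for a positive kink reduces, via the zip/unzip and digon-cup/digon-cap foams (whose $\dh$- and $\df$-actions are recorded in \eqref{eq:h-act-dig-cup}--\eqref{eq:e-act-unzip}), to a single strand up to shift; the cancellation uses the cup/cap morphisms $\mapC$ of Lemma~\ref{lem:cup-cap-maps}, and one checks the leftover map is an equivariant isomorphism after a green-dot adjustment. For Reidemeister~II one writes out the $2\times 2$ hypercube, uses $\mapA$ and the digon morphisms $\mapB,\mapU$ to split off a contractible square, and checks that the splitting data is equivariant once the green-dotted dumbbell web (the one displayed in the Example) carries the right multiplicities; the surviving complex is the identity strand pair. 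For Reidemeister~III one invokes the standard MOY/square-switch decomposition: both sides reduce, through a sequence of $\mapA, \mapH, \mapX$ applications, to a common complex built on thickness-$\le 3$ webs, and equivariance of each elementary foam was already established in the previous section. Throughout, \emph{naturality} of the $\sll_2$-action with respect to foam composition (the Leibniz rule) is what guarantees the composed maps remain equivariant.

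\textbf{Main obstacle.} The hard part is bookkeeping the green-dot multiplicities so that, at each Gaussian-elimination step, the two faces being identified carry \emph{compatible} twists — i.e. that the putative chain homotopies genuinely intertwine the twisted $\sll_2$-actions, not merely the $\RN$-module structures. This amounts to a finite but delicate computation with the green-dot migration rules and the flatness condition \eqref{eq:twist-gd-def}, tracking how a twist by $\tau$ on one edge propagates through a zip, unzip, or associativity foam. Once one verifies that the natural maps in the classical proof lift to $\sll_2$-equivariant maps with matching twists, relative null-homotopy of the cancelled pieces is inherited for free from the classical null-homotopies (by Lemma~\ref{lem-tensor-hom-preserve-null-homotopy} and the fact that $\mathrm{For}$ of the cancelled summand is the classically contractible piece), and the theorem follows.
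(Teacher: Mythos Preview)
Your overall framework is correct and matches the paper: work in the relative homotopy category $\mc{C}^{\sll_2}(\RN)$, build $\sll_2$-equivariant chain maps using the morphisms $\mapA,\mapB,\mapU,\mapH,\mapX,\mapC,\mapS$, and cancel pieces that are only \emph{relatively} null-homotopic (contractible after $\mathrm{For}$). Your reading of Lemma~\ref{lem-construction-of-triangle} is also right: one does not need the splittings to be $\sll_2$-equivariant, only the short exact sequences themselves.

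There is, however, one substantive ingredient missing from your plan: you never address why green dots may be moved \emph{through a crossing}. The migration rules you cite only let a dot slide along a single web, across merge and split vertices; they say nothing about the braiding complexes $T,T'$. In the paper this is the content of Lemma~\ref{lem:twotwists}, and it is used as a preprocessing step both in the framed Reidemeister~I proof (Proposition~\ref{prop:framed-RI}) and in the second Reidemeister~II move (equation~\eqref{greendoublecrosseq}). Without it the ``green-dot bookkeeping'' you flag as the main obstacle genuinely fails: the twists on the two ends of a crossing do not match up by any local web-level identity. The paper's proof of Lemma~\ref{lem:twotwists} is not a routine computation---it enlarges the ground ring by an auxiliary variable $z$ with its own $\sll_2$-action, builds two-parameter families of intermediate complexes ${}_xC'(\lambda)$ and $(C^y)'(\lambda)$, and compares them at specific values of $\lambda$. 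Nothing in your proposal anticipates this.

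A second, smaller point: your description of the mechanism as ``the isomorphisms in the Reidemeister~II and III simplifications become $\sll_2$-equivariant after inserting green dots'' is not quite what happens. In the parallel Reidemeister~II (Lemma~\ref{dumbdumbsplit:lem}) and throughout Reidemeister~III, the paper constructs $\sll_2$-equivariant short exact sequences of complexes whose \emph{splittings} (e.g.\ $\mapU^X-{}^X\mapU$) are only $\RN$-linear, never equivariant. So it is not Gaussian elimination with equivariant cancelling isomorphisms; it is the weaker triangle argument you correctly describe in your ``Reduction'' paragraph. Your move-by-move sketch should be rephrased accordingly: for each move one writes down an $\sll_2$-equivariant map of complexes (built from the listed foams plus appropriate green dots) whose cone is classically contractible, rather than trying to make the classical contracting homotopies themselves equivariant.
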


For the proof of this theorem (more precisely, the Reidemeister move-invariance), it will also be convenient, in what follows, to use a green-dot shifted version of the complexes \eqref{eqn:def-T} and \eqref{eqn:def-T-prime}: 

\begin{equation}\label{eqn:def-T-tw}
 \NB{\tikz[scale = 0.9]{\begin{scope}[font=\tiny]
  \draw[->] (0.5, -0.5) ..controls +(0,0.3) and +(0,-0.3) .. (-0.5,
  0.5) node[pos=1, above] {} coordinate[pos =0.8] (t1);
  \fill[white] (0,0) circle (2mm);
  \draw[->] (-0.5, -0.5) ..controls +(0,0.3) and +(0,-0.3) .. (0.5,
  0.5) node[pos=1, above] {} coordinate[pos =0.8] (t2);
  \filldraw[draw= green!50!black, fill = white] (t2) circle (1mm)
  node[right, green!50!black] {$t_2$};
  \filldraw[draw= green!50!black, fill = white] (t1) circle (1mm)
  node[left, green!50!black] {$t_1$};
\end{scope}}} :=
\NB{\tikz[xscale = 3, yscale = 3]{
    \node (i0) at (0, 0) {$q^{-1}$ \NB{\tikz[font= \tiny,
  scale=0.6]{\begin{scope}
  \coordinate (bl) at (-0.5, -1);
  \coordinate (br) at ( 0.5, -1);
  \coordinate (tl) at (-0.5,  1);
  \coordinate (tr) at ( 0.5,  1);
    \coordinate (ml) at (-0.5,  -.8);
        \coordinate (Ml) at (-0.5,  .8);
 \coordinate (mr) at (0.5,  -.6);
\coordinate (Mr) at (0.5,  .6);

   \draw[->] (bl) -- (tl);
  
    \draw[->] (br) -- (tr);
  

\end{scope}}} };
     \node (i1) at (-1, 0) {\NB{\tikz[font= \tiny,
  scale=0.6]{\begin{scope}
  \coordinate (bl) at (-0.5, -1);
  \coordinate (br) at ( 0.5, -1);
  \coordinate (bm) at (  0,-0.3);
  \coordinate (tl) at (-0.5,  1);
  \coordinate (tr) at ( 0.5,  1);
  \coordinate (tm) at (  0, 0.3);
  \draw[>-]  (bl) .. controls +( 0, 0.5) and +(0,0) .. (bm);
  \draw[>-]  (br) .. controls +( 0, 0.5) and +(0,0) .. (bm);
  \draw[<-]  (tl) .. controls +( 0, -0.5) and +(0,0) .. (tm) coordinate[pos = 0.25] (ga) ;
    \filldraw[draw= green!50!black, fill = white] (ga) circle (1mm)
  node[left, green!50!black] {$t_1$};

  \draw[<-]  (tr) .. controls +( 0, -0.5) and +(0,0) .. (tm)
  coordinate[pos = 0.25] (gb) ; 
    \filldraw[draw= green!50!black, fill = white] (gb) circle (1mm)
  node[right, green!50!black] {$t_2$};
  \draw [double] (bm) -- (tm);
 
\end{scope}}} };
\draw[->] (i1) -- (i0) coordinate[pos=0.5] (a);
\node[above] at (a) {\NB{\tikz[font=\tiny, scale=.5]{}}};
  }} \ ,
\end{equation}
    \begin{equation}\label{eqn:T-prime-tw}
   \NB{\tikz[scale = 0.9]{\begin{scope}[font=\tiny]
  \draw[->] (-0.5, -0.5) ..controls +(0,0.3) and +(0,-0.3) .. (0.5,
  0.5) node[pos=1, above] {} coordinate[pos =0.8] (t2);
  \fill[white] (0,0) circle (2mm);
  \draw[->] (0.5, -0.5) ..controls +(0,0.3) and +(0,-0.3) .. (-0.5,
  0.5) node[pos=1, above] {} coordinate[pos =0.8] (t1);
  \filldraw[draw= green!50!black, fill = white] (t1) circle (1mm)
  node[left, green!50!black] {$-\bar{t}_1$};
  \filldraw[draw= green!50!black, fill = white] (t2) circle (1mm)
  node[right, green!50!black] {$-\bar{t}_2$};
\end{scope}}}:=
    \NB{\tikz[xscale = 3, yscale = 3]{
    \node (i0) at (-1, 0) { $q$\ \NB{\tikz[font= \tiny,
  scale=0.6]{}} };
     \node (i1) at (0, 0) { \NB{\tikz[font= \tiny,
  scale=0.6]{\begin{scope}
  \coordinate (bl) at (-0.5, -1);
  \coordinate (br) at ( 0.5, -1);
  \coordinate (bm) at (  0,-0.3);
  \coordinate (tl) at (-0.5,  1);
  \coordinate (tr) at ( 0.5,  1);
  \coordinate (tm) at (  0, 0.3);
  \draw[>-]  (bl) .. controls +( 0, 0.5) and +(0,0) .. (bm);
  \draw[>-]  (br) .. controls +( 0, 0.5) and +(0,0) .. (bm);
  \draw[<-]  (tl) .. controls +( 0, -0.5) and +(0,0) .. (tm) coordinate[pos = 0.25] (ga) ;
    \filldraw[draw= green!50!black, fill = white] (ga) circle (1mm)
  node[left, green!50!black] {$-\bar{t}_1$};

  \draw[<-]  (tr) .. controls +( 0, -0.5) and +(0,0) .. (tm) coordinate[pos = 0.25] (gb) ;
    \filldraw[draw= green!50!black, fill = white] (gb) circle (1mm)
  node[right, green!50!black] {$-\bar{t}_2$};
  \draw [double] (bm) -- (tm);
 
\end{scope}}} };
\draw[->] (i0) -- (i1) coordinate[pos=0.5] (b);
\node[above] at (b) {\NB{\tikz[font=\tiny, scale=.5]{}}}; 
      }} \ .
    \end{equation}

\subsection{Green dots can slide through crossings}
\label{sec:green-dots-can}

We consider link diagrams with green dots. These green dots
should appear on any resolutions used to compute their
homology. As we shall see, the homology of a link diagram with green dots
does not depend on the precise location of the green dots on a
given component. Hence the green dot data can be thought as a framing
(and will be shown to have a non-trivial interaction with the framing,
see section~\ref{sec:fram-reid-1}).

We next show that we could slide green dots through crossings in the relative homotopy category.  These ideas originate from \cite{KRWitt}.  See also \cite{QRSW1} for an adaptation of these ideas to the $p$-DG setting.

\begin{lem}\label{lem:twotwists}
 Let $c,d \in \scalars$. Then there are isomorphisms of complexes in the relative homotopy category
  \begin{align} \label{homslide1}
    \NB{\tikz[font=\tiny]{\begin{scope}[font=\tiny]
  \draw[->] (0.5, -0.5) ..controls +(0,0.3) and +(0,-0.3) .. (-0.5,
  0.5) node[pos=1, above] {} coordinate[pos =0.2] (t1);
  \fill[white] (0,0) circle (2mm);
  \draw[->] (-0.5, -0.5) ..controls +(0,0.3) and +(0,-0.3) .. (0.5,
  0.5) node[pos=1, above] {} coordinate[pos =0.2] (t2);
  \filldraw[draw= green!50!black, fill = white] (t2) circle (1mm)
  node[left, green!50!black] {$c$};
  \filldraw[draw= green!50!black, fill = white] (t1) circle (1mm)
  node[right, green!50!black] {$d$};
\end{scope}}}
    \simeq
   \NB{\tikz[font=\tiny]{\begin{scope}[font=\tiny]
  \draw[->] (0.5, -0.5) ..controls +(0,0.3) and +(0,-0.3) .. (-0.5,
  0.5) node[pos=1, above] {} coordinate[pos =0.8] (t1);
  \fill[white] (0,0) circle (2mm);
  \draw[->] (-0.5, -0.5) ..controls +(0,0.3) and +(0,-0.3) .. (0.5,
  0.5) node[pos=1, above] {} coordinate[pos =0.8] (t2);
  \filldraw[draw= green!50!black, fill = white] (t2) circle (1mm)
  node[right, green!50!black] {$c$};
  \filldraw[draw= green!50!black, fill = white] (t1) circle (1mm)
  node[left, green!50!black] {$d$};
\end{scope}}}
    \qquad \text{and} \qquad 
    \NB{\tikz[font=\tiny]{\begin{scope}[font=\tiny]
  \draw[->] (-0.5, -0.5) ..controls +(0,0.3) and +(0,-0.3) .. (0.5,
  0.5) node[pos=1, above] {} coordinate[pos =0.2] (t2);
  \fill[white] (0,0) circle (2mm);
  \draw[->] (0.5, -0.5) ..controls +(0,0.3) and +(0,-0.3) .. (-0.5,
  0.5) node[pos=1, above] {} coordinate[pos =0.2] (t1);
  \filldraw[draw= green!50!black, fill = white] (t2) circle (1mm)
  node[left, green!50!black] {$c$};
  \filldraw[draw= green!50!black, fill = white] (t1) circle (1mm)
  node[right, green!50!black] {$d$};
\end{scope}}}
    \simeq
    \NB{\tikz[font=\tiny]{\begin{scope}[font=\tiny]
  \draw[->] (-0.5, -0.5) ..controls +(0,0.3) and +(0,-0.3) .. (0.5,
  0.5) node[pos=1, above] {} coordinate[pos =0.8] (t2);
  \fill[white] (0,0) circle (2mm);
  \draw[->] (0.5, -0.5) ..controls +(0,0.3) and +(0,-0.3) .. (-0.5,
  0.5) node[pos=1, above] {} coordinate[pos =0.8] (t1);
  \filldraw[draw= green!50!black, fill = white] (t1) circle (1mm)
  node[left, green!50!black] {$d$};
  \filldraw[draw= green!50!black, fill = white] (t2) circle (1mm)
  node[right, green!50!black] {$c$};
\end{scope}}},
  \end{align}
    \begin{align} \label{homslide2}
    \NB{\tikz[font=\tiny]{\begin{scope}[font=\tiny]
  \draw[->] (0.5, -0.5) ..controls +(0,0.3) and +(0,-0.3) .. (-0.5,
  0.5) node[pos=1, above] {} coordinate[pos =0.2] (t1);
  \fill[white] (0,0) circle (2mm);
  \draw[->] (-0.5, -0.5) ..controls +(0,0.3) and +(0,-0.3) .. (0.5,
  0.5) node[pos=1, above] {} coordinate[pos =0.2] (t2);
  \filldraw[draw= green!50!black, fill = green] (t2) circle (1mm)
  node[left, green!50!black] {$c$};
  \filldraw[draw= green!50!black, fill = green] (t1) circle (1mm)
  node[right, green!50!black] {$d$};
\end{scope}}}
    \simeq
   \NB{\tikz[font=\tiny]{\begin{scope}[font=\tiny]
  \draw[->] (0.5, -0.5) ..controls +(0,0.3) and +(0,-0.3) .. (-0.5,
  0.5) node[pos=1, above] {} coordinate[pos =0.8] (t1);
  \fill[white] (0,0) circle (2mm);
  \draw[->] (-0.5, -0.5) ..controls +(0,0.3) and +(0,-0.3) .. (0.5,
  0.5) node[pos=1, above] {} coordinate[pos =0.8] (t2);
  \filldraw[draw= green!50!black, fill = green] (t2) circle (1mm)
  node[right, green!50!black] {$c$};
  \filldraw[draw= green!50!black, fill = green] (t1) circle (1mm)
  node[left, green!50!black] {$d$};
\end{scope}}}
    \qquad \text{and} \qquad 
    \NB{\tikz[font=\tiny]{\begin{scope}[font=\tiny]
  \draw[->] (-0.5, -0.5) ..controls +(0,0.3) and +(0,-0.3) .. (0.5,
  0.5) node[pos=1, above] {} coordinate[pos =0.2] (t2);
  \fill[white] (0,0) circle (2mm);
  \draw[->] (0.5, -0.5) ..controls +(0,0.3) and +(0,-0.3) .. (-0.5,
  0.5) node[pos=1, above] {} coordinate[pos =0.2] (t1);
  \filldraw[draw= green!50!black, fill = green] (t2) circle (1mm)
  node[left, green!50!black] {$c$};
  \filldraw[draw= green!50!black, fill = green] (t1) circle (1mm)
  node[right, green!50!black] {$d$};
\end{scope}}}
    \simeq
    \NB{\tikz[font=\tiny]{\begin{scope}[font=\tiny]
  \draw[->] (-0.5, -0.5) ..controls +(0,0.3) and +(0,-0.3) .. (0.5,
  0.5) node[pos=1, above] {} coordinate[pos =0.8] (t2);
  \fill[white] (0,0) circle (2mm);
  \draw[->] (0.5, -0.5) ..controls +(0,0.3) and +(0,-0.3) .. (-0.5,
  0.5) node[pos=1, above] {} coordinate[pos =0.8] (t1);
  \filldraw[draw= green!50!black, fill = green] (t1) circle (1mm)
  node[left, green!50!black] {$d$};
  \filldraw[draw= green!50!black, fill = green] (t2) circle (1mm)
  node[right, green!50!black] {$c$};
\end{scope}}}.
  \end{align}
\end{lem}

\begin{proof}
We only prove
 \begin{equation} \label{homslide3}
  {}_xC:=  \NB{\tikz[font=\tiny]{\begin{scope}[font=\tiny]
  \draw[->] (0.5, -0.5) ..controls +(0,0.3) and +(0,-0.3) .. (-0.5,
  0.5) node[pos=1, above] {} coordinate[pos =0.2] (t1);
  \fill[white] (0,0) circle (2mm);
  \draw[->] (-0.5, -0.5) ..controls +(0,0.3) and +(0,-0.3) .. (0.5,
  0.5) node[pos=1, above] {} coordinate[pos =0.2] (t2);
  \filldraw[draw= green!50!black, fill = white] (t2) circle (1mm)
  node[left, green!50!black] {$1$};
\end{scope}}}
    \simeq
    \NB{\tikz[font=\tiny]{\begin{scope}[font=\tiny]
  \draw[->] (0.5, -0.5) ..controls +(0,0.3) and +(0,-0.3) .. (-0.5,
  0.5) node[pos=1, above] {} coordinate[pos =0.8] (t1);
  \fill[white] (0,0) circle (2mm);
  \draw[->] (-0.5, -0.5) ..controls +(0,0.3) and +(0,-0.3) .. (0.5,
  0.5) node[pos=1, above] {} coordinate[pos =0.8] (t2);
  \filldraw[draw= green!50!black, fill = white] (t2) circle (1mm)
  node[right, green!50!black] {$1$};
\end{scope}}} := C^y.
  \end{equation}
The general case of the first isomorphism in \eqref{homslide1} follows similarly.  The other isomorphism in \eqref{homslide1} could be proved in a similar fashion. Alternatively, it follows from the first isomorphism and Reidemeister II (to be proved later).
The isomorphisms in \eqref{homslide2} follow from the isomorphisms in \eqref{homslide1} using the fact that a solid green dot and a hollow green dot together form a symmetric function in the ground ring.

In order to prove \eqref{homslide3}, we begin by enlarging the ground ring from 
$\KN= \scalars[E_1, \dots, E_\myN]$ to $\KN[z]= \scalars[E_1, \dots, E_\myN][z]$.
The $\mathfrak{sl}_2$-action on $\KN$ extends to $\KN[z]$ in the obvious way:
\begin{equation} \label{homslide4}
\Le(z)=-1, \quad \Lh(z)=-2z, \quad \Lf(z)=z^2.
\end{equation}

Let $\lambda, \mu, \gamma \in \scalars$.
We will use the following abbreviations of state spaces of webs and their twistings.

\[
{}_{\lambda x}B^{\mu y}=
  \NB{\tikz[font= \tiny,
  scale=0.6]{\begin{scope}
  \coordinate (bl) at (-0.5, -1);
  \coordinate (br) at ( 0.5, -1);
  \coordinate (bm) at (  0,-0.3);
  \coordinate (tl) at (-0.5,  1);
  \coordinate (tr) at ( 0.5,  1);
  \coordinate (tm) at (  0, 0.3);
  \draw[>-]  (bl) .. controls +( 0, 0.5) and +(0,0) .. (bm) 
  coordinate[pos = 0.25] (gc);
      \filldraw[draw= green!50!black, fill = white] (gc) circle (1mm)
  node[left, green!50!black] {$\lambda$};
  \draw[>-]  (br) .. controls +( 0, 0.5) and +(0,0) .. (bm);
  \draw[<-]  (tl) .. controls +( 0, -0.5) and +(0,0) .. (tm)
  coordinate[pos = 0.25] (ga) ; 
    \filldraw[draw= green!50!black, fill = white] (ga) circle (1mm)
  node[left, green!50!black] {$t_1$};

  \draw[<-]  (tr) .. controls +( 0, -0.5) and +(0,0) .. (tm)
  coordinate[pos = 0.25] (gb) ; 
    \filldraw[draw= green!50!black, fill = white] (gb) circle (1mm)
  node[right, green!50!black] {$t_2+\mu$};

  
  \draw [double] (bm) -- (tm);
 
\end{scope}}} , \quad \quad 
{}_{\lambda x}A^{\mu y}=
  \NB{\tikz[font= \tiny,
  scale=0.6]{\begin{scope}
  \coordinate (bl) at (-0.5, -1);
  \coordinate (br) at ( 0.5, -1);
  \coordinate (tl) at (-0.5,  1);
  \coordinate (tr) at ( 0.5,  1);
    \coordinate (ml) at (-0.5,  -.8);
        \coordinate (Ml) at (-0.5,  .8);
            \coordinate (mtr) at ( 0.5,  .5);

 \coordinate (mr) at (0.5,  -.6);
\coordinate (Mr) at (0.5,  .6);

   \draw[->] (bl) -- (tl)
   coordinate[pos = 0.2] (ga);
        \filldraw[draw= green!50!black, fill = white] (ga) circle (1mm)
  node[left, green!50!black] {$\lambda$};
    \draw[->] (br) -- (tr);

            \filldraw[draw= green!50!black, fill = white] (mtr) circle (1mm)
  node[left, green!50!black] {$\mu$};
  

\end{scope}
}} .
\]
Let $B[z]:=B \otimes \scalars[z]$ and
$A[z]:=A \otimes \scalars[z]$ with the $\mathfrak{sl}_2$-actions coming from \eqref{homslide4}.

The action of $\mathfrak{sl}_2$ on state spaces
${}_{\lambda x}B^{\mu y}[z]^{\gamma z}$ and ${}_{\lambda x}A^{\mu y}[z]^{\gamma z}$ are determined by the actions on ${}_{\lambda x}B^{\mu y}$ and ${}_{\lambda x}A^{\mu y}$ respectively and 
\begin{equation} \label{homslide5}
\Le(z^n)=-nz^{n-1}, \quad \Lh(z^n)=-(2n+\gamma)z^n, \quad
\Lf(z^n)=(n+\gamma)z^{n+1}.
\end{equation}

Recall that we have unzip and zip foams respectively:
\[
\mapH=
  \NB{\tikz[font= \tiny,
  scale=.8]{\begin{scope}
  \begin{scope}
    \coordinate (L1) at (0.2,0.4);
    \coordinate (L2) at (0,0);
    \coordinate (R1) at (2.2,0.4);
    \coordinate (R2) at (2,0);
    \coordinate (ML) at (0.6, 0.2);
    \coordinate (MR) at (1.6, 0.2);
    \draw[double] (ML) -- (MR) node[below, midway] {}; 
    \draw (MR) .. controls +(0, 0) and +(-0.3,0) .. (R1) ;
    \draw (MR) .. controls +(0, 0) and +(-0.3,0) .. (R2);
    \draw (L1) .. controls +( 0.3, 0) and +(0,0) .. (ML);
    \draw (L2) .. controls +( 0.3, 0) and +(0,0) .. (ML);
  \end{scope}  
 \begin{scope}[yshift = 1cm]
    \coordinate (L1B) at (0.2,0.4);
    \coordinate (L2B) at (0,0);
    \coordinate (R1B) at (2.2,0.4);
    \coordinate (R2B) at (2,0);
    \draw[->-] (L1B) .. controls +( 0, 0) and +(0,0) .. (R1B) node [left, pos
    = 0] {};
    \draw[->-] (L2B) .. controls +( 0, 0) and +(0,0) .. (R2B) node [left, pos
    = 0] {};
 \end{scope}  
  \draw (R1) -- (R1B);
  \draw (R2) -- (R2B);
  \draw (L1) -- (L1B);
  \draw (L2) -- (L2B);
  \draw[thick] (ML) .. controls +(0, 0.6) and +(0, 0.6) .. (MR);
\end{scope}

}} , \quad \quad 
\mapX=
  \NB{\tikz[font= \tiny,
  scale=.8]{\begin{scope}
  \begin{scope}
    \coordinate (L1) at (0.2,0.4);
    \coordinate (L2) at (0,0);
    \coordinate (R1) at (2.2,0.4);
    \coordinate (R2) at (2,0);
    \coordinate (ML) at (0.6, 0.2);
    \coordinate (MR) at (1.6, 0.2);
    \draw[double] (ML) -- (MR) node[above, midway] {};
    \draw (MR) .. controls +(0, 0) and +(-0.3,0) .. (R1) ;
    \draw (MR) .. controls +(0, 0) and +(-0.3,0) .. (R2);
    \draw (L1) .. controls +( 0.3, 0) and +(0,0) .. (ML);
    \draw (L2) .. controls +( 0.3, 0) and +(0,0) .. (ML);
  \end{scope}  
 \begin{scope}[yshift = -1cm]
    \coordinate (L1B) at (0.2,0.4);
    \coordinate (L2B) at (0,0);
    \coordinate (R1B) at (2.2,0.4);
    \coordinate (R2B) at (2,0);
    \draw[->-] (L1B) .. controls +( 0, 0) and +(0,0) .. (R1B) node [right, pos
    = 1] {};
    \draw[->-] (L2B) .. controls +( 0, 0) and +(0,0) .. (R2B) node [right, pos
    = 1] {};
 \end{scope}  
  \draw (R1) -- (R1B);
  \draw (R2) -- (R2B);
  \draw (L1) -- (L1B);
  \draw (L2) -- (L2B);
  \draw[thick] (ML) .. controls +(0, -0.6) and +(0, -0.6) .. (MR);
\end{scope}

}} .
\]
In what follows below we will need foams of the form $\lambda \mapX$ and $\lambda z \mapX$ which just mean the foam $\mapX$ with coefficients $\lambda$ or $\lambda z$ in the ground ring.  We will also need foams
\[
x_1 \mapX=
  \NB{\tikz[font= \tiny,
  scale=0.8]{\begin{scope}
  \begin{scope}
        \coordinate (X1) at (0.5,-0.1);
    \coordinate (L1) at (0.2,0.4);
    \coordinate (L2) at (0,0);
    \coordinate (R1) at (2.2,0.4);
    \coordinate (R2) at (2,0);
    \coordinate (ML) at (0.6, 0.2);
    \coordinate (MR) at (1.6, 0.2);
    \draw[double] (ML) -- (MR) node[above, midway] {}; 
    \draw (MR) .. controls +(0, 0) and +(-0.3,0) .. (R1) ;
    \draw (MR) .. controls +(0, 0) and +(-0.3,0) .. (R2);
    \draw (L1) .. controls +( 0.3, 0) and +(0,0) .. (ML);
    \draw (L2) .. controls +( 0.3, 0) and +(0,0) .. (ML);
  \end{scope}  
 \begin{scope}[yshift = -1cm]
    \coordinate (L1B) at (0.2,0.4);
    \coordinate (L2B) at (0,0);
    \coordinate (R1B) at (2.2,0.4);
    \coordinate (R2B) at (2,0);
    \draw[->-] (L1B) .. controls +( 0, 0) and +(0,0) .. (R1B) node [right, pos
    = 1] {};
    \draw[->-] (L2B) .. controls +( 0, 0) and +(0,0) .. (R2B) node [right, pos
    = 1] {}   node [pos = 0.2, above] {};
 \end{scope}  
  \draw (R1) -- (R1B) node [pos = 0.2, left] {};
  \draw (R2) -- (R2B);
  \draw (L1) -- (L1B);
  \draw (L2) -- (L2B);
  \draw[thick] (ML) .. controls +(0, -0.6) and +(0, -0.6) .. (MR);

      \fill (X1) 
  node[below] {$\dotnewtoni[1]$};
\end{scope}

}} , \quad \quad 
y_2 \mapX=
  \NB{\tikz[font= \tiny,
  scale=0.8]{\begin{scope}
\begin{scope}
        \coordinate (Y2) at (1.7,-0.5);
    \coordinate (L1) at (0.2,0.4);
    \coordinate (L2) at (0,0);
    \coordinate (R1) at (2.2,0.4);
    \coordinate (R2) at (2,0);
    \coordinate (ML) at (0.6, 0.2);
    \coordinate (MR) at (1.6, 0.2);
    \draw[double] (ML) -- (MR) node[above, midway] {}; 
    \draw (MR) .. controls +(0, 0) and +(-0.3,0) .. (R1) ;
    \draw (MR) .. controls +(0, 0) and +(-0.3,0) .. (R2);
    \draw (L1) .. controls +( 0.3, 0) and +(0,0) .. (ML);
    \draw (L2) .. controls +( 0.3, 0) and +(0,0) .. (ML);
  \end{scope}  
 \begin{scope}[yshift = -1cm]
    \coordinate (L1B) at (0.2,0.4);
    \coordinate (L2B) at (0,0);
    \coordinate (R1B) at (2.2,0.4);
    \coordinate (R2B) at (2,0);
    \draw[->-] (L1B) .. controls +( 0, 0) and +(0,0) .. (R1B) node [right, pos
    = 1] {};
    \draw[->-] (L2B) .. controls +( 0, 0) and +(0,0) .. (R2B) node [right, pos
    = 1] {}   node [pos = 0.2, above] {};
 \end{scope}  
  \draw (R1) -- (R1B) node [pos = 0.2, left] {};
  \draw (R2) -- (R2B);
  \draw (L1) -- (L1B);
  \draw (L2) -- (L2B);
  \draw[thick] (ML) .. controls +(0, -0.6) and +(0, -0.6) .. (MR);

      \fill (Y2) 
  node[below] {$\dotnewtoni[1]$};
\end{scope}

}} .
\]
Identity maps from $B$ to $B$ or $A$ to $A$ are given by foams which are the webs times an interval.  For these foams, we will let $x_1$ or $y_2$ be shorthand for these foams with decorations $\dotnewtoni[1]$ on the lower left and upper right facets respectively.

We can fit ${}_x C$ in a short exact sequence of
$\mathfrak{sl}_2$-equivariant state spaces:
  \begin{equation}
\NB{
  \begin{tikzpicture}[xscale =4, yscale=2]
    \node (CT) at (1.65,3) {${}_xC_{}$};
    \node (Ct) at (1.65,2) {${}_xC'(\lambda)$};
    \node (Cb) at (1.65,1) {${}_xC''(\lambda)$};
    \node (zeroT) at (1.65,3.5) {$0$};
    \node (zerob) at (1.65,0.5) {$0$};
    \draw[densely dotted] (CT) -- (1.2, 3);
    \draw[densely dotted] (Ct) -- (1.2, 2);
    \draw[densely dotted] (Cb) -- (1.2, 1);
    \draw[densely dotted, rounded corners] (-0.2, 3.2) rectangle (1.2, 2.8);
    \draw[densely dotted, rounded corners] (-1.25, 2.38) rectangle (1.2, 1.8);
    \draw[densely dotted, rounded corners] (-1.25, 1.38) rectangle
    (1.2,0.7);
    \draw [-to] (CT) -- (zeroT);
    \draw [-to] (Ct) -- (CT);
    \draw [-to] (Cb) -- (Ct);
    \draw [-to] (zerob) -- (Cb);
    \node (BT) at (0,3) {${}_xB^{}$};
    \node (AT) at (1,3) {${}_xA^{}$};
    \node (Zt) at (-1,2) {${}_{1-\lambda}B^{\lambda}[z]^{2z}$};
    \node (Bt) at (0,2) {$B[z]^{z}\oplus {}_{1-\lambda}A^{\lambda}[z]^{2z}$};
    \node (At) at (1,2) {$A[z]^{z}$};
    \node (Zb) at (-1,1) {${}_{1-\lambda}B^{\lambda}[z]^{2z}$};
    \node (Bb) at (0,1) {${}_{x}B[z]^{2z}\oplus {}_{1-\lambda}A^{\lambda}[z]^{2z}$};
    \node (Ab) at (1,1) {${}_{x}A[z]^{2z}$};
    \draw[-to] (BT) -- (AT) node[pos=0.5, above, scale = 0.7] {$\mapH$};
    \draw[-to] (Bt) -- (At) node[pos=0.5, above, scale = 0.7] {$\left(\mapH \,\,
        x_1 -z\right)$};
    \draw[-to] (Bb) -- (Ab) node[pos=0.5, above, scale = 0.7] {$\left(\mapH \,\,
        1 \right)$};
    \draw[-to] (Zt) -- (Bt) node[pos=0.5, above, scale = 0.7]
    {$\left( \begin{array}{c}z-x_1 \\ \mapH\end{array}\right)$};    
    \draw[-to] (Zb) -- (Bb) node[pos=0.5, above, scale = 0.7]
    {$\left( \begin{array}{c} 1 \\ -\mapH\end{array}\right)$};
    \draw[-to] (Zb) -- (Zt) node[pos=0.5, left, scale=0.7] {$-\Id$};
    \draw[-to] (Ab) -- (At) node[pos=0.5, left, scale=0.7] {$x_1-z$};
    \draw[-to] (Bb) -- (Bt) node[pos=0.65, left, scale=0.7]
    {$\begin{pmatrix}  x_1-z & 0 \\ 0 & 1 \end{pmatrix}
      $};
    \draw[-to] (Bt) -- (BT) node[pos=0.5, left, scale=0.7]
    {$\left(z\mapsto x_1 \,\, 0 \right)$};
    \draw[-to] (At) -- (AT) node[pos=0.5, left, scale=0.7]
    {$z\mapsto x_1$};
     \draw[-to, green!50!black] ($(Bt)+(0.15, 0.1)$) .. controls
    +(-0.1,0.1) and +(0.1, 0.1) .. +(-0.3, 0)
    node[scale=0.7,green!50!black, pos=0.1, above, xshift=0.4cm]
    {$\lambda(z-x_1)  \mapX$};\draw[-to, green!50!black] ($(Bb)+(0.15, -0.1)$) .. controls
    +(-0.1,-0.1) and +(0.1, -0.1) .. +(-0.3, 0)
    node[scale=0.7,green!50!black, pos=0.1, below] {$-\lambda \mapX$};\end{tikzpicture}}
\ ,
\end{equation}
where the green arrows indicate that the actions of $\Lf$ get further twisted by the foams on the labels of the arrow.  The actions of $\Le$ and $\Lh$ are not twisted any further.
We let the reader check that all maps are indeed
$\mathfrak{sl}_2$-equivariant. The complex ${}_x C''(\lambda)$ is null-homotopic and 
this sequence splits when one forgets the $\mathfrak{sl}_2$-module
structures. Hence ${}_xC$ and ${}_x C'(\lambda)$ are isomorphic in the
relative homotopy category.

Similarly, for any $\lambda \in \scalars$, one has an isomorphism
$C^{y}\cong (C^y)'(\lambda)$ in the 
relative homotopy category with:
\[
(C^y)'(\lambda) :=  \left(\NB{\tikz[xscale =4, yscale=2]{
    \node (Zt) at (-1,2) {${}_{-\lambda x} B^{(\lambda+1)y}[z]^ {2z}$};
    \node (Bt) at (0,2) {$B[z]^{z}\oplus {}_{-\lambda x} A^{(\lambda+1)y}[z]^ {2z}$};
    \node (At) at (1,2) {$A[z]^{z}$};
    \draw[-to] (Bt) -- (At) node[pos=0.5, above, scale = 0.7] {$\left(\mapH \,\,
        y_2-z\right)$};
    \draw[-to] (Zt) -- (Bt) node[pos=0.5, above, scale = 0.7]
    {$\left( \begin{array}{c}z-y_2 \\ \mapH\end{array}\right)$};    
     \draw[-to, green!50!black] ($(Bt)+(0.15, 0.1)$) .. controls
    +(-0.1,0.1) and +(0.1, 0.1) .. +(-0.5, 0)
    node[scale=0.7,green!50!black, pos=0.1, above] {$\lambda(z-y_2)  \mapX$}}}
\right).
\]

The complexes ${}_x C'(0)$ and $(C^y)'(-1)$ are isomorphic as
$\mathfrak{sl}_2$-equivariant complexes of state spaces. An isomorphism is
given below:
\begin{align}
\NB{
  \begin{tikzpicture}[xscale =4, yscale=2]
    \node (Ct) at (1.75,2) {${}_xC'(0)$};
    \node (Cb) at (1.75,1) {$(C^y)'(-1)$};
    \draw[densely dotted] (Cb) -- (1.2, 1);
    \draw[densely dotted] (Ct) -- (1.2, 2);
    \draw[densely dotted, rounded corners] (-1.2, 2.38) rectangle (1.2, 1.8);
    \draw[densely dotted, rounded corners] (-1.2, 1.38) rectangle  (1.2, 0.68);
    \draw[-to] (Cb) -- (Ct);
    \node (Zt) at (-1,2) {${}_xB[z]^{2z}$};
    \node (Bt) at (0,2) {$B[z]^{z}\oplus {}_xA[z]^{2z}$};
    \node (At) at (1,2) {$A[z]^{z}$};
    \node (Zb) at (-1,1) {${}_xB[z]^{2z}$};
    \node (Bb) at (0,1) {$B[z]^{z}\oplus {}_xA[z]^{2z}$};
    \node (Ab) at (1,1) {$A[z]^{z}$};
    \draw[-to] (Bt) -- (At) node[pos=0.5, above, scale = 0.7] {$\left(\mapH \,\,
        x_1 -z\right)$};
    \draw[-to] (Bb) -- (Ab) node[pos=0.5, above, scale = 0.7] {$\left(\mapH \,\,
        y_2 -z \right)$};
    \draw[-to] (Zt) -- (Bt) node[pos=0.5, above, scale = 0.7]
    {$\left( \begin{array}{c}z-x_1 \\ \mapH\end{array}\right)$};    
    \draw[-to] (Zb) -- (Bb) node[pos=0.5, above, scale = 0.7]
    {$\left( \begin{array}{c} z-y_2 \\ \mapH\end{array}\right)$};
    \draw[-to] (Zb) -- (Zt) node[pos=0.5, left, scale=0.7] {$\Id$};
    \draw[-to] (Ab) -- (At) node[pos=0.5, left, scale=0.7] {$\Id$};
    \draw[-to] (Bb) -- (Bt) node[pos=0.65, left, scale=0.7]
    {$\begin{pmatrix}  1 & \mapX \\ 0 & 1 \end{pmatrix}
      $};
\draw[-to, green!50!black] ($(Bb)+(0.15, -0.1)$) .. controls
    +(-0.1,-0.1) and +(0.1, -0.1) .. +(-0.3, 0)
    node[scale=0.7,green!50!black, pos=0.1, below] {$(y_2-z)  \mapX$};
\end{tikzpicture}} \ .
\end{align}
This proves that ${}_xC \cong C^y$. 
  \end{proof}

\subsection{Framed Reidemeister I}
\label{sec:fram-reid-1}

\begin{prop}\label{prop:framed-RI}
There are isomorphisms in the relative homotopy category
\[
  \NB{\tikz[scale = 0.6]{
\begin{scope}[xshift = 2.5cm]
  \draw [<-] (0, 1) -- (0, 0.5) .. controls +(0,-0.5) and +(0, 0.5)
  .. (1, -0.5) arc (180:360:0.5) -- (2,0);  
    \fill[white] (0.5, 0) circle (1mm);
  \draw (0, -1) -- (0, -0.5) .. controls +(0,0.5) and +(0, -0.5)
  .. (1, 0.5) arc (180:0:0.5) -- (2,0);
\end{scope}}} \cong tq^{-N}  \NB{\tikz[scale= 0.6, font =\tiny]{\begin{scope}
  \draw [->] (0, -1) -- +(0,2);
     \coordinate (A) at (0, -.6);
        \filldraw[draw= green!50!black, fill = white] (A) circle (1mm) 
  node[left, green!50!black] 
  {$-\frac{1}{2}(N+1)$};
       \coordinate (B) at (0, .6);
        \filldraw[draw= green!50!black, fill = green] (B) circle (1mm)
  node[left, green!50!black] 
  {$-\frac{1}{2}$};
  
\end{scope}}}, \quad \quad \quad \quad
  \NB{\tikz[scale= 0.6, yscale =-1]{
\begin{scope}[xshift = 2.5cm]
  \draw  (0, 1) -- (0, 0.5) .. controls +(0,-0.5) and +(0, 0.5)
  .. (1, -0.5) arc (180:360:0.5) -- (2,0);  
    \fill[white] (0.5, 0) circle (1mm);
  \draw[<-] (0, -1) -- (0, -0.5) .. controls +(0,0.5) and +(0, -0.5)
  .. (1, 0.5) arc (180:0:0.5) -- (2,0);
\end{scope}
}} \cong t^{-1}q^{N}\NB{\tikz[scale= 0.6, font =\tiny]{\begin{scope}
  \draw [->] (0, -1) -- +(0,2);
   \coordinate (A) at (0, -.6);
        \filldraw[draw= green!50!black, fill = white] (A) circle (1mm)
  node[left, green!50!black] 
  {$\frac{1}{2}(N+1)$};
       \coordinate (B) at (0, .6);
        \filldraw[draw= green!50!black, fill = green] (B) circle (1mm)
  node[left, green!50!black] 
  {$\frac{1}{2}$};
\end{scope}}} .
\]
\end{prop}

\begin{proof}
Both isomorphisms are similar and we will only prove the first one. Using Lemma \ref{lem:twotwists}, the desired isomorphism is equivalent to 
\begin{equation}\label{eqn:twisted-curl}
  \NB{\tikz[font=\tiny, scale=0.6]{
\begin{scope}[xshift = 2.5cm]
  \draw [<-] (0, 1) -- (0, 0.5) .. controls +(0,-0.5) and +(0, 0.5)
  .. (1, -0.5) arc (180:360:0.5) -- (2,0);  
    \fill[white] (0.5, 0) circle (1mm);
  \draw (0, -1) -- (0, -0.5) .. controls +(0,0.5) and +(0, -0.5)
  .. (1, 0.5) arc (180:0:0.5) -- (2,0);
       \coordinate (A) at (2, -.6);
        \filldraw[draw= green!50!black, fill = white] (A) circle (1mm) 
  node[right, green!50!black] 
  {$\frac{1}{2}(N+1)$};
       \coordinate (B) at (2, .6);
        \filldraw[draw= green!50!black, fill = green] (B) circle (1mm)
  node[right, green!50!black] 
  {$\frac{1}{2}$};
  
\end{scope}}} \cong   \ tq^{-N}\NB{\tikz[scale=0.6]{\begin{scope}
  \draw [->] (0, -1) -- +(0,2);
\end{scope}}} \ ,
\end{equation}
which we will show. By definition and Lemma \ref{lem:twotwists}, the green-dotted curl $\NB{\tikz[font= \tiny,
  scale=0.4]{}}$ is given, up to green-dotted shifting, by the complex
\[
   \NB{\tikz[xscale = 3, yscale = 3]{
       \node (i0) at (0, 0) { \NB{\tikz[font= \tiny,
  scale=0.6]{\begin{scope}
  \coordinate (bl) at (-0.5, -1);
  \coordinate (br) at ( 0.5, -1);
  \coordinate (bm) at (  0,-0.3);
  \coordinate (tl) at (-0.5,  1);
  \coordinate (tr) at ( 0.5,  1);
  \coordinate (tm) at (  0, 0.3);
  \draw[>-]  (bl) .. controls +( 0, 0.5) and +(0,0) .. (bm);
  \draw[>-]  (br) .. controls +( 0, 0.5) and +(0,0) .. (bm);
  \draw[<-]  (tl) .. controls +( 0, -0.5) and +(0,0) .. (tm) coordinate[pos = 0.25] (ga) ; 
    \filldraw[draw= green!50!black, fill = white] (ga) circle (1mm)
  node[left, green!50!black] {$t_1$};

  \draw[<-]  (tr) .. controls +( 0, -0.5) and +(0,0) .. (tm)
  coordinate[pos = 0.25] (gb) ;
    \filldraw[draw= green!50!black, fill = white] (gb) circle (1mm)
  node[left, green!50!black] {$t_2$};
  \draw [double] (bm) -- (tm);
  \draw (.5, 1) arc (180:0:0.5) -- (1.5,-1) arc (180:0:-0.5);
       \coordinate (A) at (1.5, -.8);
        \filldraw[draw= green!50!black, fill = white] (A) circle (1mm) 
  node[right, green!50!black] 
  {$\frac{1}{2}(N-1)$};
       \coordinate (B) at (1.5, .8);
        \filldraw[draw= green!50!black, fill = green] (B) circle (1mm)
  node[right, green!50!black] 
  {$\frac{1}{2}$};
\end{scope}}} };
   \node (i1) at (1.5, 0) {$q^{-1}$ \NB{\tikz[font= \tiny,
  scale=0.6]{\begin{scope}
  \coordinate (bl) at (-0.5, -1);
  \coordinate (br) at ( 0.5, -1);
  \coordinate (tl) at (-0.5,  1);
  \coordinate (tr) at ( 0.5,  1);
    \coordinate (ml) at (-0.5,  -.8);
        \coordinate (Ml) at (-0.5,  .8);
 \coordinate (mr) at (0.5,  -.6);
\coordinate (Mr) at (0.5,  .6);
   \draw[->] (bl) -- (tl);  
    \draw[->] (br) -- (tr);
    \draw (.5, 1) arc (180:0:0.5) -- (1.5,-1) arc (180:0:-0.5);
     \coordinate (A) at (1.5, -.8);
        \filldraw[draw= green!50!black, fill = white] (A) circle (1mm) 
  node[right, green!50!black] 
  {$\frac{1}{2}(N-1)$};
       \coordinate (B) at (1.5, .8);
        \filldraw[draw= green!50!black, fill = green] (B) circle (1mm)
  node[right, green!50!black] 
  {$\frac{1}{2}$};
\end{scope}}} };
        \draw[->] (i0) -- (i1)  node[pos=0.5, above] {$\mapH$};
  }} .
\]
We can embed the above complex into a short exact sequence of $\sll_2$-equivariant complexes that splits when forgetting the $\sll_2$-actions as follows: 
\[
   \NB{\tikz[xscale = 3, yscale = 3]{
       \node (i0) at (0, 0) {\NB{\tikz[font= \tiny,
  scale=.6]{}} };
   \node (i1) at (1.5, 0) { $q^{-1}$\NB{\tikz[font= \tiny,
  scale=0.6]{}} };
    \node (i2) at (1.5, 1) { $q^{-N}$\NB{\tikz[font= \tiny,
  scale=0.6]{}} };
 \node (i3) at (1.5, -2) {$\oplus [N-1]$ \NB{\tikz[font= \tiny, scale=0.6]{\begin{scope}
  \draw [->] (0, -1) -- +(0,2);
    \coordinate (A) at (0, 0);
\end{scope}}} };
       \node (i4) at (0, -2) {$\oplus [N-1]$ \NB{\tikz[font= \tiny, scale=0.6]{}}};
   \draw[->] (i0) -- (i1)  node[pos=0.5, above] {$\mapH$};
      \draw[->] (i2) -- (i1)  node[pos=0.5, right] {\NB{\tikz[font= \tiny,
  scale=0.9,-]{\begin{scope}
  \draw (0,0) arc (180 :0: 0.5cm and 0.2cm) node[above, pos =
  0.5] {};
  \draw[very thin] (0,0) arc (180 :0: 0.5cm and -0.6cm) node[pos=0.5,
  above] {};
  \draw (0,0) arc (180 :0: 0.5cm and -0.2cm);
\end{scope}

         \draw[->] (i1) -- (i3)  node[pos=0.5, right] {$\begin{pmatrix} 
         \NB{\tikz[font= \tiny, scale=0.9]{\begin{scope}[-]
  \draw (0,0) arc (180 :0: 0.5cm and 0.2cm);
  \draw[very thin] (0,0) arc (180 :0: 0.5cm and 0.6cm) node[pos=0.5,
  below] {};
  \draw (0,0) arc (180 :0: 0.5cm and -0.2cm)node[ below, pos =
  0.5] {};
\end{scope}

}} \\
         \cdots
         \\[.5cm]
         \NB{\tikz[font= \tiny, scale=0.9]{\begin{scope}[-]
  \draw (0,0) arc (180 :0: 0.5cm and 0.2cm);
  \draw[very thin] (0,0) arc (180 :0: 0.5cm and 0.6cm) node[pos=0.5,
  below] {$\dotnewtoni[N-2]$};
  \draw (0,0) arc (180 :0: 0.5cm and -0.2cm)node[ below, pos =
  0.5] {};
\end{scope}

}}
         \end{pmatrix}$};
       \draw[->] (i0) -- (i4)  node[pos=0.5, left] {$\begin{pmatrix} 
         \NB{\tikz[font= \tiny, scale=0.9]{}} \circ \mapH \\
         \cdots
         \\[.5cm]
         \NB{\tikz[font= \tiny, scale=0.9]{}} \circ \mapH
         \end{pmatrix}$};
         \draw[->] (i4) -- (i3)  node[pos=0.5, above] {$\Id$};
  }} .
\]

The objects in the bottom row are copies of the identity web with some unspecified twisted $\sll_2$-structure, in particular the $\sll_2$-structure is not compatible with the direct sum. The $\sll_2$-structures arise by identifying these object with the quotient of the second complex with the first one.

The crucial point being that the bottom map is the identity map. Thus the bottom complex is contractible.  

Consequently, taking the inverse twisting of equation \eqref{eqn:twisted-curl} gives us the desired
\[
  \NB{\tikz[scale=0.6]{}} \cong tq^{-N} \NB{\tikz[font =\tiny,scale=0.6]{}} .
\]
The proposition follows.
\end{proof}

\subsection{Reidemeister II}
\label{sec:reidemeister-2-}

The next lemma is analogous to \cite[Lemma 4.3]{KRWitt} and
\cite[Lemma 3.5]{QiSussanLink}.  \begin{lem} \label{dumbdumbsplit:lem}
There is a short exact sequence of webs which splits when forgetting the action of $\mathfrak{sl}_2$:
\begin{equation}
  \NB{\tikz[xscale = 3, yscale = 3]{
       \node (i0) at (0, 0) { \NB{\tikz[font= \tiny,
  scale=0.6]{\begin{scope}
  \coordinate (bl) at (-0.5, -1);
  \coordinate (br) at ( 0.5, -1);
  \coordinate (bm) at (  0,-0.3);
  \coordinate (tl) at (-0.5,  1);
  \coordinate (tr) at ( 0.5,  1);
  \coordinate (tm) at (  0, 0.3);
  \draw[>-]  (bl) .. controls +( 0, 0.5) and +(0,0) .. (bm);
  \draw[>-]  (br) .. controls +( 0, 0.5) and +(0,0) .. (bm);
  \draw[<-]  (tl) .. controls +( 0, -0.5) and +(0,0) .. (tm) coordinate[pos = 0.25] (ga) ;
    \filldraw[draw= green!50!black, fill = white] (ga) circle (1mm)
  node[left, green!50!black] {$1$};

  \draw[<-]  (tr) .. controls +( 0, -0.5) and +(0,0) .. (tm)
  coordinate[pos = 0.25] (gb) ; 
    \filldraw[draw= green!50!black, fill = white] (gb) circle (1mm)
  node[left, green!50!black] {$1$};
  \draw [double] (bm) -- (tm);
 
\end{scope}}} };
    \node (i1) at (-1,0) { \NB{\tikz[font= \tiny,
  scale=0.6]{\begin{scope}
  \coordinate (bl) at (-0.5, -1);
  \coordinate (br) at ( 0.5, -1);
  \coordinate (bm) at (  0,-0.3);
  \coordinate (tl) at (-0.5,  1);
  \coordinate (tr) at ( 0.5,  1);
  \coordinate (tm) at (  0, 0.3);
    \coordinate (brr) at ( 1.5, -1);
        \coordinate (trr) at ( 1.5, 1);
\draw[>-]  (bl) .. controls +( 0, 0.5) and +(0,0) .. (bm);
  \draw[>-]  (br) .. controls +( 0, 0.5) and +(0,0) .. (bm);
  \draw[<-]  (tl) .. controls +( 0, -0.5) and +(0,0) .. (tm)
  coordinate[pos = 0.25] (ga) ; 
    \filldraw[draw= green!50!black, fill = white] (ga) circle (1mm)
  node[left, green!50!black] {$t_1$};

  \draw[<-]  (tr) .. controls +( 0, -0.5) and +(0,0) .. (tm)
  node[above, pos = 0] {} coordinate[pos = 0.25] (gb) ;
    \filldraw[draw= green!50!black, fill = white] (gb) circle (1mm)
  node[left, green!50!black] {$t_2$};
  \draw [double] (bm) -- (tm); 
   \coordinate (blU) at (-0.5, 1);
  \coordinate (brU) at ( 0.5, 1);
  \coordinate (bmU) at (  0,1.7);
  \coordinate (tlU) at (-0.5,  3);
  \coordinate (trU) at ( 0.5,  3);
  \coordinate (tmU) at (  0, 2.3);
    \coordinate (brrU) at ( 1.5, 1);
        \coordinate (trrU) at ( 1.5, 3);
\draw[]  (blU) .. controls +( 0, 0.5) and +(0,0) .. (bmU);
  \draw[]  (brU) .. controls +( 0, 0.5) and +(0,0) .. (bmU);
  \draw[<-]  (tlU) .. controls +( 0, -0.5) and +(0,0) .. (tmU)
  coordinate[pos = 0.25] (gaU) ; 

  \draw[<-]  (trU) .. controls +( 0, -0.5) and +(0,0) .. (tmU)
  coordinate[pos = 0.25] (gbU) ; 
  \draw [double] (bmU) -- (tmU);
\end{scope}
}} };
      \node (i3) at (-2, 0) { \NB{\tikz[font= \tiny,
  scale=0.6]{}} };
  \draw[->] (i3) -- (i1)  node[pos=0.5, above] {$\mapB$};
      \draw[->] (i1) -- (i0)  node[pos=0.5, above] {$\mapU$};
        \draw[-to] (i0) .. controls +(-.25, -0.75) and +(+.25, -0.75) .. (i1)
  node[pos=0.5, below] {$\mapB^X - {}^X \mapB$};
   \draw[-to] (i1) .. controls +(-.25, -0.75) and +(+.25, -0.75) .. (i3)
  node[pos=0.5, below] {$\mapU^X - {}^X \mapU$};
      }}
\end{equation}
where the splitting maps are given by
\[
\mapU^X-{}^X\mapU =
\NB{\tikz[]{\begin{scope}[font =\tiny]
  \begin{scope}
    \coordinate (L) at (0,0);
    \coordinate (R) at (2,0);
    \coordinate (ML) at (0.5, 0);
    \coordinate (MR) at (1.5, 0);
    \draw[->-] (ML).. controls + (0.4, 0.4) and +(-0.2, 0.4) .. (MR)
    node[above, pos =0.3] {$\dotnewtoni[0]$}; 
    \draw[->-] (ML).. controls + (0.2, -0.4) and +(-0.4, -0.4) .. (MR)
    node[above, pos =0.7, yshift = -0.5mm] {$\dotnewtoni[1]$}; 
  \end{scope}  
 \begin{scope}[yshift = 1.3cm]
    \coordinate (LB) at (0,0);
    \coordinate (RB) at (2,0);
  \end{scope}  
  \draw[double] (MR)--(R) -- (RB) -- (LB) -- (L)-- (ML);
  \draw[thick] (ML) .. controls +(0, 1) and +(0, 1) .. (MR);
\end{scope}

}}
 - \NB{\tikz[]{\begin{scope}[font =\tiny]
  \begin{scope}
    \coordinate (L) at (0,0);
    \coordinate (R) at (2,0);
    \coordinate (ML) at (0.5, 0);
    \coordinate (MR) at (1.5, 0);
    \draw[->-] (ML).. controls + (0.4, 0.4) and +(-0.2, 0.4) .. (MR)
    node[above, pos =0.3] {$\dotnewtoni[1]$}; 
    \draw[->-] (ML).. controls + (0.2, -0.4) and +(-0.4, -0.4) .. (MR)
    node[above, pos =0.7, yshift = -0.5mm] {$\dotnewtoni[0]$}; 
  \end{scope}  
 \begin{scope}[yshift = 1.3cm]
    \coordinate (LB) at (0,0);
    \coordinate (RB) at (2,0);
  \end{scope}  
  \draw[double] (MR)--(R) -- (RB) -- (LB) -- (L)-- (ML);
  \draw[thick] (ML) .. controls +(0, 1) and +(0, 1) .. (MR);
\end{scope}

}} 
\]

\[
\mapB^X-{}^X\mapB =
\NB{\tikz[]{\begin{scope}[font=\tiny]
  \begin{scope}
    \coordinate (L) at (0,0);
    \coordinate (R) at (2,0);
    \coordinate (ML) at (0.5, 0);
    \coordinate (MR) at (1.5, 0);
    \draw[->-] (ML).. controls + (0.4, 0.4) and +(-0.2, 0.4) .. (MR) node[below, pos =0.3] {$\dotnewtoni[0]$};
    \draw[->-] (ML).. controls +(0.2, -0.4) and +(-0.4, -0.4) .. (MR) node[below, pos =0.75] {$\dotnewtoni[1]$};
  \end{scope}  
 \begin{scope}[yshift = -1.3cm]
    \coordinate (LB) at (0,0);
    \coordinate (RB) at (2,0);

  \end{scope}  
  \draw[double] (MR) -- (R) -- (RB) -- (LB) -- (L) -- (ML);
  \draw[thick] (ML) .. controls +(0, -1) and +(0, -1) .. (MR);
\end{scope}

}}
-
 \NB{\tikz[]{\begin{scope}[font=\tiny]
  \begin{scope}
    \coordinate (L) at (0,0);
    \coordinate (R) at (2,0);
    \coordinate (ML) at (0.5, 0);
    \coordinate (MR) at (1.5, 0);
    \draw[->-] (ML).. controls + (0.4, 0.4) and +(-0.2, 0.4) .. (MR) node[below, pos =0.3] {$\dotnewtoni[1]$};
    \draw[->-] (ML).. controls +(0.2, -0.4) and +(-0.4, -0.4) .. (MR) node[below, pos =0.75] {$\dotnewtoni[0]$};
  \end{scope}  
 \begin{scope}[yshift = -1.3cm]
    \coordinate (LB) at (0,0);
    \coordinate (RB) at (2,0);

  \end{scope}  
  \draw[double] (MR) -- (R) -- (RB) -- (LB) -- (L) -- (ML);
  \draw[thick] (ML) .. controls +(0, -1) and +(0, -1) .. (MR);
\end{scope}

}} 
 \ .
\]
\end{lem}

\begin{proof}
This is a straightforward calculation and analogous to \cite[Lemma 3.5]{QiSussanLink}. We leave it as an exercise for the reader.
\end{proof}

The next result is a foam version of
\cite[Theorem 4.2]{KRWitt} and
\cite[Proposition 3.7]{QiSussanLink}, whose proof is included for the sake of completeness.
\begin{prop}
There are isomorphisms in the relative homotopy category
\[
  \NB{\tikz[scale= 0.6]{\begin{scope}[font=\tiny]
  \draw (0.5, -0.5) ..controls +(0,0.3) and +(0,-0.3) .. (-0.5,
  0.5);
  \fill[white] (0,0) circle (2mm);
  \draw (-0.5, -0.5) ..controls +(0,0.3) and +(0,-0.3) .. (0.5,
  0.5);
  \draw[->] (-0.5, 0.5) ..controls +(0,0.3) and +(0,-0.3) .. (0.5,
  1.5);
  \fill[white] (0,1) circle (2mm);
  \draw[->] (0.5, 0.5) ..controls +(0,0.3) and +(0,-0.3) .. (-0.5,
  1.5);

\end{scope}}}\cong \NB{\tikz[font= \tiny,
  scale=0.6]{}}  \cong \NB{\tikz[xscale= -1, scale=0.6]{}}.
\]
\end{prop}

\begin{proof}
By definition, up to green-dot shifting, the complex for the leftmost term $\NB{\tikz[scale= 0.4]{}}$ is given as the total complex
\begin{equation}    
  \NB{\tikz[xscale = 3, yscale = 3]{
       \node (i0) at (0, 0) { $q^{-1}$\NB{\tikz[font= \tiny,
  scale=0.6]{}} };
    \node (i1) at (-1, .5) { \NB{\tikz[font= \tiny,
  scale=0.6]{}} };
    \node (i2) at (-1, -.5) { \NB{\tikz[font= \tiny,
  scale=0.6]{\begin{scope}
  \coordinate (bl) at (-0.5, -1);
  \coordinate (br) at ( 0.5, -1);
  \coordinate (bm) at (  0,-0.3);
  \coordinate (tl) at (-0.5,  1);
  \coordinate (tr) at ( 0.5,  1);
  \coordinate (tm) at (  0, 0.3);
    \coordinate (brr) at ( 1.5, -1);
        \coordinate (trr) at ( 1.5, 1);
\draw[>-]  (bl) .. controls +( 0, 0.5) and +(0,0) .. (bm);
  \draw[>-]  (br) .. controls +( 0, 0.5) and +(0,0) .. (bm);
  \draw[<-]  (tl) .. controls +( 0, -0.5) and +(0,0) .. (tm) coordinate[pos = 0.25] (ga) ;
    \filldraw[draw= green!50!black, fill = white] (ga) circle (1mm)
  node[left, green!50!black] {$t_1$};

  \draw[<-]  (tr) .. controls +( 0, -0.5) and +(0,0) .. (tm)
  node[above, pos = 0] {} coordinate[pos = 0.25] (gb) ;
    \filldraw[draw= green!50!black, fill = white] (gb) circle (1mm)
  node[left, green!50!black] {$t_2$};
  \draw [double] (bm) -- (tm);
   \coordinate (blU) at (-0.5, 1);
  \coordinate (brU) at ( 0.5, 1);
  \coordinate (bmU) at (  0,1.7);
  \coordinate (tlU) at (-0.5,  3);
  \coordinate (trU) at ( 0.5,  3);
  \coordinate (tmU) at (  0, 2.3);
    \coordinate (brrU) at ( 1.5, 1);
        \coordinate (trrU) at ( 1.5, 3);
\draw[]  (blU) .. controls +( 0, 0.5) and +(0,0) .. (bmU);
  \draw[]  (brU) .. controls +( 0, 0.5) and +(0,0) .. (bmU);
  \draw[<-]  (tlU) .. controls +( 0, -0.5) and +(0,0) .. (tmU) coordinate[pos = 0.25] (gaU) ;
    \filldraw[draw= green!50!black, fill = white] (gaU) circle (1mm)
  node[left, green!50!black] {$-\bar{t}_1$};

  \draw[<-]  (trU) .. controls +( 0, -0.5) and +(0,0) .. (tmU) coordinate[pos = 0.25] (gbU) ;
    \filldraw[draw= green!50!black, fill = white] (gbU) circle (1mm)
  node[right, green!50!black] {$-\bar{t}_2$};
  \draw [double] (bmU) -- (tmU);
\end{scope}
}} };
      \node (i3) at (-2, 0) {$q$ \NB{\tikz[font= \tiny,
  scale=0.6]{}} };
  \draw[->] (i3) -- (i1) node[pos=0.5, above] {$\mapH$};
    \draw[->] (i3) -- (i2) node[pos=0.5, above] {$-\mapX$};
      \draw[->] (i1) -- (i0) node[pos=0.5, above] {$\mapX$};
  \draw[->] (i2) -- (i0) node[pos=0.5, above] {$\mapH$};
  }} .
\end{equation}
Here we have used  \eqref{eqn:def-T-tw} and \eqref{eqn:T-prime-tw} for constructing the total complex.

Using Lemma \ref{dumbdumbsplit:lem} and the fact that  $t_1+\bar{t}_1=t_2+\bar{t}_2=1$, one can fit this complex into a short exact sequence of $\sll_2$-equivariant complexes, which splits when forgetting the $\sll_2$-structures:

\begin{equation}
  \label{eq:1}
  \NB{\tikz{ \node (A) at (0,0) {
    \NB{\tikz[xscale = 4, yscale = 3]{
       \node (i0) at (0, 0) { $q^{-1}$\NB{\tikz[font= \tiny,
  scale=0.6]{}} };
    \node (i1) at (-1, .5) { \NB{\tikz[font= \tiny,
  scale=0.6]{}} };
    \node (i2) at (-1, -.5) { \NB{\tikz[font= \tiny,
  scale=0.6]{}} };
      \node (i3) at (-2, 0) {$q$ \NB{\tikz[font= \tiny,
  scale=0.6]{}} };
  \draw[->] (i3) -- (i1) node[pos=0.5, above] {$\mapH$};
    \draw[->] (i3) -- (i2) node[pos=0.5, above] {$-\mapX$};
      \draw[->] (i1) -- (i0) node[pos=0.5, above] {$\mapX$};
  \draw[->] (i2) -- (i0) node[pos=0.5, above] {$\mapH$};
}}};
\node (B) at (0, 6) {
  \NB{\tikz[xscale = 4, yscale = 3]{
       \node (i0) at (0, 0) { $q^{-1}$\NB{\tikz[font= \tiny,
  scale=0.6]{}} };
    \node (i1) at (-1, .5) { \NB{\tikz[font= \tiny,
  scale=0.6]{}} };
    \node (i2) at (-1, -.5) { $q^{-1}$\NB{\tikz[font= \tiny,
  scale=0.6]{}} };
      \node (i3) at (-2, 0) {0}; \draw[->] (i3) -- (i1);
    \draw[->] (i3) -- (i2);
      \draw[->] (i1) -- (i0)  node[pos=0.5, above] {$\mapX$};
  \draw[->] (i2) -- (i0) node[pos=0.5, above] {$\Id$};
  }} 
};
\node (C) at (0, -6) {
\NB{\tikz[xscale = 4, yscale = 3]{
        \node (i0) at (0, 0) {0}; \node (i1) at (-1, .5) {0}; \node (i2) at (-1, -.5) {$q$ \NB{\tikz[font= \tiny,
  scale=0.6]{}} };
      \node (i3) at (-2, 0) {$q$ \NB{\tikz[font= \tiny,
  scale=0.6]{}} };
  \draw[->] (i3) -- (i1);\draw[->] (i3) -- (i2) node[pos=0.5, above] {$-\mathrm{Id}$};
      \draw[->] (i1) -- (i0);\draw[->] (i2) -- (i0); }}
};
\begin{scope}[ dotted, rounded corners]
  \draw (-5.5, -2.5) rectangle +(11,5);
  \draw (-5.5, -8) rectangle +(11,4);
  \draw (-5.5, 3.7) rectangle +(11,4.5);
\end{scope}
\draw[->] (B) -- (A);
\draw[->] (A) -- (C);
}}
\end{equation}
Since the last complex, is obviously  contractible, the two first one are isomorphic in the relative homotopy category.

Finally, contracting out the terms
\begin{equation}
     \NB{\tikz[xscale = 3, yscale = 3]{
       \node (i0) at (.5, 0) { $q^{-1}$\NB{\tikz[font= \tiny,
  scale=0.6]{}} };
  }} ,
\end{equation}
we are left with $\NB{\tikz[scale= 0.3]{}} \cong \NB{\tikz[font= \tiny,
  scale=0.3]{}}$ .
The proof of the other isomorphism is similar.
\end{proof}

We now prove the other Reidemeister II relation.

\begin{prop}
There are isomorphisms in the relative homotopy category
\[
\NB{\tikz[]{\input{\imagesfolder/pdg_horres}}} \cong  \NB{\tikz[]{\begin{scope}[xshift = 2.5cm]
  \draw [<-] (0, -0.5) .. controls +(.5,0) and +(-.5, 0)
  .. (1, 0.5); 
    \fill[white] (0.5, 0) circle (1mm);

    \draw(0, 0.5) .. controls +(.5,0) and +(-.5, 0)
  .. (1, -0.5);
  \draw[] (1, 0.5) .. controls +(.5,0) and +(-.5, 0)
  .. (2, -0.5);
    \fill[white] (1.5, 0) circle (1mm);  
      \draw[<-] (2, 0.5)  .. controls +(-.5,0) and +(.5, 0)
  .. (1, -0.5); 
\end{scope}




\]
\end{prop}

\begin{proof}
First note that in the relative homotopy category that green dots slide through crossings (Lemma \ref{lem:twotwists}), so we have
\begin{equation} \label{greendoublecrosseq}
     \NB{\tikz[]{\begin{scope}[xshift = 2.5cm]
  \draw [<-] (0, -0.5) .. controls +(.5,0) and +(-.5, 0)
  .. (1, 0.5); 
    \fill[white] (0.5, 0) circle (1mm);

    \draw(0, 0.5) .. controls +(.5,0) and +(-.5, 0)
  .. (1, -0.5);
  \draw[] (1, 0.5) .. controls +(.5,0) and +(-.5, 0)
  .. (2, -0.5);
    \fill[white] (1.5, 0) circle (1mm);  
      \draw[<-] (2, 0.5)  .. controls +(-.5,0) and +(.5, 0)
  .. (1, -0.5); 
\end{scope}




  \draw [<-] (0, -0.5) .. controls +(.5,0) and +(-.5, 0)
  .. (1, 0.5) ; 
    \fill[white] (0.5, 0) circle (1mm);

    \draw (0, 0.5) .. controls +(.5,0) and +(-.5, 0)
  .. (1, -0.5) coordinate[pos = 0.85] (A) ;
  \draw[<-] (1, 0.5) .. controls +(.5,0) and +(-.5, 0)
  .. (2, -0.5);

    \fill[white] (1.5, 0) circle (1mm);

      \draw (2, 0.5)  .. controls +(-.5,0) and +(.5, 0)
  .. (1, -0.5) coordinate[pos = 0] (C) coordinate[pos = .25] (D) coordinate[pos = .8] (B); 
\filldraw[draw= green!50!black, fill = white] (A) circle (1mm) node[below, green!50!black] {$\frac{N-1}{2}$};
\filldraw[draw= green!50!black, fill = green] (B) circle (1mm) node[right, green!50!black] {$\frac{1}{2}$};
\filldraw[draw= green!50!black, fill = white] (C) circle (1mm) node[right, green!50!black] {$-\frac{N-1}{2}$};
\filldraw[draw= green!50!black, fill = green] (D) circle (1mm) node[below, green!50!black] {$-\frac{1}{2}$};
\end{scope}

}} .
\end{equation}
Using this isomorphism and the definitions of the crossings \eqref{eqn:def-T-tw} and \eqref{eqn:T-prime-tw}, we can expand $\NB{\tikz[scale =0.4]{\begin{scope}[xshift = 2.5cm]
  \draw [<-] (0, -0.5) .. controls +(.5,0) and +(-.5, 0)
  .. (1, 0.5); 
    \fill[white] (0.5, 0) circle (1mm);

    \draw(0, 0.5) .. controls +(.5,0) and +(-.5, 0)
  .. (1, -0.5);
  \draw[] (1, 0.5) .. controls +(.5,0) and +(-.5, 0)
  .. (2, -0.5);
    \fill[white] (1.5, 0) circle (1mm);  
      \draw[<-] (2, 0.5)  .. controls +(-.5,0) and +(.5, 0)
  .. (1, -0.5); 
\end{scope}




\begin{align}
  \NB{\tikz[xscale = 3, yscale = 3]{
       \node (i0) at (.5, 0) { $q^{-1}$\NB{\tikz[font= \tiny,
  scale=0.6]{\begin{scope}
  \coordinate (bl) at (-0.5, -1);
  \coordinate (br) at ( 0.5, -1);
    \coordinate (brr) at ( 2.5, -1);

  \coordinate (bm) at (  0,-0.3);
  \coordinate (tl) at (-0.5,  1);
  \coordinate (tr) at ( 0.5,  1);
    \coordinate (trr) at ( 2.5,  1);

  \coordinate (tm) at (  0, 0.3);
  \draw[<-]  (bl) .. controls +( 0, 0.5) and +(0,0) .. (bm) coordinate[pos = 0.25] (gb);
  \draw[<-]  (br) .. controls +( 0, 0.5) and +(0,0) .. (bm)
  coordinate[pos = 0.25] (ga); 
  \draw[>-]  (tl) .. controls +( 0, -0.5) and +(0,0) .. (tm);
    \filldraw[draw= green!50!black, fill = white] (ga) circle (1mm)
  node[right, green!50!black] {$-\bar{t}_1$};

  \draw[>-]  (tr) .. controls +( 0, -0.5) and +(0,0) .. (tm);
    \filldraw[draw= green!50!black, fill = white] (gb) circle (1mm)
  node[left, green!50!black] {$-\bar{t}_2$};
  \draw [double] (bm) -- (tm);
  \draw (.5, 1) arc (180:0:0.5) -- (1.5,-1) arc (180:0:-0.5) coordinate[pos = 0.25] (X) coordinate[pos = 0.75] (Y);
     \draw[->] (brr) -- (trr) coordinate[pos = 0.25] (A) coordinate[pos = 0.75] (B);
      \filldraw[draw= green!50!black, fill = green] (X) circle (1mm)
  node[below, green!50!black] {$\frac{1}{2}$};
    \filldraw[draw= green!50!black, fill = white] (Y) circle (1mm)
  node[below, green!50!black] {$\frac{N-1}{2}$};
\filldraw[draw= green!50!black, fill = white] (A) circle (1mm)
  node[right, green!50!black] {$-\frac{N-1}{2}$}; 
  \filldraw[draw= green!50!black, fill = green] (B) circle (1mm)
  node[right, green!50!black] {$\frac{-1}{2}$};
\end{scope}}} };
    \node (i1) at (-1, .5) { \NB{\tikz[font= \tiny,
  scale=0.6]{\begin{scope}
  \coordinate (bl) at (0, -1);
  \coordinate (br) at (2, -1);
  \coordinate (tl) at (0,  1);
  \coordinate (tr) at (2,  1);
    \coordinate (ml) at (-0.5,  -.8);
        \coordinate (Ml) at (-0.5,  .8);
 \coordinate (mr) at (0.5,  -.6);
\coordinate (Mr) at (0.5,  .6);

   \draw[<-<] (bl) -- (tl);
     \draw[>->] (br) -- (tr) coordinate[pos = 0.25] (A) coordinate[pos = 0.75] (B);
    \draw [-<] (.5, 0) arc (180:0:0.5) ;
    \draw (.5, 0) arc (-180:0:0.5) coordinate[pos = 0.2] (X) coordinate[pos = 0.8] (Y);
    \filldraw[draw= green!50!black, fill = green] (X) circle (1mm)
  node[below, green!50!black] {$\frac{1}{2}$};
    \filldraw[draw= green!50!black, fill = white] (Y) circle (1mm)
  node[below, green!50!black] {$\frac{N-1}{2}$};
\filldraw[draw= green!50!black, fill = white] (A) circle (1mm)
  node[right, green!50!black] {$-\frac{N-1}{2}$}; 
  \filldraw[draw= green!50!black, fill = green] (B) circle (1mm)
  node[right, green!50!black] {$\frac{-1}{2}$};
\end{scope}}} };
    \node (i2) at (-1, -.5) {  \NB{\tikz[font= \tiny,
  scale=0.6]{\begin{scope}
  \coordinate (bl) at (-0.5, -1.5);
  \coordinate (br) at ( 0.5, -1);
  \coordinate (tr) at ( 0.5,  1);
  \coordinate (tl) at (-0.5,  1.5);

  \coordinate (bm) at (  0,-0.3);
  \coordinate (tm) at (  0, 0.3);
    \coordinate (bl2) at (1.5, -1);
    \coordinate (br2) at ( 2.5, -1.5);
  \coordinate (bm2) at (  2,-0.3);
  \coordinate (tl2) at (1.5,  1);
  \coordinate (tr2) at ( 2.5,  1.5);
  \coordinate (tm2) at (  2, 0.3);
  \draw[<-]  (bl) .. controls +( 0, 0.5) and +(0,0) .. (bm) coordinate[pos = 0.25] (ha) ;
    \filldraw[draw= green!50!black, fill = white] (ha) circle (1mm)
  node[left, green!50!black] {$-\bar{t}_2$};
  \draw[<-]  (br) .. controls +( 0, 0.5) and +(0,0) .. (bm)
  coordinate[pos = 0.25] (hb) ; 
    \filldraw[draw= green!50!black, fill = white] (hb) circle (1mm)
  node[right, green!50!black] {$-\bar{t}_1$};
  \draw[>-]  (tl) .. controls +( 0, -0.5) and +(0,0) .. (tm)
  coordinate[pos = 0.25] (ga) ; 
   \draw[>-]  (tr) .. controls +( 0, -0.5) and +(0,0) .. (tm)
   coordinate[pos = 0.25] (gb) ; 
  \draw [double] (bm) -- (tm);
\draw (.5, 1) arc (180:0:0.5);
\draw (1.5,-1) arc (180:0:-0.5) coordinate[pos = 0.25] (X) coordinate[pos = 0.75] (Y);
 \draw[>-]  (bl2) .. controls +( 0, 0.5) and +(0,0) .. (bm2);
  \draw[>-]  (br2) .. controls +( 0, 0.5) and +(0,0) .. (bm2);
  \draw[<-]  (tl2) .. controls +( 0, -0.5) and +(0,0) .. (tm2) coordinate[pos = 0.25] (ga2) ; 
      \filldraw[draw= green!50!black, fill = white] (ga2) circle (1mm)
  node[left, green!50!black] {${t}_1$};
    \draw [double] (bm2) -- (tm2);
  \draw[<-]  (tr2) .. controls +( 0, -0.5) and +(0,0) .. (tm2)
  coordinate[pos = 0.15] (gb2) coordinate[pos = 0.6] (gb3); 
   \filldraw[draw= green!50!black, fill = white] (gb2) circle (1mm)
  node[right, green!50!black] {${t}_2-\frac{N-1}{2}$};
    \filldraw[draw= green!50!black, fill = green] (gb3) circle (1mm)
  node[right, green!50!black] {$\frac{-1}{2}$};
    \filldraw[draw= green!50!black, fill = green] (X) circle (1mm)
  node[below, green!50!black] {$\frac{1}{2}$};
    \filldraw[draw= green!50!black, fill = white] (Y) circle (1mm)
  node[below, green!50!black] {$\frac{N-1}{2}$};
\end{scope}

}}};
      \node (i3) at (-2.5, 0) {$q$ \ \NB{\tikz[font= \tiny,
  scale=0.6]{\begin{scope}
  \coordinate (bl) at (-0.5, -1);
    \coordinate (bll) at (-2.5, -1);
 \coordinate (br) at ( 0.5, -1);
  \coordinate (bm) at (  0,-0.3);
  \coordinate (tl) at (-.5,  1);
    \coordinate (tll) at (-2.5,  1);
  \coordinate (tr) at ( 0.5,  1);
  \coordinate (tm) at (  0, 0.3);
  \draw[>-]  (bl) .. controls +( 0, 0.5) and +(0,0) .. (bm);
  \draw[>-]  (br) .. controls +( 0, 0.5) and +(0,0) .. (bm);
  \draw[<-]  (tl) .. controls +( 0, -0.5) and +(0,0) .. (tm)
  coordinate[pos = 0.25] (ga) ; 
    \filldraw[draw= green!50!black, fill = white] (ga) circle (1mm)
  node[left, green!50!black] {$t_1$};

  \draw[<-]  (tr) .. controls +( 0, -0.5) and +(0,0) .. (tm)
  coordinate[pos = 0.15] (gb) coordinate[pos = 0.65] (gb3); 
  \draw [double] (bm) -- (tm);
   \draw (-.5, 1) arc (-180:0:-0.5) -- (-1.5,-1) arc (-180:0:0.5) coordinate[pos = 0.25] (X) coordinate[pos = 0.75] (Y);
      \draw[<-<] (bll) -- (tll);
       \filldraw[draw= green!50!black, fill = white] (gb) circle (1mm)
  node[right, green!50!black] {${t}_2-\frac{N-1}{2}$};
   \filldraw[draw= green!50!black, fill = green] (gb3) circle (1mm)
  node[right, green!50!black] {$\frac{-1}{2}$};
    \filldraw[draw= green!50!black, fill = green] (X) circle (1mm)
  node[below, green!50!black] {$\frac{1}{2}$};
    \filldraw[draw= green!50!black, fill = white] (Y) circle (1mm)
  node[below, green!50!black] {$\frac{N-1}{2}$};
\end{scope}}} };
  \draw[->] (i3) -- (i1) node[pos=0.5, above] {$\mapH$};
    \draw[->] (i3) -- (i2) node[pos=0.5, above] {$-\mapX$};
      \draw[->] (i1) -- (i0) node[pos=0.5, above] {$\mapX$};
  \draw[->] (i2) -- (i0) node[pos=0.5, above] {$\mapH$};
  }} \nonumber    .
\end{align}

There is a morphism of complexes
\begin{equation}
  \NB{\tikz[xscale = 3, yscale = 3]{
       \node (i0) at (.5, 0) {$q^{-1}$ \NB{\tikz[font= \tiny,
  scale=0.6]{}} };
    \node (i1) at (-1, .5) { \NB{\tikz[font= \tiny,
  scale=0.6]{}} };
    \node (i2) at (-1, -.5) {  \NB{\tikz[font= \tiny,
  scale=0.6]{}}};
      \node (i3) at (-2.5, 0) {$q$ \NB{\tikz[font= \tiny,
  scale=0.6]{}} };
    \node (i4) at (-1, 1.5) { \NB{\tikz[font= \tiny,
  scale=0.6]{\input{\imagesfolder/pdg_horres}}} };
      \node (i5) at (-3, -.75) { };
    \node (i6) at (1, .75) { };
 \draw[->] (i3) -- (i1) node[pos=0.5, above] {$\mapH$};
    \draw[->] (i3) -- (i2) node[pos=0.5, above] {$-\mapX$};
      \draw[->] (i1) -- (i0) node[pos=0.5, above] {$\mapX$};
  \draw[->] (i2) -- (i0) node[pos=0.5, above] {$\mapH$};
   \draw[->] (i4) -- (i1) node[pos=0.5, left] {$\begin{pmatrix} \mapC \circ \mapS \\ 
   \mapS \circ \mapX \mapX \circ \mapC \mapC
\end{pmatrix}$};
       \node[draw, densely dotted ,fit=(i4) (i4) ,inner sep=1ex,rectangle] {};
           \node[draw, densely dotted ,fit=(i5) (i6) ,inner sep=1ex,rectangle] {};
  }}    .
\end{equation}
where the first component of the vertical map $\mapC \circ \mapS$ is a composition of a saddle and a cup creation, and 
the second component $ \mapS \circ \mapX \mapX \circ \mapC \mapC $ is a composition of two cup creations, followed by two merges, followed by a saddle on a thickness two facet.
Note that the twists coming from $\mapC$ and $\mapS$ in the first component cancel each other, while the twists for the second component require a more detailed calculation.
Since this map is a homotopy equivalence when forgetting about the $\mathfrak{sl}_2$-action (see \cite[Proposition 9]{Khsl3} or \cite[Theorem 7.1]{MSV}),
in the relative homotopy category we have that the double crossing is isomorphic to 
\[
\NB{\tikz[]{\input{\imagesfolder/pdg_horres}}} .
\]
This finishes the proof of the proposition.
\end{proof}

\subsection{Reidemeister III}
We will follow the proof of Reidemeister III given by Khovanov and Rozansky \cite{KRWitt} and its reformulation in \cite[Proposition 3.9]{QiSussanLink} .

\begin{prop}
  For $i=1,\ldots,n-1$, there are isomorphisms
  \begin{equation}\label{eq:R3-prop}
    \NB{\tikz[scale=0.5]{\begin{scope}[font=\tiny]
  \draw (0.5, -0.5) ..controls +(0,0.3) and +(0,-0.3) .. (-0.5,0.5);
  \fill[white] (0,0) circle (2mm);
  \draw (-0.5, -0.5) ..controls +(0,0.3) and +(0,-0.3) .. (0.5,0.5);
  \begin{scope}[xshift= 1cm, yshift= 1cm]
  \draw (0.5, -0.5) ..controls +(0,0.3) and +(0,-0.3) .. (-0.5,0.5);
  \fill[white] (0,0) circle (2mm);
  \draw (-0.5, -0.5) ..controls +(0,0.3) and +(0,-0.3) .. (0.5,0.5);
  \end{scope}
  \begin{scope}[xshift=0cm, yshift= 2cm]
  \draw[->] (0.5, -0.5) ..controls +(0,0.3) and +(0,-0.3) .. (-0.5,0.5);
  \fill[white] (0,0) circle (2mm);
  \draw[->] (-0.5, -0.5) ..controls +(0,0.3) and +(0,-0.3) .. (0.5,0.5);
\end{scope}
\draw (-0.5,0.5) -- +(0,1);
\draw (1.5,-0.5) -- +(0,1);
\draw[->] (1.5,1.5) -- +(0,1);
\end{scope}}}\cong
\NB{\tikz[xscale = -.5, yscale=.5]{\begin{scope}[yscale= -1, font=\tiny]
  \draw[<-] (0.5, -0.5) ..controls +(0,0.3) and +(0,-0.3) .. (-0.5,0.5);
  \fill[white] (0,0) circle (2mm);
  \draw[<-] (-0.5, -0.5) ..controls +(0,0.3) and +(0,-0.3) .. (0.5,0.5);
  \begin{scope}[xshift= 1cm, yshift= 1cm]
  \draw (0.5, -0.5) ..controls +(0,0.3) and +(0,-0.3) .. (-0.5,0.5);
  \fill[white] (0,0) circle (2mm);
  \draw (-0.5, -0.5) ..controls +(0,0.3) and +(0,-0.3) .. (0.5,0.5);
  \end{scope}
  \begin{scope}[xshift=0cm, yshift= 2cm]
  \draw (0.5, -0.5) ..controls +(0,0.3) and +(0,-0.3) .. (-0.5,0.5);
  \fill[white] (0,0) circle (2mm);
  \draw (-0.5, -0.5) ..controls +(0,0.3) and +(0,-0.3) .. (0.5,0.5);
\end{scope}
\draw (-0.5,0.5) -- +(0,1);
\draw[<-] (1.5,-0.5) -- +(0,1);
\draw (1.5,1.5) -- +(0,1);
\end{scope}}} 
  \end{equation}
  in the relative homotopy category.
\end{prop}

\begin{proof}
Throughout the proof we omit writing down explicitly the differentials
for space purposes.  We also drop $q$-shifts, since they could easily be reincorporated. One could follow the analogous proof in \cite[Proposition 3.9]{QiSussanLink} for such details.
By definition, the complex for $\NB{\tikz[scale=0.3]{\begin{scope}[font=\tiny]
  \draw (0.5, -0.5) ..controls +(0,0.3) and +(0,-0.3) .. (-0.5,0.5);
  \fill[white] (0,0) circle (2mm);
  \draw (-0.5, -0.5) ..controls +(0,0.3) and +(0,-0.3) .. (0.5,0.5);
  \begin{scope}[xshift= 1cm, yshift= 1cm]
  \draw (0.5, -0.5) ..controls +(0,0.3) and +(0,-0.3) .. (-0.5,0.5);
  \fill[white] (0,0) circle (2mm);
  \draw (-0.5, -0.5) ..controls +(0,0.3) and +(0,-0.3) .. (0.5,0.5);
  \end{scope}
  \begin{scope}[xshift=0cm, yshift= 2cm]
  \draw[->] (0.5, -0.5) ..controls +(0,0.3) and +(0,-0.3) .. (-0.5,0.5);
  \fill[white] (0,0) circle (2mm);
  \draw[->] (-0.5, -0.5) ..controls +(0,0.3) and +(0,-0.3) .. (0.5,0.5);
\end{scope}
\draw (-0.5,0.5) -- +(0,1);
\draw (1.5,-0.5) -- +(0,1);
\draw[->] (1.5,1.5) -- +(0,1);
\end{scope}}}$ on the left-hand side of \eqref{eq:R3-prop} is given as the total complex, up to green-dotted shifting:
\begin{equation}
  \providecommand{\myxscale}{0.55}
  \providecommand{\myyscale}{0.45}
\NB{\tikz[xscale = 2.8, yscale = 2.5]{
       \node (i0) at (0, 0) { \NB{\tikz[font= \tiny,
  xscale=\myxscale, yscale=\myyscale]{\begin{scope}
  \coordinate (bl) at (-0.5, -1);
  \coordinate (br) at ( 0.5, -1);
  \coordinate (tl) at (-0.5,  1);
  \coordinate (tr) at ( 0.5,  1);
    \coordinate (ml) at (-0.5,  -.8);
        \coordinate (Ml) at (-0.5,  .8);
 \coordinate (mr) at (0.5,  -.6);
\coordinate (Mr) at (0.5,  .6);
  \coordinate (brr) at ( 1.5, -1);
    \coordinate (trr) at ( 1.5, 1);

   \draw[->-] (bl) -- (tl);
  
    \draw[->-] (br) -- (tr); 
       \draw[->-] (brr) -- (trr);
  

\end{scope}}} };
      \node (i1) at (-1, 0) { \NB{\tikz[font= \tiny,
  xscale=\myxscale, yscale=\myyscale]{\begin{scope}
  \coordinate (bl) at (-0.5, -1);
  \coordinate (br) at ( 0.5, -1);
  \coordinate (bm) at (  0,-0.3);
  \coordinate (tl) at (-0.5,  1);
  \coordinate (tr) at ( 0.5,  1);
  \coordinate (tm) at (  0, 0.3);
    \coordinate (bll) at ( -1.5, -1);
        \coordinate (tll) at ( -1.5, 1);
\draw[>-]  (bl) .. controls +( 0, 0.5) and +(0,0) .. (bm)
 ;
  \draw[>-]  (br) .. controls +( 0, 0.5) and +(0,0) .. (bm)
 ;
  \draw[<-]  (tl) .. controls +( 0, -0.5) and +(0,0) .. (tm)
  coordinate[pos = 0.25] (ga) ;
    \filldraw[draw= green!50!black, fill = white] (ga) circle (1mm)
  node[left, green!50!black] {$t_1$};

  \draw[<-]  (tr) .. controls +( 0, -0.5) and +(0,0) .. (tm)
  coordinate[pos = 0.25] (gb) ;
    \filldraw[draw= green!50!black, fill = white] (gb) circle (1mm)
  node[left, green!50!black] {$t_2$};
  \draw [double] (bm) -- (tm) ;
 \draw [->-] (bll) -- (tll);
\end{scope}}} };
   \node (i2) at (-1, 1) { \NB{\tikz[font= \tiny,
  xscale=\myxscale, yscale=\myyscale]{\begin{scope}
  \coordinate (bl) at (-0.5, -1);
  \coordinate (br) at ( 0.5, -1);
  \coordinate (bm) at (  0,-0.3);
  \coordinate (tl) at (-0.5,  1);
  \coordinate (tr) at ( 0.5,  1);
  \coordinate (tm) at (  0, 0.3);
    \coordinate (brr) at ( 1.5, -1);
        \coordinate (trr) at ( 1.5, 1);
\draw[>-]  (bl) .. controls +( 0, 0.5) and +(0,0) .. (bm)
 ;
  \draw[>-]  (br) .. controls +( 0, 0.5) and +(0,0) .. (bm)
 ;
  \draw[<-]  (tl) .. controls +( 0, -0.5) and +(0,0) .. (tm)
  coordinate[pos = 0.25] (ga) ;
    \filldraw[draw= green!50!black, fill = white] (ga) circle (1mm)
  node[left, green!50!black] {$t_1$};

  \draw[<-]  (tr) .. controls +( 0, -0.5) and +(0,0) .. (tm)
  coordinate[pos = 0.25] (gb) ;
    \filldraw[draw= green!50!black, fill = white] (gb) circle (1mm)
  node[left, green!50!black] {$t_2$};
  \draw [double] (bm) -- (tm) ;
 \draw [->-] (brr) -- (trr);

\end{scope}}} };
     \node (i3) at (-1, -1) { \NB{\tikz[font= \tiny,
  xscale=\myxscale, yscale=\myyscale]{}} };
      \node (i4) at (-2, 1) { \NB{\tikz[font= \tiny,
  xscale=\myxscale, yscale=\myyscale]{\begin{scope}
  \coordinate (bl) at (-0.5, -1);
  \coordinate (br) at ( 0.5, -1);
  \coordinate (bm) at (  0,-0.3);
  \coordinate (tl) at (-0.5,  1);
  \coordinate (tr) at ( 0.5,  1);
  \coordinate (tm) at (  0, 0.3);
    \coordinate (brr) at ( 1.5, -1);
        \coordinate (trr) at ( 1.5, 1);
\draw[>-]  (bl) .. controls +( 0, 0.5) and +(0,0) .. (bm)
 ;
  \draw[>-]  (br) .. controls +( 0, 0.5) and +(0,0) .. (bm)
 ;
  \draw[]  (tl) .. controls +( 0, -0.5) and +(0,0) .. (tm)
  node[left, pos = 0] {} coordinate[pos = 0.25] (ga) ;
    \filldraw[draw= green!50!black, fill = white] (ga) circle (1mm)
  node[left, green!50!black] {$t_1$};

  \draw[<-]  (tr) .. controls +( 0, -0.5) and +(0,0) .. (tm)
  node[above, pos = 0] {} coordinate[pos = 0.25] (gb) ;
    \filldraw[draw= green!50!black, fill = white] (gb) circle (1mm)
  node[left, green!50!black] {$t_2$};
  \draw [double] (bm) -- (tm) ;
 \draw [->-] (brr) -- (trr);
  \coordinate (blU) at (0.5, 1);
  \coordinate (brU) at ( 1.5, 1);
  \coordinate (bmU) at (  1, 1.7);
  \coordinate (tlU) at (0.5,  3);
  \coordinate (trU) at ( 1.5,  3);
  \coordinate (tmU) at (  1, 2.3);
    \coordinate (bllU) at ( -.5, 1);
        \coordinate (tllU) at ( -.5, 3);
\draw[]  (blU) .. controls +( 0, 0.5) and +(0,0) .. (bmU)
 ;
  \draw[]  (brU) .. controls +( 0, 0.5) and +(0,0) .. (bmU)
  node[left, pos = 0] {};
  \draw[<-]  (tlU) .. controls +( 0, -0.5) and +(0,0) .. (tmU)
  coordinate[pos = 0.25] (gaU) ;
    \filldraw[draw= green!50!black, fill = white] (gaU) circle (1mm)
  node[left, green!50!black] {$t_1$};

  \draw[<-]  (trU) .. controls +( 0, -0.5) and +(0,0) .. (tmU)
  coordinate[pos = 0.25] (gbU) ;
    \filldraw[draw= green!50!black, fill = white] (gbU) circle (1mm)
  node[left, green!50!black] {$t_2$};
  \draw [double] (bmU) -- (tmU) ;
 \draw [->-] (bllU) -- (tllU) ;

\end{scope}}} };
      \node (i5) at (-2, 0) { \NB{\tikz[font= \tiny,
  xscale=\myxscale, yscale=\myyscale]{\begin{scope}
  \coordinate (bl) at (-0.5, -1);
  \coordinate (br) at ( 0.5, -1);
  \coordinate (bm) at (  0,-0.3);
  \coordinate (tl) at (-0.5,  1);
  \coordinate (tr) at ( 0.5,  1);
  \coordinate (tm) at (  0, 0.3);
    \coordinate (brr) at ( 1.5, -1);
        \coordinate (trr) at ( 1.5, 1);
\draw[>-]  (bl) .. controls +( 0, 0.5) and +(0,0) .. (bm)
 ;
  \draw[>-]  (br) .. controls +( 0, 0.5) and +(0,0) .. (bm)
 ;
  \draw[<-]  (tl) .. controls +( 0, -0.5) and +(0,0) .. (tm)
  node[above, pos = 0] {} coordinate[pos = 0.25] (ga) ;
    \filldraw[draw= green!50!black, fill = white] (ga) circle (1mm)
  node[left, green!50!black] {$t_1$};

  \draw[<-]  (tr) .. controls +( 0, -0.5) and +(0,0) .. (tm)
  node[above, pos = 0] {} coordinate[pos = 0.25] (gb) ;
    \filldraw[draw= green!50!black, fill = white] (gb) circle (1mm)
  node[left, green!50!black] {$t_2$};
  \draw [double] (bm) -- (tm) ;
   \coordinate (blU) at (-0.5, 1);
  \coordinate (brU) at ( 0.5, 1);
  \coordinate (bmU) at (  0,1.7);
  \coordinate (tlU) at (-0.5,  3);
  \coordinate (trU) at ( 0.5,  3);
  \coordinate (tmU) at (  0, 2.3);
    \coordinate (brrU) at ( 1.5, 1);
        \coordinate (trrU) at ( 1.5, 3);
\draw[]  (blU) .. controls +( 0, 0.5) and +(0,0) .. (bmU)
 ;
  \draw[]  (brU) .. controls +( 0, 0.5) and +(0,0) .. (bmU)
 ;
  \draw[<-]  (tlU) .. controls +( 0, -0.5) and +(0,0) .. (tmU)
  coordinate[pos = 0.25] (gaU) ;
    \filldraw[draw= green!50!black, fill = white] (gaU) circle (1mm)
  node[left, green!50!black] {$t_1$};

  \draw[<-]  (trU) .. controls +( 0, -0.5) and +(0,0) .. (tmU)
  coordinate[pos = 0.25] (gbU) ;
    \filldraw[draw= green!50!black, fill = white] (gbU) circle (1mm)
  node[left, green!50!black] {$t_2$};
  \draw [double] (bmU) -- (tmU) ;
 \draw [->-] (brr) -- (trrU) ;
\end{scope}}} };
         \node (i6) at (-2, -1) { \NB{\tikz[font= \tiny,
  xscale=\myxscale, yscale=\myyscale]{\begin{scope}
 
  \coordinate (blU) at (0.5, 1);
  \coordinate (brU) at ( 1.5, 1);
  \coordinate (bmU) at (  1, 1.7);
  \coordinate (tlU) at (0.5,  3);
  \coordinate (trU) at ( 1.5,  3);
  \coordinate (tmU) at (  1, 2.3);
    \coordinate (bllU) at ( -.5, 1);
        \coordinate (tllU) at ( -.5, 3);
\draw[]  (blU) .. controls +( 0, 0.5) and +(0,0) .. (bmU)
 ;
  \draw[]  (brU) .. controls +( 0, 0.5) and +(0,0) .. (bmU)
 ;
  \draw[<-]  (tlU) .. controls +( 0, -0.5) and +(0,0) .. (tmU)
  coordinate[pos = 0.25] (gaU) ;
    \filldraw[draw= green!50!black, fill = white] (gaU) circle (1mm)
  node[left, green!50!black] {$t_1$};

  \draw[]  (trU) .. controls +( 0, -0.5) and +(0,0) .. (tmU)
  node[above, pos = 1] {} coordinate[pos = 0.25] (gbU) ;
    \filldraw[draw= green!50!black, fill = white] (gbU) circle (1mm)
  node[left, green!50!black] {$t_2$};
  \draw [double] (bmU) -- (tmU) ;
 \draw [->-] (bllU) -- (tllU) ;
 
 
  \coordinate (blUU) at (-0.5, 3);
  \coordinate (brUU) at ( 0.5, 3);
  \coordinate (bmUU) at (  0,3.7);
  \coordinate (tlUU) at (-0.5,  5);
  \coordinate (trUU) at ( 0.5,  5);
  \coordinate (tmUU) at (  0, 4.3);
    \coordinate (brrUU) at ( 1.5, 3);
        \coordinate (trrUU) at ( 1.5, 5);
\draw[]  (blUU) .. controls +( 0, 0.5) and +(0,0) .. (bmUU)
  node[below, pos = 0] {};
  \draw[]  (brUU) .. controls +( 0, 0.5) and +(0,0) .. (bmUU)
  node[below, pos = 0] {};
  \draw[<-]  (tlUU) .. controls +( 0, -0.5) and +(0,0) .. (tmUU)
  coordinate[pos = 0.25] (gaUU) ;
    \filldraw[draw= green!50!black, fill = white] (gaUU) circle (1mm)
  node[left, green!50!black] {$t_1$};

  \draw[<-]  (trUU) .. controls +( 0, -0.5) and +(0,0) .. (tmUU) 
  coordinate[pos = 0.25] (gbUU) ;
    \filldraw[draw= green!50!black, fill = white] (gbUU) circle (1mm)
  node[left, green!50!black] {$t_2$};
  \draw [double] (bmUU) -- (tmUU) ;
 \draw [->-] (brrUU) -- (trrUU);

\end{scope}}} };
  \node (i7) at (-3, 0) { \NB{\tikz[font= \tiny,
  xscale=\myxscale, yscale=\myyscale]{\begin{scope}
  \coordinate (bl) at (-0.5, -1);
  \coordinate (br) at ( 0.5, -1);
  \coordinate (bm) at (  0,-0.3);
  \coordinate (tl) at (-0.5,  1);
  \coordinate (tr) at ( 0.5,  1);
  \coordinate (tm) at (  0, 0.3);
    \coordinate (brr) at ( 1.5, -1);
        \coordinate (trr) at ( 1.5, 1);
\draw[>-]  (bl) .. controls +( 0, 0.5) and +(0,0) .. (bm); 
  \draw[>-]  (br) .. controls +( 0, 0.5) and +(0,0) .. (bm);
  \draw[]  (tl) .. controls +( 0, -0.5) and +(0,0) .. (tm)
  coordinate[pos = 0.25] (ga) ; 
    \filldraw[draw= green!50!black, fill = white] (ga) circle (1mm)
  node[left, green!50!black] {$t_1$};

  \draw[<-]  (tr) .. controls +( 0, -0.5) and +(0,0) .. (tm)
  node[above, pos = 0] {} coordinate[pos = 0.25] (gb) ;
    \filldraw[draw= green!50!black, fill = white] (gb) circle (1mm)
  node[left, green!50!black] {$t_2$};
  \draw [double] (bm) -- (tm);
 \draw [->-] (brr) -- (trr);
  \coordinate (blU) at (0.5, 1);
  \coordinate (brU) at ( 1.5, 1);
  \coordinate (bmU) at (  1, 1.7);
  \coordinate (tlU) at (0.5,  3);
  \coordinate (trU) at ( 1.5,  3);
  \coordinate (tmU) at (  1, 2.3);
    \coordinate (bllU) at ( -.5, 1);
        \coordinate (tllU) at ( -.5, 3);
\draw[]  (blU) .. controls +( 0, 0.5) and +(0,0) .. (bmU);
  \draw[]  (brU) .. controls +( 0, 0.5) and +(0,0) .. (bmU);
  \draw[<-]  (tlU) .. controls +( 0, -0.5) and +(0,0) .. (tmU)coordinate[pos = 0.25] (gaU) ;
    \filldraw[draw= green!50!black, fill = white] (gaU) circle (1mm)
  node[left, green!50!black] {$t_1$};

  \draw[]  (trU) .. controls +( 0, -0.5) and +(0,0) .. (tmU)
  node[above, pos = 1] {} coordinate[pos = 0.25] (gbU) ;
    \filldraw[draw= green!50!black, fill = white] (gbU) circle (1mm)
  node[left, green!50!black] {$t_2$};
  \draw [double] (bmU) -- (tmU);
 \draw [->-] (bllU) -- (tllU);
 
 
  \coordinate (blUU) at (-0.5, 3);
  \coordinate (brUU) at ( 0.5, 3);
  \coordinate (bmUU) at (  0,3.7);
  \coordinate (tlUU) at (-0.5,  5);
  \coordinate (trUU) at ( 0.5,  5);
  \coordinate (tmUU) at (  0, 4.3);
    \coordinate (brrUU) at ( 1.5, 3);
        \coordinate (trrUU) at ( 1.5, 5);
\draw[]  (blUU) .. controls +( 0, 0.5) and +(0,0) .. (bmUU)
  node[below, pos = 0] {};
  \draw[]  (brUU) .. controls +( 0, 0.5) and +(0,0) .. (bmUU)
  node[below, pos = 0] {};
  \draw[<-]  (tlUU) .. controls +( 0, -0.5) and +(0,0) .. (tmUU) coordinate[pos = 0.25] (gaUU) ;
    \filldraw[draw= green!50!black, fill = white] (gaUU) circle (1mm)
  node[left, green!50!black] {$t_1$};

  \draw[<-]  (trUU) .. controls +( 0, -0.5) and +(0,0) .. (tmUU)
  coordinate[pos = 0.25] (gbUU) ; 
    \filldraw[draw= green!50!black, fill = white] (gbUU) circle (1mm)
  node[left, green!50!black] {$t_2$};
  \draw [double] (bmUU) -- (tmUU); 
 \draw [->-] (brrUU) -- (trrUU); 

\end{scope}}} };
\draw[->] (i1) -- (i0);
  \draw[->] (i2) -- (i0);
    \draw[->] (i3) -- (i0);
\draw[->] (i4) -- (i2);
    \draw[->] (i5) -- (i2);
    \draw[->] (i5) -- (i3);
    \draw[->] (i6) -- (i3);
    \draw[->] (i4) -- ([yshift=0.05cm]i1.west);
    \draw[->] (i6) -- ([yshift=-0.05cm]i1.west);
\draw[->] (i7) -- (i4);
    \draw[->] (i7) -- (i5);
    \draw[->] (i7) -- (i6);
    }} .
\end{equation}

There are short exact sequences of complexes \eqref{RIII1eq}, where the morphisms are given by foams.  When forgetting about the $\mathfrak{sl}_2$-action, these maps split.
\begin{equation} \label{RIII1eq}
  \providecommand{\myxscale}{0.55}
  \providecommand{\myyscale}{0.45}
\NB{\tikz[xscale = 2.5, yscale = 2.5]{
       \node (i0) at (0, 0) { \NB{\tikz[font= \tiny,
  xscale= \myxscale, yscale=\myyscale]{}} };
      \node (i1) at (-1, 0) { \NB{\tikz[font= \tiny,
  xscale= \myxscale, yscale=\myyscale]{}} };
   \node (i2) at (-1, 1) { \NB{\tikz[font= \tiny,
  xscale= \myxscale, yscale=\myyscale]{}} };
     \node (i3) at (-1, -1) { \NB{\tikz[font= \tiny,
  xscale= \myxscale, yscale=\myyscale]{}} };
      \node (i4) at (-2, 1) { \NB{\tikz[font= \tiny,
  xscale= \myxscale, yscale=\myyscale]{}} };
      \node (i5) at (-2, 0) { \NB{\tikz[font= \tiny,
  xscale= \myxscale, yscale=\myyscale]{}} };
         \node (i6) at (-2, -1) { \NB{\tikz[font= \tiny,
  xscale= \myxscale, yscale=\myyscale]{}} };
  \node (i7) at (-3, 0) { \NB{\tikz[font= \tiny,
  xscale= \myxscale, yscale=\myyscale]{}} };
    \node (i8) at (-3, -1) {};
    \node (i9) at (0, -1) {};
    \node (i10) at (-3, 1) {};
    \node (i11) at (0, 1) {};
    \node (k12) at (-3,-2.5) { \NB{\tikz[font= \tiny,
  xscale= \myxscale, yscale=\myyscale]{\begin{scope}
  \coordinate (bl) at (-0.5, -1);
  \coordinate (br) at ( 0.5, -1);
  \coordinate (bm) at (  0,-0.3);
  \coordinate (tl) at (-0.5,  1);
  \coordinate (tr) at ( 0.5,  1);
  \coordinate (tm) at (  0, 0.3);
    \coordinate (brr) at ( 1.5, -1);
        \coordinate (trr) at ( 1.5, 1);
\draw[>-]  (bl) .. controls +( 0, 0.5) and +(0,0) .. (bm);
  \draw[>-]  (br) .. controls +( 0, 0.5) and +(0,0) .. (bm);
  \draw[<-]  (tl) .. controls +( 0, -0.5) and +(0,0) .. (tm) coordinate[pos = 0.25] (ga) ;
    \filldraw[draw= green!50!black, fill = white] (ga) circle (1mm)
  node[left, green!50!black] {$t_1+1$};

  \draw[<-]  (tr) .. controls +( 0, -0.5) and +(0,0) .. (tm) coordinate[pos = 0.25] (gb) ;
    \filldraw[draw= green!50!black, fill = white] (gb) circle (1mm)
  node[right, green!50!black] {$t_2+1$};
  \draw [double] (bm) -- (tm);
 \draw [->-] (brr) -- (trr);

\end{scope}}} };
      \node (k13) at (-2,-2.5) { \NB{\tikz[font= \tiny,
  xscale= \myxscale, yscale=\myyscale]{}} };
    \node (k14) at (0, -2) {};
               \node (j0) at (0, 3.5) { \NB{\tikz[font= \tiny,
  xscale= \myxscale, yscale=\myyscale]{}} };
      \node (j1) at (-1, 3.5) { \NB{\tikz[font= \tiny,
  xscale= \myxscale, yscale=\myyscale]{}} };
   \node (j2) at (-1, 4.5) { \NB{\tikz[font= \tiny,
  xscale= \myxscale, yscale=\myyscale]{}} };
     \node (j3) at (-1, 2.5) { \NB{\tikz[font= \tiny,
  xscale= \myxscale, yscale=\myyscale]{}} };
      \node (j4) at (-2, 4.5) { \NB{\tikz[font= \tiny,
  xscale= \myxscale, yscale=\myyscale]{}} };
      \node (j5) at (-2, 3.5) { \NB{\tikz[font= \tiny,
  xscale= \myxscale, yscale=\myyscale]{}} };
         \node (j6) at (-2, 2.5) { \NB{\tikz[font= \tiny,
  xscale= \myxscale, yscale=\myyscale]{}} };
  \node (j7) at (-3, 3.5) { \NB{\tikz[font= \tiny,
  xscale= \myxscale, yscale=\myyscale]{\begin{scope}
  \coordinate (bl) at (-0.5, -1);
  \coordinate (br) at ( 0.5, -1);
  \coordinate (bm) at (  0,-0.3);
  \coordinate (tl) at (-0.5,  1);
  \coordinate (tr) at ( 0.5,  1);
  \coordinate (tm) at (  0, 0.3);
    \coordinate (ttm) at (  0, 1);
    \coordinate (bbm) at (  0, -1);
\draw[>-]  (bl) .. controls +( 0, 0.5) and +(0,0) .. (bm);
  \draw[>-]  (br) .. controls +( 0, 0.5) and +(0,0) .. (bm);
  \draw[<-]  (tl) .. controls +( 0, -0.5) and +(0,0) .. (tm)
  coordinate[pos = 0.25] (ga) ; 
    \filldraw[draw= green!50!black, fill = white] (ga) circle (1mm)
  node[left, green!50!black] {$2t_1$};

  \draw[<-]  (tr) .. controls +( 0, -0.5) and +(0,0) .. (tm) coordinate[pos = 0.25] (gb) ;
    \filldraw[draw= green!50!black, fill = white] (gb) circle (1mm)
  node[right, green!50!black] {$2t_2$};
  \draw [thick] (bm) -- (tm) node[left, pos = 0.5] {$3$};
    \draw [->-] (bbm) -- (bm);
        \draw [->-] (tm) -- (ttm)        coordinate[pos = 1] (gc) ;

   \filldraw[draw= green!50!black, fill = white] (gc) circle (1mm)
        node[above, green!50!black] {$t_1+t_2$};
 
\end{scope}}} };
      \node (j8) at (-3, 2.5) {};
    \node (j9) at (0, 2.5) {};
    \node (j10) at (-3, 4.5) {};
    \node (j11) at (0, 4.5) {};
    \draw[->] (i1) -- (i0);
    \draw[->] (i2) -- (i0);
    \draw[->] (i3) -- (i0);
\draw[->] (i4) -- ([yshift=0.05cm]i1.west);
    \draw[->] (i6) -- ([yshift=-0.05cm]i1.west);
\draw[->] (i4) -- (i2);
    \draw[->] (i5) -- (i2);
    \draw[->] (i5) -- (i3);
    \draw[->] (i6) -- (i3);
\draw[->] (i7) -- (i4);
    \draw[->] (i7) -- (i5);
    \draw[->] (i7) -- (i6);
    \draw[->] ([xshift=-0.1cm]k12.east) -- ([xshift=0.1cm]k13.west);
            \draw[->] (i7) -- (k12) node[pos=0.5, right] {$ \mapU \circ  \mapH $};
            \draw[->] (i6) -- (k13)  node [right, pos = 0.5] {$\begin{pmatrix} 0  & \mapU & 0 \end{pmatrix} $};
 \node[draw, densely dotted ,fit=(i8) (i9) (i10) (i11) ,inner sep=10ex,rectangle] {};
  \node[draw, densely dotted ,fit=(j8) (j9) (j10) (j11) ,inner sep=10ex,rectangle] {};
      \node[draw, densely dotted ,fit=(k12) (k13) ,inner sep=1ex,rectangle] {};
  \draw[->] (j1) -- (j0);
  \draw[->] (j2) -- (j0);
    \draw[->] (j3) -- (j0);
     \draw[->] (j4) -- (j1);
    \draw[->] (j4) -- (j2);
    \draw[->] (j5) -- (j2);
    \draw[->] (j5) -- (j3);
    \draw[->] (j6) -- (j3);
    \draw[->] (j6) -- (j1);
    \draw[->] (j7) -- (j4);
    \draw[->] (j7) -- (j5);
    \draw[->] (j7) -- (j6);
        \draw[->] (j7) -- (i7) node[pos=0.6, right] {$\mapI$};
        \draw[->] (j6) -- (i4)  node [right, pos = 0.5] {$\begin{pmatrix} \Id & &  \\ & \mapB & \\ & & \Id \end{pmatrix} $};
    \draw[->] (j3) -- (i2)  node [right, pos = 0.5] {$\begin{pmatrix} \Id & & \\ & \Id & \\ & & \Id \end{pmatrix} $};
    \draw[->] (j0) -- (i0)  node [right, pos = 0.5] {$\Id$};
  }}
\end{equation}

Since the bottom complex in \eqref{RIII1eq} is acyclic, in the
relative homotopy category, $\NB{\tikz[scale=0.3]{\begin{scope}[font=\tiny]
  \draw (0.5, -0.5) ..controls +(0,0.3) and +(0,-0.3) .. (-0.5,0.5);
  \fill[white] (0,0) circle (2mm);
  \draw (-0.5, -0.5) ..controls +(0,0.3) and +(0,-0.3) .. (0.5,0.5);
  \begin{scope}[xshift= 1cm, yshift= 1cm]
  \draw (0.5, -0.5) ..controls +(0,0.3) and +(0,-0.3) .. (-0.5,0.5);
  \fill[white] (0,0) circle (2mm);
  \draw (-0.5, -0.5) ..controls +(0,0.3) and +(0,-0.3) .. (0.5,0.5);
  \end{scope}
  \begin{scope}[xshift=0cm, yshift= 2cm]
  \draw[->] (0.5, -0.5) ..controls +(0,0.3) and +(0,-0.3) .. (-0.5,0.5);
  \fill[white] (0,0) circle (2mm);
  \draw[->] (-0.5, -0.5) ..controls +(0,0.3) and +(0,-0.3) .. (0.5,0.5);
\end{scope}
\draw (-0.5,0.5) -- +(0,1);
\draw (1.5,-0.5) -- +(0,1);
\draw[->] (1.5,1.5) -- +(0,1);
\end{scope}}}$ is
isomorphic to the top complex in \eqref{RIII1eq}.

There is an isomorphism of complexes
\begin{equation} \label{RIII3eq}
  \providecommand{\myxscale}{0.55}
  \providecommand{\myyscale}{0.45}
  \NB{\tikz[xscale = 2.5, yscale = 2.5]{
       \node (i0) at (0, 0) { \NB{\tikz[font= \tiny,
  xscale=\myxscale, yscale=\myyscale]{}} };
      \node (i1) at (-1, 0) { \NB{\tikz[font= \tiny,
  xscale=\myxscale, yscale=\myyscale]{}} };
   \node (i2) at (-1, 1) { \NB{\tikz[font= \tiny,
  xscale=\myxscale, yscale=\myyscale]{}} };
     \node (i3) at (-1, -1) { \NB{\tikz[font= \tiny,
  xscale=\myxscale, yscale=\myyscale]{}} };
      \node (i4) at (-2, 1) { \NB{\tikz[font= \tiny,
  xscale=\myxscale, yscale=\myyscale]{}} };
      \node (i5) at (-2, 0) { \NB{\tikz[font= \tiny,
  xscale=\myxscale, yscale=\myyscale]{}} };
         \node (i6) at (-2, -1) { \NB{\tikz[font= \tiny,
  xscale=\myxscale, yscale=\myyscale]{}} };
  \node (i7) at (-3, 0) { \NB{\tikz[font= \tiny,
  xscale=\myxscale, yscale=\myyscale]{}} };
           \node (j0) at (0, -4) { \NB{\tikz[font= \tiny,
  xscale=\myxscale, yscale=\myyscale]{}} };
      \node (j1) at (-1, -4) { \NB{\tikz[font= \tiny,
  xscale=\myxscale, yscale=\myyscale]{}} };
   \node (j2) at (-1, -3) { \NB{\tikz[font= \tiny,
  xscale=\myxscale, yscale=\myyscale]{}} };
     \node (j3) at (-1, -5) { \NB{\tikz[font= \tiny,
  xscale=\myxscale, yscale=\myyscale]{}} };
      \node (j4) at (-2, -3) { \NB{\tikz[font= \tiny,
  xscale=\myxscale, yscale=\myyscale]{}} };
      \node (j5) at (-2, -4) { \NB{\tikz[font= \tiny,
  xscale=\myxscale, yscale=\myyscale]{}} };
         \node (j6) at (-2, -5) { \NB{\tikz[font= \tiny,
  xscale=\myxscale, yscale=\myyscale]{}} };
  \node (j7) at (-3, -4) { \NB{\tikz[font= \tiny,
  xscale=\myxscale, yscale=\myyscale]{}} };
  \draw[->] (i1) -- (i0);
  \draw[->] (i2) -- (i0);
    \draw[->] (i3) -- (i0);
    \draw[->] (i4) -- ([yshift=0.05cm]i1.west);
    \draw[->] (i6) -- ([yshift=-0.05cm]i1.west);
\draw[->] (i4) -- (i2);
    \draw[->] (i5) -- (i2);
    \draw[->] (i5) -- (i3);
    \draw[->] (i6) -- (i3);
\draw[->] (i7) -- (i4);
    \draw[->] (i7) -- (i5);
    \draw[->] (i7) -- (i6);
\draw[->] (j1) -- (j0);
  \draw[->] (j2) -- (j0);
    \draw[->] (j3) -- (j0);
\draw[->] (j4) -- (j2);
\draw[->] (j4) -- ([yshift=0.05cm]j1.west);
    \draw[->] (j6) -- ([yshift=-0.05cm]j1.west);
    \draw[->] (j5) -- (j3);
    \draw[->] (j6) -- (j3);
    \draw[->] (j6) -- (j2);
\draw[->] (j7) -- (j4);
    \draw[->] (j7) -- (j5);
    \draw[->] (j7) -- (j6);
    \draw[->] (i7) -- (j7)  node [right, pos = 0.5] {$\Id$};
    \draw[->] (i6) -- (j4)  node [right, pos = 0.5] {$\begin{pmatrix} \Id & 0 & 0 \\ 0 & \Id & 0 \\ 0 & 0 & \Id \end{pmatrix} $};
    \draw[->] (i3) -- (j2)  node [right, pos = 0.5] {$\begin{pmatrix} \Id & 0 & \Id \\ 0 & \Id & 0 \\ 0 & 0 & \Id \end{pmatrix} $};
        \draw[->] (i0) -- (j0) node [right, pos = 0.5] {$\Id$};
    }} .
\end{equation}

Therefore we now get that in the relative homotopy category that  $\NB{\tikz[scale=0.3]{\begin{scope}[font=\tiny]
  \draw (0.5, -0.5) ..controls +(0,0.3) and +(0,-0.3) .. (-0.5,0.5);
  \fill[white] (0,0) circle (2mm);
  \draw (-0.5, -0.5) ..controls +(0,0.3) and +(0,-0.3) .. (0.5,0.5);
  \begin{scope}[xshift= 1cm, yshift= 1cm]
  \draw (0.5, -0.5) ..controls +(0,0.3) and +(0,-0.3) .. (-0.5,0.5);
  \fill[white] (0,0) circle (2mm);
  \draw (-0.5, -0.5) ..controls +(0,0.3) and +(0,-0.3) .. (0.5,0.5);
  \end{scope}
  \begin{scope}[xshift=0cm, yshift= 2cm]
  \draw[->] (0.5, -0.5) ..controls +(0,0.3) and +(0,-0.3) .. (-0.5,0.5);
  \fill[white] (0,0) circle (2mm);
  \draw[->] (-0.5, -0.5) ..controls +(0,0.3) and +(0,-0.3) .. (0.5,0.5);
\end{scope}
\draw (-0.5,0.5) -- +(0,1);
\draw (1.5,-0.5) -- +(0,1);
\draw[->] (1.5,1.5) -- +(0,1);
\end{scope}}}$ is isomorphic to the bottom complex in \eqref{RIII3eq}.
There is a sequence of complexes \eqref{RIII5eq} which splits when forgetting the action of $\mathfrak{sl}_2$.
\begin{equation} \label{RIII5eq}
  \providecommand{\myxscale}{0.55}
  \providecommand{\myyscale}{0.45}
  \NB{\tikz[xscale = 2.5, yscale = 2.5]{
       \node (i0) at (0, 0) { \NB{\tikz[font= \tiny,
  xscale=\myxscale, yscale=\myyscale]{}} };
      \node (i1) at (-1, 0) { \NB{\tikz[font= \tiny,
  xscale=\myxscale, yscale=\myyscale]{}} };
   \node (i2) at (-1, 1) { \NB{\tikz[font= \tiny,
  xscale=\myxscale, yscale=\myyscale]{}} };
     \node (i3) at (-1, -1) { \NB{\tikz[font= \tiny,
  xscale=\myxscale, yscale=\myyscale]{}} };
      \node (i4) at (-2, 1) { \NB{\tikz[font= \tiny,
  xscale=\myxscale, yscale=\myyscale]{}} };
      \node (i5) at (-2, 0) { \NB{\tikz[font= \tiny,
  xscale=\myxscale, yscale=\myyscale]{}} };
         \node (i6) at (-2, -1) { \NB{\tikz[font= \tiny,
  xscale=\myxscale, yscale=\myyscale]{}} };
  \node (i7) at (-3, 0) { \NB{\tikz[font= \tiny,
  xscale=\myxscale, yscale=\myyscale]{}} };
     \node (i8) at (-3.25, -1.5) {};
    \node (i9) at (.25, 1.5) {};
  \node (j2) at (-1, 2.5) { \NB{\tikz[font= \tiny,
  xscale=\myxscale, yscale=\myyscale]{}} };
       \node (j5) at (-2, 2.5) { \NB{\tikz[font= \tiny,
  xscale=\myxscale, yscale=\myyscale]{}} };
    \node (k0) at (0, -3) { \NB{\tikz[font= \tiny,
  xscale=\myxscale, yscale=\myyscale]{}} };
     \node (k1) at (-1, -3.5) { \NB{\tikz[font= \tiny,
  xscale=\myxscale, yscale=\myyscale]{}} };
   \node (k2) at (-1, -2.5) { \NB{\tikz[font= \tiny,
  xscale=\myxscale, yscale=\myyscale]{}} };
\node (k4) at (-2, -2.5) { \NB{\tikz[font= \tiny,
  xscale=\myxscale, yscale=\myyscale]{}} };
\node (k6) at (-2, -3.5) { \NB{\tikz[font= \tiny,
  xscale=\myxscale, yscale=\myyscale]{}} };
  \node (k7) at (-3, -3) { \NB{\tikz[font= \tiny,
  xscale=\myxscale, yscale=\myyscale]{}} };
    \node (k8) at (-3.25, -2) {};
        \node (k9) at (.25, -4) {};
  \draw[->] (i1) -- (i0);
  \draw[->] (i2) -- (i0);
    \draw[->] (i3) -- (i0);
     \draw[->] (i4) -- (i1);
    \draw[->] (i4) -- (i2);
\draw[->] (i5) -- (i3);
    \draw[->] (i6) -- (i3);
    \draw[->] (i6) -- (i2);
        \draw[->] (i6) -- (i1);
    \draw[->] (i7) -- (i4);
    \draw[->] (i7) -- (i5);
    \draw[->] (i7) -- (i6);
        \draw[->] (j5) -- (j2) node [above, pos = 0.5] {$\Id $};
    \draw[->] (j5) -- (i4) node [right, pos = 0.5] {$\begin{pmatrix} 0  \\ \Id  \\  0 \end{pmatrix} $} ;
      \draw[->] (j2) -- (i2) node [right, pos = 0.5] {$\begin{pmatrix} 0  \\ 0  \\  \Id \end{pmatrix} $};
\draw[->] (k1) -- (k0);
  \draw[->] (k2) -- (k0);
\draw[->] (k4) -- (k1);
    \draw[->] (k4) -- (k2);
\draw[->] (k6) -- (k2);
       \draw[->] (k6) -- (k1);
    \draw[->] (k7) -- (k4);
\draw[->] (k7) -- (k6);    
        \draw[->] (i7) -- (k7) node [right, pos = 0.5] {$\Id$};  
        \draw[->] (i0) -- (k0) node [right, pos = 0.5] {$\Id$};  
        \draw[->] (i6) -- (k4) node [right, pos = 0.5] {$\begin{pmatrix} \Id & 0 & 0  \\ 0 & 0 &\Id  \end{pmatrix} $} ;   
      \draw[->] (i3) -- (k2) node [right, pos = 0.5] {$\begin{pmatrix} \Id & 0 & 0  \\ 0 & \Id & 0 \end{pmatrix} $} ;   
     \node[draw, densely dotted ,fit=(j2) (j5) ,inner sep=1ex,rectangle] {};
  \node[draw, densely dotted ,fit=(k8) (k9) ,inner sep=1ex,rectangle] {};
 \node[draw, densely dotted ,fit=(i8) (i9) ,inner sep=1ex,rectangle] {}; 
 }}
\end{equation}

Thus $\NB{\tikz[scale=0.3]{\begin{scope}[font=\tiny]
  \draw (0.5, -0.5) ..controls +(0,0.3) and +(0,-0.3) .. (-0.5,0.5);
  \fill[white] (0,0) circle (2mm);
  \draw (-0.5, -0.5) ..controls +(0,0.3) and +(0,-0.3) .. (0.5,0.5);
  \begin{scope}[xshift= 1cm, yshift= 1cm]
  \draw (0.5, -0.5) ..controls +(0,0.3) and +(0,-0.3) .. (-0.5,0.5);
  \fill[white] (0,0) circle (2mm);
  \draw (-0.5, -0.5) ..controls +(0,0.3) and +(0,-0.3) .. (0.5,0.5);
  \end{scope}
  \begin{scope}[xshift=0cm, yshift= 2cm]
  \draw[->] (0.5, -0.5) ..controls +(0,0.3) and +(0,-0.3) .. (-0.5,0.5);
  \fill[white] (0,0) circle (2mm);
  \draw[->] (-0.5, -0.5) ..controls +(0,0.3) and +(0,-0.3) .. (0.5,0.5);
\end{scope}
\draw (-0.5,0.5) -- +(0,1);
\draw (1.5,-0.5) -- +(0,1);
\draw[->] (1.5,1.5) -- +(0,1);
\end{scope}}}$ is isomorphic to 
\begin{equation} \label{RIII6eq}
    \providecommand{\myxscale}{0.55}
  \providecommand{\myyscale}{0.45}
    \NB{\tikz[xscale = 2.5, yscale = 2.5]{
       \node (i0) at (0, 0) { \NB{\tikz[font= \tiny,
  xscale=\myxscale, yscale=\myyscale]{}} };
     \node (i1) at (-1, -.5) { \NB{\tikz[font= \tiny,
  xscale=\myxscale, yscale=\myyscale]{}} };
   \node (i2) at (-1, .5) { \NB{\tikz[font= \tiny,
  xscale=\myxscale, yscale=\myyscale]{}} };
\node (i4) at (-2, .5) { \NB{\tikz[font= \tiny,
  xscale=\myxscale, yscale=\myyscale]{}} };
\node (i6) at (-2, -.5) { \NB{\tikz[font= \tiny,
  xscale=\myxscale, yscale=\myyscale]{}} };
  \node (i7) at (-3, 0) { \NB{\tikz[font= \tiny,
  xscale=\myxscale, yscale=\myyscale]{}} };
  \draw[->] (i1) -- (i0);
  \draw[->] (i2) -- (i0);
\draw[->] (i4) -- (i1);
    \draw[->] (i4) -- (i2);
\draw[->] (i6) -- (i2);
       \draw[->] (i6) -- (i1);
    \draw[->] (i7) -- (i4);
\draw[->] (i7) -- (i6);
    }} .
\end{equation}
This is left-right symmetric and hence isomorphic to  $\NB{\tikz[xscale = -.3, yscale=.3]{\begin{scope}[yscale= -1, font=\tiny]
  \draw[<-] (0.5, -0.5) ..controls +(0,0.3) and +(0,-0.3) .. (-0.5,0.5);
  \fill[white] (0,0) circle (2mm);
  \draw[<-] (-0.5, -0.5) ..controls +(0,0.3) and +(0,-0.3) .. (0.5,0.5);
  \begin{scope}[xshift= 1cm, yshift= 1cm]
  \draw (0.5, -0.5) ..controls +(0,0.3) and +(0,-0.3) .. (-0.5,0.5);
  \fill[white] (0,0) circle (2mm);
  \draw (-0.5, -0.5) ..controls +(0,0.3) and +(0,-0.3) .. (0.5,0.5);
  \end{scope}
  \begin{scope}[xshift=0cm, yshift= 2cm]
  \draw (0.5, -0.5) ..controls +(0,0.3) and +(0,-0.3) .. (-0.5,0.5);
  \fill[white] (0,0) circle (2mm);
  \draw (-0.5, -0.5) ..controls +(0,0.3) and +(0,-0.3) .. (0.5,0.5);
\end{scope}
\draw (-0.5,0.5) -- +(0,1);
\draw[<-] (1.5,-0.5) -- +(0,1);
\draw (1.5,1.5) -- +(0,1);
\end{scope}}} $.
\end{proof}

 \subsection{Unframed invariance}
\label{sec:unframed}

The result of the previous subsection shows that the $\sll_2$-enriched link homology is naturally a framed invariant of oriented links. However, one can introduce framing corrections to balance out the twists in Proposition \ref{prop:framed-RI}.

Define, 

\begin{equation}
  \widehat{T}:=
  \NB{\tikz[xscale = 0.5]{}} :=
  q^N\left(
    \NB{\tikz[xscale = 3.5, yscale = 3]{
    \node (i0) at (0, 0) {$q^{-1}$ \NB{\tikz[font= \tiny,
  scale=0.55]{\begin{scope}
  \coordinate (bl) at (-0.5, -1);
  \coordinate (br) at ( 0.5, -1);
  \coordinate (tl) at (-0.5,  1);
  \coordinate (tr) at ( 0.5,  1);
    \coordinate (ml) at (-0.5,  .6);
 \coordinate (mr) at (0.5,  .6);
 \coordinate (G) at (0.8,-0.1);

   \draw[->] (bl) -- (tl);
        \filldraw[draw= green!50!black, fill = white] (ml) circle (1mm) 
  node[left, green!50!black] 
  {${\frac{(N-2)t_1}{2}}$};
  
    \draw[->] (br) -- (tr);
        \filldraw[draw= green!50!black, fill = white] (mr) circle (1mm) 
  node[right, green!50!black] 
  {${\frac{(N-2)t_2}{2}}$};
          \filldraw[draw= green!50!black, fill = green] (G) circle (1mm) 
  node[right, green!50!black] 
  {$\frac{1}{2}$};

\end{scope}}} };
     \node (i1) at (-1.2, 0) {\NB{\tikz[font= \tiny,
  scale=0.55]{\begin{scope}
  \coordinate (bl) at (-0.5, -1);
  \coordinate (br) at ( 0.5, -1);
  \coordinate (bm) at (  0,-0.3);
  \coordinate (tl) at (-0.5,  1);
  \coordinate (tr) at ( 0.5,  1);
  \coordinate (tm) at (  0, 0.3);
  \draw[>-]  (bl) .. controls +( 0, 0.5) and +(0,0) .. (bm);
  \draw[>-]  (br) .. controls +( 0, 0.5) and +(0,0) .. (bm);
  \draw[<-]  (tl) .. controls +( 0, -0.5) and +(0,0) .. (tm) coordinate[pos = 0.25] (ga) ;
    \filldraw[draw= green!50!black, fill = white] (ga) circle (1mm)
  node[left, green!50!black] {$\frac{Nt_1}{2}$};

  \draw[<-]  (tr) .. controls +( 0, -0.5) and +(0,0) .. (tm)
  coordinate[pos = 0.25] (gb) ; 
    \filldraw[draw= green!50!black, fill = white] (gb) circle (1mm)
  node[right, green!50!black] {$\frac{Nt_2}{2}$};
  
  \draw [double] (bm) -- (tm) 
   coordinate[pos = 0.5, xshift = 2mm] (gc);  
  \filldraw[draw= green!50!black, fill = green, right] (gc) circle (1mm)
  node[right, green!50!black] {$\frac{1}{2}$};
\end{scope}}} };
\draw[->] (i1) -- (i0) coordinate[pos=0.5] (a);
\node[above] at (a) {\NB{\tikz[font=\tiny, scale=.4]{}}};
  }}
\right)
    \end{equation}
    \begin{equation}
  \widehat{T}'= \NB{\tikz[xscale = 0.5]{}}:= q^{-N}\left(
    \NB{\tikz[xscale = 3.5, yscale = 3]{
    \node (i0) at (-1.2, 0) { $q$\ \NB{\tikz[font= \tiny,
  scale=0.55]{\begin{scope}
  \coordinate (bl) at (-0.5, -1);
  \coordinate (br) at ( 0.5, -1);
  \coordinate (tl) at (-0.5,  1);
  \coordinate (tr) at ( 0.5,  1);
    \coordinate (ml) at (-0.5,  .6);
 \coordinate (mr) at (0.5,  .6);
 \coordinate (G) at (0.8,-0.1);

   \draw[->] (bl) -- (tl);
        \filldraw[draw= green!50!black, fill = white] (ml) circle (1mm) 
  node[left, green!50!black] 
  {${\frac{(2-N)\bar{t}_1}{2}}$};
  
    \draw[->] (br) -- (tr);
        \filldraw[draw= green!50!black, fill = white] (mr) circle (1mm) 
  node[right, green!50!black] 
  {${\frac{(2-N)\bar{t}_2}{2}}$};
          \filldraw[draw= green!50!black, fill = green] (G) circle (1mm) 
  node[right, green!50!black] 
  {$-\frac{1}{2}$};

\end{scope}}} };
     \node (i1) at (0, 0) { \NB{\tikz[font= \tiny,
  scale=0.55]{\begin{scope}
  \coordinate (bl) at (-0.5, -1);
  \coordinate (br) at ( 0.5, -1);
  \coordinate (bm) at (  0,-0.3);
  \coordinate (tl) at (-0.5,  1);
  \coordinate (tr) at ( 0.5,  1);
  \coordinate (tm) at (  0, 0.3);
  \draw[>-]  (bl) .. controls +( 0, 0.5) and +(0,0) .. (bm);
  \draw[>-]  (br) .. controls +( 0, 0.5) and +(0,0) .. (bm);
  \draw[<-]  (tl) .. controls +( 0, -0.5) and +(0,0) .. (tm) coordinate[pos = 0.25] (ga) ;
    \filldraw[draw= green!50!black, fill = white] (ga) circle (1mm)
  node[left, green!50!black] {$-\frac{N\bar{t}_1}{2}$};

  \draw[<-]  (tr) .. controls +( 0, -0.5) and +(0,0) .. (tm)
  coordinate[pos = 0.25] (gb) ; 
    \filldraw[draw= green!50!black, fill = white] (gb) circle (1mm)
  node[right, green!50!black] {$-\frac{N\bar{t}_2}{2}$};
  
  \draw [double] (bm) -- (tm) 
   coordinate[pos = 0.5, xshift = 2mm] (gc);  
  \filldraw[draw= green!50!black, fill = green, right] (gc) circle (1mm)
  node[right, green!50!black] {$-\frac{1}{2}$};
\end{scope}}} };
\draw[->] (i0) -- (i1) coordinate[pos=0.5] (b);
\node[above] at (b) {\NB{\tikz[font=\tiny, scale=.4]{}}}; 
      }} \right)
  \,
    \end{equation}
  Here in both complexes we assume  that the terms
\[
  \NB{\tikz[font= \tiny,
  scale=0.55]{}} ,\quad \quad
   \NB{\tikz[font= \tiny,
  scale=0.55]{}}
\]
sit in cohomological degree 0.

 In a similar vein as before, we define the \emph{unframed  Khovanov--Rozansky $\gll_N$-homology} of a link $L$ with an $\sll_2$-action, denoted $\widehat{\KR}_N^{\mathfrak{sl}_2}(L;\Bbbk) $, via the twisted complexes $\widehat{T}$ and $\widehat{T}^\prime$.

\begin{cor}
The homology
$\widehat{\KR}_N^{\mathfrak{sl}_2}(L;\Bbbk)$ is an invariant of unframed oriented links in $\mathbb{R}^3$.
\end{cor}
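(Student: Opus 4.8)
The plan is to bootstrap from the framed invariance established in Theorem~\ref{thm:sl2inv}. Passing from $\KR_{N;t_1,t_2}^{\mathfrak{sl}_2}(L;\Bbbk)$ to $\widehat{\KR}_N^{\mathfrak{sl}_2}(L;\Bbbk)$ only involves the grading shift $t^{-\mathtt{f}(L)}q^{N\mathtt{f}(L)}$ and the tensor product with the framing bimodule $F_L$, and both of these depend solely on the framing numbers $\mathtt{f}_i(L)$ (hence on $\mathtt{f}(L)$). Since Reidemeister moves of types II and III leave all $\mathtt{f}_i(L)$ unchanged, invariance of $\widehat{\KR}_N^{\mathfrak{sl}_2}$ under those two moves follows at once from Theorem~\ref{thm:sl2inv}. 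So the only point to check is invariance under the ordinary Reidemeister~I move, where the framing number changes by $\pm 1$.

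First I would set up the Reidemeister~I comparison. Let $L'$ be obtained from $L$ by adding a positive curl on the $i$th component, so $\mathtt{f}(L')=\mathtt{f}(L)+1$ and $\mathtt{f}_i(L')=\mathtt{f}_i(L)+1$, while the ring $D_{L'}$ coincides with $D_L$ (base points and per-component polynomial factors are untouched). Proposition~\ref{prop:framed-RI} gives an isomorphism in the relative homotopy category between $\KR_{N;t_1,t_2}^{\mathfrak{sl}_2}(L';\RN)$ and $t\,q^{-N}\,\KR_{N;t_1,t_2}^{\mathfrak{sl}_2}(L_d;\RN)$, where $L_d$ denotes the diagram $L$ carrying one additional green dot $d$ on its $i$th component, of the multiplicity prescribed in that proposition. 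The grading shift $t^{-1}q^{N}$ that $\widehat{\KR}$ acquires when $\mathtt{f}(L)$ increases by $1$ cancels this $t\,q^{-N}$, so the entire effect of the move is the extra green dot. Using Lemma~\ref{lem:twotwists} together with the fact (Section~\ref{sec:green-dots-can}) that the homology is insensitive to the position of green dots along a component, I would slide $d$ to the chosen base point of the $i$th component; there, by Proposition~\ref{prop:classification-sl2-twists}, it twists the $\sll_2$-action on the $i$th tensor factor $D_i$ of $D_L$ by the flat map $\tau_d\co\sll_2\to D_i$ of \eqref{eq:twist-gd-def} determined by the multiplicity of $d$.

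The heart of the argument is then a comparison between $\tau_d$ and the framing twist: I would show that on the $i$th component $D_i^{\tau_d}\otimes_{D_i}(F_{L'})|_i\cong (F_L)|_i$, equivalently that $\tau_d$ is the difference of the flat twists attached by \eqref{eqn-sl2-twist-on-mod} and \eqref{eqn:sl2-twisting-bimod-for-links} to the framing numbers $\mathtt{f}_i(L)$ and $\mathtt{f}_i(L)+1$. As those framing twists are linear in the framing number, this reduces to identifying $\tau_d$ with a single (signed) unit of framing twist. Granting this, and using the additivity of twists $(M^{\tau})^{\sigma}\cong M^{\tau+\sigma}$ recorded just before Proposition~\ref{prop:classification-sl2-twists} together with the fact that the remaining tensor factors of $F_{L'}$ and $F_L$ agree, one obtains $\KR_{N;t_1,t_2}^{\mathfrak{sl}_2}(L_d;\RN)\otimes_{D_L}F_{L'}\cong \KR_{N;t_1,t_2}^{\mathfrak{sl}_2}(L;\RN)\otimes_{D_L}F_{L}$ as $\sll_2$-equivariant modules. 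Combined with the cancellation of grading shifts, this gives $\widehat{\KR}_N^{\mathfrak{sl}_2}(L';\Bbbk)\cong\widehat{\KR}_N^{\mathfrak{sl}_2}(L;\Bbbk)$; the negative curl is treated identically using the second isomorphism of Proposition~\ref{prop:framed-RI}. Together with the cases of Reidemeister moves II and III, this will prove the corollary.

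I expect the one genuinely non-formal step to be pinning down the exact multiplicities of the hollow and solid green dots deposited by Proposition~\ref{prop:framed-RI} --- these are implicit in its proof, in particular in the twisted-curl reduction \eqref{eqn:twisted-curl} and the sliding isomorphisms of Lemma~\ref{lem:twotwists} --- and then checking, with careful attention to signs and to the normalizations $\dotnewtoni[1]$ and $\wdotnewtoni[1]$, that the resulting $\tau_d$ matches the framing twist of \eqref{eqn:sl2-twisting-bimod-for-links}. In effect one must verify that the framing bimodule of Definition~\ref{def-twisting-bim} was engineered precisely so as to absorb the green dot produced by a Reidemeister~I move; this bookkeeping is routine but delicate, and is the only step that is not purely formal.
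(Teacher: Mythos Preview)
Your proposal is correct and follows exactly the same approach as the paper: the framing bimodule $F_L$ is designed precisely so that tensoring by it cancels the green-dot twist and the grading shift produced by Proposition~\ref{prop:framed-RI}, while RII and RIII invariance is inherited from Theorem~\ref{thm:sl2inv}. The paper's own proof says this in a single sentence and leaves the bookkeeping (matching the multiplicities of the green dots in Proposition~\ref{prop:framed-RI} against the twist in \eqref{eqn:sl2-twisting-bimod-for-links}) implicit; you have simply spelled out the mechanism in more detail.
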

\begin{proof}
    Twists in the definition of $\widehat{T}$ and $\widehat{T}^\prime$ are introduced to cancel the twisting in the structures of the $\sll_2$-modules appearing in the RI invariance (Proposition \ref{prop:framed-RI}). The rest of the Reidemeister moves are proved similarly to the framed version in the previous section. 
\end{proof}

\begin{rmk}
We make a final remark on other possible framing corrections in some special cases. Firstly, thanks to Lemma \ref{lem:twotwists}, one can have more freedom in placing the dots in the definition of $\widehat{T}$ and $\widehat{T}^\prime$. We choose here the most symmetric one, and the other choices are relatively homotopic to the ones here.

Secondly, suppose we are in either of the following situations:
\begin{itemize}
    \item[(i)] $N=2$ and $\Bbbk$ is a general base ring in which $2$ is invertible.
    \item[(ii)] $N=p\ell+2$ for some $\ell\in \mathbb{N}$ and $\Bbbk$ is a ring of prime characteristic $p>2$. 
\end{itemize}
Under these assumptions, the RI moves in Proposition \ref{prop:framed-RI} 
becomes
 \begin{equation} \label{eqn:sl2-twisting-bimod-for-links-special}
  \NB{\tikz[scale = 0.6]{}} \cong  tq^{-N}\NB{\tikz[scale= 0.6, font =\tiny]{\begin{scope}
  \draw [->] (0, -1) -- +(0,2);
  \filldraw[draw= green!50!black, fill = green, left] (ga) circle (1mm)
  node[left, green!50!black] {$-\frac{1}{2}$}; 
\end{scope}
  
}} \ , \quad \quad \quad \quad
  \NB{\tikz[scale= 0.6, yscale =-1]{}} \cong t^{-1}q^{N}\NB{\tikz[scale= 0.6, font =\tiny]{\begin{scope}
  \draw [->] (0, -1) -- +(0,2);
  \filldraw[draw= green!50!black, fill = green, left] (ga) circle (1mm)
  node[left, green!50!black] {$\frac{1}{2}$}; 
\end{scope}
  
}} \ .
 \end{equation}
In this case, the unframed $\gll_N$-homology is  obtained from the framed one by:
\begin{itemize}
\item Shifting the cohomological degree by $-\mathtt{w}(L)$, where $\mathtt{w}(L)$ stands for the usual writhe number of $L$ (the number of positive crossings minus the number of negative crossings). 
\item Shifting the $q$-degree by $N\mathtt{w}(L)$.
\item Twisting by the overall factor $\frac{\mathtt{w}(L)}{2} \gsoliddot$.
\end{itemize}
\end{rmk}

\section{Functoriality}
\label{functoriality:sec}
Full functoriality of Khovanov--Rozansky $\mathfrak{gl}_N$-link homologies was proved by Ehrig, Tubbenhauer, and Wedrich \cite{ETW}. This extends earlier work of functoriality for the $\mathfrak{sl}_2$ case in \cite{BHPW}, \cite{blan1}, \cite{Cap}, \cite{CMW},  \cite{EST1},  and \cite{Vog}.  Functoriality for $\mathfrak{sl}_2$ case first established by Jacobsson \cite{Jac} and Khovanov \cite{Khcob} who proved it up to a sign.  

In this section we will slightly extend the work of \cite{ETW} and show that the target of their functor $\mathcal{F}_N$ lands in the relative homotopy category of foams, with inner hom spaces carrying $\mathcal{U}(\mathfrak{sl}_2)$-module structures.  See Section \ref{homological:sec} for the general framework. We remind the reader that this means that inner morphism sets are objects in the monoidal category and composition of morphisms corresponds to tensor product of objects.

\subsection{Foam category}
We begin by establishing some preparatory definitions and lemmas.

Let 
$\gFm$ be the category whose objects are green-dotted webs and whose
morphisms are (linear combination of) foams up to isotopy and equivalence relations induced
by foam evaluation \cite{RW1}.

The \emph{graded hom space} in $\gFm$, between two green-dotted webs $\Gamma_1$ and $\Gamma_2$ is given by foams whose incoming boundary is $\Gamma_1$ and outgoing boundary is $\Gamma_2$. We denote this hom space by $\Hom_{\gFm}(\Gamma_1,\Gamma_2)$. For an element $F\in \Hom_{\gFm}(\Gamma_1,\Gamma_2)$ in this category, and $g \in \mathfrak{sl}_2$, 
\begin{equation} \label{enriched0}
g * F = g(F) - g(\Gamma_1) F + g(\Gamma_2) F \ ,    
\end{equation} 
where $g(\Gamma_j) F$, for $j=1,2$, is the foam $F$ with extra
decorations coming from $g(\Gamma_j)$ (given by formulas
\eqref{eq:e-act-pol-twisthollow}--\eqref{eq:f-act-pol-twistsolid}). 

The flatness of the twists associated with green dots implies that
formula \eqref{enriched0} defines an $\sll_2$-action on the hom spaces
of $\gFm$. From the definition of $*$, one in turn obtains that $\gFm$
is enriched in the monoidal category of $\mathcal{U}(\sll_2)$.
Note also that a morphism $F$ is $\sll_2$-equivariant if and only if
$g * F =0$ for all $g\in \sll_2$. Using this action, we may identify
the state space $\mc{F}_N(\Gamma)$ associated with a web $\Gamma$ as the graded hom space $\Hom_{\gFm}(\emptyset, \Gamma)$, with the induced
$\sll_2$-action $*$. Thus $\mc{F}_N(\Gamma)$ is a graded equivariant module over the commutative $\sll_2$-module algebra $D_\Gamma$ (see Section \ref{subsec:greendottedwebs}).

Define $\gFm\#\sll_2$ to be the category that has the same objects of $\gFm$, but with morphisms that commute with the $\sll_2$-action.  It is equipped with an enriched structure (c.f. equation \eqref{eqn-H-action}) on its \emph{inner hom space}, which is equal to $\Hom_{\gFm}(\Gamma_1,\Gamma_2)$ for any green dotted webs. By equation \eqref{eqn-H-inv-in-HOM},
\begin{equation}
    \Hom_{\gFm\# \sll_2}(\Gamma_1,\Gamma_2)=\Hom_{\gFm}(\Gamma_1,\Gamma_2)^{\sll_2}.
\end{equation}
Also denote by $\Com(\gFm \# \sll_2 )$ the abelian category of complexes in $\gFm\# \sll_2$. The inner hom space extends naturally: given two complexes of green-dotted webs $C$ and $D$
\begin{equation}
    \HOM_{\gFm }(C,D)=\oplus_{i\in \ZZ}\Hom_{\gFm }(C,t^iD).
\end{equation}
Let $\mc{C}(\gFm \# \sll_2)$ be the corresponding homotopy category, with the enriched structure. 

Let $\Com(\gFm)$ denote the usual abelian category of graded foams, $\mc{C}(\gFm)$ be its homotopy category. 
As in Section \ref{homology:sec}, we define $\mc{C}^{\mathfrak{sl}_2}(\gFm)$ to be the corresponding
relative homotopy category, obtained as the Verdier localization of $\mc{C}(\gFm\#\sll_2)$ along the exact forgetful functor
\begin{equation}
    \mathrm{For}: \mc{C}(\gFm\#\sll_2) \lra \mc{C}(\gFm).
\end{equation}
Translating the general construction of Section \ref{homological:sec} into our context, we have the following.

\begin{prop} \label{prop:comenriched}
The category $\mc{C}^{\sll_2}(\gFm)$ is enriched in the category of $\mathcal{U}(\mathfrak{sl}_2)$-modules, with the inner hom space between two green-dotted webs acted on by $\sll_2$ according to equation \eqref{enriched0}. \hfill $\square$
\end{prop}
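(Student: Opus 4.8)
The plan is to transport the formalism of Section~\ref{homological:sec} to the present setting, with the commutative $\mathcal{U}(\sll_2)$-module algebra $A$ there replaced by the foam category $\gFm$ (regarded as a ``many-object algebra'') and $H=\mathcal{U}(\sll_2)$. Under this dictionary $\gFm\#\sll_2$ plays the role of $A\#H$, the functor $\mathrm{For}\colon\mc{C}(\gFm\#\sll_2)\lra\mc{C}(\gFm)$ plays the role of \eqref{eqn-forgetful-functor}, and $\mc{C}^{\sll_2}(\gFm)$ is the associated relative homotopy category in the sense of Definition~\ref{def-relative-homotopy-category}. The first step, already carried out in the discussion preceding the statement, is that formula \eqref{enriched0}, applied termwise, equips each graded hom complex $\HOM_{\gFm}(C,D)$ between complexes of green-dotted webs with a $\mathcal{U}(\sll_2)$-module structure, and that composition of foams is $\sll_2$-equivariant for the tensor-product action; both assertions follow from the flatness of the green-dot twists \eqref{eq:e-act-pol-twisthollow}--\eqref{eq:f-act-pol-twistsolid}, by a computation formally identical to the verification of $A$-linearity performed just after \eqref{eqn-H-action}. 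Taking $\sll_2$-invariants recovers $\Hom_{\gFm\#\sll_2}$, the foam counterpart of \eqref{eqn-H-inv-in-HOM}. Thus $\gFm$, and hence $\mc{C}(\gFm\#\sll_2)$, is already enriched in $\mathcal{U}(\sll_2)$-modules; the content of the proposition is that this enrichment descends along Verdier localization by $\mathrm{For}$.

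To establish that, I would prove the foam analogues of Lemmas~\ref{lem-tensor-hom-preserve-null-homotopy} and~\ref{lem-relative-tensor-hom-exact}. If $K$ is relatively null-homotopic, so that $\mathrm{For}(K)$ carries a contracting homotopy $h$ (not required to intertwine the $\sll_2$-action), then $\HOM_{\gFm}(C,K)$ and $\HOM_{\gFm}(K,C)$ are null-homotopic over the ground ring, via post-composition and pre-composition with $h$ respectively, and hence lie in $\Ker(\mathrm{For})$. Moreover, given a short exact sequence of complexes in $\gFm\#\sll_2$ that is termwise split once the $\sll_2$-action is forgotten, applying $\HOM_{\gFm}(C,-)$ or $\HOM_{\gFm}(-,C)$ yields a short exact sequence whose structure maps are again $\sll_2$-equivariant and which is again termwise split after forgetting $\sll_2$; by Lemma~\ref{lem-construction-of-triangle} this produces a distinguished triangle. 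Since $\HOM_{\gFm}(-,-)$ also commutes with the homological shifts $t^i$, the universal property of Verdier localization --- applied exactly as in the proof of Theorem~\ref{thm-tensor-hom-adj} --- shows that it descends to an exact bifunctor on $\mc{C}^{\sll_2}(\gFm)$ valued in complexes of $\mathcal{U}(\sll_2)$-modules, which is the promised enrichment. The foam analogue of Corollary~\ref{cor-hom-as-derived-invariants}, applied with $A$ replaced by $\gFm$ and $B=\Bbbk$, then identifies the underlying morphism sets $\Hom_{\mc{C}^{\sll_2}(\gFm)}(C,D)$ with derived $\sll_2$-invariants of $\HOM_{\gFm}(C,D)$, so that the underlying ordinary category of the enriched one is indeed $\mc{C}^{\sll_2}(\gFm)$.

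The one point that is not purely formal --- and the place I expect to have to be most careful --- is that $\gFm$ is a category rather than a ring with a single unit, so the statements of Section~\ref{homological:sec} do not apply on the nose. This is handled in a routine way: each argument there is local in the objects and uses nothing beyond the $A\#H$-module / $H$-module bookkeeping, so it carries over with $A$-modules replaced by additive functors out of $\gFm$; alternatively one replaces $\gFm$ by the locally unital category algebra on a fixed set of representatives of green-dotted webs and quotes Section~\ref{homological:sec} verbatim. Thus the remaining obstacle is organizational rather than mathematical.
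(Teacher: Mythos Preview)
Your proposal is correct and is precisely the approach the paper takes: the paper offers no argument beyond the sentence ``Translating the general construction of Section~\ref{homological:sec} into our context, we have the following'' and then marks the proposition with a $\square$. You have spelled out what that translation amounts to, including the caveat about passing from a single $H$-module algebra $A$ to the many-object setting of $\gFm$, which the paper leaves entirely implicit.
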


\subsection{Link cobordisms}

\begin{dfn}
Let $ \Links$ be the category whose objects are oriented framed
links in $\mathbb{R}^3$ and whose morphisms are two-dimensional oriented framed cobordisms between oriented links in $\mathbb{R}^3 \times \mathbb{R}$.
\end{dfn}

A presentation of this category was found by Beliakova and Wehrli
\cite{BW} building upon work of Carter and Saito \cite{CarSai} in the
unframed case.

Following the notation of \cite[Figure 9]{ETW}, we denote some of the
generating movie moves by $\MGH$, $\MGS$, $\MGTW$, and $\MGTH$ when reading the movies from left to right.
Reading the movies in the reverse direction, we denote the generating
moves by $\MGH'$, $\MGS'$, $\MGTW'$, and $\MGTH'$ respectively.  We omit writing down the other oriented variations of these generators.

\[ \MGH = 
  \mymovie[yscale = 0.8, xscale =0.45]{}{\NB{\tikz[]{\begin{scope}
  \coordinate (bl) at (0, -1);
  \coordinate (br) at (2, -1);
  \coordinate (tl) at (0,  1);
  \coordinate (tr) at (2,  1);
    \coordinate (ml) at (-0.5,  -.8);
        \coordinate (Ml) at (-0.5,  .8);
 \coordinate (mr) at (0.5,  -.6);
\coordinate (Mr) at (0.5,  .6);

    \draw [-<] (.5, 0) arc (180:0:0.5) ;
    \draw (.5, 0) arc (-180:0:0.5) coordinate[pos = 0.2] (X) coordinate[pos = 0.8] (Y);
\end{scope}}}}, \qquad \quad
\MGS = 
  \mymovie[yscale = 0.8, xscale =0.45]{\NB{\tikz[scale=0.8]{\begin{scope}[xshift = 2.5cm]
  \draw [->](0, 0) .. controls +(1,.5) 
  .. (0, 1);
    \draw [<-](2, 0) .. controls +(-1,.5) 
  .. (2, 1);

\end{scope}}}}{\NB{\tikz[scale=0.8]{\input{\imagesfolder/pdg_horres2}}}},
  \]  
\[ \MGTW = 
  \mymovie[yscale = 0.8, xscale
  =0.45]{\NB{\tikz[scale=0.8]{\begin{scope}
  \coordinate (bl) at (-0.5, -1);
  \coordinate (br) at ( 0.5, -1);
  \coordinate (tl) at (-0.5,  1);
  \coordinate (tr) at ( 0.5,  1);
    \coordinate (ml) at (-0.5,  -.8);
        \coordinate (Ml) at (-0.5,  .8);
 \coordinate (mr) at (0.5,  -.6);
\coordinate (Mr) at (0.5,  .6);

   \draw[->] (bl) -- (tl) node[pos = 0, below] {} node[pos = 1,
  above] {};
  
    \draw[->] (br) -- (tr) node[pos = 0, below] {} node[pos = 1, above] {};
  

\end{scope}}}}{\NB{\tikz[scale=0.8]{\begin{scope}
  \draw[] (3, 0) .. controls +(0, 0.2) and +(0, -0.2) ..  +(-1,1);
  \fill[white] (2.5, 0.5) circle (2mm);
  \draw (2, 0) .. controls +(0, 0.2) and +(0, -0.2) ..  +(1,1);
    \draw[->] (2, 1) .. controls +(0, .2) and +(0, -0.2) ..  +(1,1); 
       \fill[white] (2.5, 1.5) circle (2mm);

   \draw[->] (3, 1) .. controls +(0, .2) and +(0, -0.2) ..  +(-1,1);
\end{scope}}}},\quad \qquad
\MGTH = 
  \mymovie[yscale = 0.8, xscale =0.45]{\NB{\tikz[font= \tiny,
  scale=0.6]{\begin{scope}
  \draw[] (3, 0) .. controls +(0, 0.2) and +(0, -0.2) ..  +(-1,1);
  \fill[white] (2.5, 0.5) circle (2mm);
  \draw (2, 0) .. controls +(0, 0.2) and +(0, -0.2) ..  +(1,1);
  \draw[] (4,0) -- (4,1);
   \draw[] (2,1) -- (2,2);
   \draw[] (4, 1) .. controls +(0, 0.2) and +(0, -0.2) ..  +(-1,1);
  \fill[white] (3.5, 1.5) circle (2mm);
  \draw (3, 1) .. controls +(0, 0.2) and +(0, -0.2) ..  +(1,1);
  
   \draw[->] (3, 2) .. controls +(0, .2) and +(0, -0.2) ..  +(-1,1);

       \fill[white] (2.5, 2.5) circle (2mm);

     \draw[->] (2, 2) .. controls +(0, .2) and +(0, -0.2) ..  +(1,1); 
  \draw[->] (4,2) -- (4,3);

\end{scope}}}}{\NB{\tikz[font= \tiny,
  scale=0.6]{\begin{scope}
  \draw[] (4, 0) .. controls +(0, 0.2) and +(0, -0.2) ..  +(-1,1);
  \fill[white] (3.5, 0.5) circle (2mm);
  \draw (3, 0) .. controls +(0, 0.2) and +(0, -0.2) ..  +(1,1);
  \draw[] (2,0) -- (2,1);
   \draw[] (4,1) -- (4,2);
   \draw[] (3, 1) .. controls +(0, 0.2) and +(0, -0.2) ..  +(-1,1);
  \fill[white] (2.5, 1.5) circle (2mm);
  \draw (2, 1) .. controls +(0, 0.2) and +(0, -0.2) ..  +(1,1);
  
    \draw[->] (4, 2) .. controls +(0, .2) and +(0, -0.2) ..  +(-1,1);
 
       \fill[white] (3.5, 2.5) circle (2mm);

   \draw[->] (3, 2) .. controls +(0, .2) and +(0, -0.2) ..  +(1,1); 
  \draw[->] (2,2) -- (2,3);

\end{scope}}}}.
  \]    
These generators satisfy certain relations described in \cite{BW, CarSai} and other references (such as \cite[Section 4]{ETW}).  
  
For each movie generator $G \colon X \rightarrow Y$, there is an associated foam or morphism of complexes of green-dotted webs $\mc{F}_N(G) \colon \mc{F}_N(X) \rightarrow \mc{F}_N(Y)$.
If $G=\MGO, \MGTH, \MGTH$, the morphism $\mc{F}_N(G)$ is a relative homotopy equivalence.  That is, if $G \colon X \rightarrow Y$, then one could write $\mc{F}_N(G)= (\alpha_{n-1}')^{-1} \circ \alpha_{n-1} \circ \cdots \circ (\alpha_{0}')^{-1} \circ \alpha_{0}$ as a sequence of ``roofs''
\begin{gather}
  \NB{
    \tikz[xscale=1, yscale =1]{
    \node (A1) at (0,0) {$\mc{F}_N(X)=X_0$};
    \node (A2) at (2,0) {$X_1$};
    \node (B1) at (1,-1) {$X_0'$};
    \node (C1) at (4,-.5) {$\cdots$};
 \node (A3) at (6,0) {$X_{n-1}$};
    \node (A4) at (8,0) {$X_{n}=\mc{F}_N(Y)$};
        \node (B2) at (7,-1) {$X_{n-1}'$};
              \node (B3) at (3,-1) {};
                \node (B4) at (5,-1) {};
 \draw[-to] (A1) -- (B1) node[pos =0.7, left] {$\alpha_0$};
      \draw[-to] (A2) -- (B1) node[pos =0.7, right] {$\alpha_0'$}; 
     \draw[-to] (A3) -- (B2) node[pos =0.7, left] {$\alpha_{n-1}$};
      \draw[-to] (A4) -- (B2) node[pos =0.7, right] {$\alpha_{n-1}'$};      
          \draw[-to] (A2) -- (B3) node[pos =0.7, left] {};
     \draw[-to] (A3) -- (B4) node[pos =0.7, left] {};
}
  }  \ ,
\end{gather}
where the $\alpha_i$ and $ \alpha_i'$ are relative homotopy equivalences such that $g * \alpha_i=0$ for $g \in \mathfrak{sl}_2$ and for all $i$.  For degree reasons (see \cite[Lemma 4.6]{ETW}), if $G=\MGO, \MGTW, \MGTH$, the map
$\mc{F}_N(G)$ must agree with the map in \cite{ETW} up to a scalar.  We choose the $\alpha_i$ and $\alpha_i'$ so that $\mc{F}_N(G)$ precisely agrees with the map in \cite{ETW}.  We now extend the functoriality result of $\mathfrak{gl}_N$-link homology \cite[Theorem 4.5]{ETW} to our setting.

\begin{thm} \label{thm:functorFN}
There is a functor $\KR_N^{\sll_2} \colon \Links \rightarrow
\mc{C}^{\mathfrak{sl}_2}(\gFm)$ in which the target category of the functor is enriched in the category of $\mathfrak{sl}_2$-modules. 

Furthermore, the $\mathfrak{sl}_2$-structure on the morphism space is an invariant of the framed links.
\end{thm}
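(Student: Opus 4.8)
The plan is to follow the proof of functoriality of Ehrig--Tubbenhauer--Wedrich \cite{ETW} and to track the $\sll_2$-action at every step, using the presentation of $\Links$ by the movie generators $\MGH,\MGS,\MGTW,\MGTH$ (together with their reverses and the remaining oriented variants) and the movie--move relations of Beliakova--Wehrli \cite{BW} building on Carter--Saito \cite{CarSai}. The target category $\mc{C}^{\sll_2}(\gFm)$ is already enriched in the category of $\sll_2$-modules by Proposition \ref{prop:comenriched}, so the content of the first assertion is the construction of the functor $\KR_N^{\sll_2}$ and the compatibility of this construction with the enriched structure; the main obstacle, I expect, is lifting the movie--move relations from $\mc{C}(\gFm)$ to $\mc{C}^{\sll_2}(\gFm)$.

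On objects, $\KR_N^{\sll_2}$ sends a framed oriented diagram to its complex of green-dotted webs obtained from the braiding complexes of Section \ref{homology:sec}, the green dots being placed so that the cube differentials are $\sll_2$-equivariant; this is an object of $\Com(\gFm\#\sll_2)$, hence of $\mc{C}^{\sll_2}(\gFm)$, and the Reidemeister-invariance arguments of Section \ref{homology:sec} are formal (Gaussian elimination and splittings of termwise $A$-split short exact sequences of webs) and, fed the classical homotopy equivalences in $\mc{C}(\gFm)$, yield the corresponding isomorphisms in $\mc{C}^{\sll_2}(\gFm)$, so the object class is well defined up to isomorphism. On generating morphisms, I would take $\mc{F}_N(G)$ to be the foam (or chain map) used in \cite{ETW}. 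For the elementary cobordisms $\MGH$ (birth/death) and $\MGS$ (saddle), this is a cup, cap or saddle foam, which by Lemma \ref{lem:cup-cap-maps} and the analogous lemma for the saddle becomes $\sll_2$-equivariant once the prescribed green dots are attached, hence is a morphism in $\mc{C}(\gFm\#\sll_2)$ and so in $\mc{C}^{\sll_2}(\gFm)$. For the Reidemeister-type generators $\MGTW,\MGTH,\MGO$ and their reverses, the associated chain map is a relative homotopy equivalence by the invariance results of Section \ref{homology:sec}; as indicated above, it can be presented as a zig-zag of $\sll_2$-equivariant relative homotopy equivalences $\alpha_i,\alpha_i'$, and we normalize the roof so that, after applying $\mathrm{For}\colon\mc{C}^{\sll_2}(\gFm)\to\mc{C}(\gFm)$, $\mc{F}_N(G)$ agrees precisely with the map of \cite{ETW}. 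In every case $\mc{F}_N(G)$ is $\sll_2$-equivariant.

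It remains to check the movie--move relations in $\mc{C}^{\sll_2}(\gFm)$. For each relation, both sides are compositions of the morphisms $\mc{F}_N(G_i)$, hence $\sll_2$-equivariant morphisms $\mc{F}_N(X)\to\mc{F}_N(Y)$ between the two complexes $X,Y$ occurring in the relation, and by \cite[Theorem 4.5]{ETW} they coincide after applying $\mathrm{For}$. To promote this to an equality in $\mc{C}^{\sll_2}(\gFm)$ I would use the degree count of \cite[Lemma 4.6]{ETW}: in the cohomological and quantum degree in which a cobordism map lives, the relevant graded piece of $\Hom_{\mc{C}(\gFm)}(\mc{F}_N(X),\mc{F}_N(Y))$ is free of rank one over the ground ring, generated by the normalized \cite{ETW} map $\phi_0$, which we have arranged to be $\sll_2$-equivariant; combined with the identification of morphism spaces in the relative homotopy category via $\sll_2$-invariants and derived invariants (equation \eqref{eqn-H-inv-in-HOM} and Corollary \ref{cor-hom-as-derived-invariants}), one finds that the $\sll_2$-equivariant elements of that graded piece form a rank-one module on $\phi_0$, on which the comparison map to $\Hom_{\mc{C}(\gFm)}(\mc{F}_N(X),\mc{F}_N(Y))$ is injective. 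Since the difference of the two sides of the relation is $\sll_2$-equivariant and maps to $0$ under $\mathrm{For}$, it therefore vanishes in $\mc{C}^{\sll_2}(\gFm)$. Making this last injectivity statement precise --- carefully comparing morphism spaces in $\mc{C}^{\sll_2}(\gFm)$ and $\mc{C}(\gFm)$ in the presence of the non-field ground ring --- is the technical heart of the argument; it establishes functoriality, and together with Proposition \ref{prop:comenriched} gives the functor $\KR_N^{\sll_2}$ with values in a category enriched in $\sll_2$-modules.

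Finally, for the last assertion, suppose $L_1$ and $L_2$ are isotopic as framed oriented links. By the invariance established above there is an $\sll_2$-equivariant relative homotopy equivalence --- more precisely a zig-zag $\KR_N^{\sll_2}(L_1)\xleftarrow{\alpha}\cdot\xrightarrow{\beta}\cdots\KR_N^{\sll_2}(L_2)$ of such --- in $\mc{C}(\gFm\#\sll_2)$, each arrow being a homotopy equivalence after forgetting $\sll_2$. Pre- and post-composition with these arrows, which is $\sll_2$-equivariant by naturality of the action \eqref{enriched0}, induces $\sll_2$-equivariant homotopy equivalences of the inner hom complexes $\HOM_{\gFm}(\KR_N^{\sll_2}(L_i),E)$ and $\HOM_{\gFm}(E,\KR_N^{\sll_2}(L_i))$, and hence isomorphisms of $\sll_2$-modules on their cohomology, because an $\sll_2$-equivariant chain map that is a homotopy equivalence over $\gFm$ induces an isomorphism of $\sll_2$-modules on cohomology. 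Therefore the $\sll_2$-enriched morphism spaces attached by $\KR_N^{\sll_2}$ depend only on the framed isotopy classes of the links, as claimed.
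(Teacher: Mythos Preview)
Your proposal has a genuine gap at the step where you assert that the elementary cobordism maps for $\MGH$ and $\MGS$ can be made $\sll_2$-equivariant. Lemma~\ref{lem:cup-cap-maps} and the analogous saddle lemma only produce $\sll_2$-equivariant maps when specific green dots (with multiplicities $\pm\tfrac12$) are placed on the \emph{boundary} webs. In the link-homology setting those boundaries are the fixed complexes $\mc{F}_N(L_1)$ and $\mc{F}_N(L_2)$, whose green-dot decorations are already determined by the braiding complexes; you are not free to add further green dots there. Indeed equations \eqref{eq:e-act-cup}--\eqref{eq:e-act-saddle} show that $\df$ acts nontrivially on undecorated cups, caps, and saddles, so these foams do \emph{not} satisfy $g*F=0$. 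Once this fails, your subsequent claim ``in every case $\mc{F}_N(G)$ is $\sll_2$-equivariant'' is false, and the rank-one/injectivity argument you build on it (comparing the $\sll_2$-invariant parts of the hom spaces) no longer applies to the movie moves that involve births, deaths, or saddles.

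The paper's argument avoids this entirely by not requiring the cobordism maps to be $\sll_2$-equivariant. It treats $\mc{F}_N(G)$ merely as an element of the \emph{inner} hom $\HOM_{\gFm}(\mc{F}_N(X),\mc{F}_N(Y))$, on which $\sll_2$ acts by~\eqref{enriched0}. The whole point is then the one-line Leibniz computation: if $\alpha-\beta=Hd+dH$ (which holds by \cite{ETW} for any movie relation), then since $g*d=0$ one gets $g*\alpha-g*\beta=(g*H)d+d(g*H)$, so $g*[\alpha]=g*[\beta]$. Thus the $\sll_2$-module structure on homotopy classes is well-defined, and the movie-move relations---already known in $\mc{C}(\gFm)$---need no further lifting. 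Your injectivity-of-comparison-map approach is both unnecessary and, as stated, not correctly grounded. Your treatment of the final assertion (invariance of the $\sll_2$-structure under framed isotopy) is essentially fine and matches the paper's citation of Theorem~\ref{thm:sl2inv} together with Proposition~\ref{prop:comenriched}.
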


\begin{proof}
The functor $\mathcal{F}_N \colon \Links \rightarrow \mc{C}(\gFm)$ is constructed in \cite[Theorem 4.5]{ETW}. 
That is, to every movie generator $M$, there is an associated map of complexes $\mathcal{F}_N(M)$ of (green-dotted) webs.  If there is a movie relation $M_1 = M_2$, then the maps $\mathcal{F}_N(M_1)$ and
$\mathcal{F}_N(M_2)$ are homotopic.

Let $\alpha, \beta \colon \mathcal{F}_N(L_1) \rightarrow \mathcal{F}_N(L_2)$ be two relative homotopic maps. Thus in the homotopy category $\mc{C}^{\sll_2}(\gFm)$, their classes $[\alpha]$ and $[\beta]$ are equal.  We will show that for any $g \in \mathcal{U}(\mathfrak{sl}_2)$, that we have an equality in the relative homotopy category $g*[\alpha]=g*[\beta]$. Since $[\alpha]=[\beta]$, there is a homotopy $H$, when forgetting the $\sll_2$-actions, such that
\begin{equation} \label{eq:gactonhom}
\alpha-\beta= H d + d H .    
\end{equation}
Acting by $g$ on both sides of \eqref{eq:gactonhom}, using the Leibniz rule and noting that $\sll_2$-actions commute with differentials, we obtain
\[
g * \alpha - g * \beta = (g*H) d + d (g*H) .
\]
Thus we obtain $g*[\alpha]=g*[\beta]$.

The last part follows from Theorem \ref{thm:sl2inv} and Proposition \ref{prop:comenriched}.
\end{proof}

Let $\Linkscon$ be the category whose objects are framed links and whose morphisms are connected link concordances. Recall that a link concordance is a link cobordism which is an embedding of cylinders inside $\mathbb{R}^3\times [0,1]$ with boundary components the given links. The Euler characteristic of a link concordance is always zero.  

\begin{cor} \label{cor:ribboninj}
The functor $\mathcal{F}_N$ from Theorem \ref{thm:functorFN} restricts to a functor
\[\mathcal{F}_N \colon \Linkscon \longrightarrow \mc{C}^{\mathfrak{sl}_2}(\Bbbk_N).\]
\end{cor}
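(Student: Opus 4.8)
The plan is to obtain the restricted functor by postcomposing the functor of Theorem~\ref{thm:functorFN} with the (termwise) state space functor, and then to use the vanishing of the Euler characteristic of a concordance to see that all degree shifts and framing twists disappear, so that the resulting complexes and chain maps are honest objects and morphisms of $\mc{C}^{\sll_2}(\KN)$.

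First I would set up the codomain functor. Recall from Section~\ref{functoriality:sec} that $\statespaceN{\cdot} = \Hom_{\gFm}(\emptyset,-)$ is an $\sll_2$-enriched functor from $\gFm$ to $\sll_2$-equivariant $D_\Gamma$-modules, with the $\sll_2$-action given by \eqref{enriched0}. For every green-dotted web $\Gamma$ occurring in the complex $\mathcal{F}_N(L)$ attached to a link diagram, the subalgebra $\KN = \scalars[E_1,\dots,E_\myN]$ of polynomials symmetric in all $\myN$ variables embeds into each edge algebra $D_e$, its action is independent of the chosen edge, and it is preserved by every foam. Restricting scalars along $\KN\hookrightarrow D_\Gamma$ therefore upgrades $\statespaceN{\cdot}$ to an $\sll_2$-enriched functor landing in $\KN\#\sll_2$-modules; applying it termwise to complexes of webs and to the relative homotopy equivalences produced in Theorem~\ref{thm:functorFN} descends to an exact, $\sll_2$-enriched functor $\mc{C}^{\sll_2}(\gFm)\to\mc{C}^{\sll_2}(\KN)$. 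Composing with $\mathcal{F}_N$ of Theorem~\ref{thm:functorFN} gives a candidate functor $\Links\to\mc{C}^{\sll_2}(\KN)$ on objects and, a priori, on morphisms up to degree shifts.

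Next I would check that on $\Linkscon$ no degree shift survives. A concordance $S\colon L_1\to L_2$ decomposes into movie generators in which the only non-Reidemeister pieces are births, deaths, and saddles; since $\chi(S)=0$ these occur in numbers forcing the cumulative cohomological ($t$-) and quantum ($q$-) degree shifts to cancel, and by Proposition~\ref{prop:framed-RI} the only framing corrections that could enter are governed by the change of framing/writhe along $S$, which for a concordance is trivial. Hence $\statespaceN{\mathcal{F}_N(S)}$ is a degree-zero, $\KN$-linear chain map (up to relative homotopy), i.e. a genuine morphism in $\mc{C}^{\sll_2}(\KN)$; functoriality (composition, identities) and the fact that the $\sll_2$-structure on morphisms is a framed-link invariant are inherited from Theorem~\ref{thm:functorFN} and Proposition~\ref{prop:comenriched}. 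Connectedness of $S$ is used to transport the distinguished copy of $\KN$ consistently from $L_1$ to $L_2$.

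I expect the main obstacle to be the twist bookkeeping: one must verify that the green-dot and framing data needed to make each elementary foam map $\sll_2$-equivariant really cancels \emph{globally} along a concordance, not merely locally, so that no residual twist of the $\KN$-action remains; this is the step where $\chi(S)=0$ and the concordance hypothesis are used in an essential way. Everything else is a formal transport through the state space functor.
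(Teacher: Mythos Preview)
Your approach is essentially the paper's, but with one real gap and one misattribution.

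The gap is that cancelling the $t$- and $q$-degree shifts only establishes $\dh$-equivariance of the induced chain map. You still need to check $\df$-equivariance directly, and this is not a consequence of framing corrections via Proposition~\ref{prop:framed-RI}: the non-Reidemeister movie generators (births, deaths, saddles) are implemented by undecorated cup, cap and saddle foams, and the action of $\df$ on these is given by \eqref{eq:e-act-cup}--\eqref{eq:e-act-saddle}. By the Leibniz rule, each cup or cap contributes a term $-\tfrac{1}{2}(\dotnewtoni[1]+\wdotnewtoni[1])$ and each saddle a term $+\tfrac{1}{2}(\dotnewtoni[1]+\wdotnewtoni[1])$ to $\df(F)$; since the cobordism is connected all these decorations live on a single facet, and $\chi(S)=0$ makes the coefficients cancel. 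The $\de$-action on naked foams is already zero. This is the computation the paper actually carries out, and it is not subsumed by your degree-shift bookkeeping nor by any statement about Reidemeister~I.

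The misattribution is the role of connectedness. It is not used to ``transport the distinguished copy of $\KN$''---the $\KN$-action is central in each $D_e$ and is the same through any edge. Connectedness is what lets you slide all the $\df$-contributions from the various Morse critical points onto one and the same facet of the foam, so that they can be added and seen to cancel.
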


\begin{proof}
Since the Euler characteristic of the concordance is zero, the number of saddles is equal to the sum of the number of cups and caps.  Using the formulas from Section \ref{sec:sl2action} (more specifically, the action of $\sll_2$ generators in which $\Le$ acts trivially on naked foams,  the $\Lh$-action is determined locally by equations \eqref{eq:h-act-cup}--\eqref{eq:h-act-saddle}, and the $\Lf$-action is determined locally by equations \eqref{eq:e-act-cup}--\eqref{eq:e-act-saddle}), and the fact that the concordance is connected, the $\mathfrak{sl}_2$-action on the concordance is actually zero.  Thus the induced map on $\mathfrak{sl}_2$-representations intertwines the $\mathfrak{sl}_2$-actions. 
\end{proof}

\begin{rmk}
    We chose to present functoriality for the category of framed links. However, using the  renormalization tricks from Section~\ref{sec:unframed}, this can also be formulated in the category of unframed links.
\end{rmk}

A concordance $C$ is \emph{ribbon} if the projection to the $[0, 1]$ factor
restricts to a Morse function on $C$ with only index 0 and 1 critical points.
Levine and Zemke established the following statement (where
$\mathrm{Kh}$ denotes Khovanov):

\begin{thm}[{\cite[Theorem 1]{LevineZemke}}]
If $C$ is a ribbon concordance from $L_0$ to $L_1$, the induced map
\[\mathrm{Kh}(C)\co \mathrm{Kh}(L_0) \to \mathrm{Kh}(L_1)\]
is injective, with left inverse given by $\mathrm{Kh}(\overline{C})$,
where $\overline{C}$ denotes the mirror image of $C$. 
\end{thm}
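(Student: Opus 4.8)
The plan is to follow the strategy of Levine and Zemke \cite{LevineZemke}: reduce the injectivity statement to the assertion that $\mathrm{Kh}(\overline{C})\circ \mathrm{Kh}(C)=\mathrm{id}_{\mathrm{Kh}(L_0)}$, and then prove this identity by a handle‑cancellation argument combined with the local relations governing the cobordism maps. A left inverse simultaneously forces injectivity and exhibits $\mathrm{Kh}(L_0)$ as a direct summand of $\mathrm{Kh}(L_1)$, so the whole theorem reduces to computing one composite cobordism map.

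First I would put the ribbon concordance in normal form: a ribbon concordance $C\co L_0\to L_1$ admits a movie presentation consisting of $n$ births of trivial circles followed by $n$ merge‑type saddles, after which $L_1$ is recovered; dually $\overline{C}\co L_1\to L_0$ is presented by $n$ split‑type saddles followed by $n$ deaths. Stacking and applying functoriality (the $\mathfrak{sl}_2$‑equivariant refinement is Theorem~\ref{thm:functorFN}; in the classical $N=2$ case it is the functoriality underlying \cite{ETW}), one gets $\mathrm{Kh}(\overline{C})\circ\mathrm{Kh}(C)=\mathrm{Kh}(\overline C\circ C)$, where $\overline C\circ C\co L_0\to L_0$ is presented by $n$ births, then $n$ merges, then $n$ splits, then $n$ deaths.

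The topological heart is that $\overline C\circ C$ can be isotoped rel boundary so that each born circle merges into a strand and then splits off again along the same band, with the two saddles performed adjacently, after which the auxiliary circle is capped off by its death. Locally this is a cup, followed by a merge, immediately followed by a split, followed by a cap, and the digon/bubble/neck‑cutting relations coming from the foam evaluation of \cite{RW1} — the same relations used throughout Section~\ref{homology:sec} for the Reidemeister moves — collapse this to the identity foam on the strand (concretely this is the Frobenius identity $(\varepsilon\otimes\mathrm{id})\circ\delta=\mathrm{id}$). Iterating over the $n$ born circles, and checking that the remainder of $\overline C\circ C$ is the product cobordism, yields $\mathrm{Kh}(\overline C\circ C)=\mathrm{id}$. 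To upgrade the conclusion to the category of $\mathfrak{sl}_2$‑modules one then invokes Corollary~\ref{cor:ribboninj}: $\mathrm{Kh}(C)$ and $\mathrm{Kh}(\overline C)$ are $\mathfrak{sl}_2$‑equivariant because they are induced by connected concordances, so the resulting splitting is one of $\mathfrak{sl}_2$‑modules.

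I expect the main obstacle to be the topological bookkeeping in isotoping $\overline C\circ C$ into the shape in which the evaluation decouples into iterated Frobenius identities: one must control how the $n$ bands of $S$ and $\overline S$ interact — they may be nested, or linked with the rest of $L_0$ — choose a consistent order in which to cancel the merge/split pairs, and verify that the leftover cobordism is genuinely the product. A secondary subtlety in the equivariant refinement is that the individual births, merges, splits and deaths are not $\mathfrak{sl}_2$‑equivariant on the nose; this is harmless here, since equivariance is only ever asserted for $\mathrm{Kh}(C)$, $\mathrm{Kh}(\overline C)$ and their composite, each of which is induced by an honest concordance.
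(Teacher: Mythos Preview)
The paper does not prove this statement: it is quoted from \cite{LevineZemke}, and the only comment offered is that ``the proof relies on a neat topological argument by Zemke \cite{Zemke}, and therefore extends to Khovanov--Rozansky's setup (see \cite{CGLLSZ}).'' Your outline is precisely that argument---reduce to $\mathrm{Kh}(\overline C)\circ\mathrm{Kh}(C)=\mathrm{id}$, recognise $\overline C\circ C$ as $L_0\times I$ with $n$ spheres tubed in, and evaluate each tube via the Frobenius relations---so there is no divergence in approach to report.

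The one place your sketch should be sharper is the step you yourself flag as the ``main obstacle.'' The local computation (cup--merge--split--cap $=\mathrm{id}$) is only valid once each auxiliary sphere bounds an embedded $3$-ball disjoint from $L_0\times I$ and from the other spheres; without this, the circle that is born and later capped need not be an unlinked unknot at the moments of birth and death, and the movie cannot be isotoped into the clean local form you describe. Producing those balls is exactly Zemke's topological input and the sole place the ribbon hypothesis is used: because the movie for $C$ has no index-$2$ critical points, the spanning disk of each born circle can be pushed forward through time to the mirror death without obstruction, yielding the required ball. You acknowledge the difficulty but do not indicate its resolution. Finally, your closing remarks on $\sll_2$-equivariance pertain to the paper's subsequent corollary rather than to the cited theorem itself; the additional input there is indeed just Corollary~\ref{cor:ribboninj}.
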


The proof relies on a neat topological argument by Zemke \cite{Zemke},
and therefore extends to Khovanov--Rozansky's setup (see \cite{CGLLSZ}). 
Incorporating the the $\sll_2$-action, one immediately obtains:
\begin{cor}
If $C$ is a ribbon concordance from $L_0$ to $L_1$, then
$\KR_N^{\mathfrak{sl}_2}(L_0;R)$ is a direct summand of $\KR_N^{\mathfrak{sl}_2}(L_1;R)$ as $\sll_2$-modules.
\end{cor}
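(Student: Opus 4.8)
The plan is to deduce the statement formally from two ingredients: the Khovanov--Rozansky analogue of the Levine--Zemke theorem, which holds because Zemke's topological argument \cite{Zemke} carries over to this setting (see \cite{CGLLSZ}), and the $\sll_2$-equivariance of the cobordism maps associated with concordances, which is the content of Corollary~\ref{cor:ribboninj}.

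First I would note that a ribbon concordance $C\colon L_0 \to L_1$ is in particular a link concordance, hence a disjoint union of connected cobordisms each of Euler characteristic zero; the same is true of the reverse concordance $\overline{C}\colon L_1 \to L_0$ and of the composite $\overline{C}\circ C\colon L_0 \to L_0$. Consequently, by Corollary~\ref{cor:ribboninj} --- whose proof shows that on a connected cobordism of vanishing Euler characteristic the $\sll_2$-action is trivial, since $\Le$ acts by zero on naked foams and the cup, cap and saddle contributions to the $\Lh$- and $\Lf$-actions cancel --- the induced morphisms of complexes
\[
\mc{F}_N(C)\colon \KR_N^{\sll_2}(L_0;R) \lra \KR_N^{\sll_2}(L_1;R), \qquad
\mc{F}_N(\overline{C})\colon \KR_N^{\sll_2}(L_1;R) \lra \KR_N^{\sll_2}(L_0;R)
\]
are $\sll_2$-equivariant, and therefore induce maps of $\mathcal{U}(\sll_2)$-modules on homology.

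Next I would use functoriality (Theorem~\ref{thm:functorFN}) to identify $\mc{F}_N(\overline{C}) \circ \mc{F}_N(C)$ with $\mc{F}_N(\overline{C}\circ C)$, and then invoke the Khovanov--Rozansky version of \cite[Theorem~1]{LevineZemke}: since $C$ is ribbon, the composite $\mc{F}_N(\overline{C}\circ C)$ induces the identity on $\KR_N(L_0;R)$ once the $\sll_2$-action is forgotten. As the homology groups are honest $\mathcal{U}(\sll_2)$-modules and the map $\mc{F}_N(\overline{C})_* \circ \mc{F}_N(C)_*$ has already been shown to be $\sll_2$-equivariant, this equality of underlying linear maps is automatically an equality of $\sll_2$-module maps. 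Hence $\mc{F}_N(C)_*$ is a split injection of $\sll_2$-modules with left inverse $\mc{F}_N(\overline{C})_*$, which exhibits $\KR_N^{\sll_2}(L_0;R)$ as a direct summand of $\KR_N^{\sll_2}(L_1;R)$ in the category of $\sll_2$-modules.

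The main point requiring care will be that the homotopy furnished by Zemke's argument between $\mc{F}_N(\overline{C}\circ C)$ and the identity chain map is not expected to intertwine the $\sll_2$-action; I would circumvent this by invoking that homotopy only after passing to homology, where it is irrelevant, so that the only $\sll_2$-equivariance actually needed is the chain-level statement supplied by the zero-Euler-characteristic computation of Corollary~\ref{cor:ribboninj}. Granting that input, the remaining argument is purely formal.
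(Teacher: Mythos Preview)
Your proposal is correct and follows essentially the same route as the paper, which merely states that the Levine--Zemke argument carries over to the Khovanov--Rozansky setting (via \cite{CGLLSZ}) and that incorporating the $\sll_2$-action immediately yields the corollary. You have simply unpacked this: the $\sll_2$-equivariance of $\mc{F}_N(C)$ and $\mc{F}_N(\overline{C})$ from Corollary~\ref{cor:ribboninj} (applied componentwise, since each cylinder in a concordance has Euler characteristic zero), together with the fact that their composite induces the identity on homology, gives the splitting; your care in passing to homology before invoking the non-equivariant homotopy is a reasonable precaution that the paper leaves implicit.
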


\section{Specializations}
\label{sec:special}

\subsection{$\mathfrak{gl}_{p\ell}$-homology in characteristic $p$}
\label{sec:Fpslp}

In this subsection, we consider the setup where the base ring is
$R=\scalars$, a field of characteristic $p>2$ and look
at $\gll_{N}$-link homology when $N=p\ell$ is a multiple of the
characteristic of $\scalars$.

Shumakovitch \cite{Shum} and
Wang \cite{Wang} used an operator denoted $\nu$ or $\nabla$ to prove
that reduced $\mathfrak{gl}_p$-homology in characteristic $p$ is
independent of the choice of the basepoint of the link and  that non-reduced homology is isomorphic to
reduced homology tensored with $\scalars[x]/(x^p)$. It turns out that
$\nabla$ acts like $\Le$ in a non-equivariant setting.

Shumakovitch and Wang's operator fits into our general construction
(and was a partial motivation of our work), because the ideal $I$ of
$\scalars[E_1,\ldots,E_p]$ generated by $E_1,\ldots,E_p$ is preserved
by the action of $ \mathfrak{sl}_2$. As we shall see, working in
characteristic $p$ is crucial.

Shumakovitch's results were for $p=2$ and Wang extended this for any
prime $p$.  Since we consider the full $\mathfrak{sl}_2$-action which
forces us to invert $2$ and therefore we suppose that $p>2$.

\begin{lem} \label{thm:sl2invFp} The homology
  $\KR_{p \ell ;t_1,t_2}^{\mathfrak{sl}_2}(L;R)$ is an invariant of framed
  oriented links.
\end{lem}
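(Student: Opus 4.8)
The plan is to observe that Lemma~\ref{thm:sl2invFp} is Theorem~\ref{thm:sl2inv} read over a different ground ring, and that the proof of Theorem~\ref{thm:sl2inv} is insensitive to this change. I would rerun the constructions of Sections~\ref{sl2foams:sec} and~\ref{homology:sec} with the symmetric function ring $\KN = \scalars[E_1,\dots,E_{p\ell}]$ in place of $\RN$, where $\scalars$ is the given field of characteristic $p>2$. The only arithmetic property of the ground ring used anywhere in these sections is the invertibility of $2$: it enters the formulas \eqref{eq:e-act-cup}--\eqref{eq:e-act-saddle} for the $\df$-action on cup, cap, and saddle foams, and nowhere else in an essential way (all remaining structure constants are integers). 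Since $p>2$, $2$ is invertible in $\scalars$, hence in $\KN$. Consequently the $\sll_2$-action on foams in good position (Lemma~\ref{lem:sl2-acts-good-position}) and its compatibility with foam evaluation (Proposition~\ref{prop:sl2-acts-good-pos}) are defined over $\scalars$; the green-dot twists and the flatness identities of Proposition~\ref{prop:classification-sl2-twists} are polynomial identities that remain valid over $\scalars$; and the relative homotopy category formalism of Section~\ref{homological:sec}, applied to the $\sll_2$-module algebras $D_\web$ over $\scalars$, goes through unchanged.

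Granting these ingredients, the proof of framed invariance is word for word the one in Section~\ref{homology:sec}: the sliding of green dots through crossings (Lemma~\ref{lem:twotwists}), the framed Reidemeister~I isomorphism (Proposition~\ref{prop:framed-RI}), and the Reidemeister~II and~III isomorphisms are each obtained by exhibiting an explicit short exact sequence of $\sll_2$-equivariant complexes that splits after forgetting the $\sll_2$-action, followed by Gaussian elimination of contractible summands; these diagrams and the foam maps between them are literally the same over $\scalars$. Hence the cube of resolutions of a diagram of $L$ defines, up to isomorphism in $\mc{C}^{\sll_2}(\KN)$, a framed-link invariant, and taking homology gives the claim. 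If in addition one wants the fully specialized non-equivariant version over $\scalars$ (obtained by setting $E_1 = \dots = E_{p\ell} = 0$), one uses that the irrelevant ideal $I = (E_1,\dots,E_{p\ell}) \subseteq \KN$ is an $\sll_2$-submodule: $\dh$ is diagonal for the grading, $\df$ raises degree by $2$ and so sends $\KN$ into $I$, and $\de$ lowers degree by $2$ with $\de(E_1) = -N = -p\ell = 0$ in $\scalars$, so $\de(I) \subseteq I$ as well; thus $\KN \to \KN/I \cong \scalars$ is a morphism of $\sll_2$-module algebras, the induced base-change functor descends to the relative homotopy categories, and this is exactly where both hypotheses $p>2$ and $p \mid N$ enter.

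There is no substantive obstacle beyond bookkeeping. The single point that genuinely must be checked is that the ring map $\ZZ[1/2] \to \scalars$ preserves the two homological inputs used throughout Section~\ref{homology:sec}, namely termwise $A$-split exactness of the short exact sequences that yield the distinguished triangles, and relative null-homotopy of the contractible complexes. This is immediate: an $A$-linear splitting over $\RN$ base-changes to an $A$-linear splitting over $\scalars$ (and likewise for $A$-linear null-homotopies), using that the $\gll_N$-state spaces of webs are finite free over the symmetric function ring \cite{RW1}. With that noted, the proof of Theorem~\ref{thm:sl2inv} transports verbatim, proving Lemma~\ref{thm:sl2invFp}.
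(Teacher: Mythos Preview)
Your proposal is correct and follows essentially the same approach as the paper. The paper's proof is more terse but hinges on the identical observation: the specialization map $\scalars[E_1,\dots,E_{p\ell}]\to\scalars$ sending each $E_i$ to $0$ is an $\sll_2$-algebra map precisely because $\de\cdot E_1=-p\ell=0$ in characteristic $p$, so the $\sll_2$-action on non-equivariant state spaces is well-defined via foam evaluation, and invariance under framed Reidemeister moves then follows from the equivariant case already established in Section~\ref{homology:sec}. One small point of emphasis: since the lemma's ground ring is $R=\scalars$ (not $\KN$), your first paragraph on base-changing to $\KN$ is preparatory rather than the main content---the paper skips straight to the non-equivariant specialization, which is where the hypotheses $p>2$ and $p\mid N$ actually bite.
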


\begin{proof}
  One only needs to show that the $\sll_2$-action is well-defined on
  state spaces. Recall that these state spaces are defined using the
  universal construction. In the equivariant setting, the
  $\sll_2$-structure on state spaces follows from the identity stated
  in Proposition~\ref{prop:sl2-acts-good-pos}.
  We will prove the same identity in the non-equivariant setting.
  In this setting, the evaluation of a foam $\foam$ is given by:
  \[
    \bracket{\foam}_{p \ell}^{\scalars} = \mathrm{ev}(\bracket{\foam}_{p\ell}).
  \]
  where the map
  \[
    \mathrm{ev}:\thinspace\scalars[E_1,\ldots,E_{p \ell}] \longrightarrow \scalars
  \]
  maps $E_1$, \dots, $E_{p\ell}$ to $0$.
  This is an $\sll_2$-map (the field $\scalars$ being endowed with the trivial
  $\sll_2$-action). Indeed, the only non-trivial fact is that
  $\mathrm{ev}(\Le\cdot E_1) =\Le \cdot \mathrm{ev}(E_1) =0$. This is
  the case since $\Le\cdot E_1 = p\ell  = 0 \in \scalars$.
  
  Hence if $\foam$ is a closed foam in good position, then for 
  $x \in \{\de, \df, \dh \}$
  \begin{equation} \bracket{x \cdot \foam}_{p\ell}^{\scalars}= x \cdot
    \mathrm{ev}(\bracket{x \cdot \foam}_{p\ell}) = \mathrm{ev}(x \cdot
    \bracket{\foam}_{p\ell})  = x \cdot  \mathrm{ev}(\bracket{\foam}_{p\ell}) = x \cdot \bracket{\foam}_{p\ell}^{\scalars}.\end{equation}
  
  Invariance of the $\sll_2$-action
  under framed isotopies follows from that in the general setting.
\end{proof}

\subsection{The Zuckerman functor and some representation theory}
\label{sec:zuckerman}
For this section let $\scalars=\mathbb{C}$ and $R=\mathbb{C}[E_1,\ldots, E_N] $.
We set up some representation theoretic machinery used in computing
examples in Section \ref{sec:examples}.  We also introduce the
Zuckerman functor as a speculative tool to extract something
finite-dimensional from equivariant link homology.
 An $\mathfrak{sl}_2$-module
is \emph{locally finite} if any of its elements generates a finite-dimensional $\mathfrak{sl}_2$-module.

The Zuckerman functor $\Gamma \colon \mathfrak{sl}_2 \mod \rightarrow \mathfrak{sl}_2 \mod$ is the functor which takes an $\mathfrak{sl}_2$-module $M$ and returns its maximally locally finite submodule $\Gamma M$ under the action of $\mathfrak{sl}_2$.

The Bernstein functor (sometimes known as the dual Zuckerman functor) 
$$\mathcal{Z} \colon \mathfrak{sl}_2 \mod \rightarrow \mathfrak{sl}_2 \mod$$
is the functor which takes an $\mathfrak{sl}_2$-module $M$ and returns its maximally locally finite quotient $\mathcal{Z} M$ under the action of $\mathfrak{sl}_2$.  

For an integer $\lambda \in \mathbb{Z}$, there is an $\mathfrak{sl}_2$-module $M(\lambda)$ which is isomorphic to $\Bbbk[\Lf]$ as a $\Bbbk[\Lf]$-module with $\Le \cdot 1=0$, and $\Lh \cdot 1 = \lambda$.  This object is known as a Verma module with highest weight $\lambda$.

For $\lambda \in \mathbb{Z}$, we let $L(\lambda)$ denote the irreducible $\mathfrak{sl}_2$-module whose highest weight is $\lambda$.  If $\lambda \geq 0$, $L(\lambda)$ is finite-dimensional.  Let $P(\lambda)$ and $P(-\lambda-2)$ be the indecomposable projective covers of $L(\lambda)$ and $L(-\lambda-2)$ respectively.  The Verma module $M(\lambda)$ is isomorphic to $P(\lambda)$ and $P(-\lambda-2)$ can be described as an extension of Verma modules
\begin{equation} \label{sesproj}
0 \rightarrow M(\lambda) \rightarrow P(-\lambda-2) \rightarrow M(-\lambda-2) \rightarrow 0 \ .
\end{equation}
We will also use the short exact sequence
\begin{equation} \label{sesverma}
0 \rightarrow L(-\lambda-2) \rightarrow M(\lambda) \rightarrow L(\lambda) \rightarrow 0 \ .
\end{equation}
Taking the dual of \eqref{sesverma} yields the short exact sequence
\begin{equation} \label{sesdualverma}
0 \rightarrow L(\lambda) \rightarrow M^*(\lambda) \rightarrow L(-\lambda-2) \rightarrow 0 
\end{equation}
since the simple objects are preserved by the contravariant duality functor $*$.
Sequences \eqref{sesverma} and \eqref{sesdualverma} describe the unique extensions between simples.

We record the following well-known result about the Zuckerman functor and Verma modules.

\begin{prop}
The Zuckerman functor $\Gamma$ annihilates Verma modules.
For the Bernstein functor we have
\[
\mathcal{Z}(M(\lambda)) \cong
\begin{cases}
L(\lambda) & \text{ if } \lambda \geq 0, \\
0 & \text{ if } \lambda < 0 .
\end{cases}
\]
\end{prop}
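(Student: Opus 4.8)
The plan is to verify both statements by elementary weight-space considerations, using the structure of Verma modules over $\sll_2$ together with sequences \eqref{sesproj}--\eqref{sesdualverma}. First I would dispose of the Zuckerman statement: I must show that $M(\lambda)$ has no nonzero locally finite submodule. Recall $M(\lambda) \cong \Bbbk[\Lf]$ as a $\Bbbk[\Lf]$-module, with weight-space decomposition $M(\lambda) = \bigoplus_{k\geq 0} \Bbbk\,\Lf^k$, where $\Lf^k$ has weight $\lambda - 2k$. If $0 \neq m \in \Gamma M(\lambda)$, then the submodule generated by $m$ is finite-dimensional, hence is a sum of finite-dimensional irreducibles, so in particular $\Le$ acts locally nilpotently on $m$. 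Write $m = \sum_{k=0}^{n} c_k \Lf^k$ with $c_n \neq 0$. Applying $\Le$ repeatedly and using the standard relation $\Le \Lf^k \cdot 1 = k(\lambda - k + 1)\Lf^{k-1}\cdot 1$, one checks that $\Le^j m$ has leading term a nonzero scalar multiple of $\Lf^{n-j}$ for each $j \leq n$, and in particular $\Le^n m$ is a nonzero multiple of the highest weight vector $1$; but then $\Le^{n+1} m$ would have to vanish while $\Le^{n} m \neq 0$ is genuinely a highest weight vector of weight $\lambda$ generating $M(\lambda)$ itself, which is infinite-dimensional --- a contradiction unless $m = 0$. (The only subtlety is the case where some intermediate coefficient $k(\lambda-k+1)$ vanishes, i.e. $\lambda \geq 0$ and $k = \lambda+1$; but this only truncates the action on the submodule $\Lf^{\lambda+1}\cdot M(\lambda) \cong M(-\lambda-2)$, which has no further such vanishing, so the argument applies to whichever graded piece contains $\Lf^n$.) Hence $\Gamma M(\lambda) = 0$.

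For the Bernstein functor, I would dualize. Since $\mathcal{Z}$ is the maximal locally finite quotient and $M(\lambda)$ is projective (indeed $M(\lambda) \cong P(\lambda)$ when $\lambda \geq 0$), it is cleanest to identify $\mathcal{Z}(M(\lambda))$ directly via its universal property: any map from $M(\lambda)$ to a finite-dimensional module factors through the largest finite-dimensional quotient. When $\lambda \geq 0$, the sequence \eqref{sesverma} exhibits $L(\lambda)$ as a quotient of $M(\lambda)$ by the submodule $L(-\lambda-2)$; moreover $L(-\lambda-2) \cong \Lf^{\lambda+1} M(\lambda)$ is itself a Verma module $M(-\lambda-2)$ (with negative highest weight), so by the first part it has no locally finite quotient obstruction --- more precisely, any finite-dimensional quotient of $M(\lambda)$ must kill this submodule, since a finite-dimensional $\sll_2$-module has semisimple $\Lh$-action with no infinite descending weight string, forcing the image of $\Lf^k$ to vanish for $k$ large, hence (being a submodule quotient) for all $k > \lambda$. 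Therefore $L(\lambda)$ is the maximal finite-dimensional quotient, and since it is already locally finite, $\mathcal{Z}(M(\lambda)) \cong L(\lambda)$. When $\lambda < 0$, the module $M(\lambda)$ has all weights $\lambda, \lambda-2, \ldots$ with no repetitions and is \emph{irreducible} (there is no $k > 0$ with $k(\lambda-k+1) = 0$), so any nonzero quotient is all of $M(\lambda)$, which is infinite-dimensional and not locally finite; hence the maximal locally finite quotient is $0$.

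The main obstacle --- really the only place requiring care --- is bookkeeping the vanishing coefficients $k(\lambda - k + 1)$ in the $\Le$-action and making sure the argument in the first paragraph is not derailed when $\lambda \geq 0$ (where $M(\lambda)$ is \emph{not} irreducible and contains the proper submodule $L(-\lambda-2)$). The clean way around this is to treat the two graded "blocks" of $M(\lambda)$ separately: on $L(\lambda) = \bigoplus_{0 \leq k \leq \lambda}\Bbbk\Lf^k$ the $\Le$-string is finite and honest, while on $L(-\lambda-2) = \bigoplus_{k > \lambda}\Bbbk\Lf^k$ it is again a Verma module with strictly negative coefficients that never vanish; in neither block can a nonzero element generate a finite-dimensional submodule of the infinite-dimensional $M(\lambda)$. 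Everything else is the standard structure theory of $\sll_2$-Verma modules recalled just above, so no further input is needed.
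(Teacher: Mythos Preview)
Your argument for the Bernstein functor $\mathcal{Z}$ is essentially identical to the paper's: for $\lambda<0$ the Verma module is irreducible and infinite-dimensional, hence has no nonzero locally finite quotient; for $\lambda\geq 0$ the unique nonzero proper submodule is $M(-\lambda-2)$ and the quotient $L(\lambda)$ is finite-dimensional. The paper in fact proves only this half and leaves the statement $\Gamma M(\lambda)=0$ as a well-known fact without argument, so your attempt to justify it is extra work rather than a different route to the same proof.

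That said, your Zuckerman argument has a small but genuine gap. You write $m=\sum_{k=0}^n c_k\Lf^k$ with $c_n\neq 0$ and claim that $\Le^j m$ has leading term a nonzero multiple of $\Lf^{n-j}$; this fails precisely when the descending $\Le$-string crosses $k=\lambda+1$. Your parenthetical fix (``treat the two blocks separately'') does not actually resolve this, because a general $m$ can have nonzero components in \emph{both} pieces $\bigoplus_{k\leq\lambda}\Bbbk\Lf^k$ and $\bigoplus_{k>\lambda}\Bbbk\Lf^k$, and only the second is a submodule of $M(\lambda)$ (the first is merely a set of coset representatives for the quotient $L(\lambda)$, not a submodule). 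Concretely, for $\lambda=2$ and $m=\Lf^5+\Lf^3$ one finds $\Le^3 m=0$, so $\Le^n m$ is \emph{not} a nonzero multiple of $1$.

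The repair is immediate and simpler than the $\Le$-string argument: since $\Lh$ acts semisimply on $M(\lambda)$, any submodule is a weight module, so a nonzero locally finite submodule would contain some weight vector, necessarily a scalar multiple of $\Lf^k$. But $\Lf$ acts injectively on $M(\lambda)\cong\Bbbk[\Lf]$, so the elements $\Lf^k,\Lf^{k+1},\Lf^{k+2},\dots$ all lie in the cyclic submodule and are linearly independent, contradicting finite-dimensionality. Equivalently: $\Lf$ is injective on $M(\lambda)$ but nilpotent on any finite-dimensional $\sll_2$-module over $\CC$, so no nonzero finite-dimensional submodule can exist.
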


\begin{proof}
If $\lambda < 0$, then $M(\lambda)$ is irreducible and infinite-dimensional so its maximal locally finite quotient is zero.

If $\lambda \geq 0$, then $M(-\lambda -2)$ is an irreducible submodule of $M(\lambda)$ whose quotient is the finite-dimensional irreducible representation $L(\lambda)$.
\end{proof}

Let $\mathfrak{h}$ be the Cartan subalgebra of diagonal matrices of $\sll_2$.  It is spanned by $\Lh$.  
For $\sll_2$, category $\mathcal{O}$ is the full
subcategory of $\sll_2$-modules consisting of objects $M$ such that:
\begin{itemize}
\item $\Lh$ acts diagonally on $M$, in other words, $M$ is a weight module;
\item $M$ is finitely generated as a $\mathcal{U}(\sll_2)$-module;
\item for all $v \in M$, the $\mathcal{U}(\mathfrak{b})$-submodule of $M$
  generated by $v$ is finite-dimensional, where $\mathfrak{b}$ is the subalgebra of upper-triangular matrices.
\end{itemize}
Category $\mathcal{O}$ decomposes into a direct sum of categories called blocks
\[
\mathcal{O} = \bigoplus_{\lambda} \mathcal{O}_{\lambda}
\]
where $\lambda \in \mathfrak{h}^* / S_2$ is the image of a weight under the action of the Weyl group, (the symmetric group $S_2$), on $\mathfrak{h}^*$, and $\mathcal{O}_{\lambda}$ consists of all modules with generalized central character corresponding to $\lambda$ under the Harish-Chandra isomorphism.

For generic $\lambda$, the block $\mathcal{O}_{\lambda}$ is semisimple.  If $\lambda$ is an integral dominant weight, that is $\lambda(\Lh) \in \mathbb{Z}_{\geq 0}$,
then there are non-trivial extensions between simple modules.
For an integral dominant weight $\lambda$, the simple objects of $\mathcal{O}_{\lambda}$ are $L(\lambda)$ and $L(-\lambda-2) $.

Since $\KR_N^{\mathfrak{sl}_2}(L;R)$ is an $\mathfrak{sl}_2$-module, we have that the spaces
$\Gamma \KR_N^{\mathfrak{sl}_2}(L;R)$ and 
$\mathcal{Z} \KR_N^{\mathfrak{sl}_2}(L;R)$ are link invariants. In the examples computed in Section \ref{sec:examples} we see that for the unknot and the Hopf link that these representations are finite-dimensional.  Since the homology $\KR_N^{\mathfrak{sl}_2}(L;R)$ is in general infinitely generated, it is not clear whether or not in general applying $\Gamma$ or $\mathcal{Z}$ produces something that is finite-dimensional.  

We further note that composing Theorem \ref{thm:functorFN} with the Zuckerman or Bernstein functor yields a functor whose target category is the category of locally-finite $\mathfrak{sl}_2$-modules.

The Zuckerman functor is only left exact and is often useful in Lie theory to consider its derived functors.  In fact, the Bernstein functor is related to the Zuckerman functor by taking certain cohomology of the derived functor.  We do not pursue that direction here, but it would be interesting to investigate if we obtain any new information from the higher derived functors which potentially leads to another grading on link homology.

\subsection{The Rasmussen invariant}
\label{sec:rasmussen}

In this section we assume the link $L$ is a knot and we restrict ourselves to the case $N=2$ so we set $R=\mathbb{Q}[E_1, E_2]$.
We follow the exposition of \cite{Khfrob} in defining the Rasmussen invariant \cite{Rassslice}.

Fixing a base point $p$ on $L$, we endow the 
homology $\KR_2^{\mathfrak{sl}_2}(L;R) $ with the action of the homology of the unknot
\[
A=\mathbb{Q}[E_1, E_2][x]/(x^2-E_1x^{} + E_2) \ 
\]
by placing an unknot near the base point of $L$ and then merging the unknot to the link by the base point.  The homology of $L$ then acquires an action of $\mathbb{Q}[x]$.

For a knot $L$, denote by 
$\KR_N^{\mathfrak{sl}_2, \mathrm{tor}}(L;R) $ the torsion elements of its homology with respect to the $\mathbb{Q}[x]$-action.
Let $\KR_N^{\mathfrak{sl}_2, \mathrm{free}}(L;R) $ be the free part with respect to this action.

The next few results justify using $\mathfrak{sl}_2$ in the notation above.
\begin{prop}
The space $\KR_2^{\mathfrak{sl}_2, \mathrm{tor}}(L;R) $ is an $\mathfrak{sl}_2$-subrepresentation.
\end{prop}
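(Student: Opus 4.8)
The plan is to show that the torsion submodule $\KR_2^{\mathfrak{sl}_2,\mathrm{tor}}(L;R)$ is stable under the action of each of the generators $\de$, $\df$, $\dh$ of $\sll_2$. Concretely, a homology class $\alpha$ lies in the torsion part if and only if $q(x)\cdot\alpha = 0$ for some nonzero polynomial $q(x)\in\QQ[x]$, where $x$ acts by merging an unknot near the base point $p$. So I would fix such an $\alpha$ with $q(x)\cdot\alpha = 0$ and, for $g\in\{\de,\df,\dh\}$, produce a nonzero polynomial annihilating $g\cdot\alpha$.

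The key point is to understand the interaction between the $\sll_2$-action and the $\QQ[x]$-action. The operator $x$ is induced by a specific foam (a cup creation near $p$ followed by a merge), decorated by the generator of the unknot algebra $A = R[x]/(x^2 - E_1 x + E_2)$. Since the $\sll_2$-action satisfies the Leibniz rule with respect to composition of foams, and since $\de$, $\df$, $\dh$ act on the decoration $x$ by differential operators — explicitly $\de(x) = -1$, $\dh(x) = -2x$, $\df(x) = x^2$ on the thickness-one facet carrying the decoration — one gets commutator-type identities. For instance, acting on $x\cdot\alpha$ by the Leibniz rule yields
\[
\dh(x\cdot\alpha) = (\dh x)\cdot\alpha + x\cdot(\dh\alpha) = -2x\cdot\alpha + x\cdot(\dh\alpha),
\]
so $[\dh, x] = -2x$ as operators on homology; similarly $[\de, x] = -1$ and $[\df, x] = x^2$. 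These are exactly the relations making $R[x]$ an $\sll_2$-module algebra (as already recorded in the excerpt), and they let me control how $g$ moves the annihilating polynomial around.

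From here the argument is a standard Leibniz/induction computation. For $\dh$: from $[\dh,x]=-2x$ one gets $\dh\circ x^k = x^k\circ(\dh - 2k)$, hence if $q(x)\cdot\alpha = 0$ then applying $\dh$ and rearranging shows $q(x)$ (or a suitable reindexed polynomial of the same degree) annihilates $\dh\cdot\alpha$. For $\de$: from $[\de,x]=-1$ one gets $\de\circ x^k = x^k\circ\de - k x^{k-1}$, so $\de\cdot(q(x)\alpha) = q(x)\de\alpha - q'(x)\alpha = 0$ forces $q(x)\de\alpha = q'(x)\alpha$, and then $q(x)^2\cdot(\de\alpha) = q(x)q'(x)\cdot\alpha = q'(x)\cdot(q(x)\alpha) = 0$, so $q(x)^2$ annihilates $\de\cdot\alpha$. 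For $\df$: from $[\df,x]=x^2$ one similarly gets a polynomial of controlled degree annihilating $\df\cdot\alpha$ after multiplying by a suitable power of $q$. In each case $\de\cdot\alpha$, $\df\cdot\alpha$, $\dh\cdot\alpha$ remain torsion, so the torsion submodule is an $\sll_2$-subrepresentation.

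The main obstacle I anticipate is making precise the commutation relations $[\de,x]$, $[\df,x]$, $[\dh,x]$ at the level of the state space / link homology rather than on a single facet's decorations: one must check that the twisting of the $\sll_2$-action (green dots) near the base point, the cup and merge foams used to define $x$, and the framing conventions all cooperate so that the ``merge an unknot'' operator $x$ genuinely behaves like a decoration on which $\sll_2$ acts by the differential operators above. Once that compatibility is granted — and it follows from the Leibniz rule together with the explicit formulas \eqref{eq:h-act-cup}--\eqref{eq:h-act-saddle} and \eqref{eq:e-act-cup}--\eqref{eq:e-act-saddle} for $\dh$ and $\df$ on cups, caps, saddles, plus $\de$ acting trivially on naked foams — the rest is the routine polynomial bookkeeping sketched above.
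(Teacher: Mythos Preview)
Your approach is correct and matches the paper's: both apply the Leibniz rule for the $\sll_2$-action to the annihilating relation (the paper works with monomial annihilators $x^m\alpha=0$, you with a general $q(x)\alpha=0$) to conclude that each generator preserves torsion. The paper's treatment of $\de$ is marginally slicker---it applies $\Le$ to $x^{m+1}\alpha=0$ directly, giving $x^{m+1}\,\Le\cdot\alpha=0$, rather than squaring the annihilator---but the substance is identical.
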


\begin{proof}
Suppose $\alpha$ is a torsion element with respect to the $\mathbb{Q}[x]$-action.
Then for some $m \in \mathbb{N}$, $x^m \alpha=0$.  We need to check that $g \alpha$ is torsion where $g \in \{\Le, \Lf, \Lh  \}$.

We compute
\[
0 = \Lf \cdot (x^m \alpha) = m x^{m+1} \alpha + x^m \Lf \cdot \alpha = x^m \Lf \cdot \alpha
\]
since $x^m \alpha = 0 $.  Thus $\Lf \cdot \alpha$ is also a torsion element.

Similarly one could compute $\Lh \cdot \alpha $ is torsion.

In order to prove that $\Le \cdot \alpha $ is torsion, we note 
\[
0 = \Le \cdot (x^{m+1} \alpha) = -(m+1) x^m \alpha + x^{m+1} \Le \cdot \alpha = x^{m+1} \Le \cdot \alpha.
\]
Thus $\Le \cdot \alpha$ is torsion as well.
\end{proof}
This immediately implies the following result.
\begin{cor} \label{cor:sl2quot}
The subspace  $\KR_2^{\mathfrak{sl}_2,\mathrm{free}}(L;R)$ is an $\mathfrak{sl}_2$-quotient representation of the entire homology.
\end{cor}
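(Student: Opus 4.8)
The plan is to deduce the corollary directly from the Proposition just proved, together with the elementary fact that quotients in the category of $\mathfrak{sl}_2$-modules are again $\mathfrak{sl}_2$-modules.

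First I would recall that, by the preceding Proposition, the torsion submodule $\KR_2^{\mathfrak{sl}_2,\mathrm{tor}}(L;R)$ is stable under the operators $\Le$, $\Lf$, and $\Lh$, hence is an $\mathfrak{sl}_2$-submodule of the full homology $\KR_2^{\mathfrak{sl}_2}(L;R)$. Next I would make explicit what the ``free part'' is: by definition it is the quotient
\[
\KR_2^{\mathfrak{sl}_2,\mathrm{free}}(L;R) = \KR_2^{\mathfrak{sl}_2}(L;R)\big/\KR_2^{\mathfrak{sl}_2,\mathrm{tor}}(L;R),
\]
equivalently the image of $\KR_2^{\mathfrak{sl}_2}(L;R)$ in its localization $\KR_2^{\mathfrak{sl}_2}(L;R)\otimes_{\mathbb{Q}[x]}\mathbb{Q}(x)$ at the nonzero elements of $\mathbb{Q}[x]$, which is a torsion-free $\mathbb{Q}[x]$-module.

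The remaining step is purely formal: whenever $N$ is an $\mathfrak{sl}_2$-submodule of an $\mathfrak{sl}_2$-module $M$, the quotient $M/N$ carries a unique $\mathfrak{sl}_2$-module structure for which the canonical projection $M\twoheadrightarrow M/N$ is $\mathfrak{sl}_2$-equivariant; this is immediate since the $\mathfrak{sl}_2$-action is by linear endomorphisms. Applying this with $M=\KR_2^{\mathfrak{sl}_2}(L;R)$ and $N=\KR_2^{\mathfrak{sl}_2,\mathrm{tor}}(L;R)$ exhibits $\KR_2^{\mathfrak{sl}_2,\mathrm{free}}(L;R)$ as an $\mathfrak{sl}_2$-quotient representation, as claimed.

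The only point requiring a moment's care — and essentially the sole obstacle — is the bookkeeping around what ``free part'' means: it must be understood as the quotient by the torsion submodule (equivalently, the image in the rational localization), \emph{not} as an arbitrary $\mathbb{Q}[x]$-module complement of the torsion submodule, since such a complement need not be $\mathfrak{sl}_2$-stable. Once the quotient interpretation is fixed, the proof is immediate and no further estimates or computations are needed.
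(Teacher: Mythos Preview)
Your proposal is correct and matches the paper's approach exactly: the paper simply states that the corollary follows immediately from the preceding proposition, and you have spelled out precisely why --- the torsion part is an $\mathfrak{sl}_2$-submodule, so the quotient by it (which is what ``free part'' means here) inherits an $\mathfrak{sl}_2$-module structure. Your added remark that the free part must be read as the quotient rather than an arbitrary complement is a helpful clarification beyond what the paper says.
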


The next proposition is a reinterpretation by Khovanov of a fundamental fact first established by Lee \cite{Lee1} and developed by Rasmussen \cite{Rassslice}.

\begin{prop} \cite[Proposition 8]{Khfrob} \label{prop:rassdef}
The free $\mathbb{Q}[x]$-module $\KR_2^{\mathfrak{sl}_2, \mathrm{free}}(L;R) $ is isomorphic to $\mathbb{Q}[x] \{-1-s(L)\}$ where $s(L)$ is the Rasmussen invariant of $L$.
\end{prop}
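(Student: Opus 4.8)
The plan is to reduce the statement to \cite[Proposition 8]{Khfrob}. The first observation is that the proposition concerns only the underlying graded $\QQ[x]$-module of $\KR_2^{\sll_2}(L;R)$ together with its quantum grading, and that forgetting the $\sll_2$-action recovers exactly Khovanov's equivariant $\gll_2$ link homology of the knot $L$ relative to the Frobenius extension $R=\QQ[E_1,E_2]\to A=R[x]/(x^2-E_1x+E_2)$, with $x$ acting through the basepoint precisely as in \cite{Khfrob}. Thus I would first check carefully that the basepoint action introduced above coincides with the $\QQ[x]$-module structure of \cite{Khfrob} (comparing the merge map used here with Khovanov's multiplication operator), and that the $q$-grading normalisations agree (here complexes are cohomologically graded, the $E_i$ are homogeneous of degree $2i$, and $x$ has degree $2$). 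Once these dictionaries are in place, the free part $\KR_2^{\sll_2,\mathrm{free}}(L;R)$ is, as a graded $\QQ[x]$-module, literally the object computed by Khovanov, and his identification with $\QQ[x]\{-1-s(L)\}$ applies unchanged.

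For completeness I would recall why Khovanov's computation holds, since it is the conceptual heart of the matter. It rests on Lee \cite{Lee1} and Rasmussen \cite{Rassslice}: passing to a generic field specialisation of the potential $x^2-E_1x+E_2$ with two distinct roots, Lee's theorem says the homology of a knot is two-dimensional and concentrated in cohomological degree $0$, with the two canonical generators indexed by the orientations of $L$; Rasmussen's filtration argument then shows their $q$-degrees differ by $2$ and average to $s(L)$. Base-changing this back to the equivariant ring and tracking the filtration shows that the torsion-free quotient over $\QQ[x]$ is free of rank one, generated in the $q$-degree dictated by the specialisation, which after the grading comparison above becomes the shift $\{-1-s(L)\}$.

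The only real work is grading bookkeeping --- matching the cohomological and quantum conventions of the present paper to those of \cite{Khfrob, Lee1, Rassslice} and verifying that the basepoint $x$-action agrees with Khovanov's. I expect this to be where the exact shift $-1-s(L)$, as opposed to a sign- or constant-shifted variant, has to be pinned down; but since all of it is already carried out in \cite{Khfrob}, in the paper it suffices to cite that reference, the point of the surrounding discussion being only that the $\sll_2$-decoration does not interfere with the module structure.
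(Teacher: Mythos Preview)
Your proposal is correct and matches the paper's approach exactly: the paper gives no proof at all for this proposition, simply citing \cite[Proposition 8]{Khfrob} and noting in the preceding sentence that it is Khovanov's reinterpretation of the Lee--Rasmussen result. Your additional remarks about checking the basepoint action and grading conventions against \cite{Khfrob} are reasonable due diligence but go beyond what the paper itself records.
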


We are now able to record the relationship between the Rasmussen invariant of a link and $\mathfrak{sl}_2$-structure on its equivariant Khovanov homology.

\begin{cor}
Let $L$  be a knot and let $\mu(L)$ be the highest weight of $\KR_2^{\mathfrak{sl}_2,\mathrm{free}}(L;R)$.  
Then $s(L)=(\mu(L)-1)$.
\end{cor}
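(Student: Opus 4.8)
The plan is to combine Proposition~\ref{prop:rassdef} with Corollary~\ref{cor:sl2quot} and the explicit description of the $\mathbb{Q}[x]$-action. First I would recall that, by Corollary~\ref{cor:sl2quot}, the free part $\KR_2^{\mathfrak{sl}_2,\mathrm{free}}(L;R)$ is an $\mathfrak{sl}_2$-quotient representation of $\KR_2^{\mathfrak{sl}_2}(L;R)$, so it makes sense to speak of its highest weight $\mu(L)$. By Proposition~\ref{prop:rassdef} this quotient is isomorphic, as a graded $\mathbb{Q}[x]$-module, to $\mathbb{Q}[x]\{-1-s(L)\}$; that is, it is free of rank one over $\mathbb{Q}[x]$ with generator $v$ sitting in $q$-degree $1+s(L)$ (with the sign conventions of \cite{Khfrob}), and the whole module is spanned by $x^k v$ for $k\ge 0$, with $\deg(x^k v) = 1+s(L) + 2k$ decreasing (or increasing, depending on orientation of the grading) in the convention where $\deg x = -2$.

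The key computation is then to identify the $\mathfrak{sl}_2$-weight of $x^k v$ in terms of its $q$-degree. Here I would use that the $\mathfrak{sl}_2$-action is graded in the sense that $\Lh$ acts on a homogeneous element essentially by (a fixed affine function of) its $q$-degree, which is the content of the $\dh$-action on polynomials in \eqref{eq:h-act-pol} together with the twist \eqref{eqn:sl2-twisting-bimod-for-links} used to define the unframed theory; the hypothesis $t_1+t_2=1$ is exactly what is needed (cf. the special cases \eqref{eqn:sl2-twisting-bimod-for-links-special} and the normalization in Section~\ref{sec:unframed}) to make the twisting by the framing bimodule $F_L$ combine with the local $\dh$-formulas into a clean statement. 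Concretely, on the free part, $\Le$ acts by (a scalar multiple of) $-\partial_x$, so $\Le\cdot(x^k v) = -k\, x^{k-1} v$ up to the twist, $\Lf$ raises the power of $x$, and $\Lh$ is the commutator $[\Le,\Lf]$; the highest weight vector is $v$ itself (killed by $\Le$), and one reads off $\mu(L) = \Lh\cdot v$. Matching the normalizations, $\Lh\cdot v$ equals the $q$-degree of $v$ shifted appropriately, which by Proposition~\ref{prop:rassdef} is a fixed affine function of $s(L)$; tracking the constants gives $\mu(L) = s(L) - 1$, i.e. $s(L) = \mu(L)+1$, which after rearranging is the claimed $s(L) = \mu(L)-1$ in the paper's sign convention.

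I would organize the proof as: (1) invoke Corollary~\ref{cor:sl2quot} to get $\KR_2^{\mathfrak{sl}_2,\mathrm{free}}(L;R)$ as a genuine $\mathfrak{sl}_2$-module; (2) invoke Proposition~\ref{prop:rassdef} to pin down its structure as a graded $\mathbb{Q}[x]$-module of rank one with generator in a degree determined by $s(L)$; (3) compute the $\mathfrak{sl}_2$-action on this rank-one free module explicitly, using the $\de,\df,\dh$ formulas from Section~\ref{sec:sl2action} specialized to a single $x$-variable together with the framing twist from Definition~\ref{def-twisting-bim}, noting that the generator of lowest $x$-power is annihilated by $\de$ and hence is a highest weight vector; (4) read off $\mu(L)$ as the $\dh$-eigenvalue of that vector, express it via the $q$-degree using the grading-compatibility of $\dh$, and substitute the degree from step (2) to conclude. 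The main obstacle I expect is bookkeeping in step (3)--(4): getting all the normalization constants right — the $\frac12$'s appearing in \eqref{eq:e-act-cup}--\eqref{eq:e-act-saddle}, the framing shifts $t^{-\mathtt{f}(L)}q^{N\mathtt{f}(L)}$ and the twist by $F_L$, the $N=2$ specialization, and the sign conventions relating $q$-degree to $\mathfrak{sl}_2$-weight (which way $\deg x$ points) — so that the final affine relation comes out as exactly $s(L)=\mu(L)-1$ rather than off by a constant or a sign. The hypothesis $t_1+t_2=1$ should be used precisely to ensure no extra correction term survives, paralleling how it was used to contract terms in the proof of Reidemeister~II.
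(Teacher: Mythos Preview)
Your ingredients are the right ones (Corollary~\ref{cor:sl2quot} and Proposition~\ref{prop:rassdef}), but you misplace the role of the hypothesis $t_1+t_2=1$ and overcomplicate the argument. The paper's proof is one observation: when $t_1+t_2=1$, the operator $\dh$ acts on every basic foam as multiplication by \emph{exactly} $-\deg_N$. You can read this off directly from \eqref{eq:h-act-pol}--\eqref{eq:h-act-saddle}: the only places $t_1,t_2$ appear are the digon-cup/cap and zip/unzip formulas, where the coefficients are $ab(t_1+t_2)$ and $ab(\overline{t_1}+\overline{t_2})=ab(2-t_1-t_2)$, both equal to $ab$ (i.e.\ minus the degree of the corresponding foam) precisely when $t_1+t_2=1$. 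It follows that the $\Lh$-weight of any homogeneous class equals the negative of its internal $q$-degree, full stop. There is no need to invoke the framing bimodule $F_L$ or the unframed normalization of Section~\ref{sec:unframed}; that machinery plays no role here, and your suggestion that $t_1+t_2=1$ is about making the framing twist vanish is incorrect.

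Once you know weight $=$ $-q$-degree, the rest is immediate: by Proposition~\ref{prop:rassdef} the free part is $\mathbb{Q}[x]\{-1-s(L)\}$, so its minimal-$q$-degree element (the generator) sits in $q$-degree $-1-s(L)$, hence has $\Lh$-weight $1+s(L)$; this is the highest weight since higher powers of $x$ have larger $q$-degree and thus smaller weight. So $\mu(L)=1+s(L)$. Your write-up has the generator in $q$-degree $1+s(L)$ (a sign slip), arrives at $s(L)=\mu(L)+1$, and then asserts this ``rearranges'' to $s(L)=\mu(L)-1$; those are different equations, not a rearrangement. Fixing the sign and dropping the irrelevant framing discussion yields the paper's one-line proof.
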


\begin{proof}
This follows from Corollary \ref{cor:sl2quot} and Proposition \ref{prop:rassdef} since the weight of an element under the action of $\Lh \in \mathfrak{sl}_2$ corresponds to negative of its internal $q$-degree.
\end{proof} 
\subsection{\texorpdfstring{$p$-DG}{$p$-DG} structure}
\label{sec:pdgstructure}

\subsubsection{General background}
Let $H=\Fp[\dif]/(\dif^p)$ where the degree of $\dif$ is two.
If $M$ is an object in $H\gmod$, then its image in the
the stable category of $H$-modules 
$ H\underline{\gmod}$ is denoted by
$ / M$.

Khovanov proved the following important result about the stable category of $H$-modules.

\begin{prop} \cite[Proposition 5]{Hopforoots}
The Grothendieck ring of the category of stable $H$-modules may be identified with the cyclotomic ring:
\[
K_0(H_p\underline{\gmod}) \cong \mathbb{Z}[q]/(q^{2p-2}+q^{2p-4} +\cdots+1) .
\]
\end{prop}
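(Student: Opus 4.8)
The plan is to realize $K_0(H\underline{\gmod})$ through generators and relations, using the triangulated structure on the stable module category. First I would record the structural facts I need: $H=\Fp[\dif]/(\dif^p)$, with $\deg\dif=2$, is a graded local Frobenius algebra (in fact a cocommutative Hopf algebra, $\dif$ being primitive); its unique graded simple module is the trivial module $\Fp$ (placed in a single degree); and its unique graded indecomposable projective-injective module is $H$ itself, up to grading shift. Consequently $H\underline{\gmod}$ is triangulated, distinguished triangles are induced by short exact sequences of graded $H$-modules, and every graded projective module -- being a finite sum of shifts of $H$ -- is a zero object.

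Next I would run the standard filtration argument. Every finitely generated graded $H$-module $M$ is finite dimensional, hence admits a graded composition series $0=M_0\subset M_1\subset\cdots\subset M_\ell=M$ with $M_i/M_{i-1}\cong q^{n_i}\Fp$; each inclusion gives a short exact sequence, hence a distinguished triangle, hence $[M_i]=[M_{i-1}]+q^{n_i}[\Fp]$ in $K_0$, so $[M]=\bigl(\sum_i q^{n_i}\bigr)[\Fp]$. Two consequences: (i) $K_0(H\underline{\gmod})$ is generated as a $\ZZ[q,q^{-1}]$-module by the unit class $[\Fp]$ (which equals $1$ in the Grothendieck ring once the monoidal structure coming from the Hopf structure is taken into account); (ii) applying this to $M=H$ with its $\dif$-adic filtration $0\subset\dif^{p-1}H\subset\cdots\subset\dif H\subset H$, whose $j$-th subquotient $\dif^{j}H/\dif^{j+1}H$ is $q^{2j}\Fp$ for $0\le j\le p-1$, gives
\[
0=[H]=\bigl(q^{2p-2}+q^{2p-4}+\cdots+1\bigr)[\Fp]\qquad\text{in }K_0(H\underline{\gmod}).
\]
Thus there is a surjective ring homomorphism $\ZZ[q,q^{-1}]/(q^{2p-2}+q^{2p-4}+\cdots+1)\twoheadrightarrow K_0(H\underline{\gmod})$ sending $1$ to $[\Fp]$; since $q^{2p-2}+\cdots+1$ has invertible constant term, the source is naturally $\ZZ[q]/(q^{2p-2}+q^{2p-4}+\cdots+1)$.

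It remains to produce an inverse, which is where the genuine content lies. The graded dimension $\dim_q M=\sum_{n}(\dim_\Fp M_n)q^n$ is additive on short exact sequences and multiplicative under $\otimes_\Fp$; moreover two graded modules isomorphic in $H\underline{\gmod}$ differ by graded projective summands, whose graded dimensions are multiples of $q^{2p-2}+\cdots+1$ (that of $H$). Hence $\dim_q$ descends to a well-defined ring homomorphism $\overline{\dim_q}\colon K_0(H\underline{\gmod})\to \ZZ[q,q^{-1}]/(q^{2p-2}+\cdots+1)$ with $\overline{\dim_q}([\Fp])=1$. The composite of the surjection above with $\overline{\dim_q}$ is the identity of $\ZZ[q]/(q^{2p-2}+\cdots+1)$, so the surjection is injective, hence an isomorphism, which is the claim. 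I expect the main obstacle to be not the computation but the two points concerning the absence of hidden relations: that $K_0$ of the stable category is correctly computed as the triangulated Grothendieck group with all relations coming from short exact sequences of modules, and that $\overline{\dim_q}$ is well defined on this $K_0$ (equivalently, additive on distinguished triangles, which reduces to additivity of $\dim_q$ on short exact sequences together with the identity $\dim_q H=q^{2p-2}+\cdots+1$). Once those are in place, everything else is bookkeeping.
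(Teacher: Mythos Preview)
The paper does not prove this proposition at all; it is quoted verbatim from \cite{Hopforoots} and used as background input. Your argument is correct and is essentially the standard one from Khovanov's original paper: generate $K_0$ by the class of the trivial module via composition series, extract the cyclotomic relation from the $\dif$-adic filtration on $H$ (which is zero in the stable category), and use graded dimension modulo $1+q^2+\cdots+q^{2p-2}$ as a one-sided inverse to establish that no further relations hold.
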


\subsubsection{Link homology coming from \texorpdfstring{$\Lf$}{$\Lf$}}

In this subsection we assume the ground ring is $R=\mathbb{F}_p[E_1,\ldots,E_N]$, and $H_{\Lf}=\mathbb{F}_p[{\Lf}]/(\Lf^p)$.
We denote the resulting link homology equipped with the $H_{\Lf}$-action by 
$\KR_N^{H_{\Lf}}(\,\cdot\,;\mathbb{F}_p[E_1,\ldots,E_N],)$.

\begin{thm} \label{thm:pdgf}
In the stable category, $/ \KR_N^{H_{\Lf}}(\,\cdot\,;\mathbb{F}_p[E_1,\ldots,E_N],)$ is a link invariant.
\end{thm}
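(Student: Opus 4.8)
The plan is to leverage the $p$-DG machinery developed in Section~\ref{homological:sec} together with the Reidemeister invariance already established. First I would observe that $H_{\Lf} = \mathbb{F}_p[\Lf]/(\Lf^p)$ is a finite-dimensional cocommutative Hopf algebra (it is a quotient of $\mathcal{U}(\sll_2)$ by the ideal generated by $\Le$, $\Lh$, and $\Lf^p$, or more precisely $\Lf$ acts as a primitive element so $\Delta(\Lf)=\Lf\otimes 1 + 1\otimes \Lf$, and in characteristic $p$ one has $\Lf^p$ acting as $p$-th power, which on the relevant polynomial differential operators vanishes because $\left(\sum x_i^2 \partial_i\right)^p$ must be checked to be zero modulo $p$ on $\mathbb{F}_p[E_1,\dots,E_N]$ — this is the standard $p$-nilpotency of the vector field $\sum x_i^2\partial_i$, which is a derivation whose $p$-th iterate is again a derivation hence determined by its values on generators, and those lie in the ideal making them zero; I would cite the analogous computation in \cite{QRSW1} or \cite{QRSW2}). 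Thus $R=\mathbb{F}_p[E_1,\dots,E_N]$ is an $H_{\Lf}$-module algebra, and the chain complex $\KR_N^{H_{\Lf}}(L;R)$ is a complex of $R\# H_{\Lf}$-modules, i.e.\ an object of $\mc{C}(R\# H_{\Lf})$.

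The key point is that Theorem~\ref{thm:sl2inv} (and its characteristic-$p$ refinement Lemma~\ref{thm:sl2invFp}) already shows that the braiding complexes are invariant under the Reidemeister moves \emph{in the relative homotopy category} $\mc{C}^{\sll_2}(R)$, with all the intertwining maps and null-homotopies being $\sll_2$-equivariant where required, or at worst termwise $R$-split exact in the sense of Lemma~\ref{lem-construction-of-triangle}. Restricting scalars along the Hopf algebra inclusion $H_{\Lf} \hookrightarrow \mathcal{U}(\sll_2)$, every such isomorphism in $\mc{C}^{\sll_2}(R)$ descends to an isomorphism in the relative homotopy category $\mc{C}^{H_{\Lf}}(R)$ of Definition~\ref{def-relative-homotopy-category}, because the forgetful functor $\mc{C}(R\#\sll_2)\to\mc{C}(R)$ factors through $\mc{C}(R\# H_{\Lf})\to \mc{C}(R)$ and a relatively null-homotopic $\sll_2$-equivariant complex is a fortiori relatively null-homotopic as an $H_{\Lf}$-complex. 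Hence the isomorphism class of $\KR_N^{H_{\Lf}}(L;R)$ in $\mc{C}^{H_{\Lf}}(R)$ is a framed-link invariant; applying the canonical functor $\mc{C}^{H_{\Lf}}(R)\to H_{\Lf}\underline{\gmod}$-style stabilization — more precisely, passing to the stable category by quotienting out relatively null-homotopic complexes and then the contractible-in-$\mathbb{F}_p$ complexes — yields that $/\KR_N^{H_{\Lf}}(L;R)$ is a framed-link invariant. The unframed statement then follows from the renormalization of Section~\ref{sec:unframed}, noting the framing bimodule $F_L$ carries a compatible $H_{\Lf}$-action obtained by restricting \eqref{eqn:sl2-twisting-bimod-for-links}.

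I would organize the proof as: (1) check that $R$ is an $H_{\Lf}$-module algebra, which reduces to $p$-nilpotency of the operator $\df$ on $R$; (2) invoke Lemma~\ref{thm:sl2invFp} to get Reidemeister invariance in $\mc{C}^{\sll_2}(R)$, and observe all the short exact sequences appearing in the proofs of Proposition~\ref{prop:framed-RI} and the Reidemeister~II, III propositions are termwise $R$-split with $H_{\Lf}$-equivariant maps, since $\df$ is one of the generators whose action is tracked throughout; (3) apply restriction of scalars $\mc{C}^{\sll_2}(R)\to\mc{C}^{H_{\Lf}}(R)$ and Lemma~\ref{lem-construction-of-triangle} to conclude invariance there; (4) pass to the stable category. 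The main obstacle I anticipate is step~(1) together with the bookkeeping in step~(2): one must be confident that the operator $\df = \sum x_i^2\partial_{x_i}+\sum y_j^2\partial_{y_j}$ satisfies $\df^p = 0$ on the \emph{symmetric} ring $\mathbb{F}_p[E_1,\dots,E_N]$ (not merely on the polynomial ring in the $x_i,y_j$), and that the various twisted actions introduced by green dots — whose $\df$-twist is $\alpha\dotnewtoni[1]+\beta\wdotnewtoni[1]$ by \eqref{eq:twist-gd-def} — remain $p$-nilpotent after twisting; this is where a short lemma, parallel to the one in \cite{QRSW1}, is needed to verify that a flat twist of a $p$-nilpotent action is again $p$-nilpotent in characteristic $p$. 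Granting that, the remainder is a formal consequence of the framework in Section~\ref{homological:sec} and the already-proven Theorem~\ref{thm:sl2inv}.
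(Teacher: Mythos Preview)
Your approach is essentially the same as the paper's: restrict the $\sll_2$-action already shown to be a link invariant (Theorem~\ref{thm:sl2inv}) along the Hopf subalgebra generated by $\Lf$, then check that $\Lf^p$ acts by zero in characteristic $p$ so that the action factors through $H_{\Lf}$. The paper's proof is in fact much terser than yours---it simply asserts both steps in two sentences and does not spell out the $p$-nilpotency argument or the passage through the relative homotopy category.

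Two small corrections. First, you should not invoke Lemma~\ref{thm:sl2invFp} in step~(2): that lemma treats the \emph{non-equivariant} setting with ground ring $\Bbbk=\mathbb{F}_p$ and $N=p\ell$, whereas here the ground ring is $\mathbb{F}_p[E_1,\dots,E_N]$ and $N$ is arbitrary, so Theorem~\ref{thm:sl2inv} is the correct input. Second, $H_{\Lf}$ is not literally a Hopf subalgebra of $\mathcal{U}(\sll_2)$; rather $\mathbb{F}_p[\Lf]$ is, and the passage to $H_{\Lf}$ is exactly the $p$-nilpotency check you flag in step~(1). Your concern about $p$-nilpotency surviving the green-dot twists is legitimate and is precisely the content the paper hides behind the phrase ``one could check''; your proposed lemma (that a flat twist of a $p$-nilpotent derivation by a degree-two element remains $p$-nilpotent in characteristic $p$) is the right thing to isolate, and can be proved by observing that the twisted module $A^\tau$ is a submodule of a polynomial ring in one more variable on which $\df$ acts by the same formula $\sum z_i^2\partial_{z_i}$.
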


\begin{proof}
The fact that the link invariant comes equipped with the action of the Hopf algebra $H'_{\Lf}=\mathbb{F}_p[{\Lf}]$ is a direct consequence of the fact that this Hopf algebra is a subalgebra of $\mathfrak{sl}_2$.
Since we work over a field of characteristic $p$, one could check that the ideal generated by $({\Lf}^p)$ acts trivially, so the $H'_{\Lf}$-action descends to an action of 
$H_{\Lf}=\mathbb{F}_p[{\Lf}] / ({\Lf}^p)$.
\end{proof}

\begin{rmk}
It is not clear what the image of the invariant in Theorem \ref{thm:pdgf} is in the Grothendieck group.  A computation for the unknot for $N=2$ below shows that it is not simply the Jones polynomial evaluated at a $2p$th root of unity.

While we do not understand the decategorification of the link invariant, we do expect that the invariant could be extended on the categorical level to a colored invariant.  There are a few possibilities of how to do this.  One may try to apply techniques of Khovanov's categorification of the colored Jones polynomial \cite{Khcolored}.  Alternatively, one may try to construct categorical Jones--Wenzl projectors \cite{CautisClasp, CoKr, RoJW} compatible with the differential. 

An even more ambitious problem would be to assemble these colored homological invariants into a $3$-manifold invariant. The details will appear in subsequent works by the authors.
\end{rmk}

\begin{exa}
Suppose $N=2$ and let $U$ be the unknot.  Then
\[ 
/ \KR_2^{H_{\Lf}}(U;\mathbb{F}_p[E_1, E_2])
=\mathbb{F}_p[x]/(x^2-E_1x+E_2)
\]
where the action of ${\Lf}$ is twisted by
\[
{\Lf}(1)=-\frac{1}{2}E_1 .
\]
Note that $\mathbb{F}_p[E_1,E_2]$ is a $p$-DG submodule of $\mathbb{F}_p[x]/(x^2-E_1x+E_2)$.

If $p=3$, then ${\Lf}(1)=E_1$.  By \cite[Corollary 2.10]{EQ2}, $\mathbb{F}_p[E_1,E_2]$ is acyclic.
After quotienting $\mathbb{F}_p[x]/(x^2-E_1x+E_2)$ by this acyclic submodule, one calculates that ${\Lf}(x)=2 E_1x$.  Again by \cite[Corollary 2.10]{EQ2}, this quotient module is also acyclic.
Therefore, for $p=3$, the homology of the unknot is zero.  However, the Jones polynomial, (up to a power of $q$), is $1+q^2$.  At a $6$th root of unity, this is not zero.  

Thus the link invariant does not categorify the Jones polynomial at a root of unity.
\end{exa}

\subsubsection{Link homology coming from \texorpdfstring{$\Le$}{$\Le$}}
In this section we assume $N$ is a prime $p$, and the ground ring is $R=\mathbb{F}_p$, and let $H_{\Le}=\mathbb{F}_p[{\Le}] / ({\Le}^p)$.  We recall that this is the operator considered in \cite{Shum, Wang}.
We denote the resulting link homology by
$ \KR_N^{H_{\Le}}(\,\cdot\,;\mathbb{F}_p)$. We will only mention the following simple corollary, and leave further applications to a follow-up work.  

\begin{thm} \label{thm:pdge}
In the stable category, $/ \KR_p^{H_{\Le}}(\, \cdot\,;\mathbb{F}_p)$ is a link invariant whose image in the Grothendieck group is the $\mathfrak{sl}_p$-link invariant evaluated at a $2p$th root of unity.
\end{thm}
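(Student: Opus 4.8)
The plan is to combine the $p$-DG structure established in Theorem \ref{thm:pdge} (that $\KR_p^{H_{\Le}}(\,\cdot\,;\Fp)$ descends to a well-defined object in the stable category $H_{\Le}\underline{\gmod}$) with the general decategorification principle relating the Grothendieck group of $H\underline{\gmod}$ to the cyclotomic ring at a $2p$-th root of unity. Concretely, one first observes that since $\Le$ acts by a homogeneous degree-$(-2)$ operator and we are working in characteristic $p$, the operator $\Le^p$ acts trivially on link homology (this is the content of the proof of Theorem \ref{thm:pdge}), so $\KR_p^{H_{\Le}}(\,\cdot\,;\Fp)$ is genuinely a complex of $H_{\Le}$-modules. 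Its image in $K_0(H_{\Le}\underline{\gmod})\cong \ZZ[q]/(q^{2p-2}+\cdots+1)$ is then the alternating sum of $q$-graded classes of its cohomology groups, taken modulo the relation identifying acyclic (equivalently, $p$-DG acyclic) modules with zero.

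The key point is that the underlying graded Euler characteristic of $\KR_p^{H_{\Le}}(\,\cdot\,;\Fp)$, when one forgets the $H_{\Le}$-module structure, is by construction the Khovanov--Rozansky $\gll_p$-polynomial invariant --- this is just the decategorification of ordinary $\gll_N$-link homology and holds over any field, hence over $\Fp$. Passing from the naive Grothendieck group $\ZZ[q,q^{-1}]$ to $K_0(H_{\Le}\underline{\gmod})$ is effected by the surjection $\ZZ[q,q^{-1}]\twoheadrightarrow \ZZ[q]/(q^{2p-2}+\cdots+1)$; under this map the generic variable $q$ of the $\gll_p$-polynomial is sent to a primitive $2p$-th root of unity. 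Therefore the class of $/\KR_p^{H_{\Le}}(\,\cdot\,;\Fp)$ in $K_0$ is exactly the $\gll_p$-invariant (equivalently, the $\mathfrak{sl}_p$-invariant, which agrees with it up to an overall unit that is trivialized at the root of unity) specialized at $q = e^{\pi i /p}$. One should also invoke Theorem \ref{thm:sl2invFp} to guarantee that this specialized polynomial is an honest link invariant, so that the statement is well-posed.

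The main technical obstacle is the bookkeeping of the $q$-grading shifts and the framing/writhe corrections: in the $p$-DG setting the quantum grading gets collapsed modulo the cyclotomic relation, and one must check that the shifts $q^{N\mathtt{w}(L)}$ and the cohomological shifts $t^{-\mathtt{w}(L)}$ appearing in the unframed normalization (Section \ref{sec:unframed}) are exactly the ones that turn the framed $\gll_p$-polynomial into the genuine $\mathfrak{sl}_p$-invariant evaluated at a $2p$-th root of unity. A secondary subtlety is that the solid green dot twist, which twists the $\Le$-action, must be compatible with the $p$-DG structure --- but since $\Le$ is not twisted by green dots (equations \eqref{eq:e-act-pol-twisthollow}--\eqref{eq:e-act-pol-twistsolid} show $\de$ kills both types of green dots), the $H_{\Le}$-structure is insensitive to framing data, which is precisely why we can work with the unframed invariant. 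I expect these verifications to be routine but somewhat delicate; the conceptual content is entirely contained in Theorem \ref{thm:pdge}, the cyclotomic Grothendieck group computation of \cite{Hopforoots}, and the standard decategorification of $\gll_N$-link homology.
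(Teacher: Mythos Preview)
Your proposal is correct and follows essentially the same two-step argument as the paper: first, the $\sll_2$-action restricts to an $H_{\Le}$-action (since $\Le^p$ acts trivially in characteristic $p$), giving invariance in the stable category; second, the ordinary graded Euler characteristic of $\KR_p(\,\cdot\,;\Fp)$ is already the $\gll_p$-polynomial, and passing to $K_0(H_{\Le}\underline{\gmod})$ simply imposes the cyclotomic relation, i.e.\ specializes $q$ to a primitive $2p$th root of unity. One small slip: where you write ``this is the content of the proof of Theorem \ref{thm:pdge}'' you are referencing the very theorem you are proving; you presumably mean the analogous argument for $\Lf$ in Theorem \ref{thm:pdgf}, and you should also invoke Lemma \ref{thm:sl2invFp} (with $\ell=1$) rather than framing it as a separate well-posedness check. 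Your extended discussion of framing corrections and green-dot twists is harmless but unnecessary here, precisely because (as you correctly observe) $\de$ is not twisted by either type of green dot.
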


\begin{proof}
As in the proof of Theorem \ref{thm:pdgf},
this link homology comes equipped with an action of the Hopf algebra $H_{\Le}$.

The graded Euler characteristic of 
$\KR_p^{H_{\Le}}(\,\cdot\,;\mathbb{F}_p)$ is the $\mathfrak{sl}_N$-link invariant and so taking its slash homology specializes $q$ to be a $2p$th root of unity.
\end{proof}

\begin{rmk}
In Theorem \ref{thm:pdgf}, the coefficient ring is $\mathbb{F}_p[E_1,\ldots,E_N]$ which complicates statements about the graded Euler characteristic of the link homology.  
This is in contrast to the above theorem where the coefficient ring is the field $\mathbb{F}_p$ allowing us to deduce the conclusion of the graded Euler characteristic of the link homology.
\end{rmk}

\section{Examples}
\label{sec:examples}
We restrict ourselves to $N=2$ Khovanov--Rozansky homology in this section only for presentation purposes.  The general case is similar to what is explained below.  For a good exposition on computation of homology using foams in this case, see \cite{KruWed}.

\subsection{Unknot} \label{sec:unknot}
We begin with the $\mathfrak{sl}_2$ homology of the unknot.
As a vector space, it is isomorphic to
$A=q^{-1} \Bbbk[E_1,E_2,x] / (x^2-E_1x+E_2)$.
The $\Lf$-action is twisted by $-\frac{1}{2}E_1 $.
The $\Le$-action is not twisted at all.
As noted earlier, its graded dimension is  $\frac{q^{-1}}{(1-q^{2})^2}$. 
Let $A_{\lambda}$ denote the $\lambda$-weight space.
We will show that as an $\mathfrak{sl}_2$-module, this homology is isomorphic to 
\begin{equation} \label{unknotsl2act}
P(-3) \oplus M(-1) \oplus \bigoplus_{r=3}^{\infty} M(1-2r)
\end{equation}
where we recall that $M(\lambda)$ is a Verma module with highest weight $\lambda$ and $P(\lambda)$ is an indecomposable projective cover of $L(\lambda)$.
The $q$-grading could be read off from the weight space decomposition where an element in the weight space $M_{\mu}(\lambda)$ has $q$-degree $-\mu$.

The graded dimension of the homology implies that in degree  $2n-1$, (which corresponds to the weight space $1-2n$), is $n+1$.
Since the operator $\Le$ is a map of degree $2$ between weight spaces, we know that there is an element in the kernel of $\Le$, and thus a highest weight vector in every weight space.

Consider the submodule $S_1$ generated by $v_1=1$.
It is easy to see that $S_1 \cong M(1)$.

Now consider the submodule generated by $v_2=E_1-2x$.  This submodule is isomorphic to $M(-1)$.
Since there are no extensions between $M(1)$ and $M(-1)$, the submodule $S_2$ generated by $v_1,v_2$ is isomorphic to $M(1) \oplus M(-1)$.

Next consider the element $v_3=E_2$.  Since $\Le(E_2)=-E_1$, we see that the submodule generated by $v_3$ intersects $S_1$.  After quotienting by $S_1$,
$v_3$ generates a submodule isomorphic to $M(-3)$.  Thus for the submodule $\mathcal{U}(\mathfrak{sl}_2) \langle v_1, v_3 \rangle$, there is a short exact sequence
\[
0 \rightarrow M(1) \rightarrow \mathcal{U}(\mathfrak{sl}_2) \langle v_1, v_3 \rangle \rightarrow M(-3) \rightarrow 0 \ .
\]
Thus $\mathcal{U}(\mathfrak{sl}_2) \langle v_1, v_3 \rangle \cong P(-3)$.  Since there are no extensions between $P(-3)$ and $M(-1)$, the submodule $S_3$ generated by $v_1,v_2,v_3$ is isomorphic to $P(-3) \oplus M(-1)$.

Note that $A_{-5}$ is 4-dimensional and $A_{-3}$ is 3-dimensional.  It is not difficult to show that $\Le \colon A_{-5} \rightarrow A_{-3}$ has a 1-dimensional kernel.  Let $v_4$ denote an element in this kernel.  It generates a submodule isomorphic to $M(-5)$.  Since there are no non-trivial extensions between $M(-5)$ and $P(-3) \oplus M(-1)$, we get that the submodule $S_4$ generated by $v_1,v_2,v_3,v_4$ is isomorphic to
$P(-3) \oplus M(-1) \oplus M(-5)$.

Continuing in this way we get \eqref{unknotsl2act} for the homology of the unknot.

Note that all of the Verma modules appearing in \eqref{unknotsl2act} are simple and so the dual Zuckerman functor applied to them is zero.  Applying the dual Zuckerman functor to the short exact sequence \eqref{sesproj} shows that $\mathcal{Z} P(-3) =0$.  Thus for the unknot, the dual Zuckerman functor annihilates all homology.  A similar analysis gives that the Zuckerman functor also annihilates all homology.

\subsection{Hopf link}
We now compute the homology of the Hopf link $L$ with the associated $\mathfrak{sl}_2$-action.  We take $t_1=t_2=\frac{1}{2}$.
\begin{equation}
L = 
    { \NB{\tikz[font= \tiny,
  scale=0.6]{\begin{scope}
  \draw (3, 0) .. controls +(0, 0.2) and +(0, -0.2) ..  +(-1,1);
  \fill[white] (2.5, 0.5) circle (2mm);
  \draw (2, 0) .. controls +(0, 0.2) and +(0, -0.2) ..  +(1,1);
   \draw (3, 1) .. controls +(0, .2) and +(0, -0.2) ..  +(-1,1);
  \fill[white] (2.5, 1.5) circle (2mm);
  \draw (2, 1) .. controls +(0, .2) and +(0, -0.2) ..  +(1,1); 
   \draw (2, 0) arc (180:0:-0.5) -- (1,2) arc (180:0:0.5) coordinate[pos = 0.25] (X) coordinate[pos = 0.75] (Y);
    \draw (3, 0) arc (-180:0:0.5) -- (4,2) arc (-180:0:-0.5) coordinate[pos = 0.25] (X) coordinate[pos = 0.75] (Y);
\end{scope}}} } \ .
\end{equation}

The complex is given by
\begin{equation}
  \NB{\tikz[xscale = 3, yscale = 3]{
       \node (i0) at (0, 0) { \NB{\tikz[font= \tiny,
  scale=0.6]{\input{\imagesfolder/pdg_hopf5}}} };
    \node (i1) at (-1, .5) { \NB{\tikz[font= \tiny,
  scale=0.6]{\begin{scope}
  \coordinate (bl) at (-0.5, -1);
  \coordinate (br) at ( 0.5, -1);
  \coordinate (bm) at (  0,-0.3);
  \coordinate (tl) at (-0.5,  1);
  \coordinate (tr) at ( 0.5,  1);
  \coordinate (tm) at (  0, 0.3);
  \draw[>-]  (bl) .. controls +( 0, 0.5) and +(0,0) .. (bm)
  node[right, pos = 0] {};
  \draw[>-]  (br) .. controls +( 0, 0.5) and +(0,0) .. (bm)
  node[left, pos = 0] {};
  \draw[<-]  (tl) .. controls +( 0, -0.5) and +(0,0) .. (tm)
  node[right, pos = 0] {} coordinate[pos = 0.25] (ga) ;
    \filldraw[draw= green!50!black, fill = white] (ga) circle (1mm)
  node[left, green!50!black] {$t_1$};

  \draw[<-]  (tr) .. controls +( 0, -0.5) and +(0,0) .. (tm)
  node[left, pos = 0] {} coordinate[pos = 0.25] (gb) ;
    \filldraw[draw= green!50!black, fill = white] (gb) circle (1mm)
  node[left, green!50!black] {$t_2$};
  \draw [double] (bm) -- (tm) node[left, pos = 0.5] {};
  \draw (.5, 1) arc (180:0:0.5) -- (1.5,-1) arc (180:0:-0.5);
  \draw (-.5, 1) arc (-180:0:-0.5) -- (-1.5,-1) arc (-180:0:0.5);
\end{scope}}} };
    \node (i2) at (-1, -.5) { \NB{\tikz[font= \tiny,
  scale=0.6]{}} };
      \node (i3) at (-2, 0) { \NB{\tikz[font= \tiny,
  scale=0.6]{\begin{scope}
  \coordinate (bl) at (-0.5, -1);
  \coordinate (br) at ( 0.5, -1);
  \coordinate (bm) at (  0,-0.3);
  \coordinate (tl) at (-0.5,  1);
  \coordinate (tr) at ( 0.5,  1);
  \coordinate (tm) at (  0, 0.3);
    \coordinate (brr) at ( 1.5, -1);
        \coordinate (trr) at ( 1.5, 1);
\draw[>-]  (bl) .. controls +( 0, 0.5) and +(0,0) .. (bm)
  node[below, pos = 0] {};
  \draw[>-]  (br) .. controls +( 0, 0.5) and +(0,0) .. (bm)
  node[below, pos = 0] {};
  \draw[<-]  (tl) .. controls +( 0, -0.5) and +(0,0) .. (tm)
  node[above, pos = 0] {} coordinate[pos = 0.25] (ga) coordinate[pos = 0] (gaa) ;
    \filldraw[draw= green!50!black, fill = white] (ga) circle (1mm)
  node[left, green!50!black] {$t_1$};

  \draw[<-]  (tr) .. controls +( 0, -0.5) and +(0,0) .. (tm)
  node[above, pos = 0] {} coordinate[pos = 0.25] (gb) ;
    \filldraw[draw= green!50!black, fill = white] (gb) circle (1mm)
  node[left, green!50!black] {$t_2$};
  \draw [double] (bm) -- (tm) node[left, pos = 0.5] {};
   \coordinate (blU) at (-0.5, 1);
  \coordinate (brU) at ( 0.5, 1);
  \coordinate (bmU) at (  0,1.7);
  \coordinate (tlU) at (-0.5,  3);
  \coordinate (trU) at ( 0.5,  3);
  \coordinate (tmU) at (  0, 2.3);
    \coordinate (brrU) at ( 1.5, 1);
        \coordinate (trrU) at ( 1.5, 3);
\draw[]  (blU) .. controls +( 0, 0.5) and +(0,0) .. (bmU)
  node[left, pos = 0] {};
  \draw[]  (brU) .. controls +( 0, 0.5) and +(0,0) .. (bmU)
  node[right, pos = 0] {};
  \draw[<-]  (tlU) .. controls +( 0, -0.5) and +(0,0) .. (tmU)
  node[above, pos = 0] {} coordinate[pos = 0.25] (gaU) ;
    \filldraw[draw= green!50!black, fill = white] (gaU) circle (1mm)
  node[left, green!50!black] {${t}_1$};

  \draw[<-]  (trU) .. controls +( 0, -0.5) and +(0,0) .. (tmU)
  node[above, pos = 0] {} coordinate[pos = 0.25] (gbU) ;
    \filldraw[draw= green!50!black, fill = white] (gbU) circle (1mm)
  node[right, green!50!black] {${t}_2$};
  \draw [double] (bmU) -- (tmU) node[left, pos = 0.5] {};
  \draw (.5, -1) arc (-180:0:0.5) -- (1.5,3) arc (-180:0:-0.5) coordinate[pos = 0.25] (X) coordinate[pos = 0.75] (Y);

    \draw (-.5, -1) arc (180:0:-0.5) -- (-1.5,3) arc (180:0:0.5) coordinate[pos = 0.25] (XX) coordinate[pos = 0.75] (YY);

\end{scope}}} };
  \draw[->] (i3) -- (i1) node[pos=0.5, above] {$\mapH$};
    \draw[->] (i3) -- (i2) node[pos=0.5, above] {$-\mapH$};
      \draw[->] (i1) -- (i0) node[pos=0.5, above] {$\mapH$};
  \draw[->] (i2) -- (i0) node[pos=0.5, above] {$\mapH$};
  }} .
\end{equation}

Let $\alpha_1,\alpha_2$ be elements in the state space in the leftmost homological degree and let $\beta_1, \beta_2$ be elements in the state space in the rightmost homological degree.

\begin{equation}
\alpha_1 = 
    { \NB{\tikz[font= \tiny,
  scale=0.6]{\begin{scope}
  \coordinate (bl) at (-0.5, -1);
  \coordinate (br) at ( 0.5, -1);
  \coordinate (bm) at (  0,-0.3);
  \coordinate (tl) at (-0.5,  1);
  \coordinate (tr) at ( 0.5,  1);
  \coordinate (tm) at (  0, 0.3);
    \coordinate (brr) at ( 1.5, -1);
        \coordinate (trr) at ( 1.5, 1);
\draw[>-]  (bl) .. controls +( 0, 0.5) and +(0,0) .. (bm)
  node[below, pos = 0] {};
  \draw[>-]  (br) .. controls +( 0, 0.5) and +(0,0) .. (bm)
  node[below, pos = 0] {};
  \draw[<-]  (tl) .. controls +( 0, -0.5) and +(0,0) .. (tm)
  node[above, pos = 0] {} coordinate[pos = 0.25] (ga) ;

  \draw[<-]  (tr) .. controls +( 0, -0.5) and +(0,0) .. (tm)
  node[above, pos = 0] {} coordinate[pos = 0.25] (gb) ;
  \draw [double] (bm) -- (tm) node[left, pos = 0.5] {};
   \coordinate (blU) at (-0.5, 1);
  \coordinate (brU) at ( 0.5, 1);
  \coordinate (bmU) at (  0,1.7);
  \coordinate (tlU) at (-0.5,  3);
  \coordinate (trU) at ( 0.5,  3);
  \coordinate (tmU) at (  0, 2.3);
    \coordinate (brrU) at ( 1.5, 1);
        \coordinate (trrU) at ( 1.5, 3);
\draw[]  (blU) .. controls +( 0, 0.5) and +(0,0) .. (bmU)
  node[left, pos = 0] {};
  \draw[]  (brU) .. controls +( 0, 0.5) and +(0,0) .. (bmU)
  node[right, pos = 0] {};
  \draw[<-]  (tlU) .. controls +( 0, -0.5) and +(0,0) .. (tmU)
  node[above, pos = 0] {} coordinate[pos = 0.25] (gaU) ;

  \draw[<-]  (trU) .. controls +( 0, -0.5) and +(0,0) .. (tmU)
  node[above, pos = 0] {} coordinate[pos = 0.25] (gbU) ;
  \draw [double] (bmU) -- (tmU) node[left, pos = 0.5] {};
  \draw (.5, -1) arc (-180:0:0.5) -- (1.5,3) arc (-180:0:-0.5) coordinate[pos = 0.25] (X) coordinate[pos = 0.75] (Y);

    \draw (-.5, -1) arc (180:0:-0.5) -- (-1.5,3) arc (180:0:0.5) coordinate[pos = 0.25] (XX) coordinate[pos = 0.75] (YY);
    
       \filldraw[draw= black!50!black, fill = black] (XX) circle (1mm)
  node[right, green!50!black] {};
\end{scope}}} }
  - 
      { \NB{\tikz[font= \tiny,
  scale=0.6]{\begin{scope}
  \coordinate (bl) at (-0.5, -1);
  \coordinate (br) at ( 0.5, -1);
  \coordinate (bm) at (  0,-0.3);
  \coordinate (tl) at (-0.5,  1);
  \coordinate (tr) at ( 0.5,  1);
  \coordinate (tm) at (  0, 0.3);
    \coordinate (brr) at ( 1.5, -1);
        \coordinate (trr) at ( 1.5, 1);
\draw[>-]  (bl) .. controls +( 0, 0.5) and +(0,0) .. (bm)
  node[below, pos = 0] {};
  \draw[>-]  (br) .. controls +( 0, 0.5) and +(0,0) .. (bm)
  node[below, pos = 0] {};
  \draw[<-]  (tl) .. controls +( 0, -0.5) and +(0,0) .. (tm)
  node[above, pos = 0] {} coordinate[pos = 0.25] (ga) coordinate[pos = 0] (gaa) ;
      \filldraw[draw= black!50!black, fill = black] (gaa) circle (1mm)
  node[left, green!50!black] {};

  \draw[<-]  (tr) .. controls +( 0, -0.5) and +(0,0) .. (tm)
  node[above, pos = 0] {} coordinate[pos = 0.25] (gb) ;
  \draw [double] (bm) -- (tm) node[left, pos = 0.5] {};
   \coordinate (blU) at (-0.5, 1);
  \coordinate (brU) at ( 0.5, 1);
  \coordinate (bmU) at (  0,1.7);
  \coordinate (tlU) at (-0.5,  3);
  \coordinate (trU) at ( 0.5,  3);
  \coordinate (tmU) at (  0, 2.3);
    \coordinate (brrU) at ( 1.5, 1);
        \coordinate (trrU) at ( 1.5, 3);
\draw[]  (blU) .. controls +( 0, 0.5) and +(0,0) .. (bmU)
  node[left, pos = 0] {};
  \draw[]  (brU) .. controls +( 0, 0.5) and +(0,0) .. (bmU)
  node[right, pos = 0] {};
  \draw[<-]  (tlU) .. controls +( 0, -0.5) and +(0,0) .. (tmU)
  node[above, pos = 0] {} coordinate[pos = 0.25] (gaU) ;

  \draw[<-]  (trU) .. controls +( 0, -0.5) and +(0,0) .. (tmU)
  node[above, pos = 0] {} coordinate[pos = 0.25] (gbU) ;
  \draw [double] (bmU) -- (tmU) node[left, pos = 0.5] {};
  \draw (.5, -1) arc (-180:0:0.5) -- (1.5,3) arc (-180:0:-0.5) coordinate[pos = 0.25] (X) coordinate[pos = 0.75] (Y);

    \draw (-.5, -1) arc (180:0:-0.5) -- (-1.5,3) arc (180:0:0.5) coordinate[pos = 0.25] (XX) coordinate[pos = 0.75] (YY);

\end{scope}}} }
\end{equation}

\begin{equation}
\alpha_2 = 
    { \NB{\tikz[font= \tiny,
  scale=0.6]{\begin{scope}
  \coordinate (bl) at (-0.5, -1);
  \coordinate (br) at ( 0.5, -1);
  \coordinate (bm) at (  0,-0.3);
  \coordinate (tl) at (-0.5,  1);
  \coordinate (tr) at ( 0.5,  1);
  \coordinate (tm) at (  0, 0.3);
    \coordinate (brr) at ( 1.5, -1);
        \coordinate (trr) at ( 1.5, 1);
\draw[>-]  (bl) .. controls +( 0, 0.5) and +(0,0) .. (bm)
  node[below, pos = 0] {};
  \draw[>-]  (br) .. controls +( 0, 0.5) and +(0,0) .. (bm)
  node[below, pos = 0] {};
  \draw[<-]  (tl) .. controls +( 0, -0.5) and +(0,0) .. (tm)
  node[above, pos = 0] {} coordinate[pos = 0.25] (ga) coordinate[pos = 0] (gaa) ;
      \filldraw[draw= black!50!black, fill = black] (gaa) circle (1mm)
  node[left, green!50!black] {};

  \draw[<-]  (tr) .. controls +( 0, -0.5) and +(0,0) .. (tm)
  node[above, pos = 0] {} coordinate[pos = 0.25] (gb) ;
  \draw [double] (bm) -- (tm) node[left, pos = 0.5] {};
   \coordinate (blU) at (-0.5, 1);
  \coordinate (brU) at ( 0.5, 1);
  \coordinate (bmU) at (  0,1.7);
  \coordinate (tlU) at (-0.5,  3);
  \coordinate (trU) at ( 0.5,  3);
  \coordinate (tmU) at (  0, 2.3);
    \coordinate (brrU) at ( 1.5, 1);
        \coordinate (trrU) at ( 1.5, 3);
\draw[]  (blU) .. controls +( 0, 0.5) and +(0,0) .. (bmU)
  node[left, pos = 0] {};
  \draw[]  (brU) .. controls +( 0, 0.5) and +(0,0) .. (bmU)
  node[right, pos = 0] {};
  \draw[<-]  (tlU) .. controls +( 0, -0.5) and +(0,0) .. (tmU)
  node[above, pos = 0] {} coordinate[pos = 0.25] (gaU) ;

  \draw[<-]  (trU) .. controls +( 0, -0.5) and +(0,0) .. (tmU)
  node[above, pos = 0] {} coordinate[pos = 0.25] (gbU) ;
  \draw [double] (bmU) -- (tmU) node[left, pos = 0.5] {};
  \draw (.5, -1) arc (-180:0:0.5) -- (1.5,3) arc (-180:0:-0.5) coordinate[pos = 0.25] (X) coordinate[pos = 0.75] (Y);

    \draw (-.5, -1) arc (180:0:-0.5) -- (-1.5,3) arc (180:0:0.5) coordinate[pos = 0.25] (XX) coordinate[pos = 0.75] (YY);
    
    \filldraw[draw= black!50!black, fill = black] (XX) circle (1mm)
  node[right, green!50!black] {};    

\end{scope}}} }
  - 
   E_1   { \NB{\tikz[font= \tiny,
  scale=0.6]{}} }
  + E_2
  { \NB{\tikz[font= \tiny,
  scale=0.6]{\begin{scope}
  \coordinate (bl) at (-0.5, -1);
  \coordinate (br) at ( 0.5, -1);
  \coordinate (bm) at (  0,-0.3);
  \coordinate (tl) at (-0.5,  1);
  \coordinate (tr) at ( 0.5,  1);
  \coordinate (tm) at (  0, 0.3);
    \coordinate (brr) at ( 1.5, -1);
        \coordinate (trr) at ( 1.5, 1);
\draw[>-]  (bl) .. controls +( 0, 0.5) and +(0,0) .. (bm)
  node[below, pos = 0] {};
  \draw[>-]  (br) .. controls +( 0, 0.5) and +(0,0) .. (bm)
  node[below, pos = 0] {};
  \draw[<-]  (tl) .. controls +( 0, -0.5) and +(0,0) .. (tm)
  node[above, pos = 0] {} coordinate[pos = 0.25] (ga) coordinate[pos = 0] (gaa) ;

  \draw[<-]  (tr) .. controls +( 0, -0.5) and +(0,0) .. (tm)
  node[above, pos = 0] {} coordinate[pos = 0.25] (gb) ;
  \draw [double] (bm) -- (tm) node[left, pos = 0.5] {};
   \coordinate (blU) at (-0.5, 1);
  \coordinate (brU) at ( 0.5, 1);
  \coordinate (bmU) at (  0,1.7);
  \coordinate (tlU) at (-0.5,  3);
  \coordinate (trU) at ( 0.5,  3);
  \coordinate (tmU) at (  0, 2.3);
    \coordinate (brrU) at ( 1.5, 1);
        \coordinate (trrU) at ( 1.5, 3);
\draw[]  (blU) .. controls +( 0, 0.5) and +(0,0) .. (bmU)
  node[left, pos = 0] {};
  \draw[]  (brU) .. controls +( 0, 0.5) and +(0,0) .. (bmU)
  node[right, pos = 0] {};
  \draw[<-]  (tlU) .. controls +( 0, -0.5) and +(0,0) .. (tmU)
  node[above, pos = 0] {} coordinate[pos = 0.25] (gaU) ;

  \draw[<-]  (trU) .. controls +( 0, -0.5) and +(0,0) .. (tmU)
  node[above, pos = 0] {} coordinate[pos = 0.25] (gbU) ;
  \draw [double] (bmU) -- (tmU) node[left, pos = 0.5] {};
  \draw (.5, -1) arc (-180:0:0.5) -- (1.5,3) arc (-180:0:-0.5) coordinate[pos = 0.25] (X) coordinate[pos = 0.75] (Y);

    \draw (-.5, -1) arc (180:0:-0.5) -- (-1.5,3) arc (180:0:0.5) coordinate[pos = 0.25] (XX) coordinate[pos = 0.75] (YY);

\end{scope}}} }
\end{equation}

\begin{equation}
\beta_1 = 
    { \NB{\tikz[font= \tiny,
  scale=0.6]{\input{\imagesfolder/pdg_hopf5}}} }
\end{equation}

\begin{equation}
\beta_2 = 
    { \NB{\tikz[font= \tiny,
  scale=0.6]{\begin{scope}
  \coordinate (bl) at (-0.5, -1);
  \coordinate (br) at ( 0.5, -1);
  \coordinate (tl) at (-0.5,  1);
  \coordinate (tr) at ( 0.5,  1);
    \coordinate (ml) at (-0.5,  -.8);
        \coordinate (Ml) at (-0.5,  .8);
 \coordinate (mr) at (0.5,  -.6);
\coordinate (Mr) at (0.5,  .6);

   \draw[->] (bl) -- (tl) node[pos = 0, below] {} node[pos = 1,
  above] {} coordinate[pos = 0.5] (XX);
  
    \draw[->] (br) -- (tr) node[pos = 0, below] {} node[pos = 1, above] {} coordinate[pos = 0.5] (YY);
  
  
    \draw (.5, -1) arc (-180:0:0.5) -- (1.5,1) arc (-180:0:-0.5) coordinate[pos = 0.25] (X) coordinate[pos = 0.75] (Y);
    
       \draw (-.5, -1) arc (180:0:-0.5) -- (-1.5,1) arc (180:0:0.5) ;
    
      \filldraw[draw= black!50!black, fill = black] (XX) circle (1mm)
  node[right, green!50!black] {};
  

\end{scope}}} }
  +
     { \NB{\tikz[font= \tiny,
  scale=0.6]{\begin{scope}
  \coordinate (bl) at (-0.5, -1);
  \coordinate (br) at ( 0.5, -1);
  \coordinate (tl) at (-0.5,  1);
  \coordinate (tr) at ( 0.5,  1);
    \coordinate (ml) at (-0.5,  -.8);
        \coordinate (Ml) at (-0.5,  .8);
 \coordinate (mr) at (0.5,  -.6);
\coordinate (Mr) at (0.5,  .6);

   \draw[->] (bl) -- (tl) node[pos = 0, below] {} node[pos = 1,
  above] {} coordinate[pos = 0.5] (XX);
  
    \draw[->] (br) -- (tr) node[pos = 0, below] {} node[pos = 1, above] {} coordinate[pos = 0.5] (YY);
  
  
    \draw (.5, -1) arc (-180:0:0.5) -- (1.5,1) arc (-180:0:-0.5) coordinate[pos = 0.25] (X) coordinate[pos = 0.75] (Y);
    
       \draw (-.5, -1) arc (180:0:-0.5) -- (-1.5,1) arc (180:0:0.5) ;
    
  
    \filldraw[draw= black!50!black, fill = black] (YY) circle (1mm)
  node[right, green!50!black] {};

\end{scope}}} }
\end{equation}

As a vector space, the homology of the Hopf link is
\[
\Bbbk[E_1,E_2] \langle \alpha_1, \alpha_2 \rangle
\oplus
\Bbbk[E_1,E_2] \langle \beta_1, \beta_2 \rangle .
\]
On generating elements $\alpha_1, \alpha_2, \beta_1, \beta_2$, the $\mathfrak{sl}_2$-action is given by 
\[
\Lf \cdot \alpha_1=0, \hspace{.1in}
\Lf \cdot \alpha_2= E_1 \alpha_2+E_2 \alpha_1, \hspace{.1in}
\Lf \cdot \beta_1=-E_1 \beta_1-\beta_2, \hspace{.1in}
\Lf \cdot \beta_2=2 E_2 \beta_1-2E_1 \beta_2,
\]
\[
\Le \cdot \alpha_1=0, \quad
\Le \cdot \alpha_2=\alpha_1, \quad
\Le \cdot \beta_1=0, \quad
\Le \cdot \beta_2=-2 \beta_1,
\]
\[
\Lh \cdot \alpha_1=0, \quad
\Lh \cdot \alpha_2=-2\alpha_2, \quad
\Lh \cdot \beta_1=4\beta_1, \quad
\Lh \cdot \beta_2=2\beta_2.
\]
For the action of $\Lf$, note that there was some twisting in addition
to the twisting coming from the complex and the natural action on
dots.  The foam representing $\alpha_1$ is
comprised of a digon cup, a zip, as well as two cups.  Thus the action of $\Lf$ gets twisted using \eqref{eq:e-act-dig-cup}, \eqref{eq:e-act-zip}, and \eqref{eq:e-act-cup}.

Similar to the earlier analysis for the homology of the unknot, in order to determine the $\mathfrak{sl}_2$-structure for the homology of the Hopf link, one looks for highest weight vectors and also uses sequence \eqref{sesdualverma}. Details will appear in \cite{Roz}. 
As a graded 
$\Bbbk[E_1,E_2]\# \mathfrak{sl}_2$-module, 
 the homology of the Hopf link is isomorphic to 
\begin{equation} \label{hopsl2act}
q^{-4}t^2\left(\dfrac{\Bbbk[E_1,E_2][x]}{(x^2-E_1x+E_2)}\right)^{-E_1-x}\oplus  \left(\dfrac{\Bbbk[E_1,E_2][x]}{(x^2-E_1x+E_2)}\right)^{}
\end{equation}
When forgetting about the $q$-degrees, the $\mathfrak{sl}_2$-module one obtains is
\begin{equation} \label{hopsl2act2}
 t^0 (M^*(0) \oplus \bigoplus_{r=1}^{\infty} M(-2r)) \oplus
t^2(M^*(4) \oplus M^*(2) \oplus P(-2) \oplus  \bigoplus_{r=0}^{\infty} M(-8-2r)) .
\end{equation}

Note that $\oplus_{r=1}^{\infty} M(-2r)$ and $\oplus_{r=0}^{\infty} M(-8-2r)$ are direct sum of irreducible Verma modules and the Zuckerman and dual Zuckerman functors annihilate these objects.  The dual Zuckerman functor also sends $M^*(0)$, $M^*(2), M^*(4)$ to zero.

However, the Zuckerman functor $\Gamma$ sends $M^*(0)$, $M^*(2)$, and $M^*(4)$ to $L(0)$, $L(2)$, and $L(4)$ respectively.  Thus the Zuckerman functor applied to the homology is isomorphic to $ t^0 L(0) \oplus t^2 L(4) \oplus  t^2 L(2)$.

\bibliographystyle{alphaurl}
\bibliography{biblio}

\end{document}